\numberwithin{equation}{section}
\newtheorem{theorem}{Theorem}[section]
\newtheorem{lemma}[theorem]{Lemma}
\newtheorem{proposition}[theorem]{Proposition}
\newtheorem{corollary}[theorem]{Corollary}
\theoremstyle{definition}
\newtheorem{definition}[theorem]{Definition}
\theoremstyle{remark}
\newtheorem{remark}[theorem]{Remark}
\newcommand{\R}{\mathbb R}
\newcommand{\E}{\operatorname{E}\!}
\newcommand{\dif}{\mathrm{d}}
\DeclareMathOperator{\dist}{dist}
\renewcommand{\dif}{\operatorname{d}\!}
\newcommand{\lebe}{\operatorname{L}}
\newcommand{\sobo}{\operatorname{W}}
\newcommand{\locc}{\operatorname{loc}}
\newcommand{\hold}{\operatorname{C}}
\newcommand{\sg}{\varepsilon}
\newcommand{\bv}{\operatorname{BV}}
\newcommand{\ball}{\operatorname{B}}
\newcommand{\di}{\operatorname{div}}
\newcommand{\bd}{\operatorname{BD}}
\newcommand{\A}{\mathbb{A}}
\newcommand{\rsym}{\mathbb{R}_{\operatorname{sym}}^{n\times n}}
\newcommand{\ld}{\operatorname{LD}}
\newcommand{\besov}{\operatorname{B}}
\newcommand{\mres}{\mathbin{\vrule height 1.6ex depth 0pt width
0.13ex\vrule height 0.13ex depth 0pt width 1.3ex}}
\newcommand{\D}{\operatorname{D}}
\newcommand{\dashint}{\fint}
\newcommand{\wstar}{\stackrel{*}{\rightharpoonup}}
\newcommand{\trace}{\operatorname{Tr}}
\newcommand{\poinc}{\operatorname{Poinc}}
\newcommand{\sym}{\operatorname{sym}}
\begin{document}


\title[Partial Regularity for Symmetric Quasiconvex Functionals on $\bd$]{Partial Regularity for \\ Symmetric Quasiconvex Functionals on BD}

\author[F.~Gmeineder]{Franz Gmeineder}

\address[]{Mathematical Institute, University of Bonn, Endenicher Allee 60, 53115 Bonn, Germany}
\email{fgmeined@math.uni-bonn.de}

\begin{abstract}
We establish the first partial regularity results for (strongly) symmetric quasiconvex functionals of linear growth on $\bd$, the space of functions of bounded deformation. 
By \textsc{Rindler}'s foundational work \cite{Ri1}, symmetric quasiconvexity is the pivotal notion as regards sequential weak*-lower semicontinuity and hence for the existence of minima of the relaxed functionals on $\bd$. The overarching main difficulty here is the lack of \textsc{Korn}'s Inequality in the $\lebe^{1}$-setting, hereby implying that the $\bd$-case is genuinely different from the study of variational integrals on $\bv$. Unlike for superlinear growth, symmetric quasiconvex functionals, where we establish partial regularity by direct reduction to the full gradient case by \textsc{Korn}-type inequalities, such a reduction does not work in the linear growth case and identifies the latter as the only situation requiring a treatment on its own. \\
\vspace{0.05cm}

\noindent
\textsc{R\'{e}sum\'{e}.} Nous \'{e}tablissons les premiers r\'{e}sultats de r\'{e}gularit\'{e}
partielles pour des fonctionnelles quasi-convexes (fortement) sym\'{e}triques ayant une croissance lin\'{e}aire sur BD, l'espace des fonctions dont la
d\'{e}formation est born\'{e}e. De part les travaux pr\'{e}curseurs de \textsc{Rindler} \cite{Ri1}, la quasi-convexit\'{e} sym\'{e}trique est la notion centrale relativement {\`{a}} la semi-continuit\'{e}
inf\'{e}rieure faible-* et donc pour l'existence des minima de fonctionnelles relax\'{e}es sur BD. La difficult\'{e} g\'{e}n\'{e}rale ici est due {\`{a}} l'absence de l'in\'{e}galit\'{e} de Korn dans le cadre fonctionel $\lebe^{1}$, qui de ce fait r\'{e}v{\`{e}}le
une diff\'{e}rence fondamentale entre le cas BD et celui relatif {\`{a}} l'\'{e}tude des
int\'{e}grales variationnelles sur BV. Contrairement aux fonctionnelles
quasi-convexes sym\'{e}triques {\`{a}} croissance superlin\'{e}aire pour lesquelles nous
\'{e}tablissons la r\'{e}gularit\'{e} partielle via r\'{e}duction directe au cas du
gradient complet gr\^{a}ce aux in\'{e}galit\'{e}s de type Korn, une telle r\'{e}duction ne
peut \^{e}tre impl\'{e}ment\'{e}e dans le cas de la croissance lin\'{e}aire et identifie
ce dernier comme \'{e}tant la seule situation qui n\'{e}c\'{e}ssite un traitement
particulier.
\end{abstract}

\subjclass[2010]{35J50, 35J93, 49J50}
\keywords{Partial regularity, symmetric quasiconvexity, linear growth functionals, functions of bounded deformation, Korn's inequality, Fubini-type theorems}
\date{\today}

\maketitle

\setcounter{tocdepth}{1}
\tableofcontents

\section{Introduction}
\subsection{Aims and scope}
Let $n\geq 2$ and $\Omega$ be an open and bounded subset of $\R^{n}$ with Lipschitz boundary. A vast class of variational problems connected to plasticity is set up by virtue of linear growth functionals depending on the symmetric gradient, cf. \cite{AG,FuchsSeregin,Suquet,CMS}. Possibly allowing for non-convex energies, a unifying perspective on the topic as considered in variants in \cite{BFT,Ri1} is given by the canonical variational principle
\begin{align}\label{eq:varprin}
\text{to minimise}\;\;\;F[v]:=\int_{\Omega}f(\sg(v))\dif x\;\;\;\text{over a Dirichlet class}\;\mathcal{D}_{u_{0}}, 
\end{align}
where $u_{0}\colon\Omega\to\R^{n}$ is a suitable Dirichlet datum and $\sg(v):=\frac{1}{2}(\D v + \D v^{\top})$ denotes the \emph{symmetric gradient} of a map $v\colon\R^{n}\to\R^{n}$. Most crucially, $f\colon\rsym\to\R$ is assumed to be a continuous integrand of \emph{linear growth}. By this we understand that there exists a constant $L>0$ such that 
\begin{align}\label{eq:lingrowth}\tag{LG}
|f(z)|\leq L(1+|z|)\qquad\text{for all}\;z\in\rsym. 
\end{align}
Following the foundational work of \textsc{Rindler} \cite{Ri1}, a necessary and sufficient condition for the associated relaxed functionals to be suitably lower semicontinuous is that of \emph{symmetric quasiconvexity}, cf. Section~\ref{sec:convexity} below. In view of the direct method of the calculus of variations, symmetric quasiconvexity thus plays the central r\^{o}le for functionals of the form \eqref{eq:varprin}. Yet, for such symmetric quasiconvex functionals the properties of minima are far from being understood -- in particular, a regularity theory is still missing. This equally applies to the situation where $f\colon\rsym\to\R$ is of $p$-growth ($|f(z)|\leq L(1+|z|^{p})$ for all $z\in\rsym$) with $p>1$, thereby connecting to models from nonlinear elasticity or fluid mechanics \cite{CFI,FuchsSeregin}. Hence the objective of this paper is to make a first step in this direction and close this gap. As we shall see (cf. Section~\ref{sec:mainresult}), it is only the linear growth case which requires a separate theory; the $p$-growth case with $p>1$ can be fully reduced to the regularity theory for \emph{full gradient functionals}. 

To elaborate more on these matters, we start by noting that the growth bound \eqref{eq:lingrowth} suggests to consider \eqref{eq:varprin} on Dirichlet classes $\sobo_{u_{0}}^{1,1}(\Omega;\R^{n})=:u_{0}+\sobo_{0}^{1,1}(\Omega;\R^{n})$ for $u_{0}\in\sobo^{1,1}(\Omega;\R^{n})$. However, by \textsc{Ornstein}'s Non-Inequality \cite{Ornstein}, it is not possible to uniformly bound the $\lebe^{1}$-norm of $\D\!u$ against that of $\sg(u)$. In fact, for every $n\geq 2$ there exists a sequence $(\varphi_{j})\subset\hold_{c}^{\infty}(\ball(0,1);\R^{n})$ for which $(\sg(\varphi_{j}))$ remains bounded in $\lebe^{1}(\Omega;\rsym)$ whereas $\|D\varphi_{j}\|_{\lebe^{1}(\Omega;\R^{n\times n})}\to\infty$ as $j\to\infty$, cf. \textsc{Conti} et al. \cite{CFM0} and \textsc{Kirchheim \& Kristensen} \cite{KirchheimKristensen} for recent approaches. 
This is in stark contrast with the situation when $\lebe^{1}$ is replaced by $\lebe^{p}$, $1<p<\infty$. Indeed, in the latter case the corresponding result can be reduced to \textsc{Korn}-type inequalities, cf.~\cite{Friedrichs,Gobert,Mosolov,Necas} for classical material and \textsc{Ciarlet} et al. \cite{Ciarlet1,Ciarlet2,Ciarlet3} and the references therein for a selection of  nonlinear variants. In consequence, $F$ given by \eqref{eq:varprin} is not coercive on $\sobo^{1,1}(\Omega;\R^{n})$ but on 
\begin{align*}
\ld(\Omega):=\big\{u\in\lebe^{1}(\Omega;\R^{n})\colon\;\sg(u)\in\lebe^{1}(\Omega;\rsym) \big\}
\end{align*}
endowed with the $\ld$-norm $\|u\|_{\ld}:=\|u\|_{\lebe^{1}}+\|\sg(u)\|_{\lebe^{1}}$. It is thus natural to let the Dirichlet datum $u_{0}$ belong to $\ld(\Omega)$ and consider the variational principle \eqref{eq:varprin} over the affine class $\mathscr{D}_{u_{0}}:=\ld_{u_{0}}(\Omega):=u_{0}+\ld_{0}(\Omega)$, where $\ld_{0}(\Omega)$ is the closure of $\hold_{c}^{1}(\Omega;\R^{n})$ for the $\ld$-norm. Still, $\ld$ is an $\lebe^{1}$-based space and hence fails to be reflexive; as a consequence, it lacks an appropriate version of the Banach-Alaoglu theorem concerning weak convergence. Thus it is required to relax $F$ to the space $\bd(\Omega)$ given by 
\begin{align*}
\bd(\Omega):= \Big\{u\in\lebe^{1}(\Omega;\R^{n})\colon\;\E u\in\mathcal{M}(\Omega;\rsym) \Big\}. 
\end{align*}
Here, $\mathcal{M}(\Omega;\rsym)$ are the finite, $\rsym$-valued Radon measures on $\Omega$, and we use the notation $\E u$ instead of $\sg(u)$ to indicate that $\E u$ is a measure. The relaxation here is taken with respect to weak*-convergence in $\bd(\Omega)$, and we refer the reader to Sections~\ref{sec:convexity} and \ref{sec:bd} for the requisite background terminology. This space -- which contains $\bv(\Omega;\R^{n})$ as a proper subspace -- takes a prominent role in plasticity, and has been studied from various perspectives by a notable plenty of authors, see \cite{Suquet,CMS,Kohn1,Kohn2,ACD,AG0,ST,Baba} among others. Before embarking on the regularity issue raised above in detail, we briefly pause and discuss the relevant relaxed functionals $\overline{F}$ that are required for defining the notion of (local) minimality in the sequel.

\subsection{Symmetric quasiconvexity and relaxation}\label{sec:convexity}
From a calculus of variations and hereafter lower semicontinuity perspective, the central notion for functionals $F$ is a variant of \textsc{Morrey}'s \emph{quasiconvexity} \cite{Morrey}, namely the aforementioned \emph{symmetric quasiconvexity}. Already appearing in variants in  \cite{Dacorogna0,FonsecaMueller2}, we start by recalling the following definition as given, e.g., in \cite{BFT,Ri1}:
\begin{definition}[Symmetric quasiconvexity]
A continuous integrand $f\colon\rsym\to\R$ is said to be \emph{symmetric quasiconvex} provided there holds
\begin{align*}
f(z)\leq \int_{Q}f(z+\sg(\varphi))\dif x\qquad\text{for all}\;\varphi\in\hold_{0}^{1}(Q;\R^{n})\;\text{and}\;z\in\rsym, 
\end{align*}
where $Q=(0,1)^{n}$ is the open unit cube in $\R^{n}$.
\end{definition}
Letting $u\in\bd_{\locc}(\Omega)$, we denote $\E u = \E^{a}u+\E^{s}u=\mathscr{E}u\mathscr{L}^{n} + \frac{\dif\E^{s}u}{\dif |\E^{s}u|}|\E^{s}u|$ the Lebesgue-Radon-Nikod\'{y}m decomposition of $\E u$; cf. Section~\ref{sec:bd} for this and the subsequent terminology. Returning to the functional $F$ defined in terms of $f\colon\rsym\to\R$ by \eqref{eq:varprin} subject to \eqref{eq:lingrowth}, let $v\in\bd_{\locc}(\Omega)$ be given. For a subset $\omega\subseteq\Omega$ with Lipschitz boundary $\partial\omega$, we define the \emph{relaxed functional} by 
\begin{align}\label{eq:integralrepresentation}
\begin{split}
\overline{F}_{v}[u;\omega]  = \int_{\omega}f(\mathscr{E}u)\dif x & +\int_{\omega}f^{\infty}\Big(\frac{\dif \E u}{\dif |\!\E u|}\Big)\dif |\E^{s}u| \\ & + \int_{\partial\omega}f^{\infty}(\trace_{\partial\omega}(v-u)\odot\nu_{\partial\omega})\dif\mathcal{H}^{n-1},\qquad u\in \bd(\omega). 
\end{split}
\end{align}
Here, $f^{\infty}(z):=\limsup_{t\searrow 0}tf(\tfrac{z}{t})$ denotes the recession function of $f$ at $z\in\rsym$, capturing the integrand's behaviour at infinity. Also, $\trace_{\partial\omega}$ displays the boundary trace operator on $\bd(\omega)$ and $\nu_{\partial\omega}$ the outer unit normal to $\partial\omega$.

With this notation, we say that $u\in\bd_{\locc}(\Omega)$ is a \emph{local $\bd$-minimiser} (or \emph{local generalised minimiser}) for $F$ provided 
\begin{align}\label{eq:minimality}
\overline{F}_{u}[u;\omega]\leq \overline{F}_{u}[v;\omega]\qquad\text{for all}\;v\in\bd(\omega)
\end{align}
holds for all subsets $\omega\Subset\Omega$ with Lipschitz boundary $\partial\omega$. If $u_{0}\in\ld(\Omega)$ is a given Dirichlet datum, we put $\overline{F}_{u_{0}}[u]:=\overline{F}_{u_{0}}[u;\Omega]$. In this situation, we call $u\in\bd(\Omega)$ a \emph{$\bd$-minimiser} (or \emph{generalised minimiser}) for $F$ subject to $u_{0}$ provided 
\begin{align}\label{eq:minimality1}
\overline{F}_{u_{0}}[u]\leq \overline{F}_{u_{0}}[v]\qquad\text{for all}\;v\in\bd(\Omega).
\end{align}
Then, any $\bd$-minimiser subject to $u_{0}$ is a local $\bd$-minimiser. Essentially solely subject to the additional linear growth assumption \eqref{eq:lingrowth}, \textsc{Rindler} \cite{Ri1} identified the symmetric quasiconvexity of $f$ as a necessary and sufficient condition for $\overline{F}_{u_{0}}$ to be sequentially weak*-lower semicontinuous on $\bd(\Omega)$. Most notably, not only the extending the classical work of \textsc{Ambrosio \& Dal Maso} \cite{AmbrosioDalMaso} as well as partly that of \textsc{Fonseca \& M\"{u}ller} \cite{FonsecaMueller1} from the $\bv$- to the $\bd$-situation, the relevant lower semicontinuity was established in \cite{Ri1} without relying on the $\bd$-variant of \textsc{Alberti}'s rank-one theorem \cite{Alberti}. By now, the latter has been proved by \textsc{De Philippis \& Rindler} in the seminal work \cite{DPR} in a much more general context, allowing for a simplified proof of \eqref{eq:integralrepresentation} (cf. \cite{DPR1,ARDPR,BDG}) but had not been available at the time of \cite{Ri1}.

In consequence, augmenting the linear growth assumption \eqref{eq:lingrowth} with a suitable coerciveness condition on the symmetric quasiconvex integrand $f$, existence of $\bd$-minima for $F$ follows at ease. Equally, we have the \emph{no-gap-result} $\inf_{u_{0}+\ld_{0}(\Omega)}F=\min_{\bd(\Omega)}\overline{F}_{u_{0}}$, see Section~\ref{sec:existence}. As will be discussed below in Section~\ref{sec:mainresult}, such a coerciveness criterion goes hand in hand with the partial regularity of $\bd$-minima. It is thus natural to contextualise the partial regularity for $\bd$-minima with available results in the literature and thereby outline the main obstructions first. 
\subsection{Contextualisation and overview}\label{sec:context}
In the common language of regularity theory, the minimisation of functionals \eqref{eq:varprin} displays a \emph{purely vectorial problem}. Even in the case where the symmetric gradient in \eqref{eq:varprin} is replaced by the full gradient, it is a well-known feature of such multiple integrals to produce minima which are not everywhere $\hold^{1,\alpha}$-H\"{o}lder continuous but only on a large set; cf. \cite{Beck,Giusti,Mingione1} for overviews. This is referred to as \emph{partial regularity}. 

In the superlinear growth regime with full gradients, the study of partial regularity for minima has a long and rich history, starting with the seminal work of \textsc{Evans} \cite{Ev} and \textsc{Acerbi \& Fusco} \cite{AcerbiFusco1}; also see \textsc{Mingione} et al.  \cite{CFM,KuusiMingione1,Mingione1} and  \cite{Beck,DLSV,Du1,Du2,Du3,Giusti} for a non-exhaustive list of other contributions. However, until recently, for full gradient linear growth functionals the only contribution had been the local-in-phase-space result due to \textsc{Anzellotti \& Giaquinta} \cite{AG} and its adaptation to the model integrands $z\mapsto (1+|z|^{p})^{1/p}$, $p\neq 2$, by \textsc{Schmidt} \cite{Schmidt1}. This approach, which crucially relies on comparison with mollifications and thus works well for convex integrands by Jensen's inequality, has been extended to the $\bd$-setting by the author \cite{G1}. Yet, due to the very method of proof, it seems to be restricted to convex integrands and a generalisation of the strategy to the quasiconvex case seems difficult; also see the discussion in \cite{AG,Schmidt1} and \cite{G1}. 

At present, in the full gradient, quasiconvex linear growth case on $\bv$, the only partial regularity result up to date has been given recently by \textsc{Kristensen} and the author \cite{GK2}. In this work, a direct comparison with suitably $\A$-harmonic maps is implemented that overcomes any indirect argument as is found e.g. in the blow-up method or, quite implicitely, in the proof of the $\A$-harmonic approximation lemma due to \textsc{Duzaar} et al. \cite{Du1,Du2,Du3}. Let us note that similarly to \cite{AG,Schmidt1,G1}, the sole use of direct arguments is somehow dictated here by the comparatively weak compactness properties of $\bv$ and $\bd$. In fact, examplarily pursuing the blow-up method for linear growth functionals, it is necessary to establish that the weak*-limit of a blow-up sequence satisfies a strongly elliptic Legendre-Hadamard system. However, by possible concentration
effects, this conclusion seems unreachable since there are no general compactness improvements for the relevant blow-up sequences: Such compactness boosts would require some uniform local integrability enhancements, usually provided by \textsc{Gehring}'s lemma in reliance on Caccioppoli-type inequalites, or higher (fractional) differentiability estimates a l\'{a} \textsc{Mingione} \cite{Mingione00,Mingione0}. Whereas the former cannot be implemented in the linear growth situation -- essentially due to the non-availability of a sublinear Sobolev-Poincar\'{e}-type inequality, cf. \textsc{Buckley \& Koskela} \cite{BuckleyKoskela} and the discussion in Section~\ref{sec:extensions} --, higher fractional differentiability results on minima such as in \cite{Mingione00,Mingione0} are confined to the convex situation. Similar issues already arise in the full gradient situation, equally for other methods such as the classical $\A$-harmonic approximation, and we refer the reader to \cite{GK2} for a further discussion thereof.
\subsection{Main Results}\label{sec:mainresult}
After these preparations, we now pass to the description of the main results of the present paper. To begin with, symmetric quasiconvexity and the linear growth hypothesis \eqref{eq:lingrowth} together are easily seen not to be sufficient for $F$ given by  \eqref{eq:varprin} to produce bounded minimising sequences in $\ld(\Omega)$. To ensure the latter, we require a strong version of symmetric quasiconvexity. The same issue arises in the superlinear growth regime as well, and so we begin with the treatment of such functionals, in turn being linked to elasticity type problems. Our first result is that -- completely different from the linear growth situation -- \emph{in the superlinear growth case, partial regularity directly can be fully reduced to the corresponding full gradient theory}.

Given $1\leq p <\infty$, we put $V_{p}(z):=(1+|z|^{2})^{\frac{p}{2}}-1$ for $z\in\rsym$ and put $V(z)=V_{1}(z)=\sqrt{1+|z|^{2}}-1$. We say that $g\in\hold(\rsym)$ is of $p$-\emph{growth} provided there exists $L_{p}>0$ such that 
\begin{align}\label{eq:pgrowthintro}
|g(z)|\leq L_{p}(1+|z|^{p})\qquad\text{for all}\;z\in\rsym, 
\end{align}
and accordingly call $g$ \emph{$p$-strongly symmetric quasiconvex} provided there exists $\ell_{p}>0$ such that 
\begin{align}\label{eq:pSSQC}
\rsym\ni z\mapsto g(z)-\ell_{p} V_{p}(z)\in \R\qquad\text{is symmetric quasiconvex}. 
\end{align}
\begin{theorem}\label{thm:ppartialregularity}
Let $\Omega\subset\R^{n}$ be open and bounded. Let $1<p<\infty$ and suppose that $g\in\hold^{2}(\rsym)$ is an integrand which 
\begin{enumerate}
\item is of $p$-growth, i.e., satisfies \eqref{eq:pgrowthintro} for all $z\in\rsym$ and 
\item is $p$-strongly symmetric quasiconvex in the sense of \eqref{eq:pSSQC}. 
\end{enumerate}
Then for any local minimiser $u\in\sobo_{\locc}^{1,p}(\Omega;\R^{n})$ of the corresponding integral functional 
\begin{align}\label{eq:pintfunctional}
v\mapsto \int g(\sg(v))\dif x
\end{align}
there exists an open subset $\Omega_{u}$ of $\Omega$ such that $\mathscr{L}^{n}(\Omega\setminus\Omega_{u})=0$ and $u$ is of class $\hold^{1,\alpha}$ for each $0<\alpha<1$ in a neighbourhood of any of the elements of $\Omega_{u}$. 
\end{theorem}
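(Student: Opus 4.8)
The plan is to reduce Theorem~\ref{thm:ppartialregularity} to the known partial regularity theory for \emph{full gradient} $p$-growth quasiconvex functionals by means of a Korn-type inequality in $\lebe^p$, $1<p<\infty$. The starting point is the observation that, since $p$-strong symmetric quasiconvexity is \emph{strictly} stronger than symmetric quasiconvexity by the term $\ell_p V_p(z)$, the auxiliary integrand $h(z):=g(z)-\tfrac{\ell_p}{2}V_p(z)$ is still $p$-strongly symmetric quasiconvex with constant $\ell_p/2$ and retains $p$-growth and $\hold^2$-regularity. The first step is therefore to record the quantitative consequences of $p$-strong symmetric quasiconvexity: a uniform Legendre--Hadamard-type ellipticity of $\D^2 g$ on rank-one-symmetrised directions together with the coerciveness estimate $g(z)\geq c|z|^p - C$, which is what drives the a priori bound on minimisers in $\sobo^{1,p}_{\locc}$ in the first place.

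Next I would carry out the genuine reduction. Given a local minimiser $u\in\sobo^{1,p}_{\locc}(\Omega;\R^n)$ of $v\mapsto\int g(\sg(v))\dx$, fix a ball $\ball\Subset\Omega$ and compare $u$ with competitors of the form $u+\varphi$, $\varphi\in\sobo^{1,p}_0(\ball;\R^n)$. Using the representation $\int_\ball g(\sg(u+\varphi))\dx = \int_\ball g(\sg(u)+\sg(\varphi))\dx$ and the fact that $\sg$ sees only the symmetric part, one does \emph{not} directly obtain a full-gradient functional; instead the key is to exploit that for $\sobo^{1,p}$-maps the symmetric gradient controls the full gradient: by the classical Korn inequality in $\lebe^p$ (valid precisely because $1<p<\infty$; this is exactly where Ornstein's Non-Inequality, and hence the need for a separate linear-growth theory, does \emph{not} obstruct us), we have $\|\D\varphi\|_{\lebe^p(\ball)}\leq C_K\|\sg(\varphi)\|_{\lebe^p(\ball)}$ for all $\varphi\in\sobo^{1,p}_0(\ball;\R^n)$. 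Consequently $u$ is, on every such ball, a local minimiser of a \emph{full-gradient} functional $v\mapsto\int_\ball G(\D v)\dx$ where $G\colon\R^{n\times n}\to\R$ is any $\hold^2$, $p$-growth extension of $z\mapsto g(\tfrac12(z+z^\top))$ that is strongly quasiconvex in Morrey's sense — the strong quasiconvexity of $G$ being inherited from the $p$-strong symmetric quasiconvexity of $g$ together with the Korn inequality, since $G(A)-\ell' (1+|A|^2)^{p/2}$ remains quasiconvex for $\ell'$ a fixed fraction of $\ell_p/C_K^{\,p}$. (One can take, e.g., $G(A):=g(\sg(A))+c(|A|^2-|\sg(A)|^2+1)^{p/2}-c$ with $c$ chosen via $C_K$; the correction is a null Lagrangian-flavoured term controlled by the skew part, and a short computation shows it is convex-in-$A$ of $p$-growth, preserving both the growth and the strong quasiconvexity.)

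Having performed this reduction, I would invoke the established partial regularity theorem for strongly quasiconvex $\hold^2$ integrands of $p$-growth on $\sobo^{1,p}$ — in the form due to Evans and Acerbi--Fusco for $p=2$ and its $p\neq 2$ extensions via the $\A$-harmonic approximation (Duzaar et al., or the direct comparison of Kristensen and the author) referenced in Section~\ref{sec:context} — to conclude that there is an open $\Omega_u\subseteq\Omega$ of full Lebesgue measure on which $u\in\hold^{1,\alpha}_{\locc}$ for every $0<\alpha<1$, with $\Omega\setminus\Omega_u$ characterised as the set of non-Lebesgue points of $\D u$ (or of points where the mean oscillation of $\D u$ fails to be small). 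The assertion of the theorem follows at once, since $\hold^{1,\alpha}$-regularity for the full gradient trivially implies the same for the symmetric gradient, and the exceptional set is Lebesgue-null.

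The main obstacle is the second step: one must verify carefully that Morrey's (strong) quasiconvexity of the extended full-gradient integrand $G$ genuinely follows from $p$-strong symmetric quasiconvexity of $g$. The subtlety is that symmetric quasiconvexity only tests against gradients of the form $\sg(\varphi)$, whereas quasiconvexity of $G$ must be tested against \emph{all} $\D\varphi$; the convex correction term built from the skew part is what bridges this gap, and checking that it does not destroy the $\hold^2$-regularity, the $p$-growth, \emph{and} the strict quasiconvexity simultaneously — uniformly in the ball, so that the exceptional set glues consistently — is the one place requiring genuine care rather than routine bookkeeping.
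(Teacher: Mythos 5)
Your overall strategy -- reducing to the full-gradient $p$-growth theory by Korn-type inequalities -- is the same as the paper's, but two steps of your execution break down. First, the corrected integrand $G(A):=g(A^{\sym})+c\bigl((|A|^{2}-|A^{\sym}|^{2}+1)^{p/2}-1\bigr)$ does not preserve minimality: the added term is a genuinely varying convex function of the skew part $\tfrac{1}{2}(A-A^{\top})$ and is \emph{not} a null Lagrangian, so its integral is not fixed on a Dirichlet class; a local minimiser of $v\mapsto\int g(\sg(v))\dif x$ therefore need not minimise $v\mapsto\int G(\D v)\dif x$, since a competitor may decrease the skew term at small cost in the symmetric term. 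Second, the claimed strong quasiconvexity of $G$, i.e.\ that $G-\ell' V_{p}$ is quasiconvex, is false for $p>2$: by \eqref{eq:Vpestimate} this would force \eqref{eq:convenientrewriteSQC} with the weight $(1+|z|^{2}+|\D\varphi|^{2})^{\frac{p-2}{2}}$; testing with $z$ skew-symmetric, $|z|=R$ large, and $\varphi=\nabla\psi$ for $\psi\in\hold_{c}^{\infty}(Q)$ (so $\D\varphi=\D^{2}\psi$ is symmetric and the skew correction cancels), the coercivity term grows like $R^{p-2}\int_{Q}|\D^{2}\psi|^{2}\dif x$ while $\int_{Q}G(z+\D\varphi)-G(z)\dif x=\int_{Q}g(\D^{2}\psi)-g(0)\dif x$ stays bounded, because $g$ only sees $z^{\sym}=0$. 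No correction by a function of the skew part can repair this -- and none is needed.

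The subtlety you isolate at the end is in fact not there: for $G_{g}(A):=g(A^{\sym})$ one has $G_{g}(z+\D\varphi)=g(z^{\sym}+\sg(\varphi))$, so quasiconvexity of $G_{g}$ tested against \emph{all} $\D\varphi$ is literally symmetric quasiconvexity of $g$; the only thing Korn must transfer is the coercivity term on the full gradient, not the quasiconvexity itself. Accordingly, the paper uses $G_{g}$ with no correction and verifies the hypotheses of Proposition~\ref{prop:CFM} in the form the cited regularity theorems actually require: for $p\geq 2$ the $z$-independent condition \eqref{eq:weakerStrongQC}, which follows from the symmetric version by the classical Korn inequalities in $\lebe^{2}$ and $\lebe^{p}$; for $1<p<2$ the $z$-dependent condition \eqref{eq:convenientrewriteSQC}, for which your plain $\lebe^{p}$-Korn inequality is not sufficient -- the needed estimate \eqref{eq:Kornmainp} carries the shifted weight $(1+|z^{\sym}|^{2}+|\sg(\varphi)|^{2})^{\frac{p-2}{2}}$ and is obtained from the Orlicz--Korn inequality of Proposition~\ref{prop:OrliczKorn} applied to shifted $N$-functions with $\Delta_{2}$/$\nabla_{2}$ constants uniform in the shift (Lemma~\ref{lem:conjugate}); it is also crucial there that $p<2$, so the weight decreases in $|z|$ and $|z|$ may be replaced by $|z^{\sym}|$. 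The repair of your argument is thus to drop the correction term entirely, aim at \eqref{eq:weakerStrongQC} rather than \eqref{eq:pSQC} when $p\geq2$, and invoke the shifted Orlicz--Korn inequality when $1<p<2$.
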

Theorem~\ref{thm:ppartialregularity} displays a sample theorem; similar results can be inferred for variational integrals with integrands $f(x,u,\sg(u))$. In particular, the partial $\hold^{1,\alpha}$-regularity of minima of \emph{convex} elasticity-type functionals (for some elliptic $\mathbb{C}\in \mathscr{L}(\rsym,\rsym)$, $\mu>0$ and $g\in\lebe^{p}(\Omega;\R^{n})$) 
\begin{align}
v\mapsto \int_{\Omega}\frac{1}{p}\big(\big(\langle\mathbb{C}\sg(v),\sg(v)\rangle + \mu \big)^{\frac{p}{2}}-\mu^{\frac{p}{2}}\big)\dif x + \int_{\Omega}|v-g|^{p}\dif x, 
\end{align}
as recently considered in \cite{CFI} can similarly be covered and generalised by the methods underlying Theorem~\ref{thm:ppartialregularity}, even for any elliptic operator $A[D]$ in the sense of Section~\ref{sec:bd}, and we shall pursue this elsewhere. The reason for the reducibility of Theorem~\ref{thm:ppartialregularity} to the full gradient case is that the $p$-strong symmetric quasiconvexity \eqref{eq:pSSQC} expresses a coerciveness property of the associated variational integrals on $\sobo^{1,p}$. For $1<p<\infty$ and subject to \eqref{eq:pSSQC}, minimising sequences can be proven to remain bounded in $\sobo^{1,p}(\Omega;\R^{n})$ by \textsc{Korn}-type inequalities. The $p$-strong symmetric quasiconvexity, being an integral rather than a pointwise condition, gives us direct access to the requisite \textsc{Korn}-type inequalities; see Section~\ref{sec:SSC} for more detail. 

If $p=1$, \textsc{Ornstein}'s Non-Inequality \cite{Ornstein,CFM0,KirchheimKristensen} does not allow to employ a similar reduction scheme. More systematically, if $p=1$ and \eqref{eq:lingrowth} are in action, the ($1$-)strong symmetric quasiconvexity still expresses a coerciveness property for the associated variational integrals on $\ld$ (see Lemma~\ref{lem:minseq}). To streamline terminology, we simply say that $f\in\hold(\rsym)$ is \emph{strongly symmetric quasiconvex} provided there exists $\ell>0$ such that the function
\begin{align}\label{eq:SSQC}
\rsym \ni z \mapsto f(z)-\ell V(z)\qquad\text{is symmetric quasiconvex},
\end{align}
where $V:=V_{1}$ shall be referred to as the \emph{reduced area integrand}; the ubiquity of such functions follows along the lines of \cite[Prop. 2.14]{GK2} (also see \textsc{Zhang} \cite{Zhang2} for a related construction of quasiconvex, linear growth integrands). In conjunction with \eqref{eq:lingrowth}, this condition yields the existence of $\bd$-minima for the associated variational integrals subject to given Dirichlet data (cf.~Section~\ref{sec:app1}). Moreover, appealing to Lemma~\ref{lem:minseq}, it is closely related to \emph{all} minimising sequences being bounded in $\ld(\Omega)$. Since the $1$-strong quasiconvexity for full gradient functionals, in turn, yields boundedness of the respective minimising sequences in $\sobo^{1,1}(\Omega;\R^{n})$ (cf.~\cite{GK2}), a reduction to the full gradient is systematically excluded. Within the realm of $p$-growth, symmetric quasiconvex functionals, this hereafter identifies the limiting case $p=1$ as the only one requiring a treatment on its own. 

In this respect, the main result of this paper reads as follows: 
\begin{theorem}[Partial regularity of $\bd$-minimisers]\label{thm:main1}
Let $\Omega\subset\R^{n}$ be open and bounded.  Moreover, suppose that $f\colon\R_{\sym}^{n\times n}\to\R$ is a variational integrand that is
\begin{enumerate}
\item\label{item:reg1} of class $\hold_{\locc}^{2,1}(\rsym)$, 
\item\label{item:reg2} of linear growth in the sense of \eqref{eq:lingrowth} and
\item\label{item:reg3} strongly symmetric quasiconvex in the sense of \eqref{eq:SSQC}.
\end{enumerate}
Then for each $M>0$ there exist $\varepsilon_{M}=\varepsilon_{M}(n,\ell,L,M)>0$ and a radius $R_{0}=R_{0}(n,\ell,L,M)>0$ such that for every local $\bd$-minimiser $u\in\bd_{\locc}(\Omega)$ of $F$ (in the sense of \eqref{eq:minimality}) the following holds: If $x_{0}\in\Omega$ and $0<R<R_{0}$ with $\ball(x_{0},R)\Subset\Omega$ are such that   
\begin{align}\label{eq:thm:smallness}
\left\vert\frac{\E u(\ball(x_{0},R))}{\mathscr{L}^{n}(\ball(x_{0},R))}\right\vert < M\;\;\text{and}\;\;
\dashint_{\ball(x_{0},R)}\left\vert\mathscr{E}u-\frac{\E u(\ball(x_{0},R)}{\mathscr{L}^{n}(\ball(x_{0},R))}\right\vert\dif x + \frac{|\E^{s}u|(\ball(x_{0},R))}{\mathscr{L}^{n}(\ball(x_{0},R))} < \varepsilon_{M}, 
\end{align}
then $u$ is of class $\hold^{1,\alpha}$ on $\ball(x_{0},\tfrac{R}{2})$ for any $0<\alpha<1$. As a consequence, there exists an open subset $\Omega_{u}$ of $\Omega$ with $\mathscr{L}^{n}(\Omega\setminus\Omega_{u})=0$ such that for every $x_{0}\in\Omega_{u}$, $u$ is of class $\hold^{1,\alpha}$ in a neighbourhood of $x_{0}$ for any $0<\alpha<1$. In particular, denoting $\Sigma_{u}=\Omega\setminus\Omega_{u}$, we have 
\begin{align}
\begin{split}
\Sigma_{u} & = \Sigma_{u}^{1} \cup \Sigma_{u}^{2} \\ & :=\Big\{x_{0}\in\Omega\colon\; \liminf_{R\searrow 0}\Big(\dashint_{\ball(x_{0},R)}|\mathscr{E}u-(\mathscr{E}u)_{\ball(x_{0},R)}|\dif x +\frac{|\E^{s}u|(\ball(x_{0},R))}{\mathscr{L}^{n}(\ball(x_{0},R))}\Big)>0 \Big\}\\
& \;\;\;\;\;\;\;\;\;\;\;\;\;\;\;\;\;\;\;\;\;\;\;\;\;\;\;\;\;\;\;\;\;\;\;\;\;\;\;\;\cup \Big\{x_{0}\in\Omega\colon\;\limsup_{R\searrow 0}\left\vert\frac{\E u(\ball(x_{0},R))}{\mathscr{L}^{n}(\ball(x_{0},R))}\right\vert = \infty\Big\}.
\end{split}
\end{align}
\end{theorem}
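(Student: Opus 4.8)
The plan is to follow the by-now classical scheme for partial regularity of quasiconvex functionals, adapted to the $\bd$-setting along the lines of \textsc{Kristensen} and the author's work \cite{GK2} on $\bv$, but compensating throughout for the absence of Korn's inequality in $\lebe^{1}$. \emph{First}, I would set up an excess-type quantity. Given $u\in\bd_{\locc}(\Omega)$ and a ball $\ball(x_{0},R)\Subset\Omega$, write $(\E u)_{x_{0},R}:=\E u(\ball(x_{0},R))/\mathscr{L}^{n}(\ball(x_{0},R))\in\rsym$ and define the hybrid excess
\begin{align*}
\Phi(x_{0},R):=\dashint_{\ball(x_{0},R)}V\!\big(\mathscr{E}u-(\E u)_{x_{0},R}\big)\dx + \frac{|\E^{s}u|(\ball(x_{0},R))}{\mathscr{L}^{n}(\ball(x_{0},R))},
\end{align*}
which is comparable, for excess below a threshold, to the quantity appearing in \eqref{eq:thm:smallness}. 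The target is an \emph{excess-decay estimate}: there is $\tau\in(0,1)$ and, for each $M>0$, an $\varepsilon_{M}>0$ and $R_{0}>0$ such that if $|(\E u)_{x_{0},R}|<M$ and $\Phi(x_{0},R)<\varepsilon_{M}$ (with $R<R_{0}$), then $|(\E u)_{x_{0},\tau R}|<2M$ and $\Phi(x_{0},\tau R)\leq \tfrac12 \tau^{2\beta}\Phi(x_{0},R)$ for some $\beta\in(0,1)$ and all $\beta<1$; iterating gives a Morrey–Campanato decay $\Phi(x_{0},r)\lesssim (r/R)^{2\beta}$, whence, via a $\bd$-version of the Campanato/Da Prato characterisation and a Korn–Sobolev inequality applied on the good set, $\sg(u)$ — and then, because the excess is small and the symmetric part controls the full gradient once we are in the $\lebe^{1+\delta}$ regime produced by the decay, $\D u$ — is locally Hölder continuous on $\ball(x_{0},R/2)$, yielding $u\in\hold^{1,\alpha}$ there. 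The passage from the pointwise decay to $\hold^{1,\alpha}$ and the identification of the singular set as $\Sigma_{u}^{1}\cup\Sigma_{u}^{2}$ is then routine: $\Sigma_{u}^{2}$ collects the points where the mean of $\E u$ blows up (so no $M$ works), $\Sigma_{u}^{1}$ the points where $\liminf_{R}\Phi(x_{0},R)>0$, and Lebesgue differentiation together with the (standard, measure-theoretic) fact that these two sets are $\mathscr{L}^{n}$-null gives $\mathscr{L}^{n}(\Sigma_{u})=0$ and openness of the complement.

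\emph{Second}, the core is the one-step decay, which I would obtain by a \emph{linearisation and $\A$-harmonic comparison} argument carried out by direct — not blow-up — methods, since, as stressed in the introduction, the weak compactness of $\bd$ is too poor to run an indirect scheme. Freezing the integrand at $z_{0}:=(\E u)_{x_{0},R}$ and using \eqref{item:reg1}, the bilinear form $\mathbb{A}:=D^{2}f(z_{0})$ is Legendre–Hadamard elliptic on symmetrised rank-one directions (this is exactly what strong symmetric quasiconvexity \eqref{eq:SSQC} buys us, via the standard second-variation argument applied to $f-\ell V$). One then shows that the rescaled, recentred minimiser is \emph{almost $\mathbb{A}$-harmonic} in the sense of an $\A$-harmonic approximation lemma for the operator $v\mapsto \di(\mathbb{A}\sg(v))$; the proof of this comparison uses the minimality \eqref{eq:minimality}, the linear growth \eqref{eq:lingrowth} (to estimate the error terms uniformly through $f^{\infty}$ and the $V$-functional), and a careful handling of the singular part $\E^{s}u$ and the boundary term in $\overline{F}_{u}$ — the latter appearing because on an interior ball the boundary trace term is what makes competitors with their own traces admissible. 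Comparing with the $\mathbb{A}$-harmonic map $h$ on $\ball(x_{0},R)$ sharing the appropriate affine boundary data, using the excess-decay estimates for solutions of the constant-coefficient elliptic system $\di(\mathbb{A}\sg h)=0$ (which do hold, Legendre–Hadamard ellipticity sufficing for interior estimates on such first-order symmetric systems, cf.\ the Korn-type a priori bounds), and absorbing the linearisation error (controlled by the modulus of continuity of $D^{2}f$ from \eqref{item:reg1}, i.e.\ the $\hold^{2,1}$-assumption supplies the needed Hölder rate) delivers the claimed decay of $\Phi$ provided $\varepsilon_{M}$ is chosen small and $R<R_{0}$.

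\emph{The main obstacle}, and the place where the $\bd$-theory genuinely departs from $\bv$, is that there is no Korn inequality in $\lebe^{1}$: one cannot bound $\dashint |\D u - (\D u)|$ by $\dashint|\mathscr{E}u-(\mathscr{E}u)|$ plus the singular part, so the excess $\Phi$ cannot be upgraded to a full-gradient excess at the linear-growth level, and consequently the comparison map $h$ can only be compared to $u$ in terms of symmetric gradients, while the self-improving integrability that the decay produces must be invoked \emph{before} one may pass to $\D u$. Concretely I expect two delicate points: (i) establishing the $\A$-harmonic approximation and the Caccioppoli-type inequality for $\E u$ purely in the $V$-functional norm, where the usual trick of testing with $u$ minus its affine part requires a \emph{$\bd$-Poincaré inequality} with the $V$-functional on the right — available but needing the full $\bd$-trace and extension machinery of Section~\ref{sec:bd} — and handling the concentration of $\E^{s}u$ so that it genuinely enters as a small additive error and does not survive in the limit; and (ii) at the very end, once $\Phi(x_{0},r)\lesssim r^{2\beta}$ gives $\mathscr{E}u\in\hold^{0,\beta}_{\locc}$ near $x_{0}$ and $\E^{s}u=0$ there, deducing $\D u\in\lebe^{\infty}_{\locc}$ and then $\hold^{0,\alpha}_{\locc}$ near $x_{0}$: this is where one finally uses that on an \emph{open} neighbourhood where $\sg(u)$ is continuous, classical elliptic regularity for the (now smooth-coefficient) Euler–Lagrange system $\di(Df(\sg u))=0$, combined with the $\lebe^{q}$-Korn inequality valid for $1<q<\infty$ (which we may now invoke, $u$ being in $\sobo^{1,q}_{\locc}$ of the good set for every $q$), closes the argument. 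The $\hold^{2,1}$-hypothesis in \eqref{item:reg1} is exactly calibrated so that the linearisation error carries a Hölder — rather than merely Dini — modulus, which is what makes the Campanato iteration converge to $\hold^{1,\alpha}$ for all $\alpha<1$.
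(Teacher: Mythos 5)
Your proposal reproduces the broad architecture of the paper (shifted integrand, Caccioppoli inequality for the $V$-excess, linearisation at the mean value $(\E u)_{x_{0},R}$, comparison with an $\A$-harmonic map, Campanato iteration, and a final Korn-type upgrade from $\mathscr{E}u$ to $\D u$), but it is missing the two ingredients that actually make the comparison step work in $\bd$, and at those points the argument as written would fail. First, your comparison map $h$ is not well-defined on a generic ball: a local $\bd$-minimiser only has $\lebe^{1}$-traces on $\partial\!\ball(x_{0},R)$, and the Dirichlet problem for the constant-coefficient system $\di(\A\sg h)=0$ with merely $\lebe^{1}$ boundary data is ill-posed, so "comparing with the $\A$-harmonic map $h$ sharing the boundary data of $u$" has no meaning on an arbitrary ball. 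Moreover, the "$\A$-harmonic approximation lemma" you invoke is, in its standard form, proved by an indirect compactness argument that is precisely what the weak*-compactness of $\bd$ cannot support (possible concentration of $\E^{s}u$); the paper rules this route out explicitly. The paper's substitute is the Fubini-type Theorem~\ref{thm:fubini1}: via Van Schaftingen's embedding $\bd\hookrightarrow\sobo^{\theta,n/(n-1+\theta)}$ and a projection argument on spheres, for $\mathscr{L}^{1}$-a.e.\ radius the trace of $u$ lies in $\sobo^{\frac{1}{n+1},\frac{n+1}{n}}(\partial\!\ball;\R^{n})$ with a quantitative bound, and only on such \emph{good} spheres is $h$ constructed (through the isomorphism of Proposition~\ref{thm:mazsha}) and its gradient controlled (Lemma~\ref{lem:SQCcontinuityimplc}). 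Nothing in your proposal produces these good radii, and without them both the definition of $h$ and the bound $|\sg(a_{0})|\leq m$ needed to apply the Caccioppoli inequality are out of reach.

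Second, even granting a well-defined $h$, an "almost $\A$-harmonicity" estimate that controls $\dashint V\big(\tfrac{\widetilde u-h}{R}\big)$ by a \emph{linear} function of the excess is not sufficient: in the decay estimate the comparison term enters divided by $\sigma^{2}$ (or $\sigma^{n+2}$ after normalisation), and with a linear power the bracket in \eqref{eq:TabbyCenterEstimateMain} cannot be made small by choosing the excess small — this is exactly Remark~\ref{rem:discussAharmonicity}. The paper's Proposition~\ref{prop:improvement} establishes the estimate with a \emph{superlinear} exponent $q\in(1,\tfrac{n+1}{n})$, and its proof requires machinery absent from your sketch: since generalised $\bd$-minimisers do not satisfy usable Euler–Lagrange equations (the relaxed functional involves measures and boundary terms), one first passes to $\ld$-approximants via area-strict approximation and the Ekeland variational principle, obtaining \emph{perturbed} Euler–Lagrange inequalities \eqref{eq:EkeMAIN1}–\eqref{eq:EkeMAIN2}; one then runs a duality/truncation argument, solving an auxiliary system with right-hand side $T(\Psi)$, using $\sobo^{2,p}$-estimates with $p>n+1$ and Morrey's embedding to get a Lipschitz test map, which is what produces the exponent $1-\tfrac1p$ and hence $q=p'>1$ after scaling back and letting the Ekeland parameter tend to zero. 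Your appeal to the $\hold^{2,1}$-modulus of $f''$ to "absorb the linearisation error" does not substitute for this: the obstruction is not the modulus of continuity of $f''$ but the absence of any Euler–Lagrange equation and the need for a superlinear excess power. (As a minor point, the endgame is also simpler than you suggest: once the decay gives $\mathscr{E}u\in\hold^{0,\alpha}$ and $\E^{s}u=0$ locally, the paper upgrades to $\D u\in\hold^{0,\alpha}$ directly via the Campanato–Meyers characterisation and the $\lebe^{2}$-Korn inequality of Proposition~\ref{prop:OrliczKorn}, with no detour through the Euler–Lagrange system.)
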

Not being allowed to utilise full gradient techniques, Theorem~\ref{thm:main1} cannot be established in an analogous manner as its full gradient counterpart  from \cite{GK2}. Partially based on recently available results \cite{Baba,BDG,DPR,VS}, the proof of the previous theorem is given in Section~\ref{sec:main1}. Here we rely in an essential way on an improved estimate of the local $\bd$-minimisers' distances from suitable $\A$-harmonic approximations in terms of a \emph{superlinear} power of the excess. To the best of our knowledge, an estimate of this form has only been derived recently in the $\bv$-full gradient setup in \cite{GK2}, strongly relying on the full distributional gradients of $\bv$-minima being Radon measures of finite total variation. The aforementioned superlinear excess power, in turn, stems from the higher regularity properties of the $\A$-harmonic approximants on \emph{good} balls. To define the latter notion appropriately, we remark that the $\A$-harmonic approximants on \emph{generic balls} receive their higher Sobolev regularity up to the boundary from the higher regularity of their prescribed Dirichlet data; the precise correspondence is displayed in Proposition~\ref{thm:mazsha}. For arbitrary balls $\ball\Subset\Omega$ and $u\in\bd(\Omega)$, we can only assert that $\trace_{\partial\!\ball}(u)\in\lebe^{1}(\partial\!\ball;\R^{n})$. This motivates the Fubini-type Theorem~\ref{thm:fubini1}, implying that on sufficiently many spheres, $\bd$-maps have interior traces with some additional differentiability and integrability beyond $\lebe^{1}$. We wish to stress that by Ornstein's Non-Inequality this step does \emph{not} follow as for $\bv$, where the tangential traces $\partial_{\tau}u$ can be shown to belong to $\lebe^{1}(\partial\!\ball)$ on sufficiently many balls $\ball$ (see Remark~\ref{rem:VersusBV}). Here we crucially use the embedding $\bd\hookrightarrow\sobo^{s,\frac{n}{n-1+s}}$ for $n\geq 2$, $0<s<1$ together with novel Poincar\'{e}-type inequalities to be proved in Section~\ref{sec:prelims}. Up from here, it is then the overall aim of the proof to show that Ornstein's Non-Inequality essentially becomes \emph{invisible} throughout the comparison estimates, simultaneously keeping track of the enlarged nullspace of the symmetric gradient in comparison with that of the full gradient.  This comes along with both further conceptual and technical difficulties, and Section~\ref{sec:main1} is devoted to their precise discussion and resolution. Finally, let us mention that the approach as developed here should also give a streamlining and unifying treatment for the $\bv$-case in the dimensions $n=2$ and $n\geq 3$; cf.~Remark~\ref{rem:unifying}.

Lastly, let us comment on the hypotheses and extensions of Theorem~\ref{thm:main1}. Condition~\ref{item:reg1} is rather of technical than instrumental nature and can be relaxed (cf. \cite{GK2} for a related discussion); as our focus is on the symmetric quasiconvexity condition rather than regularity of the integrands, we stick to this assumption for simplicity. Let us note, however, that subject to \ref{item:reg1}--\ref{item:reg3} from above, it is moreover not too difficult to show that $\bd$-minima are actually \emph{$\hold^{2,\alpha}$-partially regular once the $\hold^{1,\alpha}$-regularity of Theorem~\ref{thm:main1} is established}. The methods underlying the proof of Theorem~\ref{thm:main1} also apply to suitable $x$-dependent integrands, whereas the case of fully non-autonomous integrands would require an additional argument. On the other hand, Theorem~\ref{thm:main1} exclusively establishes the partial regularity, but does not provide Hausdorff dimension bounds for the singular set $\Sigma_{u}$. Note that, by the strong symmetric quasiconvexity, such estimates require a refined argument; see Section~\ref{sec:extensions} for a discussion of these matters. 

\subsection{Structure of the Paper}
In Section~\ref{sec:prelims}, we fix notation, prove and collect miscellaneous background results. In Section~\ref{sec:SSC}, we deal with the superlinear growth situation and establish Theorem~\ref{thm:main1}. Section~\ref{sec:fubini} then serves to prove a Fubini-type theorem for $\bd$ that is instrumental in the proof of Theorem~\ref{thm:main1}, and Section~\ref{sec:main1} is devoted to the proof of the latter. We conclude with an appendix in Section~\ref{sec:appendix}.

{\small 
\subsection*{Acknowledgments}
I wish to thank \textsc{Jan Kristensen, Gregory Seregin}, \textsc{Gianni Dal Maso} and \textsc{Bernd Schmidt} for making valuable suggestions on the theme of the paper, motivating, among others, the regularity results in Section~\ref{sec:SSC}. I am moreover indebted to the anonymous referees, whose careful reading and valuable suggestions led to several improvements. This work has received funding from the EPSRC Research Council and the Hausdorff Center for Mathematics Bonn, for which I am equally grateful.}
\section{Preliminaries}\label{sec:prelims}
\subsection{General Notation}
We now briefly gather notation. Unless otherwise stated, $\Omega$ always denotes an open and bounded Lipschitz subset of $\R^{n}$. We denote $\ball(x_{0},r):=\{x\in\R^{n}\colon\;|x-x_{0}|<r\}$ the open ball of radius $r>0$ centered at $x_{0}$ and use the symbol $\mathbb{B}_{\sym}^{n\times n}$ to denote the \emph{closed unit ball} in $\R_{\sym}^{n\times n}$ with respect to the Frobenius norm $|A|:=(\sum_{i,j=1}^{n}|a_{ij}|^{2})^{1/2}$, $A=(a_{ij})_{i,j=1}^{n}\in\R^{n\times n}$. Whenever $X$ is a finite dimensional real vector space, the symbol $\langle\cdot,\cdot\rangle$ is used to denote the usual inner product on $X$ and $\mathbb{S}(X)$ is the space of symmetric bilinear forms on $X$. To avoid ambiguities, note that duality pairings are exclusively used with subscripts, so e.g. $\langle\cdot,\cdot\rangle_{\mathscr{D}'\times\mathscr{D}}$ for the pairing between distributions and test functions. Also, for two given vectors $a,b\in\R^{n}$, $a\odot b := \frac{1}{2}(ab^{\mathsf{T}}+ba^{\mathsf{T}})$ denotes their symmetric tensor product, and we record that 
\begin{align}\label{eq:symmetrictensorproduct}
\frac{1}{\sqrt{2}}|a|\,|b|\leq |a\odot b| \leq |a|\,|b|\qquad\text{for all}\;a,b\in\R^{n}.
\end{align}
The symbol $\mathscr{L}(V;W)$ denotes the bounded linear operators between two normed linear spaces $V$ and $W$. As usual, $\mathscr{L}^{n}$ and $\mathscr{H}^{n-1}$ denote the $n$-dimensional Lebesgue and the $(n-1)$-dimensional Hausdorff measure, respectively, and we put $\omega_{n}:=\mathscr{L}^{n}(\ball(0,1))$. For notational brevity, we shall also sometimes write $\dif\mathscr{H}^{n-1}(x)=\dif\sigma_{x}$, but this will be clear from the context. Moreover, we denote $\mathscr{M}(\Omega;\R^{m})$ the $\R^{m}$-valued finite Radon measures on $\Omega$. Given $\mu\in\mathscr{M}(\Omega;\R^{m})$ and $A\in\mathscr{B}(\Omega)$ (the Borel $\sigma$-algebra on $\Omega$), we recall that $\mu\mres A:=\mu(-\cap A)$ is the restriction of $\mu$ to $A$. When $u\in\lebe_{\locc}^{1}(\R^{n};\R^{m})$ or $\mu\in\mathscr{M}(\R^{n};\R^{m})$, we denote for a bounded set $A\in\mathscr{B}(\R^{n})$ with $\mathscr{L}^{n}(A)>0$ 
\begin{align*}
(u)_{A}:=\dashint_{A}u\dif\mathscr{L}^{n}:=\frac{1}{\mathscr{L}^{n}(A)}\int_{A}u\dif x\;\;\text{and}\;\;(\mu)_{A}:=\dashint_{A}\dif\mu := \frac{\mu(A)}{\mathscr{L}^{n}(A)}. 
\end{align*}
If $A=\ball(x,r)$ is a ball, we write $(u)_{x,r}:=(u)_{\ball(x,r)}$ or $(\mu)_{x,r}:=(\mu)_{\ball(x,r)}$. If, however, $A\in\mathscr{B}(\R^{n})$ is such that $\mathscr{H}^{n-1}(A)\in (0,\infty)$ and $u\colon A\to\R^{m}$ is integrable with respect to $\mathscr{H}^{n-1}$, then we employ the notation 
\begin{align*}
\dashint_{A}u\dif\mathscr{H}^{n-1}:=\frac{1}{\mathscr{H}^{n-1}(A)}\int_{A}u\dif\mathscr{H}^{n-1}. 
\end{align*}
Lastly, we denote by $c,C>0$ generic constants that might change from line to line and shall only be specified provided their precise dependence on foregoing parameters is required. Similarly, we write $a\simeq b$ if there exist two constants $c,C>0$ such that $ca \leq b \leq Ca$; in particular, $c,C>0$ do not depend on $a$ or $b$. 
\subsection{The space $\bd$}\label{sec:bd}
In the following we recall the definition and record the properties of $\bd$-maps as shall be required in the upcoming sections; for more detail, the reader is referred to \cite{ST,Baba,AG0,ACD} and the references therein. We say that a measurable map $u\colon\Omega\to\R^{n}$ belongs to $\bd(\Omega)$ (and is then said to be of \emph{bounded deformation}) if and only if $u\in\lebe^{1}(\Omega;\R^{n})$ and 
\begin{align}
|\E u|(\Omega):=\sup\left\{\int_{\Omega}\langle u,\di(\varphi)\rangle\dif x\colon\;\varphi\in\hold_{c}^{1}(\Omega;\mathbb{B}_{\sym}^{n\times n}) \right\}<\infty.
\end{align}
The space $\bd_{\locc}(\Omega)$ then is defined in the obvious manner. Given $u\in\bd(\Omega)$, the Lebesgue-Radon-Nikod\'{y}m decomposition of $\E u$ yields 
\begin{align}\label{eq:BDdecomp}
\E u = \E^{a}u + \E^{s}u & = \mathscr{E}u\mathscr{L}^{n}\mres\Omega + \frac{\dif \E^{s}u}{\dif |\E^{s}u|}|\E^{s}u|,
\end{align}
where $\E^{a}u\ll\mathscr{L}^{n}$ and $\E^{s}u\bot\mathscr{L}^{n}$ are the absolutely continuous or singular parts of $\E u$ with respect to $\mathscr{L}^{n}$, respectively; in particular, we have $u\in\ld(\Omega)$ if and only if $u\in\bd(\Omega)$ and $\E u\ll\mathscr{L}^{n}$. Moreover, $\mathscr{E}u$ is the density of $\E^{a}u$ with respect to $\mathscr{L}^{n}$ and coincides with the symmetric part of the approximate gradient of $u$, cf. \cite{ACD}. Throughout, we will work with the following modes of convergence: Let $u,u_{1},u_{2},...\in\bd(\Omega)$. We say that $(u_{k})$ converges to $u$ in the \emph{norm topology} provided $\|u_{k}-u\|_{\bd(\Omega)}\to 0$, where $\|v\|_{\bd(\Omega)}:=\|v\|_{\lebe^{1}(\Omega;\R^{n})}+|\E v|(\Omega)$ for $v\in\bd(\Omega)$. On the other hand, we say that $(u_{k})$ converges to $u$ in the\footnote{Strictly speaking, being usually reserved for the $\bv$-case, these notions should be termed \emph{symmetric} weak*-, strict and area-strict convergence. As we shall work with $\bd$ exclusively, however, no confusions will arise from this.} \emph{weak*-sense} if $u_{k}\to u$ strongly in $\lebe^{1}(\Omega;\R^{n})$ and $\E u_{k}\wstar \E u$ in the sense of weak*-convergence of $\rsym$-valued Radon measures on $\Omega$, and in the \emph{strict sense} (or \emph{strictly}) if $u_{k}\to u$ strongly in $\lebe^{1}(\Omega;\R^{n})$ and $|\E u_{k}|(\Omega)\to |\E u|(\Omega)$ as $k\to\infty$. Lastly, if $u_{k}\to u$ strongly in $\lebe^{1}(\Omega;\R^{n})$ and 
\begin{align}
\int_{\Omega}\sqrt{1+|\mathscr{E}u_{k}|^{2}}\dif x + |\E^{s}u_{k}|(\Omega) \to \int_{\Omega}\sqrt{1+|\mathscr{E}u|^{2}}\dif x + |\E^{s}u|(\Omega),\qquad k\to\infty, 
\end{align}
then we shall say that $(u_{k})$ converges to $u$ in the \emph{area-strict sense}. Note that, if we put $\langle \cdot \rangle :=\sqrt{1+|\cdot|^{2}}$, then area-strict convergence amounts to $\langle \E u_{k}\rangle(\Omega)\to\langle \E u\rangle(\Omega)$ in the sense of functions of measures to be recalled in Section~\ref{sec:functionsofmeasures} below. It is then routine to show that norm implies area-strict, area-strict implies strict and strict implies weak*-convergence. When working with $u\in\ld(\Omega)$, we employ the norm $\|u\|_{\ld(\Omega)}:=\|u\|_{\lebe^{1}(\Omega;\R^{n})}+\|\sg(u)\|_{\lebe^{1}(\Omega;\R_{\sym}^{n\times n})}$ (recall that we write $\sg(u)$ for $\E u$ provided $\E u\ll\mathscr{L}^{n}$). Moreover, if $u\in\bd(\Omega)$, then there exists a sequence $(u_{j})\subset \hold^{\infty}(\Omega;\R^{n})\cap\ld(\Omega)$ such that $u_{j}\to u$ strictly as $j\to\infty$; clearly, if $\Omega=\R^{n}$, we may even choose $(u_{j})\subset\hold_{c}^{\infty}(\R^{n};\R^{n})$.

As is by now well-known (cf.~\cite{ST,Baba,BDG}), Lipschitz regularity of $\partial\!\Omega$ implies the existence of a linear, bounded, surjective boundary trace operator $\trace_{\partial\Omega}\colon\bd(\Omega)\to\lebe^{1}(\partial\!\Omega;\R^{n})$, where boundedness is understood with respect to the respective norm topologies. Crucially, this operator is already surjective when acting on $\ld(\Omega)$. Moreover, it is also continuous for strict convergence in $\bd(\Omega)$ (and hence area-strict convergence, too) \emph{but not} for weak*-convergence as specified above. Let us moreover note that, if $u\in\bd(\Omega)$, then the trivial extension 
\begin{align*}
\overline{u}:=\begin{cases} u&\;\text{in}\;\Omega,\\
0&\;\text{in}\;\R^{n}\setminus\Omega\end{cases}
\end{align*}
belongs to $\bd(\R^{n})$ as well, and the operator $\mathfrak{E}\colon u\mapsto \overline{u}$ is linear and bounded from $\bd(\Omega)$ to $\bd(\R^{n})$. We can now collect a refined result on smooth approximation, cf. \cite[Sec.~4]{GK1}:
\begin{lemma}[(Area-)strict smooth approximation]\label{lem:smooth}
Let $\Omega\subset\R^{n}$ be an open and bounded with Lipschitz boundary and let $u_{0}\in\ld(\Omega)$. Then for each $u\in\bd(\Omega)$ there exists a sequence $(u_{j})\subset u_{0}+\hold_{c}^{\infty}(\Omega;\R^{n})$ such that $\|u_{j}-u\|_{\lebe^{1}(\Omega;\R^{n})}\to 0\,$ and 
\begin{align*}
\int_{\Omega}\sqrt{1+|\sg(u_{j})|^{2}}\dif x \to \int_{\Omega}\sqrt{1+|\mathscr{E}u|^{2}}\dif x + |\!\E^{s}u|(\Omega) + \int_{\partial\Omega}|\trace_{\partial\Omega}(u_{0}-u)\odot\nu_{\partial\Omega}|\dif\mathscr{H}^{n-1}. 
\end{align*}
\end{lemma}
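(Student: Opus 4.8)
The idea is to recognise the right-hand side of the statement as the value of the relaxed \emph{area} functional and then to polish a recovery sequence so that it lies in $u_{0}+\hold_{c}^{\infty}(\Omega;\R^{n})$. Specialise the integrand in \eqref{eq:varprin} to $f=\langle\cdot\rangle:=\sqrt{1+|\cdot|^{2}}$, which is convex, of linear growth, and has recession function $\langle\cdot\rangle^{\infty}=|\cdot|$, so that $F[v]=\int_{\Omega}\sqrt{1+|\sg(v)|^{2}}\dif x$ for $v\in\ld(\Omega)$. Since $|\tfrac{\dif\E u}{\dif|\!\E u|}|=1$ holds $|\E^{s}u|$-almost everywhere, the integral representation \eqref{eq:integralrepresentation} with $\omega=\Omega$ and $v=u_{0}$ evaluates to precisely the right-hand side of the Lemma, which is therefore equal to $\overline{F}_{u_{0}}[u]$; it is finite because $u\in\bd(\Omega)$ and $\trace_{\partial\Omega}(u_{0}-u)\in\lebe^{1}(\partial\Omega;\R^{n})$. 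Moreover, by the relaxation results recalled in Section~\ref{sec:convexity} --- needed here only in the elementary convex case, cf.\ \cite{Ri1,DPR1,ARDPR,BDG} --- $\overline{F}_{u_{0}}$ is the sequential weak*-relaxation on $\bd(\Omega)$ of $F$ restricted to the Dirichlet class $u_{0}+\ld_{0}(\Omega)$.

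\textbf{The soft part.} Because relaxations of a functional admit recovery sequences (a standard fact, the weak*-topology being metrisable on $\bd$-bounded sets), there is $(v_{j})\subset u_{0}+\ld_{0}(\Omega)$ with $v_{j}\wstar u$ in $\bd(\Omega)$ and $F[v_{j}]\to\overline{F}_{u_{0}}[u]$; by the definition of weak*-convergence in $\bd$ recalled in Section~\ref{sec:bd}, this in particular forces $\|v_{j}-u\|_{\lebe^{1}(\Omega;\R^{n})}\to 0$. Write $v_{j}=u_{0}+\phi_{j}$ with $\phi_{j}\in\ld_{0}(\Omega)$; since $\ld_{0}(\Omega)$ is the $\ld$-closure of $\hold_{c}^{1}(\Omega;\R^{n})$ and $\hold_{c}^{\infty}(\Omega;\R^{n})$ is $\ld$-dense in $\hold_{c}^{1}(\Omega;\R^{n})$ (mollify), pick $\psi_{j}\in\hold_{c}^{\infty}(\Omega;\R^{n})$ with $\|\psi_{j}-\phi_{j}\|_{\ld(\Omega)}<1/j$ and set $u_{j}:=u_{0}+\psi_{j}\in u_{0}+\hold_{c}^{\infty}(\Omega;\R^{n})$. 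Then $\|u_{j}-u\|_{\lebe^{1}(\Omega;\R^{n})}\le 1/j+\|v_{j}-u\|_{\lebe^{1}(\Omega;\R^{n})}\to 0$; and since $\sg(u_{j})-\sg(v_{j})=\sg(\psi_{j})-\sg(\phi_{j})$ (recall $u_{0}\in\ld(\Omega)$, so $\sg(u_{j})=\mathscr{E}u_{0}+\sg(\psi_{j})$), the elementary bound $|\langle A\rangle-\langle B\rangle|\le|A-B|$ on $\rsym$ yields $|F[u_{j}]-F[v_{j}]|\le\|\sg(\psi_{j})-\sg(\phi_{j})\|_{\lebe^{1}}\le\|\psi_{j}-\phi_{j}\|_{\ld(\Omega)}<1/j$. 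Hence $\int_{\Omega}\sqrt{1+|\sg(u_{j})|^{2}}\dif x=F[u_{j}]\to\overline{F}_{u_{0}}[u]$, which is the asserted convergence, and $(u_{j})$ is the desired sequence.

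\textbf{Main obstacle.} Along this route the entire content is compressed into the identification step, i.e.\ into the integral representation \eqref{eq:integralrepresentation} for the area integrand, which we invoke as a black box. In a self-contained proof the only nontrivial point is the \emph{upper} bound $\limsup_{j}F[u_{j}]\le\overline{F}_{u_{0}}[u]$; the matching lower bound is soft, since boundedness of $F[u_{j}]$ forces $\E u_{j}\wstar\E u$ and weak*-lower semicontinuity of $\overline{F}_{u_{0}}$ (again only the convex case) then applies. For the upper bound one works in a finite atlas of Lipschitz graph patches, flattens $\partial\Omega$ locally, translates $u$ a distance $\sigma>0$ into $\Omega$, mollifies at scale $\ll\sigma$, and glues the outcome to $u_{0}$ across a collar of width $\sim\sigma$: the collar then carries a symmetrised gradient of size $\sim\sigma^{-1}\,\trace_{\partial\Omega}(u_{0}-u)\odot\nu_{\partial\Omega}$, so the positive one-homogeneity of $\langle\cdot\rangle^{\infty}=|\cdot|$ reproduces exactly the surface term $\int_{\partial\Omega}|\trace_{\partial\Omega}(u_{0}-u)\odot\nu_{\partial\Omega}|\dif\mathscr{H}^{n-1}$, while absolute continuity of $x\mapsto\sg(u_{0})$ on the shrinking collar and area-strict convergence of the interior mollification account for the two volume terms. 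Keeping all error terms under control through the bi-Lipschitz flattening, uniformly over the patches, is the genuine technical difficulty there.
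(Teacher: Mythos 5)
Your reduction is tidy bookkeeping, but it places the entire burden on the claim that the explicit functional $\overline{F}_{u_{0}}$ of \eqref{eq:integralrepresentation}, specialised to the area integrand, coincides with the sequential weak*-relaxation of $F$ over the Dirichlet class $u_{0}+\ld_{0}(\Omega)$, so that a recovery sequence $(v_{j})$ with $F[v_{j}]\to\overline{F}_{u_{0}}[u]$ exists. That identification is essentially the content of the lemma itself, and it is not legitimately available to you as a black box: within this paper the logical order is the reverse --- the recovery/no-gap statement \eqref{eq:nogap} in Section~\ref{sec:existence} is \emph{deduced from} Lemma~\ref{lem:smooth} together with the area-strict continuity Lemma~\ref{lem:symareastrict} --- so quoting it here is circular relative to the paper's structure. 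Nor do the external references hand it to you: \cite{Ri1} provides only lower semicontinuity (the easy inequality $\liminf_{j}F[v_{j}]\geq\overline{F}_{u_{0}}[u]$), while the representation results of \cite{ARDPR,BDG,BFT} concern the interior relaxation without the trace-penalty term; in the literature the boundary-penalised, Dirichlet-class version is obtained precisely via the gluing construction you defer to your last paragraph. Granting the black box, the remaining steps are fine but minor (the polishing from $\ld_{0}(\Omega)$ to $\hold_{c}^{\infty}(\Omega;\R^{n})$ via the $1$-Lipschitz bound $|\sqrt{1+|A|^{2}}-\sqrt{1+|B|^{2}}|\leq|A-B|$ and the $\lebe^{1}$-estimate are correct), so nothing of substance is proved by the ``soft part''.

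The genuine proof is the one you outline under ``Main obstacle'', and it is essentially what the paper's cited source for this lemma, \cite[Sec.~4]{GK1}, carries out (the $\bd$-analogue of the classical $\bv$ construction): extend $u$ by $u_{0}$ across $\partial\Omega$ into a larger Lipschitz domain, so that by Kohn's formula \eqref{eq:Kohnformula} the extension picks up the surface measure $\trace_{\partial\Omega}(u_{0}-u)\odot\nu_{\partial\Omega}\,\mathscr{H}^{n-1}\mres\partial\Omega$, then combine interior mollification with cut-offs in a collar whose width is coupled to the mollification scale, using the one-homogeneity of the recession function $|\cdot|$ to produce exactly the boundary term and area-strict (Reshetnyak-type) continuity for the two volume terms, finishing with a diagonal selection. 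As submitted, that part is a programme rather than a proof: the collar estimates (uniformity over the Lipschitz chart patches, the vanishing contribution of $\sg(u_{0})$ on the shrinking collar, the control of the mollification error near $\partial\Omega$, and the diagonal argument) are precisely the technical content and are not carried out. So there is a genuine gap: either prove the Dirichlet-class relaxation identity independently --- which amounts to the same construction --- or execute the direct construction in detail.
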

If $\Sigma\subset\Omega$ is a $\hold^{1}$-manifold oriented by $\nu\colon\Sigma\to\mathbb{S}^{n-1}$ and $u\in\bd(\R^{n})$, then $\E u\mres\Sigma$ is given by \textsc{Kohn}'s formula (cf.~\cite{Kohn1})
\begin{align}\label{eq:Kohnformula}
\E u\mres\Sigma = (u^{+}-u^{-})\odot\nu \mathscr{H}^{n-1}\mres\Sigma, 
\end{align}
where $u^{+}$ and $u^{-}$ are the right and left interior traces of $u$ along $\Sigma$. These, in turn, are well-defined upon the orientation of $\nu$, and can be computed for $\mathscr{H}^{n-1}$-a.e. $x\in\Sigma$ by virtue of 
\begin{align}\label{eq:tracesdash}
\lim_{r\searrow 0}\dashint_{\Sigma^{\pm}(x,r)}|u(y)-u^{\pm}(x)|\dif y = 0
\end{align}
for $\mathscr{H}^{n-1}$-a.e. $x\in\Sigma$, where $\Sigma^{\pm}(x,r):=\ball(x,r)\cap\{y\in\R^{n}\colon\;\langle y-x,\nu(x)\rangle \gtrless 0\}$ for $r>0$.

We will also need a fractional embedding theorem for $\bd$ as one of the main ingredients in the partial regularity proof below. Let $0<\theta<1$ and $1\leq p <\infty$. Given $U,\Sigma\subset\R^{n}$ with $\mathscr{L}^{n}(U)>0$ and $\mathscr{H}^{n-1}(\Sigma)\in (0,\infty)$, we define
\begin{align*}
&[u]_{\sobo^{\theta,p}(U;\R^{m})}:=\Big(\iint_{U\times U}\frac{|u(x)-u(y)|^{p}}{|x-y|^{n+\theta p}}\dif x\dif y \Big)^{\frac{1}{p}}\;\;\;\;\;\;\;\;\;\;\;\;\;\;\;\;\;\;\;\;\;\;\;\;\;\;\;\;\;\;\;\;\;\text{for}\;u\in\lebe^{p}(U;\R^{m}),\\
&[v]_{\sobo^{\theta,p}(\Sigma;\R^{m})}:=\Big(\iint_{\Sigma\times\Sigma}\frac{|u(x)-u(y)|^{p}}{|x-y|^{n-1+\theta p}}\dif\mathscr{H}^{n-1}(x)\dif\mathscr{H}^{n-1}(y) \Big)^{\frac{1}{p}}\;\;\;\;\text{for}\;v\in\lebe^{p}(\Sigma;\R^{m};\mathscr{H}^{n-1}), 
\end{align*}
where $\lebe^{p}(\Sigma;\R^{m};\mathscr{H}^{n-1})$ is the space of maps $v\colon\Sigma\to\R^{m}$ which are $p$-integrable for $\mathscr{H}^{n-1}$. 
The full norm on $\sobo^{\theta,p}(U;\R^{m})$ or $\sobo^{\theta,p}(\Sigma;\R^{m})$ then is given by $\|\cdot\|_{\sobo^{\theta,p}(U;\R^{m})}:=\|u\|_{\lebe^{p}(U;\R^{m})}+[u]_{\sobo^{\theta,p}(U;\R^{m})}$ (analogously for $\sobo^{\theta,p}(\Sigma;\R^{m})$). Following \textsc{Kolyada} \cite{Kolyada} (also see \textsc{Bourgain} et al. \cite{BBM}), it is well-known that $\bv(\Omega)\hookrightarrow \sobo^{\theta,n/(n-1+\theta)}(\Omega)$ for $n\geq 2$ and $\theta\in (0,1)$. For $\bd(\Omega)$, we require the recent theory of \textsc{Van Schaftingen} \cite{VS}: By \cite[Prop.~6.3]{VS} and since $n\geq 2$ in our setting, the symmetric gradient $\sg$ is \emph{elliptic} and \emph{cancelling}. Writing $\sg=\sum_{k=1}^{n}A_{k}\partial_{k}$ with $A_{k}\in\mathscr{L}(\R^{n};\rsym)$, ellipticity here means that the symbol map $\sg[\xi]:=\sum_{k=1}^{n}A_{k}\xi_{k}\colon\R^{n}\to\rsym$ is injective for all $\xi=(\xi_{1},...,\xi_{n})\in\R^{n}\setminus\{0\}$. In turn, cancellation means that the Fourier symbol $\sg[\xi]$ is sufficiently spread in the sense that
\begin{align}\label{eq:cancelling}
\bigcap_{\xi\in\R^{n}\setminus\{0\}}\sg[\xi](\R^{n})=\{0\}. 
\end{align}
Hence by \cite[Thms.~1.3,~8.1]{VS}, for each $\theta\in (0,1)$ there exists $c=c(n,\theta)>0$ such that\footnote{Note that the embedding $\bd\hookrightarrow\lebe^{\frac{n}{n-1}}$ is originally due to \textsc{Strauss} \cite{Strauss}.} 
\begin{align}\label{eq:prelimscancelling}
\|\varphi\|_{\lebe^{\frac{n}{n-1}}(\R^{n};\R^{n})}+[\varphi]_{\sobo^{\theta,n/(n-1+\theta)}(\R^{n};\R^{n})}\leq c \int_{\R^{n}}|\sg(\varphi)|\dif x\qquad\text{for all}\;\varphi\in\hold_{c}^{\infty}(\R^{n};\R^{n}). 
\end{align}
To state the next proposition, we remind the reader that on connected, open subsets $\Omega$ of $\R^{n}$, the nullspace of the symmetric gradient operator in $\mathscr{D}'(\Omega;\R^{n})$ is given by the \emph{rigid deformations}
\begin{align}\label{eq:rigiddeformations}
\mathscr{R}(\Omega):=\big\{x\mapsto Ax+b\colon\;A=-A^{\top},\;b\in\R^{n} \big\}.
\end{align}
If $\partial\Omega$ moreover is Lipschitz, for each $u\in\bd(\Omega)$ there exists $a\in\mathscr{R}(\Omega)$ such that 
\begin{align}\label{eq:PoincareonBD}
\|u-a\|_{\lebe^{1}(\Omega;\R^{n})}=\inf_{b\in\mathscr{R}(\Omega)}\|u-b\|_{\lebe^{1}(\Omega;\R^{n})}\leq c|\E u|(\Omega),
\end{align}
where $c=c(\Omega,n)>0$. We refer to \eqref{eq:PoincareonBD} as the \emph{Poincar\'{e} inequality on $\bd(\Omega)$}. Now we have 
\begin{proposition}\label{prop:fractionalpoincareBD}
Let $n\geq 2$ and $0<\theta<1$. Moreover, let $\Omega\subset\R^{n}$ be an open and bounded domain with Lipschitz boundary. Then there holds 
\begin{align}\label{eq:mainembeddingBD}
\bd(\Omega)\hookrightarrow \sobo^{\theta,\frac{n}{n-1+\theta}}(\Omega;\R^{n}), 
\end{align}
continuity of the embedding being understood with respect to the norm topologies.
\begin{enumerate}
\item\label{item:poincare1} If $\Omega$ moreover is connected, then for each $u\in\bd(\Omega)$ there exists $a\in\mathscr{R}(\Omega)$ such that 
\begin{align*}
\|u-a\|_{\sobo^{\theta,\frac{n}{n-1+\theta}}(\Omega;\R^{n})}\leq c|\E u|(\Omega), 
\end{align*}
where $c>0$ is a constant that only depends on $\Omega,n$ and $\theta$. 
\item\label{item:poincare2} There exists a constant $c=c(n,\theta)>0$ such that for every $x_{0}\in\R^{n}$, $R>0$ and every $u\in\bd(\R^{n})$ there exists $a\in\mathscr{R}(\R^{n})$ such that 
\begin{align}\label{eq:scalingSQC1}
\left(\dashint_{\ball(x_{0},R)}\int_{\ball(x_{0},R)}\frac{|u_{a}(x)-u_{a}(y)|^{\frac{n}{n-1+\theta}}}{|x-y|^{n+\theta n/(n-1+\theta)}}\dif x \dif y\right)^{\frac{n-1+\theta}{n}}\leq CR^{1-\theta}\dashint_{\ball(x_{0},R)}|\E u|, 
\end{align}
where $u_{a}:=u-a$. 
\end{enumerate}
\end{proposition}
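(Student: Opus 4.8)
The plan is to deduce all three assertions from the Van~Schaftingen inequality \eqref{eq:prelimscancelling} by combining a strict-approximation argument, the trivial extension operator $\mathfrak{E}\colon\bd(\Omega)\to\bd(\R^n)$, the Poincar\'{e} inequality \eqref{eq:PoincareonBD} on $\bd$, and a scaling. Throughout I write $q:=\tfrac{n}{n-1+\theta}\in(1,\tfrac{n}{n-1})$, so that $\sobo^{\theta,q}=\sobo^{\theta,n/(n-1+\theta)}$. First I would upgrade \eqref{eq:prelimscancelling} to arbitrary $v\in\bd(\R^n)$: by the strict smooth approximation recorded in Section~\ref{sec:bd} one picks $(v_j)\subset\hold_c^\infty(\R^n;\R^n)$ with $v_j\to v$ strictly, so that $\int_{\R^n}|\sg(v_j)|\dif x\to|\E v|(\R^n)$ and, along a subsequence, $v_j\to v$ pointwise $\mathscr{L}^n$-a.e. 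Applying \eqref{eq:prelimscancelling} to each $v_j$ and letting $j\to\infty$ --- bounding the left-hand side from below by Fatou's lemma, once for the term $\|\cdot\|_{\lebe^{n/(n-1)}}$ and once, applied to the double integral defining the seminorm, for $[\,\cdot\,]_{\sobo^{\theta,q}(\R^n)}$ --- yields
\begin{align*}
\|v\|_{\lebe^{n/(n-1)}(\R^n;\R^n)}+[v]_{\sobo^{\theta,q}(\R^n;\R^n)}\leq c(n,\theta)\,|\E v|(\R^n)\qquad\text{for all }v\in\bd(\R^n).
\end{align*}

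Next, for the embedding \eqref{eq:mainembeddingBD} and item \ref{item:poincare1}, I would take $u\in\bd(\Omega)$, set $\overline{u}:=\mathfrak{E}u\in\bd(\R^n)$ with $|\E\overline{u}|(\R^n)\leq c\,\|u\|_{\bd(\Omega)}$, and note that $[u]_{\sobo^{\theta,q}(\Omega;\R^n)}\leq[\overline{u}]_{\sobo^{\theta,q}(\R^n;\R^n)}$ and $\|u\|_{\lebe^{n/(n-1)}(\Omega;\R^n)}=\|\overline{u}\|_{\lebe^{n/(n-1)}(\R^n;\R^n)}$; since $\Omega$ is bounded and $q<\tfrac{n}{n-1}$, $\lebe^{n/(n-1)}(\Omega)\hookrightarrow\lebe^{q}(\Omega)$, so the displayed inequality gives $\|u\|_{\sobo^{\theta,q}(\Omega;\R^n)}\leq c(\Omega,n,\theta)\|u\|_{\bd(\Omega)}$, i.e.\ \eqref{eq:mainembeddingBD}. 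If in addition $\Omega$ is connected, I would pick $a\in\mathscr{R}(\Omega)$ as in \eqref{eq:PoincareonBD}; since rigid deformations lie in the kernel of $\sg$ one has $\E(u-a)=\E u$, whence $\|u-a\|_{\bd(\Omega)}=\|u-a\|_{\lebe^1(\Omega;\R^n)}+|\E u|(\Omega)\leq c(\Omega,n)\,|\E u|(\Omega)$; applying the embedding just proved to $u-a$ gives \ref{item:poincare1}.

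For item \ref{item:poincare2} I would scale. Given $u\in\bd(\R^n)$, $x_0\in\R^n$, $R>0$, set $v(y):=u(x_0+Ry)$ for $y\in\ball(0,1)$; then $v\in\bd(\ball(0,1))$, and a change of variables in the distributional definition of $\E$ gives $|\E v|(\ball(0,1))=R^{1-n}|\E u|(\ball(x_0,R))=\omega_nR\,\dashint_{\ball(x_0,R)}|\E u|$. Applying item \ref{item:poincare1} on the \emph{fixed} domain $\ball(0,1)$ --- whose Poincar\'{e} constant is therefore some $c(n,\theta)$ --- produces $\widetilde{a}\in\mathscr{R}(\ball(0,1))$ with $[v-\widetilde{a}]_{\sobo^{\theta,q}(\ball(0,1);\R^n)}\leq c(n,\theta)\,|\E v|(\ball(0,1))$. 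Setting $a(x):=\widetilde{a}((x-x_0)/R)$ --- which again lies in $\mathscr{R}(\R^n)$, the skew-symmetric linear part being preserved under the dilation --- one has $(v-\widetilde{a})(y)=(u-a)(x_0+Ry)$, and a further change of variables yields $[u-a]_{\sobo^{\theta,q}(\ball(x_0,R);\R^n)}^{q}=R^{\,n-\theta q}\,[v-\widetilde{a}]_{\sobo^{\theta,q}(\ball(0,1);\R^n)}^{q}$. Combining these identities and dividing by $\mathscr{L}^n(\ball(x_0,R))=\omega_nR^n$, as dictated by the averaged outer integral, and finally taking $q$-th roots, delivers exactly \eqref{eq:scalingSQC1} with $C=c(n,\theta)\,\omega_n^{1-1/q}=C(n,\theta)$.

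The proof is largely an assembly of the ingredients cited above; I expect the only spots requiring genuine care to be (i) the approximation step --- one must work with \emph{strict} rather than merely weak*-convergence so that $\int_{\R^n}|\sg(v_j)|\dif x\to|\E v|(\R^n)$ \emph{from above} as well, and Fatou has to be applied to the double integral defining the Gagliardo seminorm --- and (ii) the bookkeeping of the powers of $R$ in the scaling of the seminorm and of $|\E u|$. No essential obstruction beyond these is anticipated.
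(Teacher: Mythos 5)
Your argument is correct and follows essentially the same route as the paper's proof: upgrading the Van Schaftingen inequality \eqref{eq:prelimscancelling} to all of $\bd(\R^{n})$ by strict smooth approximation and Fatou, using the trivial extension operator together with $\lebe^{\frac{n}{n-1}}(\Omega)\hookrightarrow\lebe^{\frac{n}{n-1+\theta}}(\Omega)$ for \eqref{eq:mainembeddingBD}, invoking \eqref{eq:PoincareonBD} for part \ref{item:poincare1}, and obtaining part \ref{item:poincare2} by rescaling part \ref{item:poincare1} on the unit ball with the rigid deformation transported under the dilation. The scaling bookkeeping ($R^{n-\theta q}$ for the seminorm, $\omega_{n}R\,\dashint|\E u|$ for the total deformation) matches the paper's computation, so no gap remains.
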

\begin{proof}
Let $u\in\bd(\R^{n})$ first and choose a sequence $(u_{j})\subset\hold_{c}^{\infty}(\R^{n};\R^{n})$ such that $u_{j}\to u$ strictly in $\bd(\R^{n})$. Passing to a suitable subsequence, we may assume without loss of generality that $u_{j}\to u$ $\mathscr{L}^{n}$-a.e. in $\R^{n}$. Therefore, by Fatou's lemma and \eqref{eq:prelimscancelling},
\begin{align}\label{eq:extendextend}
\begin{split}
\|u\|_{\lebe^{\frac{n}{n-1}}(\R^{n};\R^{n})}+[u]_{\sobo^{\theta,n/(n-1+\theta)}(\R^{n};\R^{n})} & \leq \liminf_{j\to\infty} \|u_{j}\|_{\lebe^{\frac{n}{n-1}}(\R^{n};\R^{n})}+[u_{j}]_{\sobo^{\theta,n/(n-1+\theta)}(\R^{n};\R^{n})} \\ & \leq c\liminf_{j\to\infty} \int_{\R^{n}}|\sg(u_{j})|\dif x = c|\E u|(\R^{n}), 
\end{split}
\end{align}
Now let $u\in\bd(\Omega)$, where $\Omega\subset\R^{n}$ is open and bounded with Lipschitz boundary $\partial\Omega$. By the above, there exists a bounded linear extension operator $\mathfrak{E}\colon \bd(\Omega)\to\bd(\R^{n})$. Therefore, 
\begin{align*}
\|u\|_{\sobo^{\theta,n/(n-1+\theta)}(\Omega;\R^{n})} & \leq \max\{1,\mathscr{L}^{n}(\Omega)^{\theta/n}\}(\|u\|_{\lebe^{\frac{n}{n-1}}(\Omega;\R^{n})}+[u]_{\sobo^{\theta,n/(n-1+\theta)}(\Omega;\R^{n})})\\
& \leq \max\{1,\mathscr{L}^{n}(\Omega)^{\theta/n}\}(\|\mathfrak{E}u\|_{\lebe^{\frac{n}{n-1}}(\R^{n};\R^{n})}+[\mathfrak{E}u]_{\sobo^{\theta,n/(n-1+\theta)}(\R^{n};\R^{n})})\\
& \!\!\!\!\stackrel{\eqref{eq:extendextend}}{\leq} C(n,\theta,\mathscr{L}^{n}(\Omega))|\E\mathfrak{E}u|(\R^{n}) \leq C(n,\theta,\Omega)\|u\|_{\bd(\Omega)}, 
\end{align*}
and \eqref{eq:mainembeddingBD} follows. If $\Omega$ moreover is connected, pick $a\in\mathscr{R}(\Omega)$ such that \eqref{eq:PoincareonBD} holds; applying the preceding inequality to $u-a$ and \eqref{eq:PoincareonBD} consequently imply \ref{item:poincare1}. Ad~\ref{item:poincare2}. We may assume that $x_{0}=0$, and shall write $\ball_{r}:=\ball(0,r)$ for $r>0$. Letting $u\in\bd(\R^{n})$, we first determine an element $b\in\mathcal{R}(\R^{n})$ such that $(*)$ in the following inequality holds on the unit ball, due to part~\ref{item:poincare1} with $\Omega=\ball(0,1)$ and $p=\frac{n}{n-1+\theta}$:
\begin{align*}
\left(\int_{\ball_{R}}\int_{\ball_{R}}\frac{|u_{b}(x)-u_{b}(y)|^{p}}{|x-y|^{n+\theta p}}\dif x \dif y\right)^{\frac{1}{p}} & = \frac{R^{\frac{2n}{p}}}{R^{\frac{n+\theta p}{p}}}\left(\int_{\ball_{1}}\int_{\ball_{1}}\frac{|u_{b}(Rx)-u_{b}(Ry)|^{p}}{|x-y|^{n+\theta p}}\dif x \dif y\right)^{\frac{1}{p}} \\
& \stackrel{(*)}{\leq} c\frac{R^{\frac{2n}{p}}}{R^{\frac{n+\theta p}{p}}}R\int_{\ball_{1}}|(\sg(u_{b}))(Rx)|\dif x = c R^{\frac{n}{p}}R^{1-\theta}\dashint_{\ball_{R}}|\E u|.
\end{align*}
This in turn determines $a\in\mathcal{R}(\R^{n})$ for \eqref{eq:scalingSQC1}, and the proof is hereby complete. 
\end{proof}
The dimensional hypothesis $n\geq 2$ in Proposition~\ref{prop:fractionalpoincareBD} in fact cannot be omitted: 
\begin{remark}[$n=1$]\label{rem:BVFubinibad}
The previous proposition does not remain valid for $n=1$. This can be seen by the fact that $\sobo^{1,1}((a,b))\not\hookrightarrow\sobo^{\theta,1/\theta}((a,b))$ for any $0<\theta<1$ and all $-\infty<a<b<\infty$. For example, pick $\theta=\frac{1}{2}$. Then it is well-known that $\sobo^{1,1}((a,b))$ embeds into the Besov-Nikolski\u{\i}-space $\besov_{2,\infty}^{1/2}$ but not into $\besov_{2,2}^{1/2}((a,b))=\sobo^{1/2,2}((a,b))$. In fact, continuity of the embedding would yield that, as $n=1$, $\bd((a,b))=\bv((a,b))$ embeds into $\sobo^{1/2,2}((a,b))$ by smooth approximation, but the sign function belongs to $\bv((-1,1))$ but not to $\sobo^{1/2,2}((-1,1))$. 
\end{remark}
\begin{remark}[Projection stability]\label{rem:stability}
Since $\mathscr{R}(\Omega)$ is a finite dimensional space of polynomials, the map $a$ as in \eqref{eq:PoincareonBD} can be taken to be the $\lebe^{2}$-projection of $u$ onto $\mathscr{R}(\Omega)$ (which here is well-defined for $\lebe^{1}$-maps, too); cf.~\cite[Sec.~3]{BDG}. In particular, it satisfies the stability estimate 
\begin{align*}
\dashint_{\ball(x_{0},r)}|a|\dif x \leq c \dashint_{\ball(x_{0},r)}|u|\dif x,\qquad u\in\bd(\ball(x_{0},r)). 
\end{align*}
\end{remark}
\subsection{Functions of measures}\label{sec:functionsofmeasures}
Here we briefly record the most important features of functions being applied to measures. First, let $f\colon\rsym\to\R_{\geq 0}$ be convex and satisfy the growth bound $c_{1}|z|-c_{2}\leq f(z) \leq c_{3}(1+|z|)$ for some $c_{1},c_{2},c_{3}>0$ and all $z\in\rsym$. We recall that the \emph{recession function} $f^{\infty}\colon\rsym\to\R$ is given by 
\begin{align*}
f^{\infty}(z):=\limsup_{t\searrow 0}tf\Big(\frac{z}{t}\Big),\qquad z\in\rsym, 
\end{align*}
and by convexity and the linear growth hypothesis, the $\limsup$ is a actually a limit. Given $\mu\in\mathcal{M}(\Omega;\rsym)$, we denote its Lebesgue-Radon-Nikod\'{y}m decomposition $\mu=\mu^{a}+\mu^{s}$ and then define the measure $f(\mu)$ for $A\in\mathscr{B}(\Omega)$ by 
\begin{align}\label{eq:fcmeas}
f(\mu)(A):= \int_{A}f(\mu) := \int_{A}f\left(\frac{\dif\mu^{a}}{\dif\mathscr{L}^{n}} \right)\dif\mathscr{L}^{n}+\int_{A}f^{\infty}\left(\frac{\dif\mu^{s}}{\dif |\mu^{s}|}\right)\dif |\mu^{s}|. 
\end{align}
If $\xi\in\rsym$, we put $f(\mu-\xi):=f(\mu-\xi\mathscr{L}^{n})$. Now suppose that $f\in\hold(\rsym)$ is merely assumed symmetric-rank-one convex (so is convex with respect to directions in the symmetric rank-one cone $\R^{n}\odot\R^{n}:=\{a\odot b\colon\;a,b\in\R^{n}\}$) and of linear growth in the sense of \eqref{eq:lingrowth}. Even though not giving rise to a positive measure, \eqref{eq:fcmeas} still is a valid definition provided the density $\frac{\dif\mu^{s}}{\dif |\mu^{s}|}$ takes values in the symmetric-rank-one cone $|\mu^{s}|$-a.e.. In fact, in this situation, $f$ is convex along directions contained in $\R^{n}\odot\R^{n}$ and so, by the linear growth assumption, $f^{\infty}(z)$ exists provided $z\in\R^{n}\odot\R^{n}$. When applying such integrands $f$ to $\E u$ for $u\in\bd(\Omega)$, then the recent work of \textsc{De Philippis \& Rindler} \cite{DPR} yields $\frac{\dif\E u}{\dif |\E^{s}u|}\in\R^{n}\odot\R^{n}$ $|\E^{s}u|$-a.e.. Hence 
\begin{align*}
f(\E u)(A):=\int_{A}f(\E u) := \int_{A}f(\mathscr{E}u)\dif\mathscr{L}^{n}+\int_{A}f^{\infty}\left(\frac{\dif\E^{s}u}{\dif |\E^{s}u|}\right)\dif |\E^{s}u|\qquad\text{for all}\;A\in\mathscr{B}(\Omega)
\end{align*}
for $u\in\bd(\Omega)$ is in fact a well-posed definition. Working from the previous ideas and upon the method of proof for signed variants given in \cite{KristensenRindler}, our fundamental background fact result is
\begin{theorem}[{\textsc{Rindler} \cite{Ri1}}]\label{thm:rindler}
Let $\Omega\subset\R^{n}$ be an open and bounded Lipschitz domain and let $f\in\hold(\R_{\sym}^{n\times n})$ be a symmetric quasiconvex integrand which, in addition, satisfies \eqref{eq:lingrowth}. Also, let $u_{0}\in\bd(\Omega)$. Then, with the notation of \eqref{eq:BDdecomp}, the functional 
\begin{align*}
\overline{F}_{u_{0}}[u;\Omega] & := \int_{\Omega}f(\mathscr{E}u)\dif x + \int_{\Omega}f^{\infty}\left(\frac{\dif \E^{s}u}{\dif |\E^{s}u|}\right)\dif |\E^{s}u|+ \int_{\partial\Omega}f^{\infty}(\trace_{\partial\Omega}(u_{0}-u)\odot\nu_{\partial\Omega})\dif\mathcal{H}^{n-1}
\end{align*}
for $u\in\bd(\Omega)$ is lower semicontinuous with respect to weak*-convergence in the space $\bd(\Omega)$.
\end{theorem}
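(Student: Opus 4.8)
The plan is to reduce the assertion to an interior lower semicontinuity statement by absorbing the boundary term into a singular interior term, and then to prove that interior statement by a generalised Young measure argument in which symmetric quasiconvexity controls oscillations while the rigidity of the singular part of $\E u$ controls concentrations.

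\emph{Step 1: normalisation and reduction of the boundary term.} First I would note that one may assume $f\ge 0$: adding a suitable affine function $\ell(z)=\langle A,z\rangle+c$ leaves symmetric quasiconvexity untouched and, since $\int_{\partial\Omega}\langle A,\trace_{\partial\Omega}v\odot\nu_{\partial\Omega}\rangle\dif\mathscr{H}^{n-1}=\langle A,\E v(\Omega)\rangle$ for $v\in\bd(\Omega)$, changes $\overline{F}_{u_{0}}[\cdot;\Omega]$ only by a $u$-independent constant (a null-Lagrangian effect). Since $f$ is then symmetric-rank-one convex of linear growth, $f^{\infty}\ge 0$ exists on the symmetric rank-one cone $\R^{n}\odot\R^{n}$ — which, by the rigidity of \textsc{De Philippis \& Rindler} \cite{DPR}, is exactly where the singular densities $\tfrac{\dif\E w}{\dif|\E^{s}w|}$ of all $\bd$-maps take values — so the measures $f(\E\,\cdot\,)$ of \eqref{eq:fcmeas} are well defined. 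Next, given $u_{k}\wstar u$ in $\bd(\Omega)$, fix a bounded open $\Omega'\Supset\overline\Omega$ and an extension $u_{0}^{*}\in\bd(\Omega')$ of $u_{0}$ whose symmetric gradient does not charge $\partial\Omega$, and put $v_{k}:=u_{k}\mathbf{1}_{\Omega}+u_{0}^{*}\mathbf{1}_{\Omega'\setminus\overline\Omega}$ and $v:=u\,\mathbf{1}_{\Omega}+u_{0}^{*}\mathbf{1}_{\Omega'\setminus\overline\Omega}$. By Lipschitz regularity of $\partial\Omega$ these lie in $\bd(\Omega')$ and, by Kohn's formula \eqref{eq:Kohnformula}, $\E v_{k}\mres\partial\Omega=(\trace_{\partial\Omega}u_{0}-\trace_{\partial\Omega}u_{k})\odot\nu_{\partial\Omega}\,\mathscr{H}^{n-1}\mres\partial\Omega$; moreover $v_{k}\to v$ in $\lebe^{1}(\Omega';\R^{n})$, whence $\E v_{k}\wstar\E v$ in $\mathscr{M}(\Omega';\rsym)$. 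Using the positive $1$-homogeneity of $f^{\infty}$ and \eqref{eq:fcmeas}, one checks $f(\E v_{k})(\Omega')=\overline{F}_{u_{0}}[u_{k};\Omega]+f(\E u_{0}^{*})(\Omega'\setminus\overline\Omega)$ and likewise for $v$, the last summand being a fixed finite constant. Hence it suffices to prove that $w\mapsto f(\E w)(\Omega')$ is sequentially weak*-lower semicontinuous on $\bd(\Omega')$.

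\emph{Step 2: interior lower semicontinuity.} For $w_{k}\wstar w$ in $\bd(\Omega')$, after passing to a subsequence attaining the $\liminf$ I would let $\bm{\nu}=(\nu_{x},\lambda_{\bm{\nu}},\nu_{x}^{\infty})$ be the generalised Young measure on $\overline{\Omega'}$ generated by $(\E w_{k})$. The representation/lower semicontinuity principle for Young-measure functionals with a continuous integrand of linear growth gives $\liminf_{k}f(\E w_{k})(\Omega')\ge\int_{\Omega'}\langle\nu_{x},f\rangle\dif x+\int_{\Omega'}\langle\nu_{x}^{\infty},f^{\infty}\rangle\dif\lambda_{\bm{\nu}}$ (discarding the non-negative contribution carried by $\partial\Omega'$), while the barycentres of $\bm{\nu}$ reproduce $\E w$: the barycentre of $\nu_{x}$ equals $\mathscr{E}w(x)$ for $\mathscr{L}^{n}$-a.e.\ $x$, one has $\lambda_{\bm{\nu}}\mres\Omega'\ge|\E^{s}w|$, and the barycentre of $\nu_{x}^{\infty}$ equals $\tfrac{\dif\E w}{\dif|\E^{s}w|}(x)$ for $|\E^{s}w|$-a.e.\ $x$. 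It remains to establish the Jensen inequalities characterising symmetric-gradient Young measures: $\langle\nu_{x},f\rangle\ge f(\text{bar}\,\nu_{x})$ at $\mathscr{L}^{n}$-a.e.\ $x$ and $\langle\nu_{x}^{\infty},f^{\infty}\rangle\ge f^{\infty}(\text{bar}\,\nu_{x}^{\infty})$ at $\lambda_{\bm{\nu}}$-a.e.\ $x$ (the oscillation-concentration term, carried by the $\mathscr{L}^{n}$-absolutely continuous part of $\lambda_{\bm{\nu}}$, is handled jointly with the first and eliminated using $f^{\infty}\ge 0$). For the oscillation inequality I would localise $\bm{\nu}$ at a generic point to a homogeneous symmetric-gradient Young measure and test the very definition of symmetric quasiconvexity against $\hold_{0}^{1}$-perturbations — the symmetric-gradient analogue of the Kinderlehrer--Pedregal characterisation. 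For the concentration inequality I would invoke \cite{DPR} again: $\tfrac{\dif\E w}{\dif|\E^{s}w|}\in\R^{n}\odot\R^{n}$ $|\E^{s}w|$-a.e., and the relevant blow-ups of $\E^{s}w$ are one-directional, so $\nu_{x}^{\infty}$ is carried by a single symmetric rank-one ray along which $f$, hence $f^{\infty}$, is convex and Jensen is classical. Combined with Step 1 this proves the theorem. (Alternatively, one can run the blow-up method of \textsc{Fonseca \& M\"{u}ller} \cite{FonsecaMueller1,AmbrosioDalMaso} directly on $w\mapsto f(\E w)(\Omega')$, blowing up at Lebesgue points of $\mathscr{E}w$ and at points of $|\E^{s}w|$; the same two ingredients intervene, and this is closer in spirit to \textsc{Rindler}'s original argument \cite{Ri1}.)

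\emph{Main obstacle.} The crux is the concentration Jensen inequality: symmetric quasiconvexity alone gives no Jensen-type bound for $\nu_{x}^{\infty}$ unless one knows a priori that the concentration — that is, $\E^{s}w$ and its tangent measures — is carried by symmetric rank-one tensors, where $f$ becomes convex. This is precisely what the $\bd$-rigidity of \cite{DPR} provides; the achievement of \textsc{Rindler} \cite{Ri1} was to extract just enough of this structure through a dedicated one-directional rigidity lemma, as the full $\bd$-analogue of Alberti's rank-one theorem was not then available. The residual difficulties are of bookkeeping type — computing $\E v_{k}\mres\partial\Omega$ via \eqref{eq:Kohnformula} in Step 1, and controlling the parts of $\lambda_{\bm{\nu}}$ living on $\partial\Omega'$, mutually singular with $|\E^{s}w|$, or absolutely continuous but unrelated to $|\E^{s}w|$ in Step 2 — and all of these are neutralised by the normalisation $f\ge 0$ together with $f^{\infty}\ge 0$.
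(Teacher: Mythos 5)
The paper does not prove Theorem~\ref{thm:rindler} at all: it is imported verbatim from \textsc{Rindler} \cite{Ri1} as a background fact (with the remark that simplified proofs are now available via \cite{DPR,DPR1,ARDPR,BDG}), so your proposal can only be measured against those known arguments. Your Step~1 --- absorbing the boundary penalty into an interior functional on $\Omega'\Supset\overline{\Omega}$ by extending with $u_{0}$ and computing $\E v_{k}\mres\partial\Omega$ via \eqref{eq:Kohnformula} --- is correct and standard. The two places where the actual work happens, however, contain genuine gaps.

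First, the concentration Jensen inequality. The rank-one theorem of \cite{DPR} constrains the \emph{limit} measure: it says that the polar of $\E^{s}w$, equivalently the barycentre of $\nu_{x}^{\infty}$ with respect to $\lambda^{s}$, lies in the symmetric rank-one cone. It says nothing about the \emph{support} of $\nu_{x}^{\infty}$, which records the directional distribution of the concentrations of the generating sequence $(\E w_{k})$ and is in general neither a single ray nor contained in the cone; blow-ups of $\E^{s}w$ are irrelevant here because the Young measure is generated by the $w_{k}$, not by the limit. So ``Jensen is classical along the ray'' does not apply. What is actually needed is a Jensen inequality for the positively $1$-homogeneous, symmetric rank-one convex function $f^{\infty}$ at a point of the cone against an arbitrary probability measure with that barycentre, and this is exactly where the Kirchheim--Kristensen theorem \cite{KirchheimKristensen} (convexity of $1$-homogeneous cone-convex functions at points of the cone, i.e.\ existence of a supporting affine function there) must be invoked, as in \cite{KristensenRindler,ARDPR,BDG}; Rindler's original proof \cite{Ri1} instead bypasses rank-one information entirely through a rigidity argument for one-directional oscillations. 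As written, your treatment of what you yourself call the crux is incorrect, though repairable.

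Second, the normalisation ``w.l.o.g.\ $f\geq 0$ after subtracting an affine function'' presupposes that a symmetric quasiconvex integrand of linear growth admits a \emph{global} affine minorant (equivalently, that its convex envelope is proper). Quasiconvexity does not provide this in general (the determinant in higher growth is the standard example), you give no argument in the linear-growth case, and the signed-integrand theory of \cite{KristensenRindler} exists precisely because such a reduction is not available: affine supports are only known for the $1$-homogeneous recession function at points of the rank-one cone, again by \cite{KirchheimKristensen}. Since your Step~2 uses $f,f^{\infty}\geq 0$ to discard the mass of $\lambda$ on $\partial\Omega'$, the concentration mass not seen by $\E^{s}w$, and the oscillation--concentration cross term at regular points, this is a load-bearing unjustified step, not bookkeeping. (Minor related point: the barycentre of $\nu_{x}^{\infty}$ is $\tfrac{\dif\E^{s}w}{\dif\lambda^{s}}$, not the unit polar.) If, on the other hand, you are willing to cite the characterisation of BD-Young measures in \cite{DPR1} --- which you implicitly do for the regular-part Jensen inequality when you appeal to a ``symmetric-gradient Kinderlehrer--Pedregal'' --- then both Jensen inequalities, in their correct signed and combined form, come out of that result at once, and the argument collapses to the streamlined proof alluded to in the paper; the ``single ray'' step should then be deleted rather than repaired.
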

Finally, a lemma on the continuity of symmetric rank-one-convex functions for the area-strict metric that we shall frequently employ in conjunction with smooth approximation; in effect, it appears as a generalisation of the classical convex \textsc{Reshetnyak} (semi-)continuity theory \cite{Resh}:
\begin{lemma}[Symmetric rank-one-convexity and area-strict continuity]\label{lem:symareastrict}
Let $f\in\hold(\rsym)$ be symmetric rank-one convex with \eqref{eq:lingrowth} and let $\Omega\subset\R^{n}$ be an open and bounded set. Then $\bd(\Omega)\ni u \mapsto f(\E u)(\Omega)$ is continuous with respect to area-strict convergence. 
\end{lemma}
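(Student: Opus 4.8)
The plan is to reduce the claim, via smooth area-strict approximation, to the classical Reshetnyak continuity theorem for convex integrands applied to a modified functional. The key observation is that although $f$ is only symmetric rank-one convex, the map $z \mapsto f(z) + C\langle z\rangle$ with $\langle z\rangle = \sqrt{1+|z|^2}$ is, for $C$ large enough, \emph{convex} on the symmetric rank-one cone along any segment --- but more usefully, one can exploit that the relevant measures $\E u$ have polar $\frac{\dif \E u}{\dif|\E^s u|} \in \R^n\odot\R^n$ $|\E^s u|$-a.e. by De Philippis--Rindler \cite{DPR}, so that $f(\E u)(\Omega)$ is well-defined as recorded above. The genuinely clean route, however, is: write $g(z) := f(z) + \ell V(z)$ for a suitable $\ell$ and recall that strong symmetric rank-one convexity together with $f \in \hold(\rsym)$ of linear growth does \emph{not} in general make $g$ convex --- so instead I would appeal to the structure of $\bd$ directly.

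Here is the concrete scheme I would carry out. First, by Lemma~\ref{lem:smooth} (applied with $u_0$ replaced by $u$ itself, so that the boundary term vanishes) or rather by the basic area-strict approximation quoted just after it, given $u\in\bd(\Omega)$ pick $(u_j)\subset\hold^\infty(\Omega;\R^n)\cap\ld(\Omega)$ with $u_j\to u$ area-strictly; it suffices to show $f(\E u_j)(\Omega)=\int_\Omega f(\sg(u_j))\dif x \to f(\E u)(\Omega)$, and then a standard subsequence/Urysohn argument upgrades convergence of one approximating sequence to continuity of the functional along \emph{every} area-strictly convergent sequence. Second, the main point is a \emph{Reshetnyak-type} identity: for $f$ symmetric rank-one convex with \eqref{eq:lingrowth}, the perspective function $\widetilde f(z,t):=t f(z/t)$ for $t>0$, $\widetilde f(z,0):=f^\infty(z)$, is continuous and positively $1$-homogeneous on the cone $\{(z,t): t>0\}\cup(\R^n\odot\R^n\times\{0\})$; since the recession directions actually attained by $\E u$ lie in $\R^n\odot\R^n$, the functional $u\mapsto f(\E u)(\Omega)$ equals $\int_\Omega \widetilde f\big(\tfrac{\dif\E u}{\dif\langle\E u\rangle},\tfrac{\dif\mathscr{L}^n}{\dif\langle\E u\rangle}\big)\dif\langle\E u\rangle$, where $\langle\E u\rangle(\Omega)$ is the area-strict ``mass''. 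Because $\widetilde f$ is continuous and the pushforward measures $(\tfrac{\dif\E u_j}{\dif\langle\E u_j\rangle})_\#\langle\E u_j\rangle$ converge weakly-$*$ as measures on the sphere bundle (this is exactly what area-strict convergence encodes, cf.\ the remark after the definition of area-strict convergence in Section~\ref{sec:bd}), continuity of the integral follows --- this is precisely Reshetnyak's continuity theorem, and convexity of $f$ is \emph{not} needed for the continuity statement (it is only needed for the one-sided semicontinuity statement), one only needs $\widetilde f$ continuous, which holds here thanks to \cite{DPR} guaranteeing the polar stays in the cone where $f^\infty$ is well-defined and finite.

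Concretely, the chain of inequalities I would write is: split $\R^n\cup\Omega$-side estimates using that on $\langle z\rangle$-normalised coordinates $\widetilde f$ is uniformly continuous on the compact set $\{(z,t): |z|^2+t^2=1, t\ge 0, z\in\R^n\odot\R^n \text{ if } t=0\}$, estimate $|f(\E u_j)(\Omega)-f(\E u)(\Omega)|$ by a modulus-of-continuity term times $\langle\E u_j\rangle(\Omega)$ plus $|\langle\E u_j\rangle(\Omega)-\langle\E u\rangle(\Omega)|$ times $\sup|\widetilde f|$ on the sphere, and use area-strict convergence to kill both. A slightly more hands-on alternative avoiding the sphere bundle: use $|f(z)-f(w)|\le C|z-w|(1+|z|+|w|)^{0}$-type Lipschitz bounds together with $V$-function algebra to bound $\big|\int_\Omega f(\sg u_j)-\int_\Omega f(\sg u_k)\big|$ and pass to the measure limit. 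I would present the sphere-bundle (Reshetnyak) version as it is cleanest.

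\textbf{Main obstacle.} The one genuinely delicate point is that $f$ is \emph{not convex}, only symmetric rank-one convex, so $f^\infty$ need not exist as a limit (only a $\limsup$) at arbitrary $z\in\rsym$; the definition $f(\E u)(\Omega)$ is rescued only because $\frac{\dif\E u}{\dif|\E^s u|}$ lands in $\R^n\odot\R^n$ $|\E^s u|$-a.e.\ by \cite{DPR}, where symmetric rank-one convexity \emph{does} force $f^\infty$ to be a genuine limit and to be Lipschitz along the cone. So the crux is to verify that $\widetilde f$ is (jointly) continuous up to $t=0$ precisely on the portion of the boundary of the cone that the measures $\E u_j$ and $\E u$ actually charge, and that area-strict convergence of $(u_j)$ indeed produces weak-$*$ convergence of the associated lifted measures on this sub-cone --- this is where one must be careful that no mass escapes to recession directions outside $\R^n\odot\R^n$, which again is exactly what \cite{DPR} (together with the fact that each $\E u_j$ is itself a symmetric gradient) prevents. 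Once this is in place the rest is the standard Reshetnyak argument plus the Urysohn subsequence trick, both routine.
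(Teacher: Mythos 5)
Your overall strategy---rewrite $f(\E u)(\Omega)$ via the perspective integrand, observe that area-strict convergence is exactly strict convergence of the $\R\times\rsym$-valued measures $(\mathscr{L}^{n}\mres\Omega,\E u_{j})$, and run a Reshetnyak-type continuity argument using \cite{DPR} to handle the singular directions---is in the spirit of how this lemma is actually established in the literature; note that the paper does not reprove it but simply quotes \cite[Prop.~5.1]{BDG} together with \cite[Thm.~4]{KristensenRindler}, specialised to the symmetric gradient. Several of your ingredients are sound: symmetric rank-one convexity plus \eqref{eq:lingrowth} makes $f$ globally Lipschitz, the radial limit $\lim_{t\searrow 0}tf(z/t)$ exists for every $z\in\R^{n}\odot\R^{n}$, and hence the extended perspective function is continuous at every point $(0,z)$ with $z$ in the symmetric rank-one cone as well as at all points with $t>0$.

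The genuine gap sits at the decisive step. In the main text you assert that the perspective function is continuous ``thanks to \cite{DPR}'', and in your obstacle paragraph you claim that no lifted mass can escape to recession directions outside $\R^{n}\odot\R^{n}$ because of \cite{DPR} ``together with the fact that each $\E u_{j}$ is itself a symmetric gradient''. Neither is correct: the perspective function is genuinely discontinuous at $(0,z)$ for $z\notin\R^{n}\odot\R^{n}$, and \cite{DPR} only constrains the polar of the \emph{singular} part of each individual measure; it says nothing about the directions in which the absolutely continuous densities $\sg(u_{j})$ of the approximating (for instance smooth) maps become large. Those densities may blow up in directions far from the cone, so the lifted measures do charge every neighbourhood of the discontinuity set $\{t=0\}\times(\rsym\setminus\R^{n}\odot\R^{n})$, and weak-* convergence of the lifts cannot simply be tested against a discontinuous integrand. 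What actually closes the argument is a mechanism you never state: under strict convergence the lifted measures converge weak-* to the lift of the \emph{limit} $(\mathscr{L}^{n}\mres\Omega,\E u)$ (this identification is the core of Reshetnyak's continuity theorem and is precisely where area-strict convergence is used), and one then applies a portmanteau/mapping-theorem argument -- the perspective integrand is bounded on the sphere by \eqref{eq:lingrowth}, and its discontinuity set is null for the limit lift by \cite{DPR} applied to $u$ -- so the integrals converge; alternatively one needs a quantitative estimate showing that the contribution near the bad set vanishes. Without one of these your chain of reasoning does not close. A minor further point: proving convergence along one smooth approximating sequence of $u$ and invoking an ``Urysohn subsequence trick'' does not give continuity along arbitrary area-strictly convergent sequences; either run the lifted-measure argument directly for arbitrary sequences in $\bd(\Omega)$ (it applies verbatim and makes the smoothing step superfluous), or set up a diagonal argument, which requires the smooth-approximation statement for every map in $\bd(\Omega)$, not only for $u$.
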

The lemma follows from \cite[Prop.~5.1]{BDG} upon \textsc{Kristensen \& Rindler}'s refinement for signed integrands, \cite[Thm.~4]{KristensenRindler} and specifying to the symmetric gradient. Rather than reproducing the proof of \cite[Prop.~3.1]{GK2} with the relevant but easy modifications, we confine to stating the following equivalence between strong symmetric quasiconvexity at some $z_{0}\in\rsym$ and coerciveness; recall that $\Omega\subset\R^{n}$ is assumed to be open and bounded with Lipschitz boundary.
\begin{lemma}\label{lem:minseq}
Let $f\in\hold(\rsym)$ satisfy \eqref{eq:lingrowth} and let $u_{0}\in\ld(\Omega)$ be a given Dirichlet datum. Then all minimising sequences of the variational problem 
\begin{align}\label{eq:minseq}
\text{to infimise}\;\int_{\Omega}f(\sg(v))\dif x\;\;\;\;\text{over $\ld_{u_{0}}(\Omega)$}
\end{align}
are bounded in $\ld(\Omega)$ if and only if there exists $z_{0}\in\rsym$ and $\ell>0$ such that the function $h\colon z \mapsto f(z)-\ell V(z)$ is symmetric quasiconvex at $z_{0}$ (meaning that $h(z_{0})\leq \int_{Q}h(z_{0}+\sg(\varphi))\dif x$ for all $\varphi\in\hold_{c}^{\infty}(Q;\R^{n})$).
\end{lemma}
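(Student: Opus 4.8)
The plan is to reproduce the scheme of \cite[Prop.~3.1]{GK2}, the modifications ($\D\rightsquigarrow\sg$, $\sobo^{1,1}\rightsquigarrow\ld$, $\bv\rightsquigarrow\bd$ and affine maps $\rightsquigarrow$ rigid deformations $\mathscr{R}(\Omega)$) being routine. Write \textup{(i)} for the property that all minimising sequences of \eqref{eq:minseq} are bounded in $\ld(\Omega)$, and \textup{(ii)} for the existence of $z_{0}\in\rsym$ and $\ell>0$ with $f-\ell V$ symmetric quasiconvex at $z_{0}$; the claim is \textup{(i)}$\Leftrightarrow$\textup{(ii)}. Throughout I use the \emph{symmetric quasiconvex envelope} $\mathscr{Q}h$ of a continuous integrand $h$ of linear growth, together with the standard dichotomy (the verbatim symmetric analogue of \textsc{Dacorogna}'s) that $\mathscr{Q}h$ is either identically $-\infty$ or a real-valued symmetric quasiconvex integrand of linear growth; in particular $h$ is symmetric quasiconvex at $z_{0}$ exactly when $\mathscr{Q}h(z_{0})=h(z_{0})\in\R$. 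The second standing observation is that, for every $v\in\ld_{u_{0}}(\Omega)$, the mean value $\bar{z}:=\dashint_{\Omega}\sg(v)\dif x=\dashint_{\Omega}\sg(u_{0})\dif x$ is one and the same: indeed $v-u_{0}\in\ld_{0}(\Omega)$, and $\int_{\Omega}\sg(w)\dif x=0$ for $w\in\hold_{c}^{1}(\Omega;\R^{n})$ by the divergence theorem, hence for all $w\in\ld_{0}(\Omega)$ by density.

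\emph{Sufficiency, \textup{(ii)}$\Rightarrow$\textup{(i)}.} Suppose $h:=f-\ell V$ is symmetric quasiconvex at $z_{0}$; then $\mathscr{Q}h$ is real-valued, and so is its convex envelope $(\mathscr{Q}h)^{**}$. For $v\in\ld_{u_{0}}(\Omega)$, Jensen's inequality together with $\dashint_{\Omega}\sg(v)\dif x=\bar z$ gives
\begin{align*}
\int_{\Omega}h(\sg(v))\dif x\;\geq\;\int_{\Omega}(\mathscr{Q}h)^{**}(\sg(v))\dif x\;\geq\;\mathscr{L}^{n}(\Omega)\,(\mathscr{Q}h)^{**}(\bar z)\;=:\;-C_{0},
\end{align*}
with $C_{0}\in\R$ independent of $v$. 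Since $V(z)\geq|z|-1$ on $\rsym$, this yields the coercivity estimate
\begin{align*}
\int_{\Omega}f(\sg(v))\dif x\;=\;\int_{\Omega}h(\sg(v))\dif x+\ell\int_{\Omega}V(\sg(v))\dif x\;\geq\;\ell\,\|\sg(v)\|_{\lebe^{1}(\Omega)}-C_{0}-\ell\,\mathscr{L}^{n}(\Omega).
\end{align*}
Hence every minimising sequence $(v_{j})$ of \eqref{eq:minseq} has $\|\sg(v_{j})\|_{\lebe^{1}}$ bounded, and $\|v_{j}\|_{\lebe^{1}}$ is then bounded too since $v_{j}-u_{0}\in\ld_{0}(\Omega)$ (Poincar\'{e} inequality with vanishing trace, cf.~\eqref{eq:PoincareonBD}). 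Thus $(v_{j})$ is bounded in $\ld(\Omega)$.

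\emph{Necessity, \textup{(i)}$\Rightarrow$\textup{(ii)}, by contraposition.} Assume \textup{(ii)} fails, i.e.\ $f-\ell V$ is symmetric quasiconvex at no point, for every $\ell>0$. The first --- and main --- step is to upgrade this to $\mathscr{Q}(f-\ell V)\equiv-\infty$ for every $\ell>0$: were $\mathscr{Q}h$ real-valued for $h=f-\ell V$, then the nonnegative continuous function $h-\mathscr{Q}h$ would have to vanish somewhere, since it cannot be bounded below away from $0$ (adding that constant to $\mathscr{Q}h$ would violate the maximality of the envelope among symmetric quasiconvex minorants of $h$), while an infimum $0$ attained only ``at infinity'' is excluded by a recession-function argument comparing $(\mathscr{Q}h)^{\infty}$ with $h^{\infty}$ --- this is precisely the place where the symmetric analogue of \cite[Prop.~3.1]{GK2} has to be followed with care. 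Granting this, fix $\ell>0$; from $\mathscr{Q}(f-\ell V)\equiv-\infty$ there are, around any value $z$, maps $\varphi_{j}\in\hold_{c}^{\infty}(Q;\R^{n})$ with $\dashint_{Q}(f-\ell V)(z+\sg(\varphi_{j}))\dif x\to-\infty$, which --- because $f$ satisfies \eqref{eq:lingrowth} --- forces $\|\sg(\varphi_{j})\|_{\lebe^{1}(Q)}\to\infty$. The standard relaxation construction (periodically tiling $\Omega$, away from a boundary layer of small measure, by rescaled vanishing-amplitude copies of such $\varphi_{j}$) realises $\min_{\bd(\Omega)}\overline{F}_{u_{0}}$ as a limit of $\int_{\Omega}f(\sg(\cdot))\dif x$ along a sequence in $\ld_{u_{0}}(\Omega)$ --- here one uses the no-gap result of Section~\ref{sec:existence} --- but, by the blow-up of $\|\sg(\varphi_{j})\|_{\lebe^{1}}$, only through competitors with $\|\sg(\cdot)\|_{\lebe^{1}(\Omega)}\to\infty$. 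This produces an unbounded minimising sequence, contradicting \textup{(i)}.

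I expect the delicate step to be exactly the implication ``$\mathscr{Q}h$ real-valued $\Rightarrow$ $h$ symmetric quasiconvex at some point'' used in the necessity proof; everything else is soft. In particular this lemma needs only weak$^{*}$-compactness and the relaxation theory of Theorem~\ref{thm:rindler}, the recession calculus of Section~\ref{sec:functionsofmeasures}, and the Poincar\'{e} inequality \eqref{eq:PoincareonBD}, all of which are available in the $\lebe^{1}$-setting; thus \textsc{Ornstein}'s Non-Inequality plays no role, and the only genuinely $\bd$-specific bookkeeping is that the nullspace of $\sg$ is the larger space $\mathscr{R}(\Omega)$ rather than the affine maps, entering exclusively through \eqref{eq:PoincareonBD}.
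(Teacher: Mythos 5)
The paper does not actually prove this lemma: it is stated with the pointer to \cite[Prop.~3.1]{GK2} ``with the relevant but easy modifications'', so your sketch has to be judged on its own. The sufficiency half is essentially fine (coercivity plus the Poincar\'{e} inequality on $\ld_{0}(\Omega)$), although the parenthetical step that $(\mathscr{Q}h)^{**}$ is real-valued is not free of charge: a symmetric rank-one convex function of linear growth is Lipschitz, hence bounded below only by a multiple of $-|z|$, and this does \emph{not} yield an affine minorant, so finiteness of the convex envelope of $\mathscr{Q}h$ needs an argument. It is cleaner to avoid envelopes altogether: transfer the quasiconvexity-at-$z_{0}$ inequality from $Q$ to $\Omega$, insert $\varphi=v-u_{0}\in\ld_{0}(\Omega)$ by density of $\hold_{c}^{1}$ and equi-integrability, and absorb the fixed shift $z_{0}-\sg(u_{0})$.

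The genuine gap is in the necessity direction, exactly at the step you call delicate. The implication ``$\mathscr{Q}(f-\ell V)$ real-valued $\Rightarrow$ $f-\ell V$ symmetric quasiconvex at some point'' is false, and the recession-function comparison you invoke cannot exclude the bad case: applied verbatim to the convex envelope in one dimension it would show that $h(t)=|t|+(1+t^{2})^{-1}$ is convex at some point, yet $h^{**}(t)=|t|$, the gap $(1+t^{2})^{-1}$ is strictly positive everywhere with infimum $0$ attained only at infinity, and $h^{\infty}=(h^{**})^{\infty}$, so recession functions detect nothing. What is true, and what suffices for the lemma, is an $\ell$-trading statement: if $\mathscr{Q}(f-\ell V)>-\infty$, then $f-\ell'V$ is symmetric quasiconvex at \emph{some} point for every $0<\ell'<\ell$. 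The mechanism is a tilt-and-attain argument -- find $\xi\in\rsym$ and $z_{0}$ such that $z\mapsto (f-\ell'V)(z)-\langle\xi,z\rangle$ attains a global minimum at $z_{0}$; since $\int_{Q}\sg(\varphi)\dif x=0$ this alone gives symmetric quasiconvexity at $z_{0}$, and attainment is restored by the coercivity supplied by the leftover $(\ell-\ell')V$ -- and producing an admissible tilt $\xi$ is where the genuine work sits. With this repaired you do obtain $\mathscr{Q}(f-\ell V)\equiv-\infty$ for all $\ell>0$ when (ii) fails, but your concluding step is also unsound as written: the no-gap result of Section~\ref{sec:existence} is proved there only under the strong symmetric quasiconvexity \eqref{eq:SSQC} (even lower semicontinuity of $\overline{F}_{u_{0}}$ needs symmetric quasiconvexity of $f$), hence it is unavailable precisely when (ii) fails; and the claim that the infimum can be approached ``only through competitors with $\|\sg(\cdot)\|_{\lebe^{1}}\to\infty$'' is asserted rather than proved. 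What is needed is a direct construction of an unbounded minimising sequence: insert the destabilising oscillations into a near-minimiser at Lebesgue points of its symmetric gradient, with the quantitative bookkeeping that the energy increase is at most of order $\ell$ times the added gradient mass, and then let $\ell\searrow 0$ along the construction.
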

\subsection{$V$-function estimates and Korn-type inequalities}\label{sec:Orlicz}
For future applications in Section~\ref{sec:SSC}, we record some non-standard forms of Korn-type inequalities and gather here the relevant background results from \textsc{Breit \& Diening} \cite{BD12}. Note that, alternatively, the specifically required forms could also be traced back to \textsc{Acerbi \& Mingione} \cite{AcerbiMingione2} but then would follow only by inspection of the proof of \cite[Thm.~3.1]{AcerbiMingione2}. 

A differentiable function $\psi\colon\R_{\geq 0}\to [0,\infty)$ is called an $N$-function provided $\psi(0)=0$, its derivative $\psi'$ is right-continuous, non-decreasing and satisfies 
\begin{align}\label{eq:Phi1}
\psi'(0)=0,\;\;\;\psi'(t)>0\;\;\text{for}\;t>0\;\;\;\text{and}\;\lim_{t\to\infty}\psi'(t)=\infty. 
\end{align}
We now say that an $N$-function $\psi$ is of class $\Delta_{2}$ provided there exists $K>0$ such that $\psi(2t)\leq K\psi(t)$ for all $t\geq 0$, and the infimum over all possible such constants is denoted $\Delta_{2}(\psi)$. Similarly, we say that an $N$-function $\psi$ is class $\nabla_{2}$ provided the Fenchel conjugate $\psi^{*}(t):=\sup_{s\geq 0}(st-\psi(s))$ is of class $\Delta_{2}$; we put $\nabla_{2}(\psi):=\Delta_{2}(\psi^{*})$. We then have 
\begin{proposition}[{\cite[Thm.~1.1]{BD12}}]\label{prop:OrliczKorn}
Let $\psi\colon \R_{\geq 0}\to \R_{\geq 0}$ be an $N$-function. Then the following are equivalent: 
\begin{enumerate}
\item\label{item:OK1} $\psi$ is both of class $\Delta_{2}$ and $\nabla_{2}$, abbreviated by $\psi\in\Delta_{2}\cap\nabla_{2}$. 
\item\label{item:OK2} There exists a constant $A>0$ such that for all $u\in\hold_{c}^{\infty}(\R^{n};\R^{n})$ there holds 
\begin{align*}
\int_{\R^{n}}\psi(|Du|)\dif x \leq \int_{\R^{n}}\psi(A|\sg(u)|)\dif x. 
\end{align*}
\item\label{item:OK3} There exists a constant $A'>0$ such that for all $u\in\hold^{1}(\R^{n};\R^{n})$ and all open balls $\ball(x_{0},R)$ there holds 
\begin{align*}
\int_{\ball(x_{0},R)}\psi(Du-(Du)_{\ball(x_{0},R)})\dif x \leq \int_{\ball(x_{0},R)}\psi(A'(\sg(u)-(\sg(u))_{\ball(x_{0},R)}))\dif x.
\end{align*}
\end{enumerate}
Should \ref{item:OK1} be satisfied, then the constants from \ref{item:OK2} and \ref{item:OK3} only depend on $\Delta_{2}(\psi)$ and $\nabla_{2}(\psi)$. 
\end{proposition}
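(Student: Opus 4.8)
The plan is to prove the circle of implications by showing that \ref{item:OK1} entails \ref{item:OK2}, that \ref{item:OK1} entails \ref{item:OK3} (freely using \ref{item:OK2}, already available under \ref{item:OK1}), and that each of \ref{item:OK2}, \ref{item:OK3} forces \ref{item:OK1}. Throughout, every constant produced depends on $\psi$ only through $\Delta_{2}(\psi)$ and $\nabla_{2}(\psi)$. The analytic backbone of the two forward implications is the representation of the full gradient by the symmetric gradient through a fixed matrix of Calder\'{o}n--Zygmund operators, together with the fact that precisely when $\psi\in\Delta_{2}\cap\nabla_{2}$ such operators are \emph{modularly} bounded on $\lebe^{\psi}(\R^{n})$.

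First I would treat \ref{item:OK1}$\Rightarrow$\ref{item:OK2}. For $u\in\hold_{c}^{\infty}(\R^{n};\R^{n})$ a short computation on the Fourier side gives, for $1\le j,m\le n$ and $\xi\in\R^{n}\setminus\{0\}$,
\begin{align*}
\widehat{\partial_{m}u^{j}}(\xi)=\frac{2\xi_{m}}{|\xi|^{2}}\sum_{l=1}^{n}\xi_{l}\,\widehat{\sg(u)}_{jl}(\xi)-\frac{\xi_{m}\xi_{j}}{|\xi|^{4}}\sum_{k,l=1}^{n}\xi_{k}\xi_{l}\,\widehat{\sg(u)}_{kl}(\xi),
\end{align*}
whose multipliers $\xi\mapsto\xi_{m}\xi_{l}|\xi|^{-2}$ and $\xi\mapsto\xi_{m}\xi_{j}\xi_{k}\xi_{l}|\xi|^{-4}$ are homogeneous of degree zero and smooth on $\s^{n-1}$. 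Hence $\D u=\mathcal{R}[\sg(u)]$ for a dimension-dependent matrix $\mathcal{R}$ of Calder\'{o}n--Zygmund singular integral operators. Now $\psi\in\Delta_{2}\cap\nabla_{2}$ is equivalent to the Hardy--Littlewood maximal operator being bounded on both $\lebe^{\psi}(\R^{n})$ and $\lebe^{\psi^{*}}(\R^{n})$ (using $\psi^{*}\in\nabla_{2}\Leftrightarrow\psi\in\Delta_{2}$ and $\psi\in\nabla_{2}\Leftrightarrow$ $M$ bounded on $\lebe^{\psi}$), so Rubio de Francia extrapolation in its modular form upgrades the standard Muckenhoupt-weighted $\lebe^{p}$-bounds for $\mathcal{R}$ to an estimate $\int_{\R^{n}}\psi(|\mathcal{R}g|)\dx\le\int_{\R^{n}}\psi(A|g|)\dx$ valid for all measurable $g$, with $A=A(\Delta_{2}(\psi),\nabla_{2}(\psi))$; applying this with $g=\sg(u)$ yields \ref{item:OK2}.

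Next I would pass from the global inequality \ref{item:OK2} to the localised, mean-value form \ref{item:OK3}. Writing $\D u=\sg(u)+\scew(u)$ with $\scew(u):=\tfrac{1}{2}(\D u-\D u^{\top})$, one first observes that both sides of \ref{item:OK3} are invariant under replacing $u$ by $u-(Cx+b)$ for any $C\in\R^{n\times n}$, $b\in\R^{n}$, so one may normalise $(\D u)_{\ball(x_{0},2R)}=0$ and $(u)_{\ball(x_{0},2R)}=0$. Fixing a cut-off $\eta\in\hold_{c}^{\infty}(\ball(x_{0},2R))$ with $\eta\equiv 1$ on $\ball(x_{0},R)$ and $|\nabla\eta|\le c/R$, applying \ref{item:OK2} to $\eta u$ and expanding $\sg(\eta u)=\eta\,\sg(u)+\nabla\eta\odot u$, one bounds $\int_{\ball(x_{0},R)}\psi(|\D u|)\dx$ by $c\int_{\ball(x_{0},2R)}\psi(c|\sg(u)|)\dx+c\int_{\ball(x_{0},2R)}\psi(cR^{-1}|u|)\dx$ (here $\psi\in\Delta_{2}$ is used). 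The last term is absorbed through a Poincar\'{e}-type inequality on $\lebe^{\psi}$ applied to the normalised $u$, whose validity is again part of the $\Delta_{2}\cap\nabla_{2}$ toolbox, and a routine covering argument over concentric balls produces \ref{item:OK3} in the stated form; scaling and translation invariance keep all constants dependent only on $\Delta_{2}(\psi),\nabla_{2}(\psi)$. These localisation details are standard and I would only sketch them.

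Finally, \ref{item:OK2}$\Rightarrow$\ref{item:OK1} and \ref{item:OK3}$\Rightarrow$\ref{item:OK1} are the necessity statements, and this is where I expect the main obstacle. Both reduce to the assertion that if a nontrivial modular Korn-type estimate holds, then an appropriate combination of the second-order Riesz transforms --- equivalently the map $\sg(u)\mapsto\scew(u)$ encoded by the degree-zero multipliers above --- is modularly bounded on $\lebe^{\psi}$, and such modular boundedness of a nonzero Calder\'{o}n--Zygmund operator forces $\psi\in\Delta_{2}\cap\nabla_{2}$. One argues by contraposition: if $\psi\notin\Delta_{2}$ one exploits the uncontrolled oscillation of the singular integral near a jump (an Ornstein-type obstruction), and if $\psi\notin\nabla_{2}$ one exploits the slow $|x|^{-n}$-decay of its tail; in each case one builds explicit vector fields $u$ (from harmonic polynomials, or in the plane from real and imaginary parts of holomorphic functions, suitably rescaled and pasted) along which the two sides of \ref{item:OK2}, \ref{item:OK3} separate without bound for every fixed constant $A$, $A'$. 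The delicate point is not a single bad example but this quantitative argument, which is precisely the necessity half of the Muckenhoupt-type characterisation of singular-integral boundedness on Orlicz spaces. Since Proposition~\ref{prop:OrliczKorn} is quoted verbatim from \textsc{Breit \& Diening} \cite{BD12}, for the purposes of this paper it suffices to invoke it, with \cite[Thm.~3.1]{AcerbiMingione2} providing the alternative route indicated in the text.
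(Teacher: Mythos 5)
The paper does not prove this proposition at all: it is imported verbatim from Breit--Diening \cite{BD12} (with \cite{AcerbiMingione2} mentioned as an alternative source), so your closing remark -- that for the purposes of this paper it suffices to invoke the citation -- is exactly what the paper does, and in that sense your fallback coincides with the paper's ``proof''. Your independent sketch of the sufficiency directions is sound in outline as far as the global statement (b) goes: the Fourier identity expressing $\widehat{\partial_{m}u^{j}}$ through degree-zero multipliers acting on $\widehat{\sg(u)}$ is correct, and modular bounds for such Calder\'{o}n--Zygmund operators on Orlicz classes under $\Delta_{2}\cap\nabla_{2}$ (via extrapolation or the classical Orlicz-space CZ theory) are available with constants depending only on $\Delta_{2}(\psi),\nabla_{2}(\psi)$.

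There are, however, two genuine gaps if the sketch is read as a proof. First, the localisation step towards (c) is circular as written: after cutting off, the error term $\int\psi(c|u|/R)$ over the larger ball can only be absorbed by a \emph{Korn--Poincar\'{e}} inequality (controlling $u$ minus a rigid deformation by $\sg(u)$), not by the ordinary Poincar\'{e} inequality, which reintroduces the full gradient $\D u$ on the larger ball -- precisely the quantity you are trying to estimate, and with no small constant available for absorption; moreover (c) demands both integrals over the \emph{same} ball $\ball(x_{0},R)$, so the enlarged-ball estimate does not yield the stated form, and ``a routine covering argument'' does not repair this. The actual proof in \cite{BD12} circumvents this by a genuinely local representation of $\D u-(\D u)_{\ball}$ in terms of $\sg(u)-(\sg(u))_{\ball}$ (negative-norm/Bogovski\u{\i}-type arguments) to which the modular singular-integral bounds can be applied directly. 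Second, the necessity implications (b)$\Rightarrow$(a) and (c)$\Rightarrow$(a) -- the quantitatively hard half of \cite{BD12}, tied to Ornstein-type constructions -- are only gestured at, not proven. Since the paper itself treats the proposition as a cited background fact, these gaps do not affect the paper, but they should be acknowledged as such rather than presented as a complete argument.
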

We next consider \emph{shifted} $N$-functions (cf.~ \cite{DieningEttwein,DLSV}). Letting $\varphi\colon\R_{\geq 0}\to \R_{\geq 0}$ be an $N$-function, we put for $a\geq 0$
\begin{align}\label{eq:shiftedNfunction}
\varphi_{a}(t):=\int_{0}^{t}\frac{\varphi'(a+s)}{a+s}s\dif s,\qquad t\geq 0. 
\end{align}
The following lemma compactly gathers the for us most relevant results on shifted $N$-functions:
\begin{lemma}[{\cite[Lem.~23]{DieningEttwein}}]\label{lem:conjugate}
Let $\varphi\in \hold^{1}([0,\infty))\cap\hold^{2}((0,\infty);\R_{\geq 0})$ be an $N$-function such that $c_{1}t\varphi''(t) \leq \varphi'(t) \leq c_{2}t\varphi''(t)$ for some $c_{1},c_{2}>0$ and all $t>0$. Given $a\geq 0$, define $\varphi_{a}$ by \eqref{eq:shiftedNfunction}. Then both $\Delta_{2}(\varphi_{a})$ and $\nabla_{2}(\varphi_{a})$ can be bounded independently of $a$ and so $\varphi_{a}$ satisfies the $\Delta_{2}\cap\nabla_{2}$-condition uniformly in $a\geq 0$. 
\end{lemma}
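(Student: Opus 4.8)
The final statement is Lemma~\ref{lem:conjugate}, quoted from Diening--Ettwein, so a ``proof'' here is really an indication of how the cited result is obtained.

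\medskip

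The plan is to reduce the $\Delta_2$ and $\nabla_2$ bounds for the shifted $N$-functions $\varphi_a$ to two pointwise comparison estimates that are uniform in the shift parameter $a\geq 0$. First I would record the two basic properties of the family $(\varphi_a)_{a\geq 0}$ that follow directly from the structural assumption $c_1 t\varphi''(t)\leq\varphi'(t)\leq c_2 t\varphi''(t)$: differentiating \eqref{eq:shiftedNfunction} gives $\varphi_a'(t)=\varphi'(a+t)\,t/(a+t)$, and hence $\varphi_a''(t)$ can be computed and sandwiched, via the $c_1,c_2$-bound applied at the point $a+t$, between fixed multiples of $\varphi_a'(t)/t$. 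Thus the whole family inherits the comparison $\tilde c_1 t\varphi_a''(t)\leq \varphi_a'(t)\leq \tilde c_2 t\varphi_a''(t)$ with $\tilde c_1,\tilde c_2$ depending only on $c_1,c_2$. This is the key uniformity: the ``ellipticity ratio'' of $\varphi_a$ does not degenerate as $a\to\infty$.

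\medskip

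Next I would invoke the standard equivalences from Orlicz-function theory: an $N$-function $\psi\in\hold^2$ with $c_1 t\psi''(t)\leq\psi'(t)\leq c_2 t\psi''(t)$ satisfies $\Delta_2(\psi)\leq C(c_2)$ and $\nabla_2(\psi)\leq C(c_1)$, with $C$ depending only on the indicated constant (and the dimension is irrelevant here). Concretely, integrating the upper bound $\psi'(t)\leq c_2 t\psi''(t)$ yields $t\psi'(t)\leq (c_2+1)\psi(t)$ up to constants, equivalently $\psi(\lambda t)\leq \lambda^{c_2+1}\psi(t)$ for $\lambda\geq 1$, which is precisely a $\Delta_2$-bound depending only on $c_2$; integrating the lower bound $\psi'(t)\geq c_1 t\psi''(t)$ gives the reverse-type estimate $t\psi'(t)\geq (1+c_1^{-1})^{-1}\psi(t)$, i.e. $\psi$ grows at least like a fixed power $>1$, and this translates into a $\Delta_2$-bound for $\psi^*$, hence a $\nabla_2$-bound for $\psi$, again with constants depending only on $c_1$. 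Combining this with the first paragraph, applied to each $\psi=\varphi_a$ with the uniform constants $\tilde c_1,\tilde c_2$, gives $\Delta_2(\varphi_a)+\nabla_2(\varphi_a)\leq C(c_1,c_2)$ for all $a\geq 0$, which is the assertion.

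\medskip

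The only genuine obstacle is the book-keeping in the first paragraph: one must check that $\varphi_a\in\hold^1([0,\infty))\cap\hold^2((0,\infty))$ is again a bona fide $N$-function (monotonicity of $\varphi_a'$, the limits in \eqref{eq:Phi1}) and, more delicately, that the two-sided bound on $t\varphi_a''(t)/\varphi_a'(t)$ really is uniform in $a$ — the subtlety being that the derivative $\varphi_a'(t)=t\varphi'(a+t)/(a+t)$ mixes the argument $t$ with the shifted argument $a+t$, so differentiating once more produces a term from $\tfrac{\dd}{\dd t}\big(\varphi'(a+t)/(a+t)\big)$ that has to be controlled by $\varphi''(a+t)$ and, via the structural hypothesis, fed back into $\varphi_a'(t)/t$ without picking up $a$-dependent constants. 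Once this pointwise estimate is in place everything else is the soft Orlicz-theoretic machinery recalled above, so I do not expect further difficulties; this is exactly why one can simply cite \cite[Lem.~23]{DieningEttwein}.
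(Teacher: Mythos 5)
The paper does not prove this lemma at all --- it is quoted verbatim from Diening--Ettwein \cite[Lem.~23]{DieningEttwein} --- so there is no in-paper argument to compare against; your sketch is essentially the standard proof underlying the citation, and its core step is correct: differentiating \eqref{eq:shiftedNfunction} gives $\varphi_a'(t)=t\,\varphi'(a+t)/(a+t)$ and
$\varphi_a''(t)=\tfrac{a}{(a+t)^{2}}\varphi'(a+t)+\tfrac{t}{a+t}\varphi''(a+t)$,
and inserting the hypothesis at the point $a+t$ sandwiches $\varphi_a''(t)$ between $\min\{1,c_2^{-1}\}\,\varphi_a'(t)/t$ and $\max\{1,c_1^{-1}\}\,\varphi_a'(t)/t$, uniformly in $a\geq 0$, so the family $(\varphi_a)$ satisfies the two-sided bound with constants depending only on $c_1,c_2$.

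One bookkeeping correction in your second paragraph: you have the two implications the wrong way round. Integrating $c_1t\psi''\leq\psi'$ by parts gives the \emph{upper} bound $t\psi'(t)\leq(1+c_1^{-1})\psi(t)$, hence $\psi(2t)\leq 2^{1+1/c_1}\psi(t)$, i.e.\ the $\Delta_2$-bound, with constant depending on $c_1$; integrating $\psi'\leq c_2t\psi''$ gives the \emph{lower} bound $t\psi'(t)\geq(1+c_2^{-1})\psi(t)$, i.e.\ growth at least like $t^{1+1/c_2}$, which is what translates into a $\Delta_2$-bound for $\psi^{*}$ and hence the $\nabla_2$-bound, with constant depending on $c_2$. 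Since both constants are controlled simultaneously, the conclusion $\Delta_2(\varphi_a)+\nabla_2(\varphi_a)\leq C(c_1,c_2)$ uniformly in $a$ is unaffected; only the attribution of which inequality controls which condition needs to be swapped.
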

We come to the requisite estimates for $V$-functions, which we define for $1\leq p < \infty$ by 
\begin{align}\label{eq:Vpfunction}
V_{p}(z):=\big(1+|z|^{2}\big)^{\frac{p}{2}}-1,\qquad z\in\R^{m}, 
\end{align}
so that, with the terminology of \eqref{eq:SSQC}ff., $V=V_{1}$; note that $V_{p}\in \Delta_{2}\cap\nabla_{2}$ if and only if $1<p<\infty$. 
\begin{lemma}[{\cite[Sec.~2.4]{GK2}, {\cite[Lem.~2.4]{DFLM}}}]\label{lem:auxVSQC}
Let $m\in\mathbb{N}$. Then there exist constants $c>0$ merely depending on $m$ such that there holds
\begin{align}\label{eq:Vest1}
\begin{split}
& (\sqrt{2}-1)\min\{|z|,|z|^{2}\}\leq V(z) \leq \min\{|z|,|z|^{2}\},\\
& V(\lambda z) \leq \lambda^{2}V(z),\\
& V(z+w)\leq 2(V(z)+V(w)) 
\end{split}
\end{align}
for all $z,w\in\R^{m}$ and $\lambda\geq 1$. Moreover, for each $1<p<\infty$ there exist two constants $0<\theta_{p}\leq\Theta_{p}<\infty$ such that for all $z,z'\in\R^{m}$ there holds 
\begin{align}\label{eq:Vpestimate}
\begin{split}
\theta_{p}\big(1+|z|^{2}+|z'|^{2}\big)^{\frac{p-2}{2}}|z'|^{2} & \leq V_{p}(z+z')-V_{p}(z)-\langle V_{p}'(z),z'\rangle  \\ & \;\;\;\;\;\;\;\;\;\;\;\;\;\;\;\;\;\;\;\;\;\;\;\;\;\;\;\;\leq \Theta_{p}\big(1+|z|^{2}+|z'|^{2}\big)^{\frac{p-2}{2}}|z'|^{2}.
\end{split}
\end{align}
\end{lemma}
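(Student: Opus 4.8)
The plan is to establish the two $V$-function estimate groups in \eqref{eq:Vest1} and \eqref{eq:Vpestimate} by elementary one-variable calculus, reducing everything to the behaviour of $t\mapsto\sqrt{1+t^{2}}-1$ and $t\mapsto(1+t^{2})^{p/2}-1$ and then invoking the cited references \cite{GK2,DFLM} for the parts whose verification is purely routine. First I would treat the three assertions in \eqref{eq:Vest1}. For the two-sided bound $(\sqrt2-1)\min\{|z|,|z|^{2}\}\le V(z)\le\min\{|z|,|z|^{2}\}$, note that $V$ depends on $z$ only through $r:=|z|$, so it suffices to show $(\sqrt2-1)\min\{r,r^{2}\}\le\sqrt{1+r^{2}}-1\le\min\{r,r^{2}\}$ for $r\ge0$. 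The upper bound follows from $\sqrt{1+r^{2}}-1\le r$ (square both sides) and from $\sqrt{1+r^{2}}-1=\frac{r^{2}}{\sqrt{1+r^{2}}+1}\le\frac{r^{2}}{2}\le r^{2}$; the lower bound follows by splitting $r\le1$ (where $\sqrt{1+r^{2}}-1=\frac{r^{2}}{\sqrt{1+r^{2}}+1}\ge\frac{r^{2}}{\sqrt2+1}=(\sqrt2-1)r^{2}$) and $r\ge1$ (where $\sqrt{1+r^{2}}-1\ge\sqrt2\,r-1\ge(\sqrt2-1)r$ since $r\ge1$). The quadratic scaling $V(\lambda z)\le\lambda^{2}V(z)$ for $\lambda\ge1$ reduces to showing $g(r):=V(\lambda z)/V(z)$ — equivalently $\frac{\sqrt{1+\lambda^{2}r^{2}}-1}{\sqrt{1+r^{2}}-1}\le\lambda^{2}$ — which, after rationalising, is $\frac{\lambda^{2}r^{2}}{\sqrt{1+\lambda^{2}r^{2}}+1}\le\lambda^{2}\frac{r^{2}}{\sqrt{1+r^{2}}+1}$, i.e. $\sqrt{1+r^{2}}\le\sqrt{1+\lambda^{2}r^{2}}$, true for $\lambda\ge1$. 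Finally the quasi-triangle inequality $V(z+w)\le2(V(z)+V(w))$ follows from monotonicity of $t\mapsto\sqrt{1+t^{2}}-1$, the bound $|z+w|\le|z|+|w|$, convexity-type splitting into the regimes where one of $|z|,|w|$ dominates, together with $V(2z)\le4V(z)$ from the scaling just proved; alternatively one cites \cite[Sec.~2.4]{GK2} directly.

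Next I would turn to \eqref{eq:Vpestimate}. Write $F_{p}(z):=V_{p}(z)=(1+|z|^{2})^{p/2}-1$; the quantity to estimate is the second-order Taylor remainder $R(z,z'):=F_{p}(z+z')-F_{p}(z)-\langle F_{p}'(z),z'\rangle$. By Taylor's theorem with integral remainder, $R(z,z')=\int_{0}^{1}(1-s)\,\langle F_{p}''(z+sz')z',z'\rangle\dif s$, so it suffices to bound $\langle F_{p}''(\xi)h,h\rangle$ from above and below by $c(1+|\xi|^{2})^{(p-2)/2}|h|^{2}$. A direct computation gives $F_{p}''(\xi)=p(1+|\xi|^{2})^{p/2-1}\operatorname{Id}+p(p-2)(1+|\xi|^{2})^{p/2-2}\,\xi\otimes\xi$, whence for any $h$, $\langle F_{p}''(\xi)h,h\rangle=p(1+|\xi|^{2})^{p/2-2}\big((1+|\xi|^{2})|h|^{2}+(p-2)\langle\xi,h\rangle^{2}\big)$. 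If $p\ge2$ both terms are nonnegative and one bounds $(p-2)\langle\xi,h\rangle^{2}\le(p-2)|\xi|^{2}|h|^{2}\le(p-2)(1+|\xi|^{2})|h|^{2}$, giving eigenvalues pinched between $p(1+|\xi|^{2})^{p/2-1}|h|^{2}$ and $(p-1)p(1+|\xi|^{2})^{p/2-1}|h|^{2}$. If $1<p<2$ then $p-2\in(-1,0)$ and, using $\langle\xi,h\rangle^{2}\le|\xi|^{2}|h|^{2}\le(1+|\xi|^{2})|h|^{2}$, the bracket satisfies $(p-1)(1+|\xi|^{2})|h|^{2}\le(1+|\xi|^{2})|h|^{2}+(p-2)\langle\xi,h\rangle^{2}\le(1+|\xi|^{2})|h|^{2}$, again yielding $\langle F_{p}''(\xi)h,h\rangle\simeq(1+|\xi|^{2})^{(p-2)/2}|h|^{2}$ with constants depending only on $p$. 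The final step is to pass from $\xi=z+sz'$, $s\in[0,1]$, back to a factor in terms of $|z|$ and $|z'|$: since $|z+sz'|\le|z|+|z'|$ one has, for $p\ge2$, $(1+|z+sz'|^{2})^{(p-2)/2}\le C_{p}(1+|z|^{2}+|z'|^{2})^{(p-2)/2}$, and for $1<p<2$ the exponent is negative so one instead uses $1+|z+sz'|^{2}\ge c_{p}(1+|z|^{2}+|z'|^{2})$ after a case split on whether $|sz'|\le\frac12|z|$ or not; integrating $\int_{0}^{1}(1-s)\dif s=\tfrac12$ then produces the claimed $\theta_{p},\Theta_{p}$. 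The reverse inequalities for the lower bound are obtained symmetrically, and one may simply cite \cite[Lem.~2.4]{DFLM} for this passage.

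The main obstacle is not any single estimate — each is a short computation — but the \emph{uniformity and case-splitting} in the last reduction of \eqref{eq:Vpestimate}: controlling $(1+|z+sz'|^{2})^{(p-2)/2}$ uniformly in $s\in[0,1]$ by $(1+|z|^{2}+|z'|^{2})^{(p-2)/2}$ behaves genuinely differently according to the sign of $p-2$, and in the subquadratic range $1<p<2$ one must rule out the degeneration $z+sz'\to0$ while $z,z'$ are large, which forces the dichotomy $|z'|\le\tfrac12|z|$ versus $|z'|>\tfrac12|z|$ (in the first case $|z+sz'|\ge\tfrac12|z|$ for all $s$; in the second $|z|\le2|z'|$ so $|z'|$ already controls the scale). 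Once this dichotomy is organised cleanly the constants $\theta_{p},\Theta_{p}$ depend only on $p$ and on $m$ through the finite dimension, and the statement follows. Since all of these manipulations are standard and appear essentially verbatim in \cite{GK2,DFLM}, it is legitimate to present the proof as a combination of the one-variable estimates above with a reference to those works for the routine remainder.
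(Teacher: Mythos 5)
The paper offers no proof of this lemma at all: it is imported wholesale from \cite[Sec.~2.4]{GK2} and \cite[Lem.~2.4]{DFLM}, so your fallback of citing those references is exactly what the paper does, and the only thing to assess is your elementary reconstruction. That reconstruction follows the standard route (one-variable estimates for $V$, Taylor's formula with integral remainder plus the explicit Hessian of $V_{p}$) and most of it is sound, but three steps do not hold as written. First, for $r\ge 1$ you invoke $\sqrt{1+r^{2}}-1\ge\sqrt{2}\,r-1$, which is false for $r>1$ (it is equivalent to $r\le 1$); the target inequality $\sqrt{1+r^{2}}-1\ge(\sqrt{2}-1)r$ is still true, but should be proved directly, e.g.\ by squaring $\sqrt{1+r^{2}}\ge 1+(\sqrt{2}-1)r$, which reduces to $2(\sqrt{2}-1)r^{2}\ge 2(\sqrt{2}-1)r$, i.e.\ $r\ge1$. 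Second, the splitting ``into the regimes where one of $|z|,|w|$ dominates'' only yields $V(z+w)\le 4\max\{V(z),V(w)\}$; to get the stated constant $2$ use convexity of $V$ together with the scaling bound you already proved: $V(z+w)\le\tfrac12 V(2z)+\tfrac12 V(2w)\le 2(V(z)+V(w))$.

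Third, and most substantively, in \eqref{eq:Vpestimate} the comparison of $(1+|z+sz'|^{2})^{\frac{p-2}{2}}$ with $(1+|z|^{2}+|z'|^{2})^{\frac{p-2}{2}}$ cannot be made pointwise in $s$ in the degenerate regime: for $1<p<2$ your claimed bound $1+|z+sz'|^{2}\ge c_{p}(1+|z|^{2}+|z'|^{2})$ fails whenever $z$ and $z'$ nearly cancel (take $z'=-z$, $|z|$ large, $s$ close to $1$), and the remark that ``$|z'|$ already controls the scale'' is not an argument. What rescues the estimate is the integration in $s$: if $s_{0}$ minimises $s\mapsto|z+sz'|$, then $|z+sz'|\ge|s-s_{0}|\,|z'|$, and since $p-2>-1$ the function $|s-s_{0}|^{p-2}$ is integrable on $(0,1)$, which in the case $|z|\le 2|z'|$ gives $\int_{0}^{1}(1-s)(1+|z+sz'|^{2})^{\frac{p-2}{2}}\dif s\le C_{p}(1+|z|^{2}+|z'|^{2})^{\frac{p-2}{2}}$; the easy case $|z'|\le\tfrac12|z|$ is pointwise, as you say. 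The same dichotomy (now using that $\{|s-s_{0}|\ge\tfrac14\}$ occupies a fixed fraction of $[0,1]$) is what proves the \emph{lower} bound when $p\ge2$; that direction is not ``obtained symmetrically'' from the upper one, as you assert. With these repairs -- or with the citation to \cite[Lem.~2.4]{DFLM} explicitly carrying that passage, which is legitimate since the paper itself proves nothing here -- the proposal is acceptable.
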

\subsection{Miscellaneous auxiliary results}
In this final subsection we gather some mixed technical results. We begin with the  \textsc{Ekeland} variational principle \cite{Ekeland}, helping us to obtain good approximating sequences of certain $\bd$-maps later on, in a form given in \cite[Thm.~5.6, Rem.~5.5]{Giusti}:
\begin{lemma}[Ekeland variational principle]\label{lem:EkeLemma}
Let $(X,d)$ be a complete metric space and let $\mathcal{F}\colon X\to\R\cup\{+\infty\}$ be a lower semicontinuous function for the metric topology, bounded from below and taking a finite value at some point. Assume that for some $u\in X$ and some $\varepsilon>0$ we have $\mathscr{F}[u]\leq \inf_{X}\mathcal{F}+\varepsilon$. Then there exists $v\in X$ such that 
\begin{enumerate}
\item $d(u,v)\leq \sqrt{\varepsilon}$, 
\item $\mathcal{F}[v]\leq\mathcal{F}[u]$, 
\item $\mathcal{F}[v]\leq\mathcal{F}[w]+\sqrt{\varepsilon}d(v,w)$ for all $w\in X$. 
\end{enumerate}
\end{lemma}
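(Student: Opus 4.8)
The plan is to deploy the classical order-theoretic argument (in the spirit of Brezis--Browder). First I would introduce on $X$ the relation $w \preceq w'$ defined by
$\mathcal{F}[w] + \sqrt{\varepsilon}\, d(w,w') \le \mathcal{F}[w']$,
and check that it is a genuine partial order on $X$: reflexivity and antisymmetry are immediate, while transitivity is precisely the triangle inequality for $d$ combined with adding the two defining inequalities. For any $w \in X$ the section $S(w) := \{ w'' \in X : w'' \preceq w\}$ is nonempty since it contains $w$, and for $w'' \preceq w$ one has $\sqrt{\varepsilon}\, d(w'', w) \le \mathcal{F}[w] - \mathcal{F}[w'']$, so that $\mathcal{F}$ controls the spread of any $\preceq$-decreasing chain.

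Next I would construct a sequence $(u_k)_k$ inductively with $u_0 := u$: given $u_k$, set $m_k := \inf_{S(u_k)} \mathcal{F}$ (finite, since $\mathcal{F}$ is bounded below) and choose $u_{k+1} \in S(u_k)$ with $\mathcal{F}[u_{k+1}] \le m_k + 2^{-k}$. Since $u_{k+1} \preceq u_k$, the numbers $\mathcal{F}[u_k]$ are nonincreasing and bounded below, hence convergent, and the telescoping estimate $\sqrt{\varepsilon} \sum_k d(u_k, u_{k+1}) \le \sum_k (\mathcal{F}[u_k] - \mathcal{F}[u_{k+1}]) < \infty$ shows that $(u_k)_k$ is a Cauchy sequence. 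By completeness of $(X,d)$ there is $v \in X$ with $u_k \to v$.

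Then I would verify (a)--(c). Passing to the limit in $u_m \preceq u_k$ (valid for $m \ge k$ by transitivity), using lower semicontinuity of $\mathcal{F}$ and continuity of $d$, yields $v \preceq u_k$ for every $k$; in particular $v \preceq u_0 = u$, i.e. $\mathcal{F}[v] + \sqrt{\varepsilon}\, d(v,u) \le \mathcal{F}[u]$. This is exactly (b), and together with $\mathcal{F}[u] - \mathcal{F}[v] \le \mathcal{F}[u] - \inf_X \mathcal{F} \le \varepsilon$ it gives $d(v,u) \le \sqrt{\varepsilon}$, which is (a). For (c), suppose some $w \in X$ violated it; then $w \prec v$ with $w \ne v$, so transitivity gives $w \preceq u_k$, hence $w \in S(u_k)$ and $\mathcal{F}[w] \ge m_k \ge \mathcal{F}[u_{k+1}] - 2^{-k}$ for all $k$, whence $\mathcal{F}[w] \ge \lim_k \mathcal{F}[u_k] \ge \mathcal{F}[v]$ by lower semicontinuity, contradicting $\mathcal{F}[w] < \mathcal{F}[v]$.

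The step requiring the most care is the passage to the limit establishing $v \preceq u_k$: one must combine lower semicontinuity ($\mathcal{F}[v] \le \liminf_m \mathcal{F}[u_m]$) with continuity of the metric ($d(u_m, u_k) \to d(v, u_k)$) inside the inequality $\mathcal{F}[u_m] + \sqrt{\varepsilon}\, d(u_m, u_k) \le \mathcal{F}[u_k]$, taking the $\liminf$ over $m$. All remaining points are routine bookkeeping with telescoping sums; no input specific to $\mathrm{BD}$ or any function space enters this argument.
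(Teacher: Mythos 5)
Your argument is correct. Note, however, that the paper does not prove this lemma at all: it is imported verbatim as a classical background result, cited from Ekeland's original paper \cite{Ekeland} and stated in the form of Giusti \cite[Thm.~5.6, Rem.~5.5]{Giusti}, so there is no in-paper proof to compare against. What you have written is essentially the standard iterative/order-theoretic proof that those references themselves employ: the relation $\preceq$ with weight $\sqrt{\varepsilon}$, the nearly-minimizing selection $\mathcal{F}[u_{k+1}]\leq \inf_{S(u_k)}\mathcal{F}+2^{-k}$, the telescoping bound giving the Cauchy property, lower semicontinuity plus continuity of the metric to pass to the limit in $u_m\preceq u_k$, and the minimality of the inductive choice to rule out any $w\prec v$. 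Two small remarks, neither of which is a gap: antisymmetry of $\preceq$ can fail on the set $\{\mathcal{F}=+\infty\}$ (two points of infinite energy are mutually comparable), but your argument never uses antisymmetry -- only reflexivity and transitivity enter -- and since $\inf_X\mathcal{F}$ is finite and $\mathcal{F}[u]\leq\inf_X\mathcal{F}+\varepsilon$, all iterates $u_k$ have finite energy, so every inequality you add is between finite quantities. Second, in the step for (c) it is worth saying explicitly that a violation of (c) means $\mathcal{F}[w]+\sqrt{\varepsilon}\,d(w,v)<\mathcal{F}[v]$, which forces $\mathcal{F}[w]<\mathcal{F}[v]$ because $d\geq 0$; with that spelled out, the contradiction with $\mathcal{F}[w]\geq\lim_k\mathcal{F}[u_k]\geq\mathcal{F}[v]$ is complete. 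No input specific to $\operatorname{BD}$ is needed, exactly as you observe.
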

For the following, let us recall that a symmetric bilinear form $\mathbb{A}\in\mathbb{S}(\R^{N\times n})$ is called \emph{strongly elliptic} or \emph{Legendre-Hadamard} provided there exists $\lambda>0$ such that for all $a\in\R^{N},b\in\R^{n}$ there holds $\A[a\otimes b,a\otimes b]\geq\lambda |a\otimes b|^{2}$. For such bilinear forms, we have the following
\begin{lemma}[{\cite[Lem.~15.2.1]{MazSha},\cite[Prop.~2.11]{GK2}, \cite[Lem.~2.11]{CFM}}]\label{lem:toplinear}
Let $1<p<\infty$ and $k\in\mathbb{N}$. Then, for any open ball $\ball$ in $\R^{n}$ and any strongly elliptic bilinear form $\mathbb{A}\in\mathbb{S}(\R^{N\times n})$, the mapping 
\begin{align*}
\sobo^{k,p}(\ball;\R^{N})\ni u \mapsto (-\di(\mathbb{A}\!\nabla u),\trace_{\partial\!\ball}u)\in\sobo^{k-2,p}(\ball;\R^{N})\times\sobo^{k-\frac{1}{p},p}(\partial\!\ball;\R^{N})
\end{align*}
is a topological isomorphism. Moreover, if $|\A|\leq\Lambda$ and $-\di(\mathbb{A}\!\nabla u)=0$ in $\mathscr{D}'(\Omega;\R^{N})$, then there holds $u\in\hold^{\infty}(\Omega;\R^{N})$ together with 
\begin{align*}
\sup_{\ball(x_{0},\frac{R}{2})}|\nabla u - A| + R\sup_{\ball(x_{0},\frac{R}{2})}|\nabla^{2}u|\leq C\dashint_{\ball(x_{0},R)}|\nabla u-A|\dif x\qquad\text{for all}\;A\in\R^{N\times n}
\end{align*}
for all $\ball(x_{0},R)\Subset\Omega$, where $C=C(n,N,\lambda,\Lambda)>0$ is a constant.
\end{lemma}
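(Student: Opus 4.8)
The plan is to separate the two assertions; both are repackagings of the classical $\lebe^{p}$- and Schauder-type theory for constant-coefficient strongly elliptic systems, so the real work is only in assembling the right ingredients and in checking that the mere Legendre--Hadamard hypothesis suffices.

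For the topological isomorphism I would first observe that, for constant coefficients, the Legendre--Hadamard condition upgrades to a genuine G\r{a}rding inequality on $\sobo_{0}^{1,2}(\ball;\R^{N})$: via Plancherel, $\int_{\ball}\langle\mathbb{A}\nabla\varphi,\nabla\varphi\rangle\dif x=\int\langle M(\xi)\widehat{\varphi},\widehat{\varphi}\rangle\dif\xi$ with $M(\xi)_{\alpha\beta}=\mathbb{A}^{\alpha\beta}_{jk}\xi_{j}\xi_{k}$, and since $M(\xi)$ is real symmetric with $\langle M(\xi)a,a\rangle\geq\lambda|\xi|^{2}|a|^{2}$ for all real $a$, the same bound passes to complex vectors, whence $\int_{\ball}\langle\mathbb{A}\nabla\varphi,\nabla\varphi\rangle\dif x\geq\lambda\|\nabla\varphi\|_{\lebe^{2}}^{2}$. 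With this and a bounded lifting of the boundary datum, Lax--Milgram yields a unique weak solution of $-\di(\mathbb{A}\nabla u)=f$, $\trace_{\partial\!\ball}u=g$; the Agmon--Douglis--Nirenberg estimates for this constant-coefficient system on the smooth domain $\ball$ then bootstrap any weak solution with $f\in\sobo^{k-2,p}(\ball;\R^{N})$ and $g\in\sobo^{k-1/p,p}(\partial\!\ball;\R^{N})$ into $\sobo^{k,p}(\ball;\R^{N})$ with the corresponding two-sided norm control. Together with uniqueness this makes the stated map a continuous bijection of Banach spaces, hence a topological isomorphism by the open mapping theorem.

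For the second part, assume $-\di(\mathbb{A}\nabla u)=0$. Interior smoothness follows from the usual difference-quotient iteration: the translates of $u$ solve the same (constant-coefficient) system, and the Caccioppoli inequality --- which here is obtained by testing with $\eta^{2}v$ and invoking the G\r{a}rding inequality above for $\eta v$ --- gives $\|\nabla^{k}v\|_{\lebe^{2}(\ball(x_{0},R/2))}\leq C R^{-k}\|v\|_{\lebe^{2}(\ball(x_{0},R))}$ for every $k$ and every solution $v$, whence $u\in\hold^{\infty}(\Omega;\R^{N})$ by Sobolev embedding. Since $\mathbb{A}$ is constant, $v:=\nabla u-A$ again solves $-\di(\mathbb{A}\nabla v)=0$ componentwise, so these iterated bounds plus Sobolev embedding yield $\sup_{\ball(x_{0},R/2)}|v|+R\sup_{\ball(x_{0},R/2)}|\nabla v|\leq C\big(\dashint_{\ball(x_{0},R)}|v|^{2}\dif x\big)^{1/2}$. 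Finally I would trade the $\lebe^{2}$-average for an $\lebe^{1}$-average: applying the last estimate on $\ball(x_{0},\tfrac{3R}{4})$ and using $\dashint_{\ball(x_{0},3R/4)}|v|^{2}\leq\big(\sup_{\ball(x_{0},3R/4)}|v|\big)\dashint_{\ball(x_{0},3R/4)}|v|$ lets one absorb the square-root factor and arrive at $\dashint_{\ball(x_{0},3R/4)}|v|^{2}\leq C\big(\dashint_{\ball(x_{0},R)}|v|\big)^{2}$, turning the previous bound into the claimed estimate with $\dashint_{\ball(x_{0},R)}|\nabla u-A|\dif x$ on the right.

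The statement carries no deep difficulty, but two points deserve care and are the ones I would present in full. The first, and the only genuinely structural one, is that everything rests on the constancy of the coefficients: it is precisely the Fourier identity above that rescues coercivity from the Legendre--Hadamard (rather than Legendre) condition, and hence the entire $\lebe^{p}$- and interior-regularity machinery; this step fails for variable coefficients. The second is the passage from $\lebe^{2}$- to $\lebe^{1}$-averages on the right-hand side, which is the one-line interpolation indicated above rather than anything automatic.
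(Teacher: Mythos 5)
The paper does not prove Lemma~\ref{lem:toplinear} at all -- it is quoted from the references [MazSha, GK2, CFM] -- so your proposal can only be measured against the standard arguments underlying those citations, and in its first part it reproduces them faithfully: the Plancherel upgrade of the Legendre--Hadamard condition to genuine coercivity on $\sobo_{0}^{1,2}(\ball;\R^{N})$ (the passage from real to complex vectors via the real symmetric matrix $M(\xi)$ is correct), then Lax--Milgram, Agmon--Douglis--Nirenberg estimates on the ball and the open mapping theorem. The only thing glossed over there is the case $k=1$, $p\neq 2$, where the data $(f,g)\in\sobo^{-1,p}\times\sobo^{1-1/p,p}$ need not be admissible for Lax--Milgram and uniqueness in $\sobo^{1,p}$ for $p<2$ does not follow from the $\lebe^{2}$ theory alone; one argues by density of smooth data together with the two-sided a priori estimates and a duality argument, all of which is contained in the cited $\lebe^{p}$-theory, so this is a presentational rather than a substantive omission.

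The genuine gap is in your final step, the passage from the $\lebe^{2}$-mean to the $\lebe^{1}$-mean. As written, the chain $\dashint_{\ball(x_{0},3R/4)}|v|^{2}\leq\big(\sup_{\ball(x_{0},3R/4)}|v|\big)\dashint_{\ball(x_{0},3R/4)}|v|$ must be combined with a sup bound for $\sup_{\ball(x_{0},3R/4)}|v|$, and any such bound controls this sup by the $\lebe^{2}$-mean over a \emph{strictly larger} ball (e.g. $\ball(x_{0},R)$); you end up with $\dashint_{\ball(x_{0},3R/4)}|v|^{2}\leq C\big(\dashint_{\ball(x_{0},R)}|v|^{2}\big)^{1/2}\dashint_{\ball(x_{0},R)}|v|$, where the quadratic quantity on the right lives on a different ball than the one on the left, so there is nothing to absorb, and iterating the same move only pushes the radius outward. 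The standard repair is the iteration over intermediate radii: for $\tfrac{1}{2}\leq\tau<\sigma\leq 1$ one has $\sup_{\ball(x_{0},\tau R)}|v|\leq C(\sigma-\tau)^{-n/2}\big(\sup_{\ball(x_{0},\sigma R)}|v|\big)^{1/2}\big(\dashint_{\ball(x_{0},R)}|v|\big)^{1/2}\leq\tfrac{1}{2}\sup_{\ball(x_{0},\sigma R)}|v|+C(\sigma-\tau)^{-n}\dashint_{\ball(x_{0},R)}|v|$ by Young's inequality, and an absorption lemma in the spirit of Lemma~\ref{lem:iterationlemmaQC} then gives $\sup_{\ball(x_{0},R/2)}|v|\leq C\dashint_{\ball(x_{0},R)}|v|$; the term $R\sup_{\ball(x_{0},R/2)}|\nabla v|$ follows by applying the $\lebe^{2}$-based gradient bound on an intermediate ball and then this sup-$\lebe^{1}$ estimate. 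So your identification of the key interpolation is right, but it is an iteration, not a one-line absorption, and as stated the step does not close.
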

Finally, a standard iteration result:
\begin{lemma}[{\cite[Lem.~4.4]{GK2}}]\label{lem:iterationlemmaQC}
Let $\theta\in (0,1)$ and $R>0$. Suppose that $\Phi,\Psi\colon (0,R]\to\R$ are non-negative functions such that $\Phi$ is bounded and $\Psi$ is decreasing together with $\Psi(\sigma t)\leq\sigma^{-2}\Psi(t)$ for all $t\in (0,R]$ and $\sigma\in (0,1]$. Moreover, suppose that there holds 
\begin{align*}
\Phi(r) \leq \theta\Phi(s)+\Psi(s-r)
\end{align*}
for all $r,s\in [\tfrac{R}{2},R]$ with $r<s$. Then there exists a constant $C=C(\theta)>0$ such that 
\begin{align*}
\Phi\big(\tfrac{R}{2}\big) \leq C\Psi(R). 
\end{align*}
\end{lemma}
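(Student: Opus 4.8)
The plan is to run the classical hole-filling iteration on a geometrically convergent sequence of radii interpolating between $\tfrac R2$ and $R$. First I would fix an auxiliary parameter $\tau\in(0,1)$, to be pinned down at the very end in terms of $\theta$ alone, and set $\rho_0:=\tfrac R2$ and $\rho_{i+1}:=\rho_i+(1-\tau)\tau^i\tfrac R2$ for $i\geq 0$. Since $\sum_{i\geq 0}(1-\tau)\tau^i=1$, the sequence $(\rho_i)$ is strictly increasing with $\rho_i\nearrow R$, so $\rho_i\in[\tfrac R2,R)$ for every $i$; in particular, the hypothesis of the lemma applies to each pair $(r,s)=(\rho_i,\rho_{i+1})$, and $\rho_{i+1}-\rho_i=(1-\tau)\tau^i\tfrac R2$.

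Second, I would feed this pair into the assumed inequality to get $\Phi(\rho_i)\leq\theta\,\Phi(\rho_{i+1})+\Psi\big((1-\tau)\tau^i\tfrac R2\big)$, and then control the last term by the scaling property: because $\sigma_i:=(1-\tau)\tau^i/2\in(0,1)$, one has $\Psi(\sigma_i R)\leq\sigma_i^{-2}\Psi(R)=\tfrac{4}{(1-\tau)^2}\tau^{-2i}\Psi(R)$. This yields the recursion
\[
\Phi(\rho_i)\leq\theta\,\Phi(\rho_{i+1})+\tfrac{4}{(1-\tau)^2}\,\tau^{-2i}\,\Psi(R),\qquad i\geq 0,
\]
which I would iterate from $i=0$ to $i=k-1$ to obtain
\[
\Phi\big(\tfrac R2\big)=\Phi(\rho_0)\leq\theta^{k}\Phi(\rho_k)+\tfrac{4\Psi(R)}{(1-\tau)^2}\sum_{i=0}^{k-1}\big(\theta\tau^{-2}\big)^{i}.
\]

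Third, I would choose $\tau:=\theta^{1/4}$, so that $\theta\tau^{-2}=\theta^{1/2}<1$ and the series $\sum_{i\geq 0}(\theta\tau^{-2})^i$ converges to $(1-\theta^{1/2})^{-1}$; since $\Phi$ is bounded and $\theta^k\to 0$, passing to the limit $k\to\infty$ discards the term $\theta^k\Phi(\rho_k)$ and leaves $\Phi(\tfrac R2)\leq C(\theta)\Psi(R)$ with $C(\theta)=\tfrac{4}{(1-\theta^{1/4})^2(1-\theta^{1/2})}$. The only genuinely delicate point is the bookkeeping: checking that every $\rho_i$ stays in $[\tfrac R2,R]$ so that the hypothesis is legitimately invoked, that the arguments fed into the scaling relation lie in $(0,R]$ with dilation factor $\leq 1$, and that $\tau$ can be fixed depending only on $\theta$ so as to make $\theta\tau^{-2}<1$ — once these are arranged the estimate is immediate, the boundedness of $\Phi$ entering only qualitatively to kill the tail $\theta^k\Phi(\rho_k)$.
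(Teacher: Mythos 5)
Your proof is correct: the geometric choice of radii $\rho_i=R(2-\tau^i)/2$ stays in $[\tfrac R2,R)$, the scaling hypothesis applies with $t=R$ and $\sigma_i=(1-\tau)\tau^i/2\in(0,1]$, and with $\tau=\theta^{1/4}$ the series $\sum(\theta\tau^{-2})^i$ converges while boundedness of $\Phi$ kills the tail $\theta^k\Phi(\rho_k)$, giving $C(\theta)=4(1-\theta^{1/4})^{-2}(1-\theta^{1/2})^{-1}$. The paper does not prove the lemma itself but cites \cite[Lem.~4.4]{GK2}, and your argument is exactly the standard hole-filling iteration used there, so there is nothing further to add.
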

\section{The superlinear growth case: Proof of Theorem~\ref{thm:ppartialregularity}}\label{sec:SSC}
In this section we provide the proof of Theorem~\ref{thm:ppartialregularity}. We thereby establish that in the superlinear growth case, the partial regularity of minima for $p$-strongly symmetric quasiconvex integrands, $1<p<\infty$, can be reduced to the full gradient situation. This reduction cannot be employed in the linear growth case $p=1$, thereby particularly motivating an independent proof of Theorem~\ref{thm:main1}.

Henceforth, let $1\leq p < \infty$ and suppose that $G\in\hold(\R^{n\times n})$ is of $p$-growth in the sense that there exists $c>0$ such that 
\begin{align}\label{eq:pgrowth}
|G(z)|\leq c (1+|z|^{p})
\end{align}
for all $z\in\R^{n\times n}$. Recalling the function $V_{p}$ from \eqref{eq:Vpfunction}, we say that a function $G\in\hold(\R^{n\times n})$ is $p$-\emph{strongly quasiconvex} if and only if there exists $\lambda>0$ such that 
\begin{align}\label{eq:pSQC}
\R^{n\times n}\ni z\mapsto G(z)-\lambda V_{p}(z)\in \R\qquad\text{is quasiconvex}. 
\end{align}
As a consequence of the last part of Lemma~\ref{lem:auxVSQC}, if $1<p<\infty$, then $p$-strong quasiconvexity of $G\in\hold^{2}(\R^{n\times n})$ is equivalent to the existence of a constant $\nu>0$ such that 
\begin{align}\label{eq:convenientrewriteSQC}
\nu\int_{Q}(1+|z|^{2}+|D\varphi|^{2})^{\frac{p-2}{2}}|D\varphi|^{2}\dif x \leq \int_{Q}G(z+D\varphi)-G(z)\dif x
\end{align}
holds for all $z\in\R^{n\times n}$ and all $\varphi\in\hold_{c}^{\infty}(Q;\R^{n})$. Using $(1+t^{2})^{\frac{p-2}{2}}t^{2}\simeq t^{2}+t^{p}$ for all $t\geq 0$ provided $2\leq p <\infty$, for this range of $p$ it is easily seen that $p$-strong quasiconvexity of $G$ implies the existence of a constant $\mu>0$ such that 
\begin{align}\label{eq:weakerStrongQC}
\mu\int_{Q}|D\varphi|^{2}+|D\varphi|^{p}\dif x \leq \int_{Q}G(z+D\varphi)-G(z)\dif x
\end{align}
holds for all $z\in\R^{n\times n}$ and all $\varphi\in\hold_{c}^{\infty}(Q;\R^{n})$. In particular, however, we remark that the $1$-strong quasiconvexity in the sense of \eqref{eq:pSQC} is \emph{not} equivalent to \eqref{eq:convenientrewriteSQC}, cf. Remark~\ref{rem:p=1SQC}. In both the superlinear and linear growth regimes, analogous statements hold for ($p$-)strongly symmetric quasiconvex integrands. Toward the proof of Theorem~\ref{thm:ppartialregularity}, we record the following result attributable to \textsc{Acerbi \& Fusco} \cite{AcerbiFusco1} for $p\geq 2$ and \textsc{Carozza, Fusco} and \textsc{Mingione} \cite{CFM} for $1<p<2$:
\begin{proposition}[{\cite[Thm.~II.1]{AcerbiFusco1}, \cite[Thm.~3.2]{CFM}}]\label{prop:CFM}
Let $1<p<\infty$ and let $\Omega\subset\R^{n}$ be open. Suppose that $G\in\hold^{2}(\R^{n\times n})$ satisfies \eqref{eq:pgrowth} for all $z\in\R^{n\times n}$ and, 
\begin{enumerate}
\item\label{item:reductionreg1} if $p\geq 2$, \eqref{eq:weakerStrongQC} holds for some $\mu>0$, all $z\in\R^{n\times n}$ and $\varphi\in\hold_{c}^{\infty}(Q;\R^{n})$,
\item\label{item:reductionreg2} if $1<p<2$, \eqref{eq:convenientrewriteSQC} holds for some $\nu>0$, all $z\in\R^{n\times n}$ and $\varphi\in\hold_{c}^{\infty}(Q;\R^{n})$. 
\end{enumerate}
Then for any local minimiser $u\in\sobo_{\locc}^{1,p}(\Omega;\R^{n})$ of the integral functional 
\begin{align*}
v\mapsto \int G(Dv)\dif x
\end{align*}
there exists an open set $\Omega_{u}\subset\Omega$ with $\mathscr{L}^{n}(\Omega\setminus\Omega_{u})=0$ such that $u$ is of class $\hold^{1,\alpha}$ for any $0<\alpha<1$ in a neighbourhood of any of the elements of $\Omega_{u}$.
\end{proposition}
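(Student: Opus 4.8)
The plan is to run the by-now classical linearisation plus $\mathbb{A}$-harmonic approximation scheme for partial regularity of quasiconvex minimisers, following \textsc{Acerbi \& Fusco} for $p\geq 2$ and \textsc{Carozza, Fusco \& Mingione} for $1<p<2$, the split being forced only by the behaviour of $D^{2}G$ at infinity. As a preliminary step I would extract the ellipticity encoded in strong quasiconvexity: testing \eqref{eq:weakerStrongQC} (resp.\ \eqref{eq:convenientrewriteSQC}) with highly oscillatory $\varphi$ and letting the amplitude tend to $0$ shows, for $G\in\hold^{2}$, that $\mathbb{A}_{\xi}:=D^{2}G(\xi)$ is Legendre--Hadamard elliptic with ellipticity $\simeq(1+|\xi|^{2})^{\frac{p-2}{2}}$; together with \eqref{eq:pgrowth} and the companion bound $|D^{2}G(\xi)|\leq c(1+|\xi|^{2})^{\frac{p-2}{2}}$ accompanying this setting, both the ellipticity and the norm of $\mathbb{A}_{\xi}$ are then bounded above and below \emph{uniformly on each slab} $\{|\xi|\leq M\}$, and for $p\geq 2$ the ellipticity is in addition non-degenerate (which is exactly why the coarser inequality \eqref{eq:weakerStrongQC} suffices there).

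For $\ball(x_{0},r)\Subset\Omega$ set $\xi_{x_{0},r}:=(Du)_{\ball(x_{0},r)}$, $\ell_{x_{0},r}(x):=\xi_{x_{0},r}(x-x_{0})+(u)_{\ball(x_{0},r)}$, and introduce the excess $\Phi(x_{0},r):=\dashint_{\ball(x_{0},r)}V_{p}(|Du-\xi_{x_{0},r}|)\dif x$ (for $1<p<2$, the ``shifted'' excess $\dashint_{\ball(x_{0},r)}(1+|\xi_{x_{0},r}|^{2}+|Du-\xi_{x_{0},r}|^{2})^{\frac{p-2}{2}}|Du-\xi_{x_{0},r}|^{2}\dif x$ of \eqref{eq:Vpestimate} is the right quantity). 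The heart of the proof is an \emph{excess-improvement estimate}: for every $M>0$ there are $\varepsilon=\varepsilon(M)>0$ and $\tau=\tau(M)\in(0,\tfrac12)$ so that $|\xi_{x_{0},r}|\leq M$ and $\Phi(x_{0},r)<\varepsilon$ imply $|\xi_{x_{0},\tau r}|\leq M+1$ and $\Phi(x_{0},\tau r)\leq\tfrac12\Phi(x_{0},r)$. I would obtain it in three steps. \emph{(i) A Caccioppoli inequality of the second kind}, $\dashint_{\ball(x_{0},r/2)}V_{p}(|Du-\xi_{x_{0},r}|)\dif x\leq C\dashint_{\ball(x_{0},r)}V_{p}\big(\tfrac{|u-\ell_{x_{0},r}|}{r}\big)\dif x$: insert the competitor $u-\eta(u-\ell_{x_{0},r})$ ($\eta$ a cut-off) into minimality, Taylor-expand $G$ to second order around $\xi_{x_{0},r}$, bound the quadratic term from below via \eqref{eq:weakerStrongQC}/\eqref{eq:convenientrewriteSQC} applied to $\eta(u-\ell_{x_{0},r})$, split $D(\eta(u-\ell_{x_{0},r}))=\eta(Du-\xi_{x_{0},r})+(u-\ell_{x_{0},r})\otimes D\eta$, and absorb the $D\eta$-term by the hole-filling iteration of Lemma~\ref{lem:iterationlemmaQC}; the algebraic $V_{p}$-estimates (cf.\ Lemma~\ref{lem:auxVSQC} and the $\Delta_{2}\cap\nabla_{2}$-property of $V_{p}$) are used throughout. \emph{(ii) Linearisation and $\mathbb{A}$-harmonic approximation}: since $u$ locally minimises a $\hold^{1}$-integrand with gradient of $(p-1)$-growth, the weak Euler--Lagrange equation $\int\langle DG(Du),D\psi\rangle\dif x=0$ holds; a Taylor expansion of $DG$ around $\xi_{x_{0},r}$ together with a standard splitting into $\{|Du-\xi_{x_{0},r}|\leq K\}$ and its complement shows that $w:=u-\ell_{x_{0},r}$ is approximately $\mathbb{A}_{\xi_{x_{0},r}}$-harmonic, $\big|\dashint_{\ball(x_{0},r/2)}\langle\mathbb{A}_{\xi_{x_{0},r}}Dw,D\psi\rangle\dif x\big|\leq\omega\big(\Phi(x_{0},r)\big)\,\|D\psi\|_{\lebe^{\infty}}$ with $\omega(t)\to0$ as $t\to0$; the $\mathbb{A}$-harmonic approximation lemma then produces an exactly $\mathbb{A}_{\xi_{x_{0},r}}$-harmonic map $h$ on $\ball(x_{0},r/2)$ with $\dashint_{\ball(x_{0},r/2)}V_{p}\big(\tfrac{|w-h|}{r}\big)\dif x$ arbitrarily small relative to $\Phi(x_{0},r)$. \emph{(iii) Decay}: as $\mathbb{A}_{\xi_{x_{0},r}}$ is Legendre--Hadamard with constants uniform on $\{|\xi|\leq M\}$, Lemma~\ref{lem:toplinear} gives $\sup_{\ball(x_{0},\tau r)}|Dh-(Dh)_{\ball(x_{0},r/2)}|\leq C\tau\dashint_{\ball(x_{0},r/2)}|Dh-(Dh)_{\ball(x_{0},r/2)}|\dif x$; combining (i)--(iii) and using once more the $V_{p}$-estimates yields, upon first fixing $\tau$ small and then $\varepsilon$, the excess-improvement (with a genuine $\tau^{2}$-gain up to corrections, which is what renders any Hölder exponent $<1$ attainable).

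Iterating the excess-improvement along the radii $\tau^{k}r$, and noting that $\sum_{k}\Phi(x_{0},\tau^{k}r)^{1/p}<\infty$ keeps the mean values $\xi_{x_{0},\tau^{k}r}$ in a fixed ball, gives $\Phi(x_{0},\varrho)\leq C\varrho^{2\alpha}$ for all small $\varrho$, locally uniformly near any point where the two smallness conditions hold at a single scale; a Campanato-type argument upgrades this to $Du\in\hold^{0,\alpha}_{\locc}$, i.e.\ $u\in\hold^{1,\alpha}_{\locc}$, there, for every $\alpha<1$ (one loses an arbitrarily small fraction of the decay rate at each step). Finally one sets $\Omega_{u}:=\big\{x_{0}\in\Omega\colon\liminf_{\varrho\searrow0}\Phi(x_{0},\varrho)=0\ \text{and}\ \limsup_{\varrho\searrow0}|(Du)_{\ball(x_{0},\varrho)}|<\infty\big\}$; this set is open, since the two conditions are stable along the iteration, and carries full Lebesgue measure because $\mathscr{L}^{n}$-a.e.\ point is a Lebesgue point of $Du$ — which is the assertion.

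The step I expect to be the main obstacle is the pair (i)--(ii) in the degenerate range $1<p<2$: there $\mathbb{A}_{\xi}$ loses ellipticity as $|\xi|\to\infty$, so one cannot compare with a single fixed elliptic system and must carry the whole scheme through the shifted quantities $(1+|\xi_{x_{0},r}|^{2}+|\cdot|^{2})^{\frac{p-2}{2}}|\cdot|^{2}$ of \eqref{eq:Vpestimate}; making all error terms in the Caccioppoli inequality and in the linearisation small \emph{uniformly in the shift}, so that $\varepsilon(M)$ and $\tau(M)$ genuinely depend on $M$ (and on $n,p$ and the structural constants) only, is the technical core of \cite{CFM}. For $p\geq 2$ the non-degenerate inequality \eqref{eq:weakerStrongQC} removes this point and the argument reduces to the original one of \cite{AcerbiFusco1}.
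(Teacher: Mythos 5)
Proposition~\ref{prop:CFM} is not proved in the paper at all: it is imported verbatim as a background result from \cite{AcerbiFusco1} (for $p\geq 2$) and \cite{CFM} (for $1<p<2$), and the paper's own contribution in Section~3 is only the reduction of the symmetric-gradient functional to this full-gradient statement via the Korn-type inequalities of Proposition~\ref{prop:OrliczKorn}. So there is no in-paper proof to compare against; what your proposal does is reconstruct, essentially correctly, the classical scheme of the two cited works: Caccioppoli inequality of the second kind from strong quasiconvexity, approximate $\A$-harmonicity of $u-\ell_{x_{0},r}$ via the Euler--Lagrange equation and Taylor expansion of $DG$ around the mean value, interior decay for the comparison map from the Legendre--Hadamard condition, and iteration plus a Campanato-type upgrade, with the shifted quantities of \eqref{eq:Vpestimate} carrying the degenerate case $1<p<2$. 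Two minor points of fidelity: first, the global bound $|D^{2}G(\xi)|\leq c(1+|\xi|^{2})^{\frac{p-2}{2}}$ you invoke is not among the hypotheses of the proposition; in \cite{AcerbiFusco1,CFM} one only uses boundedness and uniform continuity of $D^{2}G$ on the slabs $\{|\xi|\leq M\}$ together with the Legendre--Hadamard ellipticity extracted from \eqref{eq:weakerStrongQC} resp.\ \eqref{eq:convenientrewriteSQC}, which is exactly what your restriction to $|\xi_{x_{0},r}|\leq M$ requires, so this is cosmetic rather than a gap. Second, \cite{AcerbiFusco1} actually argues by an indirect blow-up argument rather than through the $\A$-harmonic approximation lemma you describe, while \cite{CFM} is closer to your formulation; both routes are standard and interchangeable at this level of generality, so your sketch is an acceptable account of the cited result, and the only thing to keep in mind is that within this paper the proposition functions as a citation, with the genuinely new work being the reduction argument for Theorem~\ref{thm:ppartialregularity} and the separate treatment of the case $p=1$.
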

Working from Proposition~\ref{prop:CFM}, we proceed to establish Theorem~\ref{thm:ppartialregularity}:
\begin{proof}[Proof of Theorem~\ref{thm:ppartialregularity}, $p\geq 2$]
Let $g\in\hold^{2}(\rsym)$ satisfy the assumptions of Theorem~\ref{thm:ppartialregularity}. We then define a new integrand $G_{g}\colon\R^{n\times n}\to\R$ by 
\begin{align}\label{eq:Ggdef}
G_{g}(z):=g(z^{\sym}),\qquad z\in\R^{n\times n}.
\end{align}
Our aim is to establish that $G_{g}$ satisfies the assumptions of Proposition~\ref{prop:CFM}. Clearly, $G_{g}=g\circ\Pi_{\sym}$, where $\Pi_{\sym}\colon\R^{n\times n}\to\rsym$ is the orthogonal projection onto the symmetric matrices, and hence $G_{g}\in\hold^{2}(\R^{n\times n})$. Moreover, since $|z^{\sym}|\leq |z|$ for all $z\in\R^{n\times n}$, $|G_{g}(z)|\leq L_{p}(1+|z^{\sym}|^{p}) \leq L_{p}(1+|z|^{p})$ by \eqref{eq:pgrowthintro}, and so $G_{g}$ satisfies \eqref{eq:pgrowth} for all $z\in\R^{n\times n}$. It thus remains to show that $G_{g}$ satisfies \eqref{eq:weakerStrongQC}.

Recall that $g\colon\rsym\to\R$ satisfies \eqref{eq:pSSQC}. We note that, as $p\geq 2$, for each fixed $x\in Q$, the function $H_{x}\colon s\mapsto (1+s^{2}+|\sg(u)(x)|^{2})^{\frac{p-2}{2}}|\sg(u)(x)|^{2}$ is non-decreasing and $(1+t^{2})^{\frac{p-2}{2}}t^{2}\simeq t^{2}+t^{p}$ for all $t\geq 0$. We obtain similarly to \eqref{eq:convenientrewriteSQC} that there exists $\widetilde{\mu}>0$ such that 
\begin{align*}
\widetilde{\mu} \int_{Q}|\sg(\varphi)|^{2}+|\sg(\varphi)|^{p}\dif x \leq \int_{Q}g(z^{\sym}+\sg(\varphi))-g(z^{\sym})\dif x
\end{align*}
for all $z\in\R^{n\times n}$ and all $\varphi\in\hold_{c}^{\infty}(Q;\R^{n})$. By the usual Korn inequalities (i.e., considering the $N$-function $t\mapsto t^{q}$ for $q>1$ in Proposition~\ref{prop:OrliczKorn}), there exists $\mu>0$ such that 
\begin{align*}
\int_{Q}G_{g}(z+D\varphi)-G_{g}(z)\dif x & = \int_{Q}g(z^{\sym}+\sg(\varphi))-g(z^{\sym})\dif x \\ & \geq \widetilde{\mu} \int_{Q}|\sg(\varphi)|^{2}+|\sg(\varphi)|^{p}\dif x \geq\mu\int_{Q}|D\varphi|^{2}+|D\varphi|^{p}\dif x
\end{align*}
for all $\varphi\in\hold_{c}^{\infty}(Q;\R^{n})$ and $z\in\R^{n\times n}$. Hence $G_{g}$ satisfies \eqref{eq:weakerStrongQC} and so $G_{g}$ satisfies the requirements of Proposition~\ref{prop:CFM}. To conclude the proof, it suffices to note that if $u\in\sobo_{\locc}^{1,p}(\Omega;\R^{n})$ is a local minimiser for $v\mapsto \int g(\sg(v))\dif x$, then it is a local minimiser for $v\mapsto \int G_{g}(Dv)\dif x$. By Proposition~\ref{prop:CFM}~\ref{item:reductionreg1}, the proof is complete. 
\end{proof}
\begin{proof}[Proof of Theorem~\ref{thm:ppartialregularity}, $1<p<2$.] We may argue as in the proof in the superquadratic growth case apart from establishing the $p$-strong quasiconvexity of $G_{g}$. Note that, for $1<p<2$, $H_{x}$ as defined above is not non-decreasing anymore; hence a more refined argument is required. To establish that $G_{g}$ is $p$-strongly quasiconvex in the sense of Proposition~\ref{prop:CFM}~\ref{item:reductionreg2}, we claim that there exists a constant $c=c(p,n)>0$ such that 
\begin{align}\label{eq:Kornmainp}
\int_{Q}(1+|z|^{2}+|D\varphi|^{2})^{\frac{p-2}{2}}|D\varphi|^{2}\dif x \leq c\int_{Q}(1+|z^{\sym}|^{2}+|\sg(\varphi)|^{2})^{\frac{p-2}{2}}|\sg(\varphi)|^{2}\dif x  
\end{align}
holds for all $z\in\R^{n\times n}$ and all $\varphi\in\hold_{c}^{\infty}(Q;\R^{n})$. 

In view of \eqref{eq:Kornmainp}, let $\varphi\in\hold_{c}^{\infty}(Q;\R^{n})$ and $z\in\R^{n\times n}$ be arbitrary. Since $1<p<2$, the function $s\mapsto (1+|s|^{2}+|D\varphi(x)|^{2})^{\frac{p-2}{2}}$ is decreasing in $s$ for every $x\in Q$. Thus, as $|z^{\sym}|\leq |z|$ for all $z\in\R^{n\times n}$, 
\begin{align}\label{eq:Kornstart}
\int_{Q}(1+|z|^{2}+|D\varphi|^{2})^{\frac{p-2}{2}}|D\varphi|^{2}\dif x & \leq \int_{Q}(1+|z^{\sym}|^{2}+|D\varphi|^{2})^{\frac{p-2}{2}}|D\varphi|^{2}\dif x =: (*).
\end{align}
Now, define a function $\psi\colon\R_{\geq 0}\to\R_{\geq 0}$ by  
\begin{align}\label{eq:psimaindef}
\psi(t):=(1+t)^{p-2}t^{2},\qquad t\geq 0. 
\end{align}
Then we have, with the correspondingly shifted function $\psi_{a}$ being defined for $a\geq 0$ by \eqref{eq:shiftedNfunction},
\begin{align}\label{eq:psiaestimate}
\psi_{a}(t)\simeq \psi''(a+t)t^{2} \simeq (1+a+t)^{p-2}t^{2}\simeq (1+|a|^{2}+|t|^{2})^{\frac{p-2}{2}}t^{2}, 
\end{align}
and the constants implicit in '$\simeq$' are \emph{independent of $a$}; the lengthy yet elementary verification of this fact is deferred to the appendix, Section~\ref{sec:app1}. 
As a consequence of Lemma~\ref{lem:conjugate} and $p>1$, $\psi_{a}$ belongs to $\Delta_{2}\cap\nabla_{2}$ and, most importantly, $\Delta_{2}(\psi_{a})$ and $\nabla_{2}(\psi_{a})$ are independent of $a\geq 0$. Hence, by Proposition~\ref{prop:OrliczKorn}, there exists a constant $A=A(\Delta_{2}(\psi_{a}),\nabla_{2}(\psi_{a}))>0$ -- which, since $\Delta_{2}(\psi_{a})$ and $\nabla_{2}(\psi_{a})$ \emph{do not depend on $a$}, is actually independent of $a$: $A=A(\Delta_{2}(\psi),\nabla_{2}(\psi))>0$ -- such that for all $\varphi\in\hold_{c}^{\infty}(Q;\R^{n})$ there holds
\begin{align}\label{eq:TabbyKorn}
\int_{Q}\psi_{a}(|D\varphi|)\dif x \leq \int_{Q}\psi_{a}(A|\sg(\varphi)|)\dif x. 
\end{align}
Clearly, since $\psi$ and each $\psi_{a}$ are monotonically increasing, we may assume that $A>1$. Applying the previous inequality to the particular choice $a=|z^{\sym}|$, we therefore obtain
\begin{align}\label{eq:Korncontinue}
\begin{split}
(*) & \,\,\stackrel{\eqref{eq:psiaestimate}}{\leq} c\int_{Q}\psi_{|z^{\sym}|}(|D\varphi|)\dif x \\
& \; \stackrel{\eqref{eq:TabbyKorn}}{\leq} c \int_{Q}\psi_{|z^{\sym}|}(A|\sg(\varphi)|)\dif x \\ 
& \;\stackrel{\eqref{eq:psiaestimate}}{\leq} c \int_{Q}(1+|z^{\sym}|^{2}+A^{2}|\sg(\varphi)|^{2})^{\frac{p-2}{2}}A^{2}|\sg(\varphi)|^{2}\dif x \\ 
& \;\;\, \leq cA^{2}\int_{Q} (1+|z^{\sym}|^{2}+|\sg(\varphi)|^{2})^{\frac{p-2}{2}}|\sg(\varphi)|^{2}\dif x,
\end{split}
\end{align}
the last inequality being valid by $A>1$ and $p-2<0$. Then, combining \eqref{eq:Kornstart} and \eqref{eq:Korncontinue}, we arrive at \eqref{eq:Kornmainp}. We can then proceed in showing that $G_{g}$ defined by \eqref{eq:Ggdef} is $p$-strongly quasiconvex. To this end, let $\varphi\in\hold_{c}^{\infty}(Q;\R^{n})$ and $z\in\R^{n\times n}$ be arbitrary. We then find  
\begin{align*}
\int_{Q}G_{g}(z+D\varphi)-G_{g}(z)\dif x & \;= \int_{Q}g(z^{\sym}+\sg(\varphi))-g(z^{\sym})\dif x \\ 
& \!\!\!\!\!\!\!\!\!\stackrel{\eqref{eq:Vpestimate},\,\eqref{eq:pSSQC}}{\geq} \ell_{p}\theta_{p} \int_{Q}(1+|z^{\sym}|^{2}+|\sg(\varphi)|^{2})^{\frac{p-2}{2}}|\sg(\varphi)|^{2}\dif x \\ 
& \stackrel{\eqref{eq:Kornmainp}}{\geq} \nu \int_{Q}(1+|z|^{2}+|D\varphi|^{2})^{\frac{p-2}{2}}|D\varphi|^{2}\dif x, 
\end{align*}
where $\nu=\frac{\ell_{p}\theta_{p}}{c}$ with $\ell_{p}>0$ from \eqref{eq:pSSQC} and the constant $c>0$ from \eqref{eq:Kornmainp}. Thus Theorem~\ref{thm:ppartialregularity} follows for the growth range $1<p<2$ and in combination with the above, the proof of Theorem~\ref{thm:ppartialregularity} is complete. 
\end{proof}

\begin{remark}\label{rem:p=1SQC}
When linear growth integrands are concerned, setting $p=1$ in \eqref{eq:convenientrewriteSQC} does not give rise to an equivalent notion of ($1$-)strong quasiconvexity in the sense of \eqref{eq:pSQC} with $p=1$ (also see the restriction of exponents in Lemma~\ref{lem:auxVSQC}). This can be even seen for strongly convex linear growth integrands such as the area integrand $m\colon z\mapsto\sqrt{1+|z|^{2}}(=V(z)+1)$, compare \eqref{eq:VestimatesSQCAuxB} from below. The underlying reason for this is that convex, linear growth $\hold^{2}$-integrands typically exhibit $(p,q)$-type growth behaviour on the level of the second derivatives in the following sense: There exist $1<a<\infty$ and constants $\Lambda_{1},\Lambda_{2}>0$ such that 
\begin{align}\label{eq:aelliptic}
\Lambda_{1}\frac{|z|^{2}}{(1+|\xi|^{2})^{\frac{a}{2}}}\leq \langle m''(\xi)z,z\rangle \leq \Lambda_{2}\frac{|z|^{2}}{(1+|\xi|^{2})^{\frac{1}{2}}}\qquad\text{for all}\;z,\xi\in\R_{\sym}^{n\times n},
\end{align}
see \cite{GK1} and \cite[Ex.~4.17]{Bildhauer} for a discussion. In addition, note that if we consider an $a$-elliptic integrand satisfying \eqref{eq:SSQC} (the latter in turn expressing a coerciveness property on $\ld$ or $\bd$ but \emph{not} $\sobo^{1,1}$ or $\bv$), one obtains that generalised minima belong to $\sobo_{\locc}^{1,1}$ for if $1<a<1+\frac{2}{n}$, cf.~\cite[Thm.~1.1]{G1}. Even though the general reducibility to the full gradient case for $p=1$ is ruled out by \textsc{Ornstein}'s Non-Inequality, one might hope to employ a variant of such a procedure for $a$-elliptic integrands, where one has improved integrability estimates. However, the available $\sobo_{\locc}^{1,1}$-regularity result of \cite{G1} hinges on \emph{specific} minimising sequences being locally uniformly bounded in some $\sobo^{1,q}$, $q>1$, but this neither implies boundedness of all minimising sequences in $\sobo_{\locc}^{1,1}$ nor in $\sobo^{1,1}(\Omega;\R^{n})$. In light of \cite[Prop.~3.1]{GK2}, this would be necessary for a possible reduction procedure; still, if possible, it would only work for \emph{convex} integrands with a certain ellipticity ratio, not being applicable to the strongly (symmetric) quasiconvex case. 
\end{remark}

\section{A Fubini--type theorem for $\bd$--maps}\label{sec:fubini}
As one of the main tools in the proof of Theorem~\ref{thm:main1}, we now give a Fubini-type result for functions of bounded deformation. In effect, this establishes that on $\mathscr{L}^{1}$-a.e. sphere with fixed center, $\bd$-maps possess additional fractional differentiability and integrability; on arbitrary spheres, we can only expect $\lebe^{1}$-integrability of interior traces.  Aiming to linearise later on, suitable competitor maps attaining these more regular boundary values then will equally belong to better Sobolev spaces and so the results of Lemma~\ref{lem:toplinear} become accessible.
\begin{theorem}\label{thm:fubini1}
Let $n\geq 2$, $0<\theta<1$, $x_{0}\in\R^{n}$ and $u\in\bd_{\locc}(\R^{n})$. Then for $\mathscr{L}^{1}$--almost all radii $r>0$, $\mathscr{H}^{n-1}$-a.e. $x\in\partial\!\ball(x_{0},r)$ are Lebesgue points for $u$. For such $r>0$, the restrictions $u|_{\partial\!\ball(x_{0},r)}$ are hereafter well-defined and moreover belong to $\sobo^{\theta,n/(n-1+\theta)}(\partial\!\ball(x_{0},r);\R^{n})$. 

More precisely, there exists a constant $C=C(n,\theta)>0$ (which, in particular, is independent of $x_{0}$ and $u$) with the following property: For all $0<s<r<\infty$ there exist $t\in (s,r)$ and $\alpha\in\mathscr{R}(\R^{n})$ such that for $\mathscr{H}^{n-1}$-a.e. $x\in\partial\!\ball(x_{0},t)$, $u(x)$ coincides with its precise representative and there holds 
\begin{align}\label{eq:fubinispheremainestPR}
\begin{split}
\left(\dashint_{\partial\!\ball(x_{0},t)}\int_{\partial\!\ball(x_{0},t)}\frac{|u_{\alpha}(x)-u_{\alpha}(y)|^{\frac{n}{n-1+\theta}}}{|x-y|^{n-1+\frac{n\theta}{n-1+\theta}}}\dif\sigma_{x}\dif \sigma_{y} \right)^{\frac{n-1+\theta}{n}} &  \leq C\frac{r^{n}}{t^{\frac{(n-1)(n-1+\theta)}{n}}(r-s)^{\frac{n-1+\theta}{n}}}\times \\ & \times
\dashint_{\overline{\ball(x_{0},r)}}|\E u|.
\end{split}
\end{align}

\end{theorem}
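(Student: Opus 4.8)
The plan is to deduce the spherical statement from the solid-ball estimate of Proposition~\ref{prop:fractionalpoincareBD}\ref{item:poincare2} by a Fubini/averaging argument over concentric spheres. First I would reduce to $x_0=0$ by translation invariance, and write $\ball_\rho:=\ball(0,\rho)$. For a fixed $u\in\bd_{\locc}(\R^n)$ and fixed radii $0<s<r<\infty$, apply Proposition~\ref{prop:fractionalpoincareBD}\ref{item:poincare2} on the ball $\ball_r$ to obtain $\alpha\in\mathscr{R}(\R^n)$ with
\begin{align}\label{eq:planstart}
\left(\dashint_{\ball_r}\int_{\ball_r}\frac{|u_\alpha(x)-u_\alpha(y)|^{p}}{|x-y|^{n+\theta p}}\dif x\dif y\right)^{\frac1p}\leq C\,r^{1-\theta}\dashint_{\ball_r}|\E u|,
\end{align}
where $p=\tfrac{n}{n-1+\theta}$ and $u_\alpha=u-\alpha$. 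The point is that this same $\alpha$ will serve on a well-chosen sphere $\partial\!\ball_t$ with $t\in(s,r)$.

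Next I would pass from the solid double integral to an integral over spheres by polar coordinates. Writing $x=\rho\omega$, $y=\tau\eta$ with $\rho,\tau\in(s,r)$ and $\omega,\eta\in\s^{n-1}$ (and noting $\dif x=\rho^{n-1}\dif\rho\dif\sigma_\omega$ etc.), restricting the integration in \eqref{eq:planstart} to $x,y\in\ball_r\setminus\ball_s$ and then further to $\rho,\tau$ both lying in the annulus gives a lower bound of the left side by
\begin{align}\label{eq:planfubini}
c\int_s^r\!\!\int_s^r \rho^{n-1}\tau^{n-1}\left(\iint_{\s^{n-1}\times\s^{n-1}}\frac{|u_\alpha(\rho\omega)-u_\alpha(\tau\eta)|^{p}}{|\rho\omega-\tau\eta|^{n+\theta p}}\dif\sigma_\omega\dif\sigma_\eta\right)\dif\rho\dif\tau.
\end{align}
Specialising to the diagonal contribution where $|\rho-\tau|$ is comparable to the angular separation, or more simply keeping only the part of the double $\rho,\tau$-integral over $\{|\rho-\tau|<r-s\}$ and using that for $\rho,\tau$ comparable the kernel $|\rho\omega-\tau\eta|^{-(n+\theta p)}$ dominates (up to a dimensional constant) $t^{-(n+\theta p)}$ times the spherical kernel $|\omega-\eta|^{-(n-1+\theta p)}$ after rescaling to radius $t$, one extracts the seminorm $[u_\alpha]_{\sobo^{\theta,p}(\partial\!\ball_t)}$ of the restriction. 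The mean-value theorem in the $t$-variable then produces a single $t\in(s,r)$ for which
\begin{align}\label{eq:planmvt}
\left(\dashint_{\partial\!\ball_t}\int_{\partial\!\ball_t}\frac{|u_\alpha(x)-u_\alpha(y)|^{p}}{|x-y|^{n-1+\theta p}}\dif\sigma_x\dif\sigma_y\right)^{\frac1p}\leq C\,\frac{r^{n/p}}{t^{(n-1)/p}(r-s)^{1/p}}\,r^{1-\theta}\omega_n^{-1/p}r^{-n/p}\dashint_{\ball_r}|\E u|,
\end{align}
and bookkeeping the powers of $r$, $t$ and $(r-s)$ (using $\dashint_{\ball_r}|\E u|=\tfrac{1}{\omega_n r^n}|\E u|(\ball_r)$ and $\dashint_{\overline{\ball_r}}|\E u|=\tfrac{1}{\omega_n r^n}|\E u|(\overline{\ball_r})$, which agree for a.e.\ $r$) yields exactly \eqref{eq:fubinispheremainestPR} with the stated homogeneity $r^{n}\,t^{-(n-1)(n-1+\theta)/n}\,(r-s)^{-(n-1+\theta)/n}$. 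The Lebesgue-point claim on almost every sphere and the fact that $u|_{\partial\!\ball_t}$ is then genuinely $\sobo^{\theta,p}(\partial\!\ball_t;\R^n)$ follow from the finiteness of the spherical seminorm for $\mathscr{L}^1$-a.e.\ $t$ (a one-dimensional Fubini argument applied to the finite solid integral $\iint_{\ball_r\times\ball_r}|u(x)-u(y)|^p|x-y|^{-n-\theta p}\dif x\dif y$, which is finite by \eqref{eq:planstart}) combined with the standard fact that $\bd_{\locc}$-maps, being $\lebe^1_{\locc}$, have $\mathscr{H}^{n-1}$-a.e.\ Lebesgue points on $\mathscr{L}^1$-a.e.\ sphere.

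The main obstacle will be the comparison of kernels in passing from \eqref{eq:planfubini} to the spherical form \eqref{eq:planmvt}: one must be careful that $|\rho\omega-\tau\eta|$ controls the intrinsic geodesic (or chordal) distance on $\partial\!\ball_t$ only when $\rho$ and $\tau$ are comparable and close to $t$, so the restriction of the $\rho,\tau$-integration to a thin annulus $\{|\rho-t|,|\tau-t|<\delta(r-s)\}$ has to be chosen so that, on one hand, the solid kernel dominates a fixed multiple of the rescaled spherical kernel and, on the other hand, the remaining factors of $\rho^{n-1}\tau^{n-1}$ and the length $|t-s|$-type weights come out with the correct powers; this is precisely where the somewhat unusual exponent $r^{n}t^{-(n-1)(n-1+\theta)/n}(r-s)^{-(n-1+\theta)/n}$ originates, and tracking it requires care but no new idea. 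A secondary point is to ensure that the \emph{same} rigid deformation $\alpha$ — chosen once and for all from the solid estimate on $\ball_r$ — simultaneously makes the spherical seminorm small; this is automatic here because subtracting $\alpha$ is done before restricting to the sphere, so no separate spherical Poincar\'e correction is needed.
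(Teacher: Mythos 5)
There is a genuine gap at the heart of your argument, namely the passage from the polar--coordinate form of the solid double integral to the spherical seminorm at a single radius. After writing $x=\rho\omega$, $y=\tau\eta$, the integrand of the solid seminorm compares values of $u_{\alpha}$ at \emph{two different radii} $\rho\neq\tau$, whereas the quantity you need to extract, $[u_{\alpha}]_{\sobo^{\theta,p}(\partial\!\ball_{t})}^{p}$, compares values on one and the same sphere. Restricting the $(\rho,\tau)$--integration to a thin band around the diagonal and observing that the kernels $|\rho\omega-\tau\eta|^{-(n+\theta p)}$ and $t^{-(n+\theta p)}|\omega-\eta|^{-(n-1+\theta p)}$ are comparable there only matches the \emph{denominators}; it does nothing to convert the mixed--radius differences $|u_{\alpha}(\rho\omega)-u_{\alpha}(\tau\eta)|$ into same--radius differences $|u_{\alpha}(t\omega)-u_{\alpha}(t\eta)|$ (the diagonal $\{\rho=\tau\}$ is a null set for the double radial integration, so it carries no information by itself). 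Any attempt to bridge this by a triangle inequality through $u_{\alpha}(t\omega)$, $u_{\alpha}(t\eta)$ forces you to control radial increments such as $|u_{\alpha}(t\omega)-u_{\alpha}(\rho\omega)|$, which is again a Fubini--type restriction statement (now to rays) and is not furnished by the solid seminorm either. Note also that since $\theta p<1$ there is no trace theorem on hypersurfaces for $\sobo^{\theta,p}$, so the inequality you want cannot be quoted; it is precisely the content of Step 1 of the paper's proof, i.e. inequality \eqref{eq:Fubinifractional}, and it is where the real work lies.

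The paper closes exactly this gap with a geometric device absent from your proposal: for $x,y\in\mathbb{S}^{n-1}$ it introduces the intermediate point $\pi_{t}(x,y)=t\,(x+y)/|x+y|$, estimates $|u(rx)-u(ry)|^{p}$ by the two differences through $u(\pi_{t}(x,y))$, and then \emph{averages over the auxiliary radius} $t\in[r(1-\tfrac{|x-y|}{4}),r]$; the elementary inequalities \eqref{eq:TabeaProjectionInequality}--\eqref{eq:TabeaProjectionInequality2} guarantee that $(rx,\pi_{t}(x,y))$ is an admissible pair for the solid seminorm with $|rx-\pi_{t}(x,y)|\lesssim r|x-y|$, and the length $\sim r|x-y|$ of the $t$--interval exactly compensates the one--power discrepancy between the spherical exponent $n+\theta p-1$ and the solid exponent $n+\theta p$. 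This is how the ``second radial integral'' is created; your bookkeeping of the powers $r^{n}t^{-(n-1)(n-1+\theta)/n}(r-s)^{-(n-1+\theta)/n}$ and your mean--value selection of a good $t\in(s,r)$ are consistent with the paper's Steps 2--3 (which, in addition, run the estimate through mollifications $u^{\varepsilon}$, pass the rigid deformations $\alpha_{\varepsilon}\to\alpha$ to the limit, and use Kohn's formula \eqref{eq:Kohnformula} to identify the good spheres and the precise representative), but without the midpoint--projection step your central display relating the solid and spherical seminorms remains unproved, and the proof does not go through as written.
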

\begin{proof}
It is no loss of generality to assume $x_{0}=0$, and hence we write $\ball_{r}:=\ball(0,r)$ in the sequel. For clarity, we divide the proof into three parts.

\emph{Step 1. A general Fubini-type theorem for $\sobo^{\vartheta,p}$-maps.} In a first step, we let $0<\vartheta<1$, $1\leq p<\infty$ and let $u\in(\sobo^{\vartheta,p}\cap\hold)(\R^{n};\R^{n})$. The aim of this step is to show the inequality
\begin{align}\label{eq:Fubinifractional}
\int_{0}^{R}\iint_{\partial\!\ball_{r}\times\partial\!\ball_{r}}\frac{|u(\widetilde{x})-u(\widetilde{y})|^{p}}{|\widetilde{x}-\widetilde{y}|^{n+\vartheta p-1}}\dif\sigma_{\widetilde{x}}\dif\sigma_{\widetilde{y}}\dif r \leq C \iint_{\ball_{R}\times\ball_{R}}\frac{|u(\widetilde{x})-u(\widetilde{y})|^{p}}{|\widetilde{x}-\widetilde{y}|^{n+\vartheta p}}\dif \widetilde{x}\dif \widetilde{y}
\end{align}
for all $R>0$, where $C=C(n,\vartheta,p)>0$ is a constant. Denoting the integral on the left by $(*)$, we change variables to the unit ball and put $\widetilde{x}=rx$, $\widetilde{y}=ry$. We thereby obtain, with $\mathbb{S}^{n-1}:=\partial\!\ball(0,1)$,
\begin{align}\label{eq:FubiniTscherp1}
\begin{split}
(*)\leq \iint_{\mathbb{S}^{n-1}\times\mathbb{S}^{n-1}}\int_{0}^{R}(r^{n-1})^{2}\frac{|u(rx)-u(ry)|^{p}}{|rx-ry|^{n+\vartheta p-1}} \dif r\dif\sigma_{y}\dif\sigma_{x}.
\end{split}
\end{align}
In comparison with the right-hand side of \eqref{eq:Fubinifractional}, the ultimate integral only contains \emph{one} integral with respect to the radii at the cost of a lower power in the integrand's denominator. We thus must argue for the appearance of the second such integral while raising the power of the relevant integrand's denominator by $1$. Let $x\in\mathbb{S}^{n-1}$ and $0<t<R$ and be given. We put
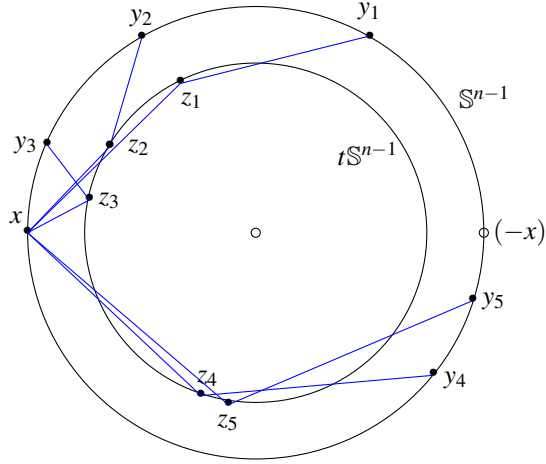
\begin{figure}\label{fig:Fubini2}
\centering
\begin{tikzpicture}[scale=3]
\draw (0,0) circle (0.75cm);
\draw (0,0) circle (1cm);
\node at (-1,0) {\textbullet};
\node[above] at (-1.05,0) {$x$};
\node at (0.5,0.86) {\textbullet};
\node at (-0.33,0.66) {\textbullet};
\node[below] at (-0.28,0.66) {$z_{1}$};
\node[above] at (0.5,0.9) {$y_{1}$};
\draw[blue] (-1,0) -- (-0.33,0.66);
\draw[blue] (0.5,0.87) -- (-0.33,0.66);
\node at (-0.5,0.86) {\textbullet};
\node[above] at (-0.5,0.87) {$y_{2}$};
\node at (-0.64,0.38) {\textbullet};
\node[right] at (-0.60,0.38) {$z_{2}$};
\draw[blue] (-1,0) -- (-0.64,0.38);
\draw[blue] (-0.64,0.38) -- (-0.5,0.86);
\node at (-0.64,0.38) {\textbullet};
\node at (-0.915,0.39) {\textbullet};
\node[left] at (-0.915,0.39) {$y_{3}$};
\node at (-0.729,0.145) {\textbullet};
\draw[blue] (-1,0) -- (-0.729,0.145);
\draw[blue] (-0.729,0.145) -- (-0.915,0.39);
\node[right] at (-0.729,0.145) {$z_{3}$};
\draw (1,0) circle (0.02cm);
\node[right] at (1,0) {$(-x)$};
\node[right] at (0.85,0.6) {$\mathbb{S}^{n-1}$};
\node[right] at (0.32,0.35) {$t\mathbb{S}^{n-1}$};
\node at (0.78,-0.63) {\textbullet};
\node at (-0.24,-0.72) {\textbullet};
\draw[blue] (-1,0) -- (-0.24,-0.72);
\draw[blue] (0.78,-0.63) -- (-0.24,-0.72);
\node[above] at (-0.21,-0.72) {$z_{4}$};
\node at (0.952,-0.3) {\textbullet};
\node at (-0.12,-0.76) {\textbullet};
\node[below] at (-0.12,-0.76) {$z_{5}$};
\draw[blue] (-1,0) -- (-0.12,-0.76);
\draw[blue] (0.952,-0.3)-- (-0.12,-0.76);
\node[right] at (0.952,-0.3) {$y_{5}$};
\node[right] at (0.785,-0.65) {$y_{4}$};
\draw (0,0) circle (0.02cm);
\end{tikzpicture}
\caption{The geometric situation in the proof of Theorem~\ref{thm:fubini1} in two dimensions for selected points $y=y_{i}$. Excluding the $\mathcal{H}^{n-1}$--nullset $(-x)$, we project the midpoints of the line segment of $x$ and $y_{i}$ onto $t\mathbb{S}^{n-1}$. This gives rise to the projections $z_{i}=\pi_{t}(x,y_{i})$, and we consequently integrate with respect to $t$ to have the requisite second radius integral emerging.}
\end{figure} 
\begin{align*}
\pi_{t}(x,y):= t\frac{x+y}{|x+y|},\qquad y\in\mathbb{S}^{n-1}\setminus\{-x\}, 
\end{align*}
which is the projection of the mid point of the line segment $[x,y]$ onto $\partial\!\ball_{t}$, cf. Figure~\ref{fig:Fubini2}. Hence,  the mapping $\Pi_{t,x}\colon \mathbb{S}^{n-1}\setminus\{-x\}\to t\mathbb{S}^{n-1}$ given by $\Pi_{t,x}(y):=\pi_{t}(x,y)$ is well--defined. We now estimate for arbitrary $x\in\mathbb{S}^{n-1}$ and $y\in\mathbb{S}^{n-1}\setminus\{-x\}$
\begin{align*}
|u(rx)-u(ry)|^{p} & \leq C ( |u(rx)-u(\pi_{t}(x,y))|^{p}+|u(ry)-u(\pi_{t}(x,y))|^{p}).
\end{align*}
Hence for all $0<a(x,y)\leq b(x,y)\leq R$, an integration with respect to $t\in [a(x,y),b(x,y)]$ yields
\begin{align}\label{eq:FubiniTscherp3}
\begin{split}
|u(rx)-u(ry)|^{p} & \leq C \dashint_{a(x,y)}^{b(x,y)}|u(rx)-u(\pi_{t}(x,y))|^{p}\dif t  + C\dashint_{a(x,y)}^{b(x,y)}|u(ry)-u(\pi_{t}(x,y))|^{p}\dif t.
\end{split}
\end{align}
At this point, fix $0<r\leq R$. We then choose $a(x,y):=r(1-\frac{|x-y|}{4})$ and $b(x,y):=r$. This particularly implies by $|x-y|\leq 2$ for all $x,y\in\mathbb{S}^{n-1}$
\begin{align}\label{eq:FubiniTscherp4}
|b(x,y)-a(x,y)|=r\frac{|x-y|}{4}\;\;\;\text{and}\;\;\;\frac{r}{2}\leq a(x,y)\leq b(x,y)=r. 
\end{align}
Now, an elementary geometric consideration (cf. Figure~\ref{fig:Fubini2}) yields that for all $x\in\mathbb{S}^{n-1}$ and $y\in\mathbb{S}^{n-1}\setminus\{-x\}$ there holds
\begin{align}\label{eq:projectioninequalityappendix}
\left\vert x- \frac{x+y}{|x+y|}\right\vert \leq |x-y|.
\end{align}
We thus have for all $0<t\leq r \leq R$
\begin{align}\label{eq:TabeaProjectionInequality}
|rx - \pi_{t}(x,y)| & = r\left\vert x-\frac{t}{r}\frac{x+y}{|x+y|}\right\vert \leq r\left\vert x-\frac{x+y}{|x+y|}\right\vert + r(1-\tfrac{t}{r})\leq r|x-y|+(r-t).
\end{align}
Combining \eqref{eq:FubiniTscherp1},  \eqref{eq:FubiniTscherp3} and \eqref{eq:FubiniTscherp4}, we then obtain 
\begin{align*}
(*) & \leq C\iint_{\mathbb{S}^{n-1}\times\mathbb{S}^{n-1}}\int_{0}^{R}(r^{n-1})^{2}\dashint_{r(1-\frac{|x-y|}{4})}^{r}\frac{|u(rx)-u(\pi_{t}(x,y))|^{p}}{|rx-ry|^{n+\vartheta p-1}}\dif t\dif r\dif\sigma_{x}\dif\sigma_{y} \\ & + C\iint_{\mathbb{S}^{n-1}\times\mathbb{S}^{n-1}}\int_{0}^{R}(r^{n-1})^{2}\dashint_{r(1-\frac{|x-y|}{4})}^{r}\frac{|u(ry)-u(\pi_{t}(x,y))|^{p}}{|rx-ry|^{n+\vartheta p-1}}\dif t\dif r\dif\sigma_{y}\dif\sigma_{x} =:\mathbf{I}+\mathbf{II}, 
\end{align*}
where we have used that for each $x\in\mathbb{S}^{n-1}$, $\{-x\}$ is a nullset for $\mathscr{H}^{n-1}$. The two integrals are symmetric in $x$ and $y$ (also note that $\pi_{t}(x,y)=\pi_{t}(y,x)$), and so it suffices to employ the desired estimate for one of these two integrals. We first estimate by virtue of the first part of \eqref{eq:FubiniTscherp4}
\begin{align*}
\mathbf{I} & \leq C \iint_{\mathbb{S}^{n-1}\times\mathbb{S}^{n-1}}\int_{0}^{R}(r^{n-1})^{2}\int_{r(1-\frac{|x-y|}{4})}^{r}\frac{|u(rx)-u(\pi_{t}(x,y))|^{p}}{|rx-ry|^{n+\vartheta p}}\dif t\dif r\dif\sigma_{x}\dif\sigma_{y} =: \mathbf{J}, 
\end{align*}
so that the desired second radius integral has emerged. To estimate $\mathbf{J}$, note that if $r(1-\frac{|x-y|}{4})\leq t \leq r$, then 
\begin{align}\label{eq:TabeaProjectionInequality1}
-t\leq r\Big(\frac{|x-y|}{4}-1\Big)\Rightarrow r-t\leq r\frac{|x-y|}{4}  \stackrel{\eqref{eq:TabeaProjectionInequality}}{\Longrightarrow} |rx-\pi_{t}(x,y)|\leq \frac{5}{4}r|x-y|.
\end{align}
Moreover, we note that for such $t$, we have 
\begin{align}\label{eq:TabeaProjectionInequality2}
r(1-\frac{|x-y|}{4})\leq t \leq r \Rightarrow (1-\frac{|x-y|}{4})\leq \frac{t}{r} \leq 1 \Rightarrow 1 \leq \frac{r}{t} \leq \frac{1}{1-\frac{|x-y|}{4}}\stackrel{|x-y|\leq 2}{\leq}2. 
\end{align}
We then estimate 
\begin{align*}
\mathbf{J} & \stackrel{\eqref{eq:TabeaProjectionInequality1}}{\leq} C \iint_{\mathbb{S}^{n-1}\times\mathbb{S}^{n-1}}\int_{0}^{R}(r^{n-1})^{2}\int_{r(1-\frac{|x-y|}{4})}^{r}\frac{|u(rx)-u(\pi_{t}(x,y))|^{p}}{|rx-\pi_{t}(x,y)|^{n+\vartheta p}}\dif t\dif r\dif\sigma_{x}\dif\sigma_{y}\\
& =  C \iint_{\mathbb{S}^{n-1}\times\mathbb{S}^{n-1}}\int_{0}^{R}r^{n-1}\int_{r(1-\frac{|x-y|}{4})}^{r}\frac{|u(rx)-u(\pi_{t}(x,y))|^{p}}{|rx-\pi_{t}(x,y)|^{n+\vartheta p}}\left(\frac{r}{t}\right)^{n-1}t^{n-1}\dif t\dif r\dif\sigma_{x}\dif\sigma_{y}\\
& \stackrel{\eqref{eq:TabeaProjectionInequality2}}{\leq}  C \iint_{\mathbb{S}^{n-1}\times\mathbb{S}^{n-1}}\int_{0}^{R}r^{n-1}\int_{r(1-\frac{|x-y|}{4})}^{r}\frac{|u(rx)-u(\pi_{t}(x,y))|^{p}}{|rx-\pi_{t}(x,y)|^{n+\vartheta p}}t^{n-1}\dif t\dif r\dif\sigma_{x}\dif\sigma_{y} =\mathbf{J}'. 
\end{align*}
At this point, we change variables and put $z:=(x+y)/|x+y|$. By the geometry of the map $\Pi_{1,x}$ and the fact that for any $y\in\mathbb{S}^{n-1}$ there holds $\mathbb{S}^{n-1}\setminus\{-x\}\ni y\mapsto (x+y)/|x+y|\in\mathbb{S}^{n-1}$, the relevant Jacobian is seen to be bounded. A routine estimation thus yields 
\begin{align*}
\mathbf{J}' & \leq  C \iint_{\mathbb{S}^{n-1}\times\mathbb{S}^{n-1}}\int_{0}^{R}r^{n-1}\int_{r(1-\frac{|x-y|}{4})}^{r}\frac{|u(rx)-u(tz)|^{p}}{|rx-tz|^{n+\vartheta p}}t^{n-1}\dif t\dif r\dif\sigma_{x}\dif\sigma_{z}\\
& \leq C \int_{0}^{R}\int_{0}^{R}\int_{\mathbb{S}^{n-1}}\int_{\mathbb{S}^{n-1}}\frac{|u(rx)-u(tz)|^{p}}{|rx-tz|^{n+\vartheta p}}\dif\sigma_{x}\dif\sigma_{z}t^{n-1}r^{n-1}\dif t\dif r\\
& \leq C \iint_{\ball(0,R)\times\ball(0,R)}\frac{|u(\widetilde{x})-u(\widetilde{y})|^{p}}{|\widetilde{x}-\widetilde{y}|^{n+\vartheta p}}\dif \widetilde{x}\dif\widetilde{y}, 
\end{align*}
the ultimate inequality being a direct consequence of a passage to polar coordinates; here, $C>0$ still only depends on $n,\vartheta$ and $p$.  This establishes \eqref{eq:Fubinifractional} and concludes step 1. 

\emph{Step 2. Existence of sufficiently many Lebesgue points.} Since we finally aim to apply step 1 for the particular choice $\vartheta=\theta$ and $p=\frac{n}{n-1+\theta}$, we record that $\vartheta p<1$ so that the traces of $\sobo^{\vartheta,p}$-maps are a priori not well-defined along $\partial\!\ball_{r}$; thus we assumed $u\in(\sobo^{s,p}\cap\hold)(\R^{n};\R^{n})$ in step 1 so that this issue did not arise. In order to make use of step 1 for $\bd$-maps $u$ by Proposition~\ref{prop:fractionalpoincareBD}, we start off by ensuring the explicit pointwise evaluability of $u$ $\mathscr{H}^{n-1}$-a.e. on $\mathscr{L}^{1}$-a.e. sphere centered at the origin. Without loss of generality, let $u\in\bd(\R^{n})$ and $0<R_{1}<R_{2}<\infty$ be arbitrary. Since $|\E u|$ is a finite Radon measure, the set $I:=\{t\in (R_{1},R_{2})\colon\; |\E u|(\partial\!\ball_{t})>0\}$ is at most countable. Hence $\mathscr{L}^{1}((R_{1},R_{2})\setminus I)=R_{2}-R_{1}$. Let $t\in (R_{1},R_{2})\setminus I$. Since $\partial\!\ball_{t}$ is a $\hold^{1}$--hypersurface, \eqref{eq:Kohnformula} yields
\begin{align}\label{eq:RNSQC}
\begin{split}
\E u\mres\partial\!\ball_{t} = (u^{+}-u^{-})\odot\nu_{\partial\!\ball_{t}}\mathcal{H}^{n-1}\mres\partial\!\ball_{t}
\end{split}
\end{align}
with the one--sided Lebesgue limits $u^{\pm}$ and the outer unit normal $\nu_{\partial\!\ball_{t}}$ to $\partial\!\ball_{t}$. Therefore,
\begin{align}
\int_{\partial\!\ball_{t}}|(u^{+}-u^{-})\odot\nu_{\partial\!\ball_{t}}|\dif\mathcal{H}^{n-1}=|\E u|(\partial\!\ball_{t})\stackrel{t\in (R_{1},R_{2})\setminus I}{=}0. 
\end{align}
This implies $|(u^{+}-u^{-})\odot\nu_{\partial\!\ball_{t}}|=0$ $\mathcal{H}^{n-1}$--a.e. on $\partial\!\ball_{t}$, and since $|a|\,|b|\leq \sqrt{2}|a\odot b|$ by \eqref{eq:symmetrictensorproduct} for all $a,b\in\R^{n}$, we conclude that $\widetilde{u}(x):=u^{+}(x)=u^{-}(x)$ holds for $\mathcal{H}^{n-1}$--a.e. $x\in\partial\!\ball_{t}$. Then, by \eqref{eq:tracesdash}, we have for $\mathscr{H}^{n-1}$-a.e. such $x\in\partial\!\ball_{t}$
\begin{align*}
\lim_{r\searrow 0} \dashint\limits_{\ball(x,r)\cap\{y\colon\langle y-x,\nu_{\partial\!\ball_{t}}\rangle \geq 0 \}}|u-\widetilde{u}(x)|\dif\mathscr{L}^{n}=\lim_{r\searrow 0} \dashint\limits_{\ball(x,r)\cap \{y\colon\langle y-x,\nu_{\partial\!\ball_{t}}\rangle < 0 \}}|u-\widetilde{u}(x)|\dif\mathscr{L}^{n}=0. 
\end{align*}
Since $\mathscr{L}^{n}(\ball(x,r)\cap \{y\colon\langle y-x,\nu_{\partial\!\ball_{t}}\rangle \gtrless 0 \})= \frac{1}{2}\mathscr{L}^{n}(\ball(x,r))$, this consequently yields 
\begin{align}\label{eq:claimSu}
\begin{split}
\lim_{r\searrow 0}\dashint_{\ball(x,r)}|u-\widetilde{u}(x)|\dif\mathscr{L}^{n}  = 0.
\end{split}
\end{align}
Hence, $\mathscr{H}^{n-1}$-a.e. $x\in\partial\!\ball_{t}$ is a Lebesgue point of $u$. In conclusion, $\mathcal{H}^{n-1}$--a.e. $x\in\partial\!\ball_{t}$ is a Lebesgue point for $u$ for $\mathscr{L}^{1}$--a.e. radius $t\in (R_{1},R_{2})$. 

In an intermediate step, we claim the following: Let $-\infty<a<b<\infty$ and let $J\subset (a,b)$ be a measurable subset of full Lebesgue measure, i.e., $\mathscr{L}^{1}((a,b)\setminus J)=0$. Then for every $g\in\lebe^{1}((a,b);\R_{\geq 0})$ there exists $\xi_{0}\in J$ which is a Lebesgue point for $g$ and satisfies 
\begin{align}\label{eq:contrameanvalue}
g^{*}(\xi_{0})=\lim_{r\searrow 0}\dashint_{(\xi_{0}-r,\xi_{0}+r)}g\dif x \leq \frac{2}{b-a}\int_{(a,b)}g\dif x,  
\end{align}
where $g^{*}$ is the precise representative of $g$. To see this, we note that $\mathscr{L}^{1}$--a.e. element of $J$ is a Lebesgue point for $g$, and hence the first equality in \eqref{eq:contrameanvalue} holds for $\mathscr{L}^{1}$-a.e. $\xi_{0}\in J$. Assume towards a contradiction that the overall claim is wrong. Then we find $g\in\lebe^{1}((a,b);\R_{\geq 0})$ such that for all $\xi_{0}\in J$ which are Lebesgue points for $g$ there holds 
\begin{align}\label{eq:contracontra}
g^{*}(\xi_{0})> \frac{2}{b-a}\int_{(a,b)}g(x)\dif x. 
\end{align}
Since this holds for $\mathscr{L}^{1}$--a.e. $\xi_{0}\in (a,b)$, we infer by integrating with respect to $\xi_{0}\in J$
\begin{align*}
2\int_{(a,b)}g(x)\dif x = \frac{2}{b-a}\int_{(a,b)}\int_{(a,b)}g(x)\dif x\dif\xi_{0}  \leq \int_{(a,b)}g(\xi_{0})\dif\xi_{0}. 
\end{align*}
By non--negativity of $g$, this implies $g\equiv 0$ $\mathscr{L}^{1}$--a.e. in $(a,b)$. This contradicts \eqref{eq:contracontra} and the proof of the intermediate claim is complete. 

\emph{Step 3. Conclusion.} Let now $0<\theta<1$ be arbitrary and put $p:=n/(n-1+\theta)$. Since the statement is local, we may assume that $u\in\bd_{\locc}(\R^{n})$ is compactly supported. For $\varepsilon>0$ consider the smooth approximations $u^{\varepsilon}(x):=\rho_{\varepsilon}*u(x)$, where $\rho\in\hold_{c}^{\infty}(\ball(0,1);[0,1])$ is a radial mollifier with $\|\rho\|_{\lebe^{1}(\ball(0,1))}=1$, and $\rho_{\varepsilon}(x):=\varepsilon^{-n}\rho(\frac{x}{\varepsilon})$ is its $\varepsilon$-rescaled variant. Based on Proposition~\ref{prop:fractionalpoincareBD}~\ref{item:poincare2}, we choose a rigid deformation $\alpha_{\varepsilon}\in\mathscr{R}(\ball_{r})$ such that, with $u_{\alpha_{\varepsilon}}^{\varepsilon}:=u^{\varepsilon}-\alpha_{\varepsilon}$, 
\begin{align}\label{eq:scalingSQC2}
\left(\dashint_{\ball_{r}}\int_{\ball_{r}}\frac{|u_{\alpha_{\varepsilon}}^{\varepsilon}(x)-u_{\alpha_{\varepsilon}}^{\varepsilon}(y)|^{p}}{|x-y|^{n+\theta p}}\dif x \dif y\right)^{\frac{1}{p}}\leq Cr^{1-\theta}\dashint_{\ball_{r}}|\E u^{\varepsilon}|. 
\end{align}
By construction of $\alpha_{\varepsilon}$ (cf.~\eqref{eq:PoincareonBD} and Remark~\ref{rem:stability}), $\|\alpha_{\varepsilon}\|_{\lebe^{1}(\ball_{r};\R^{n})}\leq c \|u^{\varepsilon}\|_{\lebe^{1}(\ball_{r};\R^{n})}\to \|u\|_{\lebe^{1}(\ball_{r};\R^{n})}$. Since $\mathscr{R}(\ball_{r})$ is finite dimensional, we hence find a sequence $(\varepsilon_{j})\subset\R_{>0}$ and $\alpha\in\mathscr{R}(\ball_{r})$ such that $\alpha_{\varepsilon_{j}}\to \alpha$ in $\mathscr{R}(\ball_{r})$. Therefore, by Fatou's lemma and \eqref{eq:Fubinifractional} with $R=r$ in the first step, 
\begin{align}\label{eq:splitfubini1}
\begin{split}
\int_{s}^{r}\iint_{\partial\!\ball_{t}\times\partial\!\ball_{t}}\frac{|u_{\alpha}^{*}(x)-u_{\alpha}^{*}(y)|^{p}}{|x-y|^{n+\theta p-1}}\dif\sigma_{x}\dif\sigma_{y}\dif t & \leq C\liminf_{j\to\infty}\int_{\ball_{r}}\int_{\ball_{r}}\frac{|u_{\alpha_{\varepsilon_{j}}}^{\varepsilon_{j}}(x)-u_{\alpha_{\varepsilon_{j}}}^{\varepsilon_{j}}(y)|^{p}}{|x-y|^{n+\theta p}}\dif x \dif y \\ 
& \!\!\!\stackrel{\eqref{eq:scalingSQC2}}{\leq} C\liminf_{j\to\infty} r^{n}\Big(r^{1-\theta}\dashint_{\ball_{r}}|\E u^{\varepsilon_{j}}|\Big)^{p}\\
& \,= Cr^{n}\Big(r^{1-\theta}\dashint_{\overline{\ball}_{r}}|\E u|\Big)^{p}
\end{split}
\end{align}
with the precise representative $u_{\alpha}^{*}$ of $u_{\alpha}$. Put $J:=\{t\in (s,r)\colon\;|\E u|(\partial\!\ball_{t})=0\}$ and define $\lambda\colon (s,r)\to\R_{\geq 0}$ by 
\begin{align*}
\lambda(t):=\iint_{\partial\!\ball_{t}\times\partial\!\ball_{t}}\frac{|u_{\alpha}^{*}(x)-u_{\alpha}^{*}(y)|^{p}}{|x-y|^{n+\theta p-1}}\dif\sigma_{x}\dif\sigma_{y},\qquad t\in J
\end{align*}
and $\lambda(t)=0$ otherwise. With $J$ from above, step 2 implies the existence of some $t\in J$ such that 
\begin{align*}
\lambda(t) \leq \frac{2}{r-s}\int_{(s,r)}\lambda(t)\dif t \stackrel{\eqref{eq:splitfubini1}}{\leq} C\frac{r^{n}}{r-s}\Big(r^{1-\theta}\dashint_{\overline{\ball}_{r}}|\E u|\Big)^{p}
\end{align*}
which, upon rewriting the left-hand side of the previous inequality in terms of $u_{\alpha}^{*}$, yields
\begin{align*}
\Big(\dashint_{\partial\!\ball(0,t)}\int_{\partial\!\ball(0,t)} \frac{|u_{\alpha}^{*}(x)-u_{\alpha}^{*}(y)|^{p}}{|x-y|^{n+\theta p-1}}\dif\sigma_{x}\dif\sigma_{y}\Big)^{\frac{1}{p}}
\leq C\frac{r^{\frac{n}{p}}r^{1-\theta}}{t^{\frac{n-1}{p}}(r-s)^{\frac{1}{p}}}
\dashint_{\overline{\ball}_{r}}|\E u|.
\end{align*}
It is clear that $C>0$ does not depend on $u$ nor $s,r$, and so we arrive at \eqref{eq:fubinispheremainestPR}; recall that $\mathscr{H}^{n-1}$-a.e. $x\in\partial\!\ball_{t}$ is a Lebesgue point for $u_{\alpha}$. The proof is complete. 
\end{proof}
\begin{remark}\label{rem:VersusBV}
In the $\bv$-case, a Fubini-type property can be established by noting that for $u\in\bv(\R^{n};\R^{N})$, the tangential derivative $\partial_{\tau}u$ on $\mathscr{L}^{1}$-almost every sphere $\partial\!\ball(0,t)$ is a finite Radon measure, too. This is  discussed and utilised in \cite{AFP} and \cite{GK2}. By \textsc{Ornstein}'s Non-Inequality, we see no argument to ensure that for generic maps $u\in\bd(\Omega)$, $\partial_{\tau}u$ should be a Radon measure on even sufficiently many spheres. Also note that, by the very nature of the objects considered, any sort of 'symmetric tangential derivative' does not make sense. As to step 1 in the above proof, Fubini-type theorems for maps $u\in\text{B}{_{p,q}^{s}}$ and $u\in\text{F}{_{p,q}^{s}}$ have been given by \textsc{Triebel} in the case where spheres are replaced by affine subspaces of $\R^{n}$, cf. \cite[Chpt.~2.5.13]{Triebel1}. To reduce to this setting by local coordinate transformations, transforming the left hand side of \eqref{eq:Fubinifractional} gives rise to additional localisation terms on the right hand side. It is not clear to us how to control these to obtain the requisite form of the estimate, an issue which does not arise in the above proof. 
\end{remark}
\section{The linear growth case: Proof of Theorem~\ref{thm:main1}}\label{sec:main1}
This section is devoted to the proof of Theorem~\ref{thm:main1}. Toward this objective, we aim to compare the given generalised minimiser with a suitable $\A$-harmonic approximations via linearisation. Since linear elliptic problems subject to $\lebe^{1}$-boundary data are, in general, ill posed, this can only be achieved on \emph{good} balls where the boundary traces of $u$ share higher fractional differentiability. In this way, the corresponding $\A$-harmonic approximation will be well-defined; note that this unclear for \emph{general} balls on whose boundaries a given $\bd$-minimiser $u$ is only known to possess traces in $\lebe^{1}$. Consequently, this is where the Fubini-type property of $\bd$-maps as given in the last section enters. To arrive at the desired excess decay, we shall estimate a $V$-function-type distance of $u$ to its $\A$-harmonic approximation in terms of a \emph{superlinear} power of the excess, cf.~Proposition~\ref{prop:improvement}. Postponing the precise discussion to Remark~\ref{rem:discussAharmonicity}, a linear instead of superlinear power of the excess -- which would come out by easier means -- is not sufficient to conclude the excess decay. In conjunction with the Caccioppoli inequality of the second kind to be proved in Section~\ref{sec:Cacc}, we will then show in Section~\ref{sec:excess} that the estimates gathered so far for \emph{good} balls are in fact sufficient to conclude a preliminary excess for all relevant balls, i.e., those on which the excess does not exceed a certain constant.

In order to implement the linearisation strategy in the main part of the partial regularity proof, we introduce for $f\colon\R_{\sym}^{n\times n}\to\R$ satisfying \ref{item:reg1}--\ref{item:reg3} from Theorem~\ref{thm:main1} and $w\in\rsym$ the integrands 
\begin{align}\label{eq:shift}
f_{w}(\xi) & := f(\xi+w)-f(w)-\langle f'(w),\xi\rangle,\qquad\xi\in\rsym, 
\end{align}
and remind the reader of the function $V\colon\R_{\sym}^{n\times n}\to\R$ given by $V(\xi):=\sqrt{1+|\xi|^{2}}-1$.
\begin{lemma}\label{lem:auxSQC}
For all $w,z\in\R_{\sym}^{n\times n}$ we have (with an obvious interpretation for $w=0$ or $z=0$)
\begin{align}\label{eq:VestimatesSQCAuxB}
\langle V''(w)z,z\rangle = \frac{1+|w|^{2}-|w|^{2}\Big(\frac{w}{|w|}\cdot\frac{z}{|z|}\Big)^{2}}{(1+|w|^{2})^{\frac{3}{2}}}\;\;\;\text{and}\;\;\;V_{w}(z)\geq \frac{1}{16}\frac{V(z)}{(1+|w|^{2})^{\frac{3}{2}}}.
\end{align}
Moreover, for each $m>0$ there exists a constant $c=c(m)\in [1,\infty)$ with the following properties: If $f\colon\R_{\sym}^{n\times n}\to\R$ satisfies hypotheses \ref{item:reg1}--\ref{item:reg3} from Theorem~\ref{thm:main1}, then for all $z\in\R_{\sym}^{n\times n}$ and all $w\in\R_{\sym}^{n\times n}$ with $|w|\leq m$ there holds
\begin{itemize}
\item[\emph{(i)}] $|f_{w}(z)|\leq cLV(z)$, 
\item[\emph{(ii)}] $|f'_{w}(z)|\leq cL\min\{|z|,1\}$, 
\item[\emph{(iii)}] $|f''_{w}(0)z-f'_{w}(z)|\leq cLV(z)$.
\end{itemize}
and for all $w\in\R_{\sym}^{n\times n}$ and open balls $\ball\subset\R^{n}$ we have
\begin{align}\label{eq:auxGKconsequenceSQC}
\frac{\ell}{c}\int_{\ball}V(\sg(\varphi))\dif x \leq \int_{\ball}f_{w}(\sg(\varphi))\dif x\qquad\text{for all}\;\varphi\in\ld_{0}(\ball). 
\end{align} 
\end{lemma}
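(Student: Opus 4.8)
The plan is to isolate the algebraic facts about the reduced area integrand $V$ --- which do not involve $f$ --- and then combine them with standard derivative bounds for symmetric quasiconvex integrands of linear growth and with the symmetric quasiconvexity of $g:=f-\ell V$. For the first two assertions in \eqref{eq:VestimatesSQCAuxB}, differentiating $V'(\xi)=\xi(1+|\xi|^{2})^{-1/2}$ once more gives $\langle V''(\xi)z,z\rangle=\big(|z|^{2}(1+|\xi|^{2})-\langle\xi,z\rangle^{2}\big)(1+|\xi|^{2})^{-3/2}$, which is the stated identity after pulling out $|z|^{2}$ and writing $\langle\xi,z\rangle=|\xi|\,|z|\,(\tfrac{\xi}{|\xi|}\cdot\tfrac{z}{|z|})$; Cauchy--Schwarz then gives the pointwise bound $\langle V''(\xi)z,z\rangle\geq|z|^{2}(1+|\xi|^{2})^{-3/2}$. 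For the lower bound on $V_{w}(z)=V(z+w)-V(w)-\langle V'(w),z\rangle$, use that this is a second-order Taylor remainder of the smooth function $V$, so $V_{w}(z)=\int_{0}^{1}(1-t)\langle V''(w+tz)z,z\rangle\,\dif t$; inserting the pointwise bound, then the elementary inequality $1+|w+tz|^{2}\leq 2(1+|w|^{2})(1+t^{2}|z|^{2})$, and finally the exact identity $\int_{0}^{1}(1-t)|z|^{2}(1+t^{2}|z|^{2})^{-3/2}\,\dif t=V(z)$ (obtained by the substitution $s=t|z|$ together with $\int_{0}^{a}(1+s^{2})^{-3/2}\,\dif s=a(1+a^{2})^{-1/2}$) yields $V_{w}(z)\geq(2(1+|w|^{2}))^{-3/2}V(z)\geq\tfrac{1}{16}(1+|w|^{2})^{-3/2}V(z)$.

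For (i)--(iii) I would first record that symmetric quasiconvexity together with \eqref{eq:lingrowth} forces $|f'(z)|\leq c(n)L$ for every $z$: the restriction of $f$ to a symmetric rank-one line is convex of linear growth, so its slope is controlled by the growth rate, whence $|\langle f'(z),a\odot b\rangle|\leq L|a\odot b|\leq L|a||b|$ by \eqref{eq:symmetrictensorproduct}, and since the symmetric rank-one matrices span $\rsym$ with quantitative control (spectral decomposition) this gives the asserted bound. Combined with \ref{item:reg1}, this bounds $f''$ and its Lipschitz seminorm on $\{|\xi|\leq m+1\}$ in terms of $m$ (and the structural data of $f$) --- which is precisely the technical role of hypothesis \ref{item:reg1}. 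Granting this, (i) follows by splitting: for $|z|\geq 1$ one estimates $|f_{w}(z)|$ crudely by \eqref{eq:lingrowth} and the bound on $f'$ and uses $V(z)\simeq|z|$ from \eqref{eq:Vest1}; for $|z|\leq 1$ one writes $f_{w}(z)=\int_{0}^{1}(1-t)\langle f''(w+tz)z,z\rangle\,\dif t$, uses the local $\lebe^{\infty}$-bound on $f''$, and uses $V(z)\simeq|z|^{2}$. Statement (ii), which is $|f'(w+z)-f'(w)|\leq cL\min\{|z|,1\}$, is handled by $2\sup|f'|$ for $|z|\geq 1$ and by the local Lipschitz bound on $f'$ for $|z|\leq 1$. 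For (iii) one rewrites $f''_{w}(0)z-f'_{w}(z)=f''(w)z-(f'(w+z)-f'(w))=\int_{0}^{1}\big(f''(w)-f''(w+tz)\big)z\,\dif t$ and estimates the case $|z|\geq 1$ by $(|f''(w)|+\sup|f''|)|z|\lesssim V(z)$ and the case $|z|\leq 1$ by the local Lipschitz bound on $f''$ from \ref{item:reg1}, obtaining $\lesssim|z|^{2}\simeq V(z)$.

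Finally, for the coercivity estimate \eqref{eq:auxGKconsequenceSQC}: for $\varphi\in\hold_{c}^{1}(\ball;\R^{n})$ one has $\int_{\ball}\sg(\varphi)\,\dif x=0$, hence $\int_{\ball}f_{w}(\sg(\varphi))\,\dif x=\int_{\ball}\big(f(w+\sg(\varphi))-f(w)\big)\,\dif x$, and similarly with $V$ in place of $f$. Since $g=f-\ell V$ is symmetric quasiconvex, a Vitali covering / rescaling argument transfers the defining inequality from the unit cube to $\ball$, and Lemma~\ref{lem:symareastrict}, applied to the symmetric rank-one convex, linear-growth integrand $z\mapsto g(w+z)$ and using that $\ld_{0}(\ball)$-convergence implies area-strict convergence in $\bd(\ball)$, extends it to all $\varphi\in\ld_{0}(\ball)$; thus $\int_{\ball}\big(g(w+\sg(\varphi))-g(w)\big)\,\dif x\geq 0$, i.e.\ $\int_{\ball}f_{w}(\sg(\varphi))\,\dif x\geq\ell\int_{\ball}V_{w}(\sg(\varphi))\,\dif x$, and inserting the pointwise bound $V_{w}(\sg(\varphi)(x))\geq\tfrac{1}{16}(1+|w|^{2})^{-3/2}V(\sg(\varphi)(x))$ from the first paragraph yields \eqref{eq:auxGKconsequenceSQC} with a constant of order $(1+|w|^{2})^{3/2}$ (hence $c=c(m)$ once $|w|\leq m$). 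I expect the genuine obstacle to be exactly the sharp $(1+|w|^{2})^{-3/2}$-dependence in the lower bound for $V_{w}$: a naive estimate using only $|w+tz|\leq|w|+|z|$ produces $(1+|z|)$-factors, which would be too weak for the linearisation arguments of Section~\ref{sec:main1}, so one must recognise both the exact integral identity and the factorisation $1+|w+tz|^{2}\leq 2(1+|w|^{2})(1+t^{2}|z|^{2})$; the only other non-soft ingredient is the local $\hold^{2}$/Lipschitz control of $f''$ supplied by \ref{item:reg1}, which enters the constants in (i) and (iii).
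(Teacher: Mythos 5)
Your proposal is correct, and on the only part the paper actually proves -- the coercivity estimate \eqref{eq:auxGKconsequenceSQC} -- it follows the same route: the pointwise bound $V_{w}\geq\tfrac{1}{16}(1+|w|^{2})^{-3/2}V$, the symmetric quasiconvexity of $f-\ell V$ tested at $w$ over the ball, the vanishing of the two linear terms by Gauss--Green since $\varphi$ has zero trace, and the passage from smooth test maps to $\ld_{0}(\ball)$ by density and continuity, which the paper leaves implicit and you spell out (either area-strict continuity or simply global Lipschitz continuity of the integrand suffices). For \eqref{eq:VestimatesSQCAuxB} and items (i)--(iii) the paper gives no proof but cites \cite[Lems.~4.1,~4.2]{GK2}; your self-contained derivations are sound: the representation $V_{w}(z)=\int_{0}^{1}(1-t)\langle V''(w+tz)z,z\rangle\dif t$ combined with $1+|w+tz|^{2}\leq 2(1+|w|^{2})(1+t^{2}|z|^{2})$ and the exact identity $\int_{0}^{1}(1-t)|z|^{2}(1+t^{2}|z|^{2})^{-3/2}\dif t=V(z)$ even yields the better constant $2^{-3/2}$ in place of $\tfrac{1}{16}$, and the global bound $|f'|\leq c(n)L$ obtained from convexity along symmetric rank-one lines, together with the local bounds on $f''$ and its Lipschitz seminorm supplied by hypothesis \ref{item:reg1}, is exactly what (i)--(iii) require (as you correctly note, the constant then depends on $f$ through these local bounds; this dependence is suppressed in the notation $c(m)$ here as in \cite{GK2}). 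Two small points: in (iii) for $|z|\geq 1$ you should bound $|f'_{w}(z)|$ by $2\sup|f'|\leq cL\leq cL|z|$ (as you do in (ii)), not by a supremum of $f''$, which is not globally controlled -- an immediate fix using your own first step; and the displayed formula for $\langle V''(w)z,z\rangle$ in the lemma is missing a factor $|z|^{2}$, which your computation implicitly restores.
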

\begin{proof}
All assertions apart from \eqref{eq:auxGKconsequenceSQC} are taken from \cite[Lems.~4.1,~4.2]{GK2}. To see \eqref{eq:auxGKconsequenceSQC}, let $\ball\subset\R^{n}$ be an open ball and let $\varphi\in\ld_{0}(\ball), w\in\R_{\sym}^{n\times n}$ with $|w|\leq m$ be arbitrary. With condition \ref{item:reg3} from Theorem~\ref{thm:main1} in the third step we deduce
\begin{align*}
\int_{\ball}&\frac{V(\sg(\varphi))\dif x}{(1+|w|^{2})^{\frac{3}{2}}} \stackrel{\eqref{eq:VestimatesSQCAuxB}}{\leq} 16 \int_{\ball}V_{w}(\sg(\varphi))\dif x  = 16\Big(\int_{\ball}V(w+\sg(\varphi))-V(w)\dif x-\underbrace{\int_{\ball}\langle V'(w),\sg(\varphi)\rangle\dif x}_{=0}\Big) \\
& \leq \frac{16}{\ell} \int_{\ball}f(w+\sg(\varphi))-f(w)\dif x -\frac{16}{\ell}\underbrace{\int_{\ball}\langle f'(w),\sg(\varphi)\rangle\dif x}_{=0} = \frac{16}{\ell}\int_{\ball}f_{w}(\sg(\varphi))\dif x. 
\end{align*}
Here the underbraced integrals vanish by the Gau\ss --Green theorem and the fact that $\varphi|_{\partial\!\ball}=0$. Noting that $|w|\leq m$, \eqref{eq:auxGKconsequenceSQC} follows and the proof is complete.
\end{proof}

\subsection{Caccioppoli inequality of the second kind}\label{sec:Cacc}
In this section we give the requisite form of the Caccioppoli inequality of the second kind, and it is here where the $\bd$-minimality crucially enters. However, different from other proof schemes, let us emphasize that this inequality will \emph{not} be used to deduce higher integrability of generalised minima; in fact, \textsc{Gehring}'s lemma does not  quite seem to fit into the linear growth framework, cf. Section~\ref{sec:extensions} below for a discussion. From now on, we tacitly suppose that $f\colon\R_{\sym}^{n\times n}\to\R$ satisfies \ref{item:reg1}-\ref{item:reg3} from Theorem~\ref{thm:main1} without further mentioning unless it is explicitely stated otherwise.
\begin{proposition}[of Caccioppoli-type]\label{prop:CaccQC}
Let $m>0$. Then there exists a constant $c=c(m,n,\frac{L}{\ell})\in [1,\infty)$ such that if $a\colon\R^{n}\to\R^{n}$ is an affine-linear mapping with $|\E a|\leq m$ and $\ball=\ball(x_{0},R)\Subset\Omega$ a ball, then there holds 
\begin{align}\label{eq:CaccQC}
\int_{\ball(x_{0},\frac{R}{2})}V(\E\,(u-a))\leq c \int_{\ball(x_{0},R)}V\Big(\frac{u-a}{R}\Big)\dif x
\end{align}
for every local $\bd$-minimiser $u\in\bd_{\locc}(\Omega)$. 
\end{proposition}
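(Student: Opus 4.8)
The plan is to prove the Caccioppoli inequality of the second kind by the standard comparison argument, adapted to the $\bd$-minimality condition and carefully tracking the fact that Korn's inequality is unavailable in $L^{1}$. Throughout, fix $m>0$, an affine map $a$ with $|\mathscr{E}a|\leq m$, and a ball $\ball(x_{0},R)\Subset\Omega$. Since $\mathscr{E}a$ is a constant matrix, replacing $f$ by the shifted integrand $f_{\mathscr{E}a}$ from \eqref{eq:shift} and $u$ by $u-a$, we may use the bounds (i)--(iii) of Lemma~\ref{lem:auxSQC} together with the coerciveness \eqref{eq:auxGKconsequenceSQC}, with constants depending only on $m$, $n$ and $L/\ell$. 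Note that $u-a$ is still a local $\bd$-minimiser of the functional built from $f_{\mathscr{E}a}$, because $a$ is a fixed competitor shift and the recession function transforms correspondingly; this is where the $\bd$-minimality \eqref{eq:minimality} (as opposed to mere $\ld$-minimality) is essential, since the competitors we construct will in general only lie in $\bd$.

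Next I would set up the iteration. For radii $\tfrac{R}{2}\leq r<s\leq R$ choose a cut-off $\eta\in\hold_{c}^{1}(\ball(x_{0},s);[0,1])$ with $\eta\equiv 1$ on $\ball(x_{0},r)$ and $|\nabla\eta|\leq \tfrac{c}{s-r}$, and split $u-a=\varphi+\psi$ with $\varphi:=\eta(u-a)$ and $\psi:=(1-\eta)(u-a)$. The competitor is $v:=(u-a)-\varphi=\psi$, which agrees with $u-a$ near $\partial\!\ball(x_{0},s)$, so by $\bd$-minimality on $\ball(x_{0},s)$ (comparing $u-a$ against $v$, using that the boundary term in $\overline{F}$ vanishes since the traces agree),
\begin{align*}
\int_{\ball(x_{0},s)}f_{\mathscr{E}a}(\mathscr{E}(u-a))\leq \int_{\ball(x_{0},s)}f_{\mathscr{E}a}(\mathscr{E}\psi),
\end{align*}
interpreting both sides as functions of measures. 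On the support of $\nabla\eta$ we have $\mathscr{E}\psi = (1-\eta)\mathscr{E}(u-a) - \nabla\eta\odot(u-a)$ away from the singular part, and $\mathscr{E}\psi$ carries the singular part $(1-\eta)\E^{s}u$; using the growth bound (i) $|f_{\mathscr{E}a}(z)|\leq cLV(z)$, the coerciveness \eqref{eq:auxGKconsequenceSQC} applied on $\ball(x_{0},s)$ to $\varphi\in\ld_{0}$ (after a preliminary smooth approximation in the area-strict sense via Lemma~\ref{lem:smooth} and the continuity Lemma~\ref{lem:symareastrict}, to handle the measure part of $\E u$), and the elementary $V$-inequalities \eqref{eq:Vest1} (sub-additivity $V(z+w)\leq 2(V(z)+V(w))$ and scaling $V(\lambda z)\leq\lambda^{2}V(z)$), I would arrive at
\begin{align*}
\int_{\ball(x_{0},r)}V(\E(u-a)) \leq \theta\int_{\ball(x_{0},s)}V(\E(u-a)) + c\int_{\ball(x_{0},R)}V\Big(\frac{u-a}{s-r}\Big)\dif x
\end{align*}
for some fixed $\theta\in(0,1)$ (the $\theta$ coming from absorbing the "good" part $(1-\eta)\mathscr{E}(u-a)$, which is supported in the annulus $\ball(x_{0},s)\setminus\ball(x_{0},r)$, into the left side — the standard hole-filling trick). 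Here the crucial point, and the one requiring care, is that \emph{no Korn inequality is invoked}: the term $\nabla\eta\odot(u-a)$ is a genuine symmetric tensor product, estimated directly by $|\nabla\eta|\,|u-a|$ via \eqref{eq:symmetrictensorproduct}, so only $V$ of $(u-a)/(s-r)$ appears on the right, never $V$ of a full gradient.

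Finally I would apply the iteration Lemma~\ref{lem:iterationlemmaQC} with $\Phi(r):=\int_{\ball(x_{0},r)}V(\E(u-a))$ (bounded, since $\E u$ is a measure of finite mass on $\ball(x_{0},R)$ and $V$ has linear growth) and $\Psi(t):=c\int_{\ball(x_{0},R)}V((u-a)/t)\dif x$, which is decreasing in $t$ and satisfies $\Psi(\sigma t)\leq\sigma^{-2}\Psi(t)$ by the scaling bound $V(\sigma^{-1}z)\leq\sigma^{-2}V(z)$ for $\sigma\in(0,1]$. This yields $\Phi(\tfrac{R}{2})\leq C\Psi(R)$, which is precisely \eqref{eq:CaccQC}. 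The main obstacle I anticipate is not the hole-filling algebra but the justification of the comparison step and of \eqref{eq:auxGKconsequenceSQC} when $\E u$ has a nontrivial singular part: one must approximate $u$ area-strictly by smooth maps sharing the boundary trace on $\ball(x_{0},s)$ (Lemma~\ref{lem:smooth}), pass the minimality inequality and the coerciveness estimate to the limit using area-strict continuity of $u\mapsto f_{\mathscr{E}a}(\E u)(\ball(x_{0},s))$ and $u\mapsto V(\E u)(\ball(x_{0},s))$ (Lemma~\ref{lem:symareastrict}, noting $V$ and $f_{\mathscr{E}a}$ are symmetric rank-one convex of linear growth by \ref{item:reg1}--\ref{item:reg3}), and only then perform the cut-off splitting on the smooth level before relaxing back. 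Care is also needed that the constant remains independent of the particular local $\bd$-minimiser and of $r,s$, which is automatic from the structure of Lemma~\ref{lem:iterationlemmaQC}.
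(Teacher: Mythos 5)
Your proposal is correct and follows essentially the same route as the paper's own proof: shift to the integrand $f_{\sg(a)}$, split $u-a=\rho(u-a)+(1-\rho)(u-a)$, compare against the competitor $(1-\rho)(u-a)$ via generalised local minimality, extend the coercivity \eqref{eq:auxGKconsequenceSQC} to the zero-trace $\bd$-map $\varphi$ by area-strict smooth approximation (Lemma~\ref{lem:smooth} together with Lemma~\ref{lem:symareastrict}), control the $\nabla\rho\odot(u-a)$ terms with the elementary $V$-inequalities, and conclude by Widman hole-filling and the iteration Lemma~\ref{lem:iterationlemmaQC}. The only cosmetic slip is describing $\varphi$ as an element of $\ld_{0}$ (in general it is only a zero-trace $\bd$-map), but you correct this yourself through the approximation remark, exactly as the paper does.
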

\begin{proof}
The proof evolves around a scheme for establishing Caccioppoli--type inequalities in the quasiconvex setting originally due to \textsc{Evans}~\cite[Lem.~3.1]{Ev}. Recalling the definition of the shifted integrands, cf.~\eqref{eq:shift}, we put $\widetilde{f}:=f_{\sg(a)}$ and $\widetilde{u}:=u-a$. We then record that $\widetilde{u}$ is a local minimiser the functional 
\begin{align*}
\mathcal{F}[v]:=\int_{\Omega}\widetilde{f}(\E v)
\end{align*}
over $\bd_{\locc}(\Omega)$. Let $\frac{R}{2}<r<s<R$ be arbitrary and choose a cut--off function $\rho\in\hold_{c}^{1}(\ball(x_{0},s);[0,1])$ with $\mathbbm{1}_{\ball(x_{0},r)}\leq\rho\leq\mathbbm{1}_{\ball(x_{0},s)}$ and $|\nabla\rho|\leq \frac{2}{s-r}$. We then define $\varphi :=\rho\widetilde{u}$ and $\psi:=(1-\rho)\widetilde{u}$, so that $\widetilde{u}=u-a=\varphi+\psi$. Before we continue, let us remark that with $\ell>0$ from hypothesis \ref{item:reg3} of Theorem~\ref{thm:main1} and $c=c(m)>0$, 
\begin{align}\label{eq:Caccaux1}
\frac{\ell}{c}\int_{\ball(x_{0},s)}V(\E\varphi) \leq \int_{\ball(x_{0},s)}\widetilde{f}(\E\varphi).
\end{align}
To see this inequality, note $\varphi|_{\partial\!\ball(x_{0},s)}=0$ and hence we find an approximating sequence $(\varphi_{k})\subset\hold_{c}^{\infty}(\ball(x_{0},s);\R^{n})$ which converges in the (symmetric) area--strict sense on $\ball(x_{0},s)$ to $\varphi$ as $k\to\infty$. From Lemma~\ref{lem:auxSQC}, cf.~\eqref{eq:auxGKconsequenceSQC}, we then deduce \eqref{eq:Caccaux1} with $\varphi$ replaced by $\varphi_{k}$. In the resulting inequality, by definition of (symmetric) area-strict convergence, the left-hand side converges to $\frac{\ell}{c}\int_{\ball(x_{0},s)} V(\E\varphi)$. For the right-hand side we invoke the continuity result for symmetric rank-one convex functionals with respect to symmetric area-strict convergence, cf.~Lemma~\ref{lem:symareastrict}. By symmetric area-strict convergence and the fact that symmetric quasiconvexity implies symmetric rank-1-convexity, we hereby obtain \eqref{eq:Caccaux1}.

Consequently, using (generalised) minimality of $\widetilde{u}$ with respect to its own boundary values and $\widetilde{u}|_{\partial\!\ball(x_{0},s)}=\psi|_{\partial\!\ball(x_{0},s)}$ in the second step, we obtain
\begin{align*}
\frac{\ell}{c}\int_{\ball(x_{0},r)}V(\E\widetilde{u}) \leq \frac{\ell}{c}\int_{\ball(x_{0},s)}V(\E\varphi) & \leq  \int_{\ball(x_{0},s)}\widetilde{f}(\E\widetilde{u})+\int_{\ball(x_{0},s)}(\widetilde{f}(\E\varphi)-\widetilde{f}(\E\widetilde{u}))\qquad(\text{by \eqref{eq:Caccaux1}})\\
& \leq  \int_{\ball(x_{0},s)}\widetilde{f}(\E\psi)+\int_{\ball(x_{0},s)}(\widetilde{f}(\E\varphi)-\widetilde{f}(\E\widetilde{u}))\\
& \leq  \int_{\ball(x_{0},s)\setminus\ball(x_{0},r)}\widetilde{f}(\E\psi) +\int_{\ball(x_{0},s)\setminus\ball(x_{0},r)}(\widetilde{f}(\E\varphi)-\widetilde{f}(\E\widetilde{u})), \\
& =: \mathbf{I}+\mathbf{II}, 
\end{align*}
where the last inequality holds as $\varphi,\widetilde{u}$ coincide on $\ball(x_{0},r)$. Then, by Lemmas~\ref{lem:auxSQC}(i) and \ref{lem:auxVSQC}, 
\begin{align*}
\mathbf{I}  \leq cL\int_{\ball(x_{0},s)\setminus\ball(x_{0},r)}V(\E\psi)  
& = cL\int_{\ball(x_{0},s)\setminus\ball(x_{0},r)}V(\Big((1-\rho)\frac{\dif\E\widetilde{u}}{\dif |\E\widetilde{u}|}\Big)|\E\widetilde{u}|-(\nabla\rho\odot\widetilde{u})\mathscr{L}^{n}) \\
& \leq 2cL \int_{\ball(x_{0},s)\setminus\ball(x_{0},r)}V(\E\widetilde{u})+8cL\int_{\ball(x_{0},s)}V\Big(\frac{\widetilde{u}}{s-r}\Big)\dif x
\end{align*}
On the other hand, we similarly find
\begin{align*}
\mathbf{II} & \leq  \int_{\ball(x_{0},s)\setminus\ball(x_{0},r)}\widetilde{f}(\Big(\rho\frac{\dif\E\widetilde{u}}{\dif|\E\widetilde{u}|}\Big)|\E\widetilde{u}| + \nabla\rho\odot\widetilde{u}\mathscr{L}^{n})-\widetilde{f}(\E\widetilde{u})\\
& \leq 3cL \int_{\ball(x_{0},s)\setminus\ball(x_{0},r)}V(\E\widetilde{u})+8cL\int_{\ball(x_{0},s)}V\Big(\frac{\widetilde{u}}{s-r}\Big)\dif x.
\end{align*}
Therefore, gathering estimates, we find 
\begin{align*}
\frac{\ell}{c}\int_{\ball(x_{0},r)}V(\E\widetilde{u})\leq  16cL\int_{\ball(x_{0},s)\setminus\ball(x_{0},r)}V(\E\widetilde{u}) + 16cL\int_{\ball(x,s)}V\Big(\frac{\widetilde{u}}{s-r}\Big)\dif x. 
\end{align*}
We now apply \textsc{Widman}'s \emph{hole--filling trick} and hence add $16cL\int_{\ball(x_{0},r)}V(\E\widetilde{u})$ to both sides of the previous inequality and divide the resulting inequality by $(\frac{\ell}{c}+16cL)$. In consequence, letting $\theta:=16cL/(\frac{\ell}{c}+16cL)$, we have $0<\theta<1$ and get
\begin{align*}
\int_{\ball(x_{0},r)}V(\E\widetilde{u})\leq \theta \int_{\ball(x_{0},s)}V(\E\widetilde{u}) + \theta\int_{\ball(x_{0},R)}V\Big(\frac{\widetilde{u}}{s-r}\Big)\dif x. 
\end{align*}
From here the conclusion is immediate by Lemma~\ref{lem:iterationlemmaQC}. The proof is complete. 
\end{proof}
\subsection{Estimating the distance to the $\A$-harmonic approximation} 
In this section we present the key result that allows to deduce the requisite excess decay needed in the proof of Theorem~\ref{thm:main1}. 
Here our strategy is as follows: Letting $m>0$ be a given number and $a\colon\R^{n}\to\R^{n}$ an affine-linear map with 
$|\E a|\leq m$, we first establish an improved estimate for the $V$-function type distance of $\widetilde{u}:=u-a$ to a suitable $\A$-harmonic approximation on \emph{good} balls $\ball(x_{0},R_{0})\Subset\Omega$. Here \emph{goodness} refers to balls on whose boundaries $\partial\!\ball(x_{0},R_{0})$ the map $\widetilde{u}$ is of class $\sobo^{\frac{1}{n+1},\frac{n+1}{n}}(\partial\!\ball(x_{0},R_{0});\R^{n})$. This is accomplished in Proposition~\ref{prop:improvement}. By the Fubini-type property of $\bd$-maps, it is then clear that whenever $x_{0}\in\Omega$ is fixed, then $\mathscr{L}^{1}$-a.e. radius $R_{0}\in (x_{0},\frac{1}{2}\dist(x_{0},\partial\Omega))$ will qualify as a good radius. It shall then be the aim of the subsequent section to justify to have the relevant estimates on good balls to conclude a preliminary excess decay. We begin with the following proposition, making Lemma~\ref{lem:toplinear} available for the sequel.
\begin{proposition}\label{thm:mazsha}
Let $\A\in\mathbb{S}(\rsym)$ be a strongly symmetric rank-one convex bilinear form, i.e., $\A$ satisfies for two constants $\nu_{1},\nu_{2}>0$ and all $a,b\in\R^{n}$, $z_{1},z_{2}\in\rsym$
\begin{align}\label{eq:TscherpelCenter10}
\nu_{1}|a\odot b|^{2}\leq \A [a\odot b,a\odot b]\;\;\;\text{and}\;\;\;|\A[z_{1},z_{2}]|\leq \nu_{2}|z_{1}|\,|z_{2}|.
\end{align}
Let $Lv:=-\di(\A\sg(v))$, where $\A$ is identified with its representing matrix in $\R^{(n\times n)\times (n\times n)}$. Then for each $k\in\mathbb{N}$, $1<q<\infty$ and any open ball $\ball\subset\R^{n}$, the mapping 
\begin{align}\label{eq:mapppropsMazSha}
\Phi\colon\sobo^{k,q}(\ball;\R^{n})\ni u \mapsto (L(u),\trace_{\partial\!\ball}u)\in \sobo^{k-2,q}(\ball;\R^{n})\times\sobo^{k-\frac{1}{q},q}(\partial\!\ball;\R^{n})
\end{align} 
is a topological isomorphism. Moreover, if $u\in\ld(\Omega)$ satisfies $Lu=0$ in $\mathscr{D}'(\Omega;\R^{n})$, then there holds $u\in\hold^{\infty}(\Omega;\R^{n})$ and 
\begin{align}\label{eq:TscherpelCenter00}
\sup_{\ball(x_{0},\frac{R}{2})}|\nabla u-A| + R\sup_{\ball(x_{0},\frac{R}{2})}|\nabla^{2}u|\leq C\dashint_{\ball(x_{0},R)}|\nabla u - A|\dif x
\end{align}
for all $A\in\rsym$ and balls $\ball(x_{0},R)\Subset\Omega$, where $C=C(n,\nu_{1},\nu_{2})>0$ is a constant. 
\end{proposition}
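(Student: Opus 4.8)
The plan is to reduce the statement to its full-gradient counterpart, Lemma~\ref{lem:toplinear}, by replacing the bilinear form $\A$ on $\rsym$ with a symmetrised extension to all of $\R^{n\times n}$. Writing $\Pi_{\sym}\colon\R^{n\times n}\to\rsym$ for the orthogonal projection onto the symmetric matrices, I would set $\widetilde{\A}[\xi,\eta]:=\A[\Pi_{\sym}\xi,\Pi_{\sym}\eta]$ for $\xi,\eta\in\R^{n\times n}$, which clearly defines an element $\widetilde{\A}\in\mathbb{S}(\R^{n\times n})$. Identifying $\widetilde{\A}$ with the linear map $\xi\mapsto\A(\Pi_{\sym}\xi)$ and using $\Pi_{\sym}\nabla v=\sg(v)$, one has $\widetilde{\A}\nabla v=\A\sg(v)$ and hence $-\di(\widetilde{\A}\nabla v)=-\di(\A\sg(v))=Lv$ for every distribution $v$; thus $L$ and the full-gradient operator $-\di(\widetilde{\A}\nabla\cdot)$ agree as differential operators.

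The one point needing a genuine argument is that $\widetilde{\A}$ is strongly elliptic in the Legendre--Hadamard sense, which is exactly what makes Lemma~\ref{lem:toplinear} applicable with $N=n$. For $a,b\in\R^{n}$ one has $\Pi_{\sym}(a\otimes b)=a\odot b$, and combining the ellipticity half of \eqref{eq:TscherpelCenter10} with the lower bound in \eqref{eq:symmetrictensorproduct} gives
\begin{align*}
\widetilde{\A}[a\otimes b,a\otimes b]=\A[a\odot b,a\odot b]\geq \nu_{1}|a\odot b|^{2}\geq \tfrac{\nu_{1}}{2}|a|^{2}|b|^{2}=\tfrac{\nu_{1}}{2}|a\otimes b|^{2},
\end{align*}
while $|\widetilde{\A}[\xi,\eta]|\leq \nu_{2}|\xi|\,|\eta|$ follows from the boundedness half of \eqref{eq:TscherpelCenter10} together with $|\Pi_{\sym}|\leq 1$. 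Hence $\widetilde{\A}$ is strongly elliptic with ellipticity constant $\lambda=\nu_{1}/2$ and bound $\Lambda=\nu_{2}$, both depending only on $\nu_{1},\nu_{2}$.

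With this in hand the whole proposition drops out of Lemma~\ref{lem:toplinear} applied to $\widetilde{\A}$. Its first half immediately yields that $\Phi$ from \eqref{eq:mapppropsMazSha} is a topological isomorphism for every $k\in\mathbb{N}$, every $1<q<\infty$ and every open ball $\ball\subset\R^{n}$, since $L=-\di(\widetilde{\A}\nabla\cdot)$. For the regularity part, if $u\in\ld(\Omega)$ satisfies $Lu=0$ in $\mathscr{D}'(\Omega;\R^{n})$, then because $\ld(\Omega)\subset\lebe^{1}_{\locc}(\Omega;\R^{n})\subset\mathscr{D}'(\Omega;\R^{n})$ we also have $-\di(\widetilde{\A}\nabla u)=0$ in $\mathscr{D}'(\Omega;\R^{n})$, so the second part of Lemma~\ref{lem:toplinear} (with $|\widetilde{\A}|\leq\nu_{2}$) gives $u\in\hold^{\infty}(\Omega;\R^{n})$ together with the pointwise estimate for all $A\in\R^{n\times n}$; specialising to $A\in\rsym$ produces \eqref{eq:TscherpelCenter00}, with the constant depending only on $n,\nu_{1},\nu_{2}$.

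I do not expect a substantive obstacle here: the argument is essentially bookkeeping of Lemma~\ref{lem:toplinear} through the passage $\A\mapsto\widetilde{\A}$, and the only real observation is that Legendre--Hadamard ellipticity survives the symmetrisation precisely because of the two-sided bound \eqref{eq:symmetrictensorproduct} on symmetric tensor products -- equivalently, because the symmetric gradient is an elliptic operator in the sense of Van Schaftingen recalled in Section~\ref{sec:bd}. The only mild care points are to record that the two differential operators literally coincide (so no lower-order perturbation appears) and that $u\in\ld(\Omega)$ is enough regularity to enter the hypothesis of the regularity half of Lemma~\ref{lem:toplinear}.
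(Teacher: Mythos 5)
Your argument is correct and coincides with the paper's own proof: both extend $\A$ to $\mathbb{S}(\R^{n\times n})$ via the symmetrisation $z\mapsto z^{\sym}$, verify the Legendre--Hadamard condition with $\lambda=\nu_{1}/2$ using the lower bound in \eqref{eq:symmetrictensorproduct}, and then read off both the isomorphism property and the interior estimate from Lemma~\ref{lem:toplinear}. The only cosmetic difference is that you identify the two differential operators pointwise while the paper records the equality of the associated weak formulations, which amounts to the same thing.
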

\begin{proof}
We reduce to Lemma~\ref{lem:toplinear} and define $\mathscr{A}\in\mathbb{S}(\R^{n\times n})$ by $\mathscr{A}[z_{1},z_{1}]:=\mathbb{A}[z_{1}^{\sym},z_{2}^{\sym}]$, $z_{1},z_{2}\in\R^{n\times n}$. Then \eqref{eq:TscherpelCenter10} in conjunction with \eqref{eq:symmetrictensorproduct} yields 
\begin{align*}
|a\otimes b|^{2} \leq |a|^{2}|b|^{2}\leq 2|a\odot b|^{2} \leq \frac{2}{\nu_{1}}\mathbb{A}[a\odot b,a\odot b] = \frac{2}{\nu_{1}}\mathscr{A}[a\otimes b,a\otimes b]. 
\end{align*}
Hence $\mathscr{A}\in\mathbb{S}(\R^{n\times n})$ satisfies the hypotheses of Lemma~\ref{lem:toplinear} with $\lambda=\frac{\nu_{1}}{2}$. With the above terminology, we then have $\int_{\Omega}\mathbb{A}[\sg(u),\sg(\varphi)]\dif x = \int_{\Omega}\mathscr{A}[\nabla u,\nabla\varphi]\dif x$ for all $\varphi\in\hold_{c}^{\infty}(\Omega;\R^{n})$. Thus, $\Phi$ given by \eqref{eq:mapppropsMazSha} is a topological isomorphism by Lemma~\ref{lem:toplinear}. The additional estimate \eqref{eq:TscherpelCenter00} then follows similarly, now invoking the second part of Lemma~\ref{lem:toplinear}. The proof is complete. 
\end{proof}
We now come to the $\A$-harmonic approximation. Recalling that the number $m>0$ and the affine-linear map $a\colon\R^{n}\to\R^{n}$ with $|\E a|\leq m$ are assumed fixed throughout, we put 
\begin{center}
$\widetilde{u}:=u-a.$
\end{center}
Given a ball $\ball=\ball(x_{0},R)\Subset\Omega$ and $u\in\bd(\Omega)$ with $\widetilde{u}|_{\partial\!\ball}\in\sobo^{\frac{1}{n+1},\frac{n+1}{n}}(\partial\!\ball;\R^{n})$, we consider the strongly symmetric rank-one system 
\begin{align}\label{eq:lin1}
\begin{cases} 
-\di(\A\sg(h))=0&\;\text{in}\;\ball,\\
h=\widetilde{u}&\;\text{on}\;\partial\!\ball, 
\end{cases}
\end{align}
where $\A:=\widetilde{f}''(0)$ with $\widetilde{f}:=f_{\sg(a)}$, cf.~\eqref{eq:shift}; note that, if $f$ satisfies hypothesis \ref{item:reg3} from Theorem~\ref{thm:main1}, it is routine to check that $\A$ is a strongly symmetric rank-one-convex bilinear form. Put $k=1$ and $q:=1+\frac{1}{n}$. Then $k-\frac{1}{q}=\frac{1}{n+1}$, and in this situation Theorem~\ref{thm:mazsha} yields that there exists a unique $h\in\sobo^{1,1+1/n}(\ball(x_{0},R);\R^{n})$ solving \eqref{eq:lin1}. We now have the following 
\begin{proposition}\label{prop:improvement}
Suppose that $f\in\hold(\rsym)$ satisfies \ref{item:reg1}--\ref{item:reg3} from Theorem~\ref{thm:main1} and let $1<q<\frac{n+1}{n}$, $m>0$ be given. Then there exists a constant $C=C(m,n,q,L,\ell)>0$ with the following property: 
Suppose that $u\in\bd_{\locc}(\Omega)$ is a local $\bd$-minimiser for $F$ and $\ball=\ball(x_{0},R)\Subset\Omega$ is an open ball such that $u|_{\partial\!\ball}\in \sobo^{\frac{1}{n+1},\frac{n+1}{n}}(\partial\!\ball;\R^{n})$. Moreover, let $a\colon\R^{n}\to\R^{n}$ be an affine--linear mapping with $|\E a|\leq m$ and denote $h$ the unique solution of the linear system \eqref{eq:lin1} with $\widetilde{u}:=u-a$. Then there holds 
\begin{align}\label{eq:absolutemain}
\dashint_{\ball(x_{0},R)}V\Big(\frac{\widetilde{u}-h}{R}\Big)\dif x \leq C \Big(\dashint_{\ball(x_{0},R)}V(\E\widetilde{u}) \Big)^{q}. 
\end{align}
\end{proposition}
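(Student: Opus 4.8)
\emph{Outline and reductions.} The plan is to carry out an $\A$-harmonic approximation in the spirit of \cite{GK2}, the crucial new point being that the singular part $\E^{s}\widetilde u$ is invisible to the Euler--Lagrange system yet contributes in full to the left side of \eqref{eq:absolutemain}, which forces a truncation argument and accounts for the superlinear power. After translating we would take $x_{0}=0$ and, rescaling $\widetilde u$ to $\widetilde u(R\,\cdot\,)/R$ and using $V(\lambda z)\le\lambda^{2}V(z)$ for $\lambda\ge1$, reduce to $R=1$; write $\ball:=\ball(0,1)$, $w:=\widetilde u-h$ and $\Phi:=\dashint_{\ball}V(\E\widetilde u)$. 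Since $V(\E\widetilde u)(\ball)=\int_{\ball}V(\mathscr{E}\widetilde u)\dx+|\E^{s}\widetilde u|(\ball)$, this gives $|\E^{s}\widetilde u|(\ball)\le|\ball|\Phi$, and by $V(z)\ge(\sqrt2-1)\min\{|z|,|z|^{2}\}$ together with Cauchy--Schwarz on $\{|\mathscr{E}\widetilde u|\le1\}$ also $\dashint_{\ball}|\E\widetilde u|\lesssim\Phi^{1/2}$ once $\Phi\le1$. We may assume $\Phi$ below a threshold $\varepsilon_{0}=\varepsilon_{0}(m,n,q,L,\ell)$, the complementary regime reducing to the lower bound on $\dashint_{\ball}|\E w|$ by a power of $\Phi$ obtained in the last step.

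\emph{The linearised comparison.} First I would record an \emph{exact} Euler--Lagrange equation for $\widetilde u$: perturbing $\widetilde u$ inside $\ball$ by an absolutely continuous field $t\sg(\varphi)$, $\varphi\in\hold_{c}^{\infty}(\ball;\R^{n})$, leaves $\E^{s}\widetilde u$ untouched and makes the boundary integrand in the relaxed functional vanish, so $t\mapsto\int_{\ball}\widetilde f(\mathscr{E}\widetilde u+t\sg(\varphi))\dx$ has a minimum at $t=0$ (recall that $\widetilde u$ is a local $\bd$-minimiser of $v\mapsto\int\widetilde f(\E v)$, cf.~the proof of Proposition~\ref{prop:CaccQC}), whence $\int_{\ball}\langle\widetilde f'(\mathscr{E}\widetilde u),\sg(\varphi)\rangle\dx=0$; by density and boundedness of $\widetilde f'$ (Lemma~\ref{lem:auxSQC}(ii)) this extends to $\varphi\in\ld_{0}(\ball)$. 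On the other hand $h$ solves $-\di(\A\sg(h))=0$ with $\A=\widetilde f''(0)$, $h=\widetilde u$ on $\partial\!\ball$, and $h\in\hold^{\infty}$ in the interior with a weak normal trace (Proposition~\ref{thm:mazsha}), so an integration by parts gives $\int_{\ball}\langle\A\sg(h),\sg(\varphi)\rangle\dx=0$ for $\varphi\in\ld_{0}(\ball)$ too. Subtracting and writing $\widetilde f'(\mathscr{E}\widetilde u)-\A\sg(h)=\A\mathscr{E}w+\bigl(\widetilde f'(\mathscr{E}\widetilde u)-\A\mathscr{E}\widetilde u\bigr)$ — the parenthesis being $\le cLV(\mathscr{E}\widetilde u)\le\tfrac{cL}{2}|\mathscr{E}\widetilde u|^{2}$ pointwise by Lemma~\ref{lem:auxSQC}(iii) — one arrives at
\[\Bigl|\int_{\ball}\langle\A\mathscr{E}w,\sg(\varphi)\rangle\dx\Bigr|\le cL\int_{\ball}V(\mathscr{E}\widetilde u)\,|\sg(\varphi)|\dx,\qquad\varphi\in\ld_{0}(\ball),\]
where $\mathscr{E}w:=\mathscr{E}\widetilde u-\sg(h)$ denotes the absolutely continuous density of $\E w$.

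\emph{Truncation and the superlinear gain.} Since $w$ carries $\E^{s}\widetilde u$ in its symmetric gradient and $\mathscr{E}w$ is a priori merely $\lebe^{1}$-integrable, the displayed identity cannot be tested with $\varphi=w$. The plan is to test instead with a Lipschitz truncation $T_{\lambda}w\in\sobo_{0}^{1,\infty}(\ball)$ at level $\lambda$, agreeing with $w$ off a set $B_{\lambda}$ with $|B_{\lambda}|\lesssim\lambda^{-1}|\E w|(\ball)$ on which $\E^{s}\widetilde u$ concentrates, then to run an absorption argument using G\aa rding's inequality for the Legendre--Hadamard form $\A$ on $\sobo_{0}^{1,2}(\ball)$, the bound $V(\mathscr{E}\widetilde u)\le\lambda|\mathscr{E}\widetilde u|$ on $\{|\mathscr{E}\widetilde u|\le\lambda\}$ (this is where the quadratic smallness of the linearisation error enters), the smallness of $B_{\lambda}$, and the interior gradient bounds for $h$ together with its $\sobo^{1,(n+1)/n}$-bound near $\partial\!\ball$ (Proposition~\ref{thm:mazsha}); choosing $\lambda\sim\Phi^{1/2}$ this should produce a Caccioppoli-type estimate $\dashint_{\ball}|\E w|\lesssim\Phi$. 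To make this rigorous on competitors of genuine $\sobo^{1,1}$-type I would first replace $\widetilde u$ by the area-strict smooth approximants (Lemma~\ref{lem:smooth}) and then by Ekeland-almost-minimisers (Lemma~\ref{lem:EkeLemma}), carry out the computation there, and pass to the limit via the area-strict continuity of Lemma~\ref{lem:symareastrict}. It is precisely the \emph{higher regularity of $h$ on the good ball} that keeps the error terms proportional to a power of $\Phi$ strictly above one. Finally, as $w$ has vanishing trace, combining the $\bd$-Sobolev--Poincar\'e inequality $\|w\|_{\lebe^{n/(n-1)}(\ball)}\le c(n)|\E w|(\ball)$ (Proposition~\ref{prop:fractionalpoincareBD}, \eqref{eq:prelimscancelling}) with the elementary splitting $\int_{\ball}V(w)\lesssim\mu\int_{\ball}|w|+\mu^{-1/(n-1)}\int_{\ball}|w|^{n/(n-1)}$, optimised over $\mu>0$, upgrades $\dashint_{\ball}|\E w|\lesssim\Phi$ to $\dashint_{\ball}V(w)\lesssim\bigl(\dashint_{\ball}|\E w|\bigr)^{(n+1)/n}\lesssim\Phi^{(n+1)/n}$; undoing the scaling this is \eqref{eq:absolutemain} for every $q<\tfrac{n+1}{n}$.

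\emph{Main obstacle.} I expect the truncation step to be the hard part. Unlike the $\bv$-situation of \cite{GK2}, where $D\widetilde u$ is a finite measure and $\widetilde u-h$ is a legitimate competitor, \textsc{Ornstein}'s Non-Inequality forces us to handle simultaneously the singular part $\E^{s}\widetilde u$ — invisible to the Euler--Lagrange system yet present in full in \eqref{eq:absolutemain} — and an a priori only $\lebe^{1}$-integrable density $\mathscr{E}\widetilde u$, and to verify that neither spoils the superlinear power. This is exactly the point at which the Fubini-type higher boundary regularity of $\widetilde u$, and hence the improved interior regularity of $h$ via Proposition~\ref{thm:mazsha}, becomes indispensable: the goodness of $\ball$ is used to keep every error term stemming from $h$ superlinearly small.
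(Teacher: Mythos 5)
Your reduction to the exact Euler--Lagrange equation $\int_{\ball}\langle\widetilde f'(\mathscr{E}\widetilde u),\sg(\varphi)\rangle\dif x=0$ and the linearised inequality $|\int_{\ball}\langle\A\mathscr{E}w,\sg(\varphi)\rangle\dif x|\leq cL\int_{\ball}V(\mathscr{E}\widetilde u)|\sg(\varphi)|\dif x$ is sound (the paper reaches the same inequality in a perturbed form via Ekeland, precisely to avoid manipulating measures), and your final interpolation step -- zero trace of $w$, the embedding $\|w\|_{\lebe^{n/(n-1)}}\lesssim|\E w|(\ball)$ and the splitting of $V(w)$ optimised in $\mu$ -- is correct. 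The genuine gap is the pivotal intermediate claim $\dashint_{\ball}|\E w|\lesssim\Phi$ with $\Phi=\dashint_{\ball}V(\E\widetilde u)$, which you only assert (``should produce''). This is a gradient-level comparison estimate that is far stronger than anything the proposition needs or the paper proves, and it is essentially an $\lebe^{1}\to\lebe^{1}$ bound for the solution operator of a Legendre--Hadamard system with a divergence-form right-hand side of size $V(\mathscr{E}\widetilde u)$ -- exactly the endpoint estimate that fails for singular-integral-type operators. Your sketched route to it does not close: testing with a Lipschitz truncation $T_{\lambda}w$ and using coercivity of $\A$ produces bad-set error terms of order $\lambda^{2}|B_{\lambda}|\sim\lambda\,|\E w|(\ball)$ and tail integrals of $|\mathscr{E}w|$ on $B_{\lambda}$, which are not superlinear in $\Phi$; moreover they involve $\int_{\ball}|\sg(h)|$, and under the hypotheses of the proposition the only quantitative control of $h$ is through the boundary seminorm $[\widetilde u]_{\sobo^{1/(n+1),(n+1)/n}(\partial\ball)}$, which is \emph{not} bounded by a power of $\Phi$ (that only happens on the Fubini-selected spheres, used later in the excess-decay section). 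Since the statement requires $C=C(m,n,q,L,\ell)$ independent of $u$ and of the ball, an argument whose error terms are measured against norms of $h$ cannot deliver the asserted constant. Note also that trivially $\dashint_{\ball}|\E\widetilde u|\lesssim\Phi^{1/2}$ only, so to win via your step (b) you would need $\dashint_{\ball}|\E w|\lesssim\Phi^{s}$ with $s>\tfrac{n}{n+1}$, i.e.\ a genuine gain over the obvious bound, and no mechanism for this gain is provided.

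The paper's mechanism is different and avoids the gradient-level comparison altogether: after an Ekeland approximation producing $\widetilde v\in\ld_{\widetilde u}(\ball)$ with a perturbed Euler--Lagrange inequality, one truncates the \emph{function} $\Psi=(\widetilde v-h)/R$ (not the gradient), solves the auxiliary system $-\di(\A\sg(\mathbf{T}))=T(\Psi)$ with zero boundary values, and uses $\sobo^{2,p}$-regularity with $p>n+1$ plus Morrey to get $\|\sg(\mathbf{T})\|_{\lebe^{\infty}}\lesssim(\int_{\ball}V(\Psi)\dif x)^{1/p}$, since $|T(\Psi)|^{p}\lesssim V(\Psi)$ pointwise. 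Testing the auxiliary system with $\Psi$ and the linearised inequality with $\mathbf{T}$ gives $\int_{\ball}V(\Psi)\dif x\leq\int_{\ball}\langle T(\Psi),\Psi\rangle\dif x\lesssim(\int_{\ball}V(\sg(\widetilde v))\dif x+\delta)\,(\int_{\ball}V(\Psi)\dif x)^{1/p}$, so the superlinear exponent $q=p'$ comes out of this duality directly, with no quantitative information on $h$ and no smallness assumption on $\Phi$ (your case distinction for large $\Phi$ is also left unresolved, since the complementary regime again needs control of $h$). If you want to salvage your outline, the fix is to replace the gradient-level truncation of $w$ by the paper's function-level truncation and duality; your exact Euler--Lagrange equation could then substitute for the Ekeland step provided you justify the pairing of $\E w$ (a measure) with the Lipschitz field $\sg(\mathbf{T})$ via a Gauss--Green formula on $\bd$ -- which is exactly the technicality the Ekeland approximation is designed to bypass.
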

\begin{proof}
We fix a ball $\ball(x_{0},R)\Subset\Omega$ such that the hypotheses of the proposition are in action. The proof then evolves in three steps: 

\emph{Step 1. Ekeland approximation.} To avoid manipulations on measures, we first employ an approximation procedure that allows us to work with $\ld$- instead of $\bd$-maps. To this end, let $\delta>0$ be arbitrary but fixed. Then we apply the area-strict approximation of Lemma~\ref{lem:smooth} to find $\widetilde{w}_{\delta}\in\ld_{\widetilde{u}}(\ball(x_{0},R)):=\widetilde{u}+\ld_{0}(\ball(x_{0},R))$ such that 
\begin{align}\label{eq:EkePrep}
\begin{split}
&\dashint_{\ball(x_{0},R)}\left\vert\frac{\widetilde{u}-\widetilde{w}_{\delta}}{R}\right\vert\dif x + \left\vert\dashint_{\ball(x_{0},R)}V(\E\widetilde{u})-\dashint_{\ball(x_{0},R)}V(\sg(\widetilde{w}_{\delta}))\dif x\right\vert \leq \delta^{2},\\
& \dashint_{\ball(x_{0},R)}\widetilde{f}(\sg(\widetilde{w}_{\delta}))\dif x \leq  \dashint_{\ball(x_{0},R)}\widetilde{f}(\E\widetilde{u})+\delta^{2}, 
\end{split}
\end{align}
where the dash is understood with respect to the Lebesgue measure $\mathscr{L}^{n}$. Note that we can assume without loss of generality that $\widetilde{w}_{\delta}\in\ld(\ball(x_{0},R))$ since $\widetilde{u}$ only enters in the definition of $\ld_{\widetilde{u}}(\ball(x_{0},R))$ through prescribing the traces. However, as $\ld(\ball(x_{0},R))$ and $\bd(\ball(x_{0},R))$ have the same trace space on $\partial\!\ball(x_{0},R)$, we can find a $\ld$-map that has the same boundary traces on $\partial\!\ball(x_{0},R)$ and then proceed as before. Crucially, $(\ld_{\widetilde{u}}(\ball(x_{0},R)),d_{\sym})$ is a complete metric space, where $d_{\sym}(v_{1},v_{2}):=\|\sg(v_{1}-v_{2})\|_{\lebe^{1}(\ball(x_{0},R);\rsym)}$ is the symmetric gradient-$\lebe^{1}$-metric. It is then routine to check that all the requirements for the Ekeland variational principle, Lemma~\ref{lem:EkeLemma}, are satisfied; in particular, by \eqref{eq:nogap} from the appendix, the local $\bd$-minimality of $\widetilde{u}$ for $v\mapsto \int \widetilde{f}(\E v)$ gives 
\begin{align*}
 \dashint_{\ball(x_{0},R)}\widetilde{f}(\sg(\widetilde{w}_{\delta}))\dif x \leq \inf_{w\in\ld_{\widetilde{u}}(\ball(x_{0},R))}\dashint_{\ball(x_{0},R)}\widetilde{f}(\sg(w)) + \delta^{2},
\end{align*}
We deduce that there exists a mapping $\widetilde{v}\in\ld_{\widetilde{u}}(\ball(x_{0},R))$ which satisfies 
\begin{align}\label{eq:EkeMAIN}
\begin{split}
& \int_{\ball(x_{0},R)}\widetilde{f}(\sg(\widetilde{v}))\dif x \leq \int_{\ball(x_{0},R)}\widetilde{f}(\sg(\widetilde{w}_{\delta}))\dif x,\\
& \dashint_{\ball(x_{0},R)}\left\vert\frac{\widetilde{v}-\widetilde{w}_{\delta}}{R}\right\vert\dif x + \dashint_{\ball(x_{0},R)}|\sg(\widetilde{v})-\sg(\widetilde{w}_{\delta})|\dif x\leq (1+c_{\poinc}) \delta,\\
& \int_{\ball(x_{0},R)}\widetilde{f}(\sg(\widetilde{v}))\dif x \leq \int_{\ball(x_{0},R)}\widetilde{f}(\sg(\widetilde{\varphi}))\dif x + \delta \int_{\ball(x_{0},R)}|\sg \big(\widetilde{v}-\widetilde{\varphi} \big)|\dif x
\end{split}
\end{align}
for all $\widetilde{\varphi}\in\ld_{\widetilde{u}}(\ball(x_{0},R))$, where $c_{\poinc}>0$ is an arbitrary but fixed constant for the Poincar\'{e} inequality in $\ld_{0}(\ball(x_{0},R))$; note that the above inequality scales correctly and hence $c_{\poinc}>0$ is in fact independent of $R$. Working from here, we obtain 
\begin{align}\label{eq:EkeMAIN1}
\left\vert\,\int_{\ball(x_{0},R)}\langle\widetilde{f}'(\sg(\widetilde{v})),\sg(\varphi)\rangle\dif x\right\vert \leq \delta\int_{\ball(x_{0},R)}|\sg(\varphi)|\dif x
\end{align}
for all $\varphi\in\ld_{0}(\ball(x_{0},R))$ and 
\begin{align}\label{eq:EkeMAIN2}
\left\vert\,\int_{\ball(x_{0},R)}\langle \widetilde{f}''(0)\sg(\widetilde{v}),\sg(\varphi)\rangle\dif x \right\vert \leq \int_{\ball(x_{0},R)}(cL V(\sg(\widetilde{v}))+\delta)|\sg(\varphi)|\dif x
\end{align}
for all $\varphi\in\sobo_{0}^{1,\infty}(\ball(x_{0},R);\R^{n})$. Indeed, for every $\theta>0$, $\widetilde{\varphi}_{\theta}^{\pm}:=\widetilde{v}\pm\theta\varphi$ qualifies as a competitor in $\eqref{eq:EkeMAIN}_{3}$. Hence, 
\begin{align*}
\left\vert\,\int_{\ball(x_{0},R)} \frac{\widetilde{f}(\sg(\widetilde{v}))-\widetilde{f}(\sg(\widetilde{v}\pm\theta\varphi))}{\theta}\dif x\right\vert \leq \delta\int_{\ball(x_{0},R)}|\sg(\varphi)|\dif x. 
\end{align*}
In this situation, sending $|\theta|\searrow 0$ yields $\eqref{eq:EkeMAIN1}$. We then consequently find
\begin{align*}
\int_{\ball(x_{0},R)}\langle \widetilde{f}''(0)\sg(\widetilde{v}),\sg(\varphi)\rangle\dif x & \leq \int_{\ball(x_{0},R)}\langle \widetilde{f}''(0)\sg(\widetilde{v})-\widetilde{f}'(\sg(\widetilde{v})),\sg(\varphi)\rangle\dif x \\ & + \int_{\ball(x_{0},R)}\langle \widetilde{f}'(\sg(\widetilde{v})),\sg(\varphi)\rangle\dif x \leq \int_{\ball(x_{0},R)}(cLV(\sg(\widetilde{v}))+\delta)|\sg(\varphi)|\dif x
\end{align*}
by Lemma~\ref{lem:auxSQC}(iii) and $\eqref{eq:EkeMAIN1}$; note that now $c$ depends on $m$. The same obviously is valid for $-\varphi$ instead of $\varphi$. This establishes \eqref{eq:EkeMAIN2}. In effect, \eqref{eq:EkeMAIN1} provides perturbed Euler-Lagrange equations as a substitute for the \textsc{Anzellotti}-type Euler-Lagrange equations for measures.

\emph{Step 2. Truncations and improved regularity for the comparison maps.} Starting from \eqref{eq:EkeMAIN2}, we let $\varphi\in\sobo_{0}^{1,\infty}(\ball(x_{0},R);\R^{n})$ be arbitrary and put $\psi:= \widetilde{v}-h$. We scale back to the unit ball and therefore put, for $x\in\ball(0,1)$, 
\begin{align*}
 \Psi(x):=\frac{1}{R}\psi(x_{0}+Rx),\;\;\;\Phi(x):=\frac{1}{R}\varphi(x_{0}+Rx),\;\;\;U(x):=\frac{1}{R}\widetilde{v}(x_{0}+Rx). 
\end{align*}
Since $h$ satisfies \eqref{eq:lin1}, we conclude from \eqref{eq:EkeMAIN2} with $\A:=\widetilde{f}''(0)$
\begin{align}\label{eq:TscherpelCenter10A}
\left\vert\,\int_{\ball(0,1)}\langle \A\sg(\Psi),\sg(\Phi)\rangle\dif x \right\vert \leq cL\int_{\ball(0,1)}V(\sg(U))|\sg(\Phi)|\dif x + \delta\int_{\ball(0,1)}|\sg(\Phi)|\dif x.
\end{align}
We then define a truncation operator $T\colon\R^{n}\to\R^{n}$ by 
\begin{align*}
T(y):=\begin{cases} y&\;\text{if}\;|y|\leq 1,\\
\frac{y}{|y|}&\;\text{if}\;|y|>1, 
\end{cases}
\qquad y\in\R^{n},
\end{align*}
and note that $T(\Psi)\in\lebe^{\infty}(\ball(0,1);\R^{n})$. Let us now consider the linear system 
\begin{align}\label{eq:Tscherpelsys}
\begin{cases}
-\di(\A\sg(\mathbf{T}))=T(\Psi)&\;\text{in}\;\ball(0,1),\\
\mathbf{T}=0&\;\text{on}\;\partial\!\ball(0,1)
\end{cases}
\end{align}
with its corresponding weak formulation 
\begin{align}\label{eq:Tscherpelsys1}
\int_{\ball(0,1)}\langle \A\sg(\mathbf{T}),\sg(\varrho)\rangle\dif x = \int_{\ball(0,1)}\langle T(\Psi),\varrho\rangle\dif x\qquad\text{for all}\;\varrho\in\hold_{c}^{\infty}(\ball(0,1);\R^{n}). 
\end{align}
Since $f$ is assumed strongly symmetric quasiconvex, it is strongly symmetric rank-one convex. Fix $p>n+1$. Then, by Proposition~\ref{thm:mazsha}, there exists a unique solution $\mathbf{T}\in\sobo^{2,p}(\ball(0,1);\R^{n})$ of \eqref{eq:Tscherpelsys} with $u|_{\partial\!\ball(0,1)}=0$. Thus there exists a constant $C=C(n,p,L,\ell)>0$ such that 
\begin{align}\label{eq:boundednessT}
\int_{\ball(0,1)}|\mathbf{T}|^{p}\dif x + \int_{\ball(0,1)}|\D\!\mathbf{T}|^{p}\dif x + \int_{\ball(0,1)}|\D^{2}\mathbf{T}|^{p}\dif x \leq C \int_{\ball(0,1)}|T(\Psi)|^{p}\dif x. 
\end{align}
In this situation, we invoke Morrey's embedding $\sobo^{1,p}(\ball;\R^{n})\hookrightarrow \hold^{0,1-n/p}(\overline{\ball};\R^{n})$ to find that $\mathbf{T}$ is Lipschitz together with the corresponding bound 
\begin{align}\label{eq:Tabbybound}
\|\D\!\mathbf{T}\|_{\lebe^{\infty}(\ball;\R^{n\times n})} \leq C(\|\D\!\mathbf{T}\|_{\lebe^{p}(\ball;\R^{n\times n})} + \|\D^{2}\mathbf{T}\|_{\lebe^{p}(\ball;\R^{n\times n}\times\R^{n})})\stackrel{\eqref{eq:boundednessT}}{\leq} C \|T(\Psi)\|_{\lebe^{p}(\ball;\R^{n})}. 
\end{align}
As $\mathbf{T}|_{\partial\!\ball(0,1)}=0$, from here we deduce $\mathbf{T}\in\sobo_{0}^{1,\infty}(\ball(0,1);\R^{n})$. Approximating a generic map $\rho\in\ld_{0}(\ball(0,1))$ by elements from $\hold_{c}^{\infty}(\ball(0,1);\R^{n})$ in the $\ld$-norm topology, we obtain 
\begin{align}\label{eq:Tscherpelsys2}
\int_{\ball(0,1)}\langle \A\sg(\mathbf{T}),\sg(\varrho)\rangle\dif x = \int_{\ball(0,1)}\langle T(\Psi),\varrho\rangle\dif x\qquad\text{for all}\;\varrho\in\ld_{0}(\ball(0,1)).  
\end{align}
Now, because of $2\leq n<p<\infty$, we have $|T(y)|^{p}= |y|^{p} \leq |y|^{2}$ for if $|y|\leq 1$ and thus there holds
\begin{align}\label{eq:Tscherpelsys3}
\|T(\Psi)\|_{\lebe^{p}}^{p}& = \int_{\ball(0,1)}|T(\Psi)|^{p}\dif x \leq c\int_{\ball(0,1)}V(\Psi)\dif x
\end{align}
by Lemma~\ref{lem:auxVSQC}. Combining \eqref{eq:Tscherpelsys3} with \eqref{eq:Tabbybound} consequently yields
\begin{align}\label{eq:TabbyBound1}
\|\sg(\mathbf{T})\|_{\lebe^{\infty}} \leq \|\D\mathbf{T}\|_{\lebe^{\infty}} \leq c\Big(\int_{\ball(0,1)}V(\Psi)\dif x\Big)^{\frac{1}{p}}, 
\end{align}
and here $c>0$ only depends on $\ell,L,m,n$ and $p$. 

\emph{Step 3. Conclusion for the approximating maps $\widetilde{v}$.} 
We now combine the estimates gathered so far to obtain inequality \eqref{eq:absolutemain} in a perturbed form. Recalling \eqref{eq:Vest1}, we succesively obtain
\begin{align*}
\int_{\ball(0,1)}V(\Psi)\dif x & \stackrel{\eqref{eq:Vest1}_{1}}{\leq} \int_{\ball(0,1)} 
\min\{|\Psi|,|\Psi|^{2}\}\dif x \\
& = \int_{\ball(0,1)} \langle T(\Psi),\Psi\rangle \dif x \;\;\;\;\;\;\;\;\;\;\;\;\;\;\;\;\;\;\;\;\;\;\;\;\;\;\;\;\;\;\;\;\;\;\;\;\;\;\;\;\;\;\;\;\;\;\;\;\;\;\;\;\;\;\;\;\;\;\;\;\;(\text{by definition of $T$})\\
& = \int_{\ball(0,1)} \langle \A\sg(\mathbf{T}),\sg(\Psi)\rangle \dif x\;\;\;\;\;\;\;\;\;\;\;\;\;\;\;\;\;\;\;\;\;\;\;\;\;\;\;\;\;\;\;\;\;(\text{by testing \eqref{eq:Tscherpelsys2} with $\rho=\Psi$})\\
& =  \int_{\ball(0,1)} \langle \A\sg(\Psi),\sg(\mathbf{T})\rangle \dif x \;\;\;\;\;\;\;\;\;\;\;\;\;\;\;\;\;\;\;\;\;\;\;\;\;\;\;\;\;\;\;\;\;\;\;\;\;\;\;\;\;\;\;\;\;\;\;\;\;\;\;\;\;\;\;\;(\text{as $\A\in\mathbb{S}(\rsym)$})\\ &\leq \int_{\ball(0,1)}(cLV(\sg(U))+\delta)|\sg(\mathbf{T})|\dif x \;\;\;\;\;\;\;\;\;\;\;\;\;\;\;\;\;(\text{by testing \eqref{eq:TscherpelCenter10A} with $\Phi=\mathbf{T}$})\\
& \leq  \int_{\ball(0,1)}(cLV(\sg(U))+\delta)\dif x \|\sg(\mathbf{T})\|_{\lebe^{\infty}} \\
& \leq  \Big(\int_{\ball(0,1)}(cLV(\sg(U))+\delta)\dif x\Big)\Big(\int_{\ball(0,1)}|V(\Psi)|\dif x\Big)^{\frac{1}{p}} \;\;\;\;\;\;\;\;\;\;\;\;\;\;\;\,\;\;(\text{by \eqref{eq:TabbyBound1}}).
\end{align*}
We therefore obtain 
\begin{align}
\Big(\int_{\ball(0,1)}V(\Psi)\dif x\Big)^{1-\frac{1}{p}}\leq \Big(\int_{\ball(0,1)}(cLV(\sg(U))+\delta)\dif x\Big).
\end{align}
At this stage recall that our choice of $p$ was only restricted to $p>n+1$. For $1<q<\frac{n+1}{n}$ as in the proposition, we thus find $n+1<p<\infty$ such that $p'=\frac{p}{p-1}=q$ and thus  
\begin{align}
\int_{\ball(0,1)}V(\Psi)\dif x \leq C \Big(\int_{\ball(0,1)}V(\sg(U))\dif x \Big)^{q}+ C \delta^{q}\mathscr{L}^{n}(\ball(0,1))^{q}. 
\end{align}
We consequently scale back to the original ball to find 
\begin{align}\label{eq:TabbyStarBound}
\dashint_{\ball(x_{0},R)}V\Big(\frac{\widetilde{v}-h}{R}\Big)\dif x \leq C \Big(\dashint_{\ball(0,R)}V(\sg(\widetilde{v}))\dif x \Big)^{q}+ C \delta^{q}\mathscr{L}^{n}(\ball(0,1))^{q}, 
\end{align}
and we note that the constant $C>0$ only depends on $m,n,q,L$ and $\ell$. 

\emph{Step 4. Limit passage $\delta\searrow 0$ and conclusion.} We now intend to send $\delta\searrow 0$; note that $\widetilde{v}$ actually depends on $\delta$: $\widetilde{v}=\widetilde{v}_{\delta}$. By Lipschitz continuity of $V$ we see that
\begin{align*}
\left\vert\dashint_{\ball(x_{0},R)}V\Big(\frac{\widetilde{v}-h}{R}\Big) - \dashint_{\ball(x_{0},R)}V\Big(\frac{\widetilde{u}-h}{R}\Big)\right\vert & \leq C(V) \dashint_{\ball(x_{0},R)}\left\vert \frac{\widetilde{u}-\widetilde{w}_{\delta}}{R}\right\vert + C(V)\dashint_{\ball(x_{0},R)}\left\vert \frac{\widetilde{v}-\widetilde{w}_{\delta}}{R}\right\vert \\
& \leq C(V)(\delta^{2} + (1+c_{\poinc})\delta)\to 0 
\end{align*}
by \eqref{eq:EkePrep} and $\eqref{eq:EkeMAIN}_{2}$ as $\delta\searrow 0$. Second, we obtain similarly
\begin{align*}
\left\vert \dashint_{\ball(x_{0},R)}V(\E\widetilde{u})-\dashint_{\ball(x_{0},R)}V(\sg(\widetilde{v}))\dif x\right\vert & \leq \left\vert \dashint_{\ball(x_{0},R)}V(\E\widetilde{u})-\dashint_{\ball(x_{0},R)}V(\sg(\widetilde{w}_{\delta}))\dif x\right\vert \\ & + C(V)\dashint_{\ball(x_{0},R)}|\sg(w_{\delta}-\widetilde{v})|\dif x \leq (\delta^{2}+C(V)\delta)\to 0
\end{align*}
as $\delta\searrow 0$. In conclusion, by \eqref{eq:TabbyStarBound} we have established  
\begin{align*}
\dashint_{\ball(x_{0},R)}V\Big(\frac{\widetilde{u}-h}{R}\Big)\dif x \leq C \Big(\dashint_{\ball(x_{0},R)} V(\E\widetilde{u}))\Big)^{q}, 
\end{align*}
which is the desired inequality \eqref{eq:absolutemain} and the proof is complete. 
\end{proof}
\begin{remark}[On the exponent $q$ in the previous proposition]\label{rem:discussAharmonicity}
It is important to remark that the exponent $q$ as it appears in the previous proposition can be chosen \emph{strictly larger} than one. From a technical perspective, the importance of $q>1$ is given by \eqref{eq:TabbyCenterEstimateMain2} from below, where the smallness assumption on the excess gives smallness of the critical quantity
\begin{align*}
\left(\frac{\mathbf{E}(x_{0},R_{0})}{R_{0}^{n}} \right)^{q-1}.
\end{align*}
If we could not use $q>1$ and only had $q=1$ at our disposal, this critical term would equal one and thus  destroy the excess decay later on in Proposition~\ref{prop:epsreg}. 
\end{remark}
In the preceding Proposition~\ref{prop:improvement} we have estimated a $V$-function type distance of $\widetilde{u}=u-a$ to its $\A$-harmonic approximation $h$, where $\A=\widetilde{f}''(0)=f''_{\sg(a)}(0)$. We conclude this subsection by showing how suitable Lebesgue norms of $\D\!h$ can be controlled by means of $\widetilde{u}$: 
\begin{lemma}\label{lem:SQCcontinuityimplc}
In the situation of Proposition~\ref{prop:improvement} there exists a constant $C=C(n,\ell,L)>0$ such that for each $b\in\mathscr{R}(\R^{n})$ the map $\widetilde{h}:=h-b$ satisfies, with $\widetilde{u}_{b}:=\widetilde{u}-b$, 
\begin{align}\label{eq:harmoniccontinuity}
\left(\dashint_{\ball(x_{0},R)}|\D\!\widetilde{h}|^{\frac{n+1}{n}}\dif x\right)^{\frac{n}{n+1}}\leq CR^{-\frac{n}{n+1}}\Big(\dashint_{\partial\!\ball(x_{0},R)}\!\int_{\partial\!\ball(x_{0},R)}\frac{|\widetilde{u}_{b}(x)-\widetilde{u}_{b}(y)|^{\frac{n+1}{n}}}{|x-y|^{(n-1+\frac{1}{n})}}\dif\sigma_{x}\dif\sigma_{y}\Big)^{\frac{n}{n+1}}.
\end{align}
\end{lemma}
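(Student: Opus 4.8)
The statement concerns the $\A$-harmonic map $h$ solving \eqref{eq:lin1}, with $\widetilde{h} = h - b$ for $b \in \mathscr{R}(\R^n)$. Since the symmetric gradient annihilates rigid deformations, $\sg(\widetilde{h}) = \sg(h)$ and $\widetilde{h}$ solves the same homogeneous system $-\di(\A\sg(\widetilde{h})) = 0$ in $\ball(x_0,R)$, now with boundary datum $\widetilde{u}_b = \widetilde{u} - b$ on $\partial\!\ball(x_0,R)$. The aim is to bound $\|\D\widetilde{h}\|_{\lebe^{(n+1)/n}(\ball(x_0,R))}$ by the fractional seminorm $[\widetilde{u}_b]_{\sobo^{1/(n+1),(n+1)/n}(\partial\!\ball(x_0,R))}$, with the correct scaling. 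The essential tool is Proposition~\ref{thm:mazsha} (equivalently Lemma~\ref{lem:toplinear}) with the parameter choice $k=1$, $q = 1 + \tfrac1n = \tfrac{n+1}{n}$, so that the boundary trace space $\sobo^{k-1/q,q}(\partial\!\ball) = \sobo^{1/(n+1),(n+1)/n}(\partial\!\ball)$ is exactly the space in which $\widetilde{u}_b$ is assumed to lie.

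\textbf{Key steps.} First I would reduce to the unit ball $\ball(0,1)$ by the rescaling $x \mapsto x_0 + Rx$: writing $H(x) := R^{-1}\widetilde{h}(x_0+Rx)$, the system becomes $-\di(\A\sg(H)) = 0$ in $\ball(0,1)$ with boundary datum $U_b(x) := R^{-1}\widetilde{u}_b(x_0+Rx)$ on $\mathbb{S}^{n-1}$, and a straightforward change of variables in the integrals tracks the powers of $R$ (the left side picks up $R^{-n/(n+1)}$ from $\dashint_{\ball}|\D H|^{(n+1)/n}$ versus $\dashint_{\ball}|\D\widetilde{h}|^{(n+1)/n}$, and the right side the corresponding factor from the surface seminorm). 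Next, by Proposition~\ref{thm:mazsha}, the map $\Phi\colon \sobo^{1,(n+1)/n}(\ball(0,1);\R^n) \to \sobo^{-1,(n+1)/n}(\ball(0,1);\R^n) \times \sobo^{1/(n+1),(n+1)/n}(\mathbb{S}^{n-1};\R^n)$, $v \mapsto (L v, \trace_{\mathbb{S}^{n-1}} v)$, is a topological isomorphism with norm bounds depending only on $n,\nu_1,\nu_2$ — and here $\nu_1,\nu_2$ are controlled by $\ell, L$ since $\A = f''_{\sg(a)}(0)$ and hypotheses \ref{item:reg1}--\ref{item:reg3} give the required ellipticity and boundedness of $\A$ uniformly (indeed $\A$ is strongly symmetric rank-one convex as noted after \eqref{eq:lin1}, and $|\A|\lesssim L$ by \ref{item:reg2}). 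Since $L H = 0$, the isomorphism gives $\|H\|_{\sobo^{1,(n+1)/n}(\ball(0,1))} \leq C(n,\ell,L)\,\|U_b\|_{\sobo^{1/(n+1),(n+1)/n}(\mathbb{S}^{n-1})}$; in particular $\|\D H\|_{\lebe^{(n+1)/n}(\ball(0,1))}$ is bounded by the full $\sobo^{1/(n+1),(n+1)/n}(\mathbb{S}^{n-1})$-norm of $U_b$. Finally, to replace this full norm by the pure seminorm $[U_b]_{\sobo^{1/(n+1),(n+1)/n}(\mathbb{S}^{n-1})}$, I would invoke that $b$ is free: one chooses $b \in \mathscr{R}(\R^n)$ to be (a suitable affine normalization of) $\widetilde{u}$ so that the $\lebe^{(n+1)/n}(\mathbb{S}^{n-1})$-part of $U_b$ is controlled by its seminorm via a Poincaré–Wirtinger inequality on the sphere — but in fact this is not even needed, since $\D H$ only sees the seminorm: subtracting any constant vector from $U_b$ does not change $\D H$, so one may apply the isomorphism estimate to $U_b$ minus its mean over $\mathbb{S}^{n-1}$ and then use the fractional Poincaré inequality $\|g - (g)_{\mathbb{S}^{n-1}}\|_{\lebe^{(n+1)/n}(\mathbb{S}^{n-1})} \leq C[g]_{\sobo^{1/(n+1),(n+1)/n}(\mathbb{S}^{n-1})}$ to absorb the $\lebe^{(n+1)/n}$ term into the seminorm. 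Undoing the rescaling yields \eqref{eq:harmoniccontinuity}.

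\textbf{Main obstacle.} The only genuinely delicate point is the bookkeeping of scaling exponents: one must verify that the powers of $R$ emerging from the two changes of variables — in the volume integral $\dashint_{\ball(x_0,R)}|\D\widetilde{h}|^{(n+1)/n}$ and in the double surface integral with kernel $|x-y|^{-(n-1+1/n)}$ — combine to give exactly the factor $R^{-n/(n+1)}$ on the right-hand side, consistently with the fact that the isomorphism constant in Proposition~\ref{thm:mazsha} is scale-invariant on the unit ball. A secondary (routine) point is checking that $\nu_1,\nu_2$ in \eqref{eq:TscherpelCenter10} for $\A = f''_{\sg(a)}(0)$ depend only on $\ell,L$ and not on $m$ or $a$, which follows from \ref{item:reg1}--\ref{item:reg3} and the definition \eqref{eq:shift} of the shifted integrand together with $|\E a| \leq m$; here strictly one should confirm the dependence is genuinely only on $n,\ell,L$ as claimed, using that strong symmetric quasiconvexity \eqref{eq:SSQC} passes to the second derivative at the origin and the linear growth \eqref{eq:lingrowth} bounds $|f''|$ — this is where one uses \ref{item:reg1} ($\hold^{2,1}_{\locc}$) to even talk about $f''$. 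Everything else is a direct application of the cited isomorphism theorem and elementary scaling.
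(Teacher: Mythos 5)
Your proposal is correct and follows essentially the same route as the paper: rescale to the unit ball, apply the isomorphism of Proposition~\ref{thm:mazsha} with $k=1$, $q=\tfrac{n+1}{n}$ to the solution with boundary datum $\widetilde{u}_{b}$ minus its spherical mean (which leaves $\D\widetilde h$ unchanged), and then absorb the $\lebe^{\frac{n+1}{n}}$-part of the trace norm into the Gagliardo seminorm before scaling back. The only cosmetic difference is that the paper proves the needed spherical Poincar\'{e}-type step inline, via Jensen's inequality and the elementary bound $\mathscr{H}^{n-1}(\partial\!\ball(0,1))^{-1}\leq c(n)|x-y|^{-(n-1+\frac{1}{n})}$ for $x,y\in\partial\!\ball(0,1)$, rather than quoting a fractional Poincar\'{e} inequality on the sphere.
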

\begin{proof}
It is no loss of generality to assume $x_{0}=0$ and $R=1$. Then we have $\widetilde{u}|_{\partial\!\ball(0,1)}\in\sobo^{\frac{1}{n},\frac{n+1}{n}}(\partial\!\ball(0,1);\R^{n})$. With this choice of $x_{0}$ and $R$, and adopting the terminology of Proposition~\ref{thm:mazsha}, denote $S:=\Phi^{-1}(0,\cdot)$. Given $b\in\mathscr{R}(\R^{n})$, we define
\begin{align*}
\mathbf{u}_{b}:=(\widetilde{u}_{b})_{\partial\!\ball(0,1)}:=\dashint_{\partial\!\ball(0,1)}\widetilde{u}_{b}\dif\mathcal{H}^{n-1}(\in\R^{n}),
\end{align*}
where the dash is now understood with respect to $\mathcal{H}^{n-1}\mres\partial\!\ball(0,1)$. Since $h$ solves \eqref{eq:lin1}, $\mathbf{h}:=\widetilde{h}-\mathbf{u}_{b}:=h-b-\mathbf{u}_{b}$ is the unique solution of \begin{align}\label{eq:lin2}
\begin{cases}
-\di(\A\sg(\mathbf{h}))=0&\;\text{in}\;\ball(0,1),\\
\mathbf{h}=\widetilde{u}_{b}-\mathbf{u}_{b}&\;\text{on}\;\partial\!\ball(0,1). 
\end{cases}
\end{align}
Hence we have $\mathbf{h}=S(\widetilde{u}_{b}-\mathbf{u}_{b})$ so that, by Proposition~\ref{thm:mazsha} with some $C=C(n,L,\ell)>0$,
\begin{align}\label{eq:TscherpelsysDir}
\|\D\!\mathbf{h}\|_{\lebe^{\frac{n+1}{n}}(\ball(0,1);\R^{n\times n})} & \leq \|\mathbf{h}\|_{\sobo^{1,\frac{n+1}{n}}(\ball(0,1);\R^{n})} \leq C \|\widetilde{u}_{b}-\mathbf{u}_{b}\|_{\sobo^{\frac{1}{n+1},\frac{n+1}{n}}(\partial\!\ball(0,1);\R^{n})}.
\end{align}
On the other hand, if $x,y\in\partial\!\ball(0,1)$, then $|x-y|\leq 2$ and therefore $
\mathscr{H}^{n-1}(\partial\!\ball(0,1))^{-1}\leq c(n)|x-y|^{-n+1-1/n}$. Thus,  
\begin{align*}
\Big(\int_{\partial\!\ball(0,1)}|\widetilde{u}_{b}(x)-\mathbf{u}_{b}|^{\frac{n+1}{n}}\dif\sigma_{x}\Big)^{\frac{n}{n+1}} & = \Big(\int_{\partial\!\ball(0,1)}\dashint_{\partial\!\ball(0,1)}|\widetilde{u}_{b}(x)-\widetilde{u}_{b}(y)|^{\frac{n+1}{n}}\dif\sigma_{y}\dif\sigma_{x}\Big)^{\frac{n}{n+1}}\\
& \leq C(n)\Big(\int_{\partial\!\ball(x_{0},1)}\int_{\partial\!\ball(0,1)}\frac{|\widetilde{u}_{b}(x)-\widetilde{u}_{b}(y)|^{\frac{n+1}{n}}}{|x-y|^{n-1+\frac{1}{n}}}\dif\sigma_{y}\dif\sigma_{x}\Big)^{\frac{n}{n+1}}.
\end{align*}
As a consequence, we obtain in conjunction with \eqref{eq:TscherpelsysDir} and a constant $C=C(n,L,\ell)>0$
\begin{align}\label{eq:fracPoinc}
\|\D\!\widetilde{h}\|_{\lebe^{\frac{n+1}{n}}(\ball(0,1);\R^{n\times n})}=\|\D\!\mathbf{h}\|_{\lebe^{\frac{n+1}{n}}(\ball(0,1);\R^{n\times n})}\leq C[\widetilde{u}_{b}]_{\sobo^{\frac{1}{n+1},\frac{n+1}{n}}(\partial\!\ball(0,1);\R^{n})}.
\end{align}
The general case now follows by scaling, and the proof is complete. 
\end{proof}
\subsection{Excess decay}\label{sec:excess}
The objective of the present subsection is to establish the excess decay that will eventually lead to the desired partial regularity assertion of Theorem~\ref{thm:main1} by an iteration scheme. To this end, let $u\in\bd_{\locc}(\Omega)$ be a local generalised minimiser of $F$, where the integrand satisfies \ref{item:reg1}--\ref{item:reg3} from Theorem~\ref{thm:main1}, and let $M_{0}>0$ be a given number. Our strategy then runs in four steps: In a first step, we choose a ball for which both the mean value \emph{and} a certain excess quantity of $\E u$ is small. Then, in a second step, we slightly diminish the radius of the given ball to obtain a ball on whose boundary we may apply the Fubini-type theorem for $\bd$-maps. This makes the $\A$-harmonic approximation of the previous subsection available. Defining suitable comparison maps in step 3, we then combine Propositions~\ref{prop:CaccQC} and \ref{prop:improvement} in step 4 to conclude a preliminary excess decay. In doing so, we define for $z\in\Omega$ and $0<r<\dist(z,\partial\Omega)$ two excess quantities by
\begin{align*}
\mathbf{E}(u;z,r):=\int_{\ball(z,r)}V(\E u - (\E u)_{\ball(z,r)})\;\;\text{and}\;\;\widetilde{\mathbf{E}}(u;z,r):=\frac{\mathbf{E}(u;z,r)}{\mathscr{L}^{n}(\ball(z,r))},
\end{align*}
and we will often write $\mathbf{E}(z,r):=\mathbf{E}(u;z,r)$, assuming that $u$ is fixed. Here, as usual, $(\E u)_{\ball(z,r)}=\E u(\ball(z,r))/\mathscr{L}^{n}(\ball(z,r))$. 

\emph{Step 1.} \emph{Smallness Assumptions.} Let $M_{0}>0$ be given and fix a ball $\ball_{R_{0}}=\ball(x_{0},R_{0})\Subset\Omega$ such that 
\begin{align}\label{eq:smallness1}
|(\E u)_{\ball_{R_{0}}}|\leq M_{0}. 
\end{align} 
and 
\begin{align}\label{eq:smallness2}
\dashint_{\ball_{R_{0}}}|\E u-(\E u)_{\ball_{R_{0}}}|\leq 1. 
\end{align}
We write $\ball_{r}:=\ball(x_{0},r)$ in all of what follows. 

\emph{Step 2.} \emph{Selection of a good radius.} In a second step, we fix an affine--linear map $a\colon\R^{n}\to\R^{n}$ with $\sg(a)=(\E u)_{\ball_{R_{0}}}$. We then put $\widetilde{u}:=u-a$ and  $\widetilde{f}:=f_{\sg(a)}$, cf.~\eqref{eq:shift}. Starting from $R_{0}>0$ as given above, we now apply Theorem~\ref{thm:fubini1}. Consequently, we find $R\in (\frac{9}{10}R_{0},\frac{19}{20}R_{0})$ such that $\widetilde{u}|_{\partial\!\ball_{R}}\in\sobo^{\frac{1}{n+1},1+\frac{1}{n}}(\partial\!\ball_{R};\R^{n})$ and a rigid deformation $b\in\mathscr{R}(\R^{n})$ together with the corresponding estimate (with $\theta=\frac{1}{n+1}$ and accordingly $p=\frac{n+1}{n}$ in Theorem~\ref{thm:fubini1})
\begin{align}\label{eq:TabTHeartbound}
\begin{split}
 \left(\dashint_{\partial\!\ball_{R}}\int_{\partial\!\ball_{R}} \right. & \left. \frac{|\widetilde{u}_{b}(x)-\widetilde{u}_{b}(y)|^{1+\frac{1}{n}}}{|x-y|^{(n-1)+\frac{1}{n}}} \dif\sigma_{x}\dif\sigma_{y}\right)^{\frac{n}{n+1}} \\ & \leq C\frac{(\frac{19}{20}R_{0})^{n}}{(\tfrac{9}{10}R_{0})^{\frac{n(n-1)}{n+1}}(\frac{1}{20}R_{0})^{\frac{n}{n+1}}}\frac{1}{(\frac{19}{20}R_{0})^{n}}\int_{\overline{\ball_{19R_{0}/20}}}|\E\widetilde{u}| \\ & \leq C \frac{R_{0}^{\frac{n}{n+1}}}{R_{0}^{n}}\int_{\ball_{R_{0}}}|\E u -(\E u)_{\ball_{R_{0}}}|.
\end{split}
\end{align}
where we recall $\widetilde{u}_{b}:=\widetilde{u}-b(=u-a-b)$, and $C=C(n)>0$ is a constant.

\emph{Step 3.} \emph{Definition of comparison maps.} We put $\A:=f''((\E u)_{\ball_{R}})$ and  pick the $\A$--harmonic mapping $\widetilde{h}\colon\ball_{R}\to\R^{n}$ solving 
\begin{align}\label{eq:lin6}
\begin{cases}
-\di(f''( (\E u)_{\ball_{R}})\sg(\widetilde{h}))= 0 &\;\text{in}\;\ball_{R},\\
\widetilde{h} = \widetilde{u}_{b}&\;\text{on}\;\partial\!\ball_{R}. 
\end{cases}
\end{align}
We are thus in the setting of \eqref{eq:lin1} and Lemma~\ref{lem:SQCcontinuityimplc} from above; by Proposition~\ref{thm:mazsha}, $\widetilde{h}\in\hold^{\infty}(\ball_{R};\R^{n})$.  Then we define 
\begin{align}\label{eq:defa0}
A(x):=\widetilde{h}(x_{0})+ \D\!\widetilde{h}(x_{0})(x-x_{0})\;\;\;\text{and}\;\;\;a_{0}(x):=a(x)+A(x),\qquad x\in\ball_{R}.
\end{align}
We then obtain 
\begin{align*}
|\sg(a_{0})| & =|\sg(a)+\sg(\widetilde{h})(x_{0})| = |(\E u)_{\ball_{R_{0}}} + \sg(\widetilde{h})(x_{0})| \\
& \leq M_{0} + |\sg(\widetilde{h})(x_{0})|\qquad\qquad\qquad\qquad\qquad\qquad\qquad\qquad\qquad\;\;\;\;\,(\text{by \eqref{eq:smallness1}})\\
& \leq   M_{0} + \sup_{\ball_{R/2}}|\D\!\widetilde{h}| \;\;\;\;\;\;\;\;\qquad\quad\qquad\qquad\qquad\qquad\quad\quad\;\;\;\qquad(\text{as}\,|\E v|\leq |\D\!v|)\\ & \leq   M_{0} + c\dashint_{\ball_{R}}|\D\!\widetilde{h}|\dif x\qquad\qquad\qquad\qquad\qquad\qquad\qquad\qquad\;\qquad\;(\text{by \eqref{eq:TscherpelCenter00}})\\
& \leq  M_{0} + c\Big(\dashint_{\ball_{R}}|\D\!\widetilde{h}|^{\frac{n+1}{n}}\dif x\Big)^{\frac{n}{n+1}}
\end{align*}
and thus 
\begin{align}\label{eq:M0est}\begin{split}
|\sg(a_{0})| & \leq M_{0} \\
& + cR^{-\frac{n}{n+1}}\Big(\dashint_{\partial\!\ball_{R}}\int_{\partial\!\ball_{R}}\frac{|\widetilde{u}_{b}(x)-\widetilde{u}_{b}(y)|^{\frac{n+1}{n}}}{|x-y|^{n-1+\frac{1}{n}}}\dif\sigma_{x}\dif\sigma_{y}\Big)^{\frac{n}{n+1}}\;\;\;\;\;\;\;\;\;\;\;\;\text{(by Lemma~\ref{lem:SQCcontinuityimplc})}\\
& \leq M_{0} + \frac{c}{R_{0}^{n}}\int_{\ball(x_{0},R_{0})}|\E u-(\E  u)_{\ball_{R_{0}}}|\qquad\;\;\;\;\;\;\;\;\;\;\;\;\;\;\;\;\;\;\;\;\;\;\;\;(\text{by~\eqref{eq:TabTHeartbound} and $R_{0}\sim R$})
\\ &\leq M_{0}+c,
\end{split}
\end{align}
where the last estimates holds because of \eqref{eq:smallness2}. Here, $c=c(n,L,\ell)>0$ is a constant that we fix now. In particular, the constants appearing here do not depend on $R$ or $R_{0}$. Summarising, if we put $m:=M_{0}+c$ as on the right side of the previous chain of inequalities, then we obtain 
\begin{align}\label{eq:TabTHeartBound1}
|\sg(a_{0})|\leq m. 
\end{align}

\emph{Step 4. Comparison estimates.} Let $0<\sigma<\frac{1}{5}$ be arbitrary. We note, as a consequence of Lemma~\ref{lem:auxVSQC} and Jensen's inequality, 
\begin{align}\label{eq:TabTHeart2}
\begin{split}
\int_{\ball_{\sigma R_{0}}}V(\E u-(\E u)_{\ball_{\sigma R_{0}}}) &  = \int_{\ball_{\sigma R_{0}}}V(\E u-\E a_{0}+\E a_{0}-(\E u)_{\ball_{\sigma R_{0}}})\\
& \leq C\int_{\ball_{\sigma R_{0}}}V(\E u-\E a_{0}) + C \int_{\ball_{\sigma R_{0}}}V((\E\,(u-a_{0}))_{\ball_{\sigma R_{0}}})\\
& \leq C\int_{\ball_{\sigma R_{0}}}V(\E\,(u-a_{0})) \stackrel{b\in\mathscr{R}(\R^{n})}{=} C\int_{\ball_{\sigma R_{0}}}V(\E\,(u-b-a_{0})).
\end{split}
\end{align}
Our next objective is to apply the Caccioppoli--type inequality, Proposition~\ref{prop:CaccQC}. Having chosen $m>0$ as it appears in Proposition~\ref{prop:CaccQC} by \eqref{eq:TabTHeartBound1}, we find $c=c(m,n,L,\ell)>0$ such that \eqref{eq:CaccQC} holds with the requisite modifications; note that $b+a_{0}$ is affine-linear, too, with $|\sg(b+a_{0})|=|\sg(a_{0})|\leq m$. We then estimate, using \eqref{eq:TabTHeart2} and the Caccioppoli--type inequality in the first step, 
\begin{align*}
\int_{\ball_{\sigma R_{0}}}V (\E u - (\E u)_{\ball_{\sigma R_{0}}}) & \leq C\int_{\ball_{2\sigma R_{0}}}V\Big(\frac{\widetilde{u}(x)-b(x)-A(x)}{\sigma R_{0}} \Big)\dif x \\
& \leq C\int_{\ball_{2\sigma R_{0}}}V\Big(\frac{\widetilde{u}(x)-b(x)-\widetilde{h}(x)-A(x)+\widetilde{h}(x)}{\sigma R_{0}} \Big)\dif x \\
& \leq C\int_{\ball_{2\sigma R_{0}}}V\Big(\frac{(\widetilde{u}-b)-\widetilde{h}}{\sigma R_{0}} \Big)\dif x + C\int_{\ball_{2\sigma R_{0}}}V\Big(\frac{\widetilde{h}-A}{\sigma R_{0}} \Big)\dif x \\
& \leq  \frac{C}{\sigma^{2}}\int_{\ball_{R}}V\Big(\frac{\widetilde{u}_{b}-\widetilde{h}}{R} \Big)\dif x + C\int_{\ball_{2\sigma R_{0}}}V\Big(\frac{\widetilde{h}-A}{\sigma R_{0}} \Big)\dif x\\
& =: \mathbf{I}+\mathbf{II}, 
\end{align*}
where $C=C(m,n,\tfrac{L}{\ell})>0$ is a constant. Here we have used $\ball_{2\sigma R_{0}}\subset\ball_{R}$, uniform comparability of $R$ and $R_{0}$ and the fact that $V(\lambda z)\leq c\lambda^{2}V(z)$ for a constant $c>0$, all $z\in\rsym$ and $|\lambda|\geq 1$ (cf.~Lemma~\ref{lem:auxVSQC}). We continue with the estimation of $\mathbf{I}$, and for this purpose let $1<q<\frac{n+1}{n}$ be arbitrary but fixed. We use Proposition~\ref{prop:improvement} and uniform comparability of $R,R_{0}$ to obtain
\begin{align}\label{eq:Iestimateexcessdecay}
\mathbf{I} & =  \frac{C}{\sigma^{2}}\int_{\ball_{R}}V\Big(\frac{\widetilde{u}_{b}-\widetilde{h}}{ R} \Big)\dif x =\frac{CR^{n}}{\sigma^{2}}\dashint_{\ball_{R}}V\Big(\frac{\widetilde{u}-h}{ R} \Big)\dif x \leq C\frac{R_{0}^{n}}{\sigma^{2}} \left(\dashint_{\ball_{R}}V(\E \widetilde{u}) \right)^{q}, 
\end{align}
the last step being valid by uniform comparability of $R$ and $R_{0}$. As usual, the map $h$ is defined as the solution of the strongly symmetric elliptic system \eqref{eq:lin6} with boundary datum $\widetilde{u}=u-a$. As to $\mathbf{II}$, let $x\in\ball_{2\sigma R_{0}}$. We employ a pointwise estimate to find by use of Taylor's formula 
\begin{align*}
\left\vert\frac{\widetilde{h}(x)-A(x)}{\sigma R_{0}}\right\vert & = \left\vert\frac{\widetilde{h}(x)-\widetilde{h}(x_{0})-\langle \D \widetilde{h}(x_{0}),x-x_{0}\rangle}{\sigma R_{0}}\right\vert &\\ & \leq C \big(\sup_{\ball_{R/2}}|\D^{2}\widetilde{h}|\big)\frac{|x-x_{0}|^{2}}{\sigma R_{0}}&\\
& \leq C \big(\sup_{\ball_{R/2}}|\D^{2}\widetilde{h}|\big)\frac{(2\sigma R_{0})^{2}}{\sigma R_{0}}&\;(\text{since}\,x\in\ball(x_{0},2\sigma R_{0}))\\
& \leq C\sigma R \big(\sup_{\ball_{R/2}}|\D^{2}\widetilde{h}|\big)&\;(\text{since}\;R_{0}\leq \tfrac{10}{9}R)\\
& \leq C\sigma\dashint_{\ball_{R}}|\D\widetilde{h}|\dif x &\;(\text{by Proposition~\ref{thm:mazsha}}) \\
& \leq C\sigma\Big(\dashint_{\ball_{R}}|\D\widetilde{h}|^{\frac{n+1}{n}}\dif x\Big)^{\frac{n}{n+1}} =: \mathbf{III}&\;\text{(by Jensen)}.
\end{align*}
Similarly as in the estimation given in \eqref{eq:M0est}, we again employ Lemma~\ref{lem:SQCcontinuityimplc} to further obtain 
\begin{align*}
\mathbf{III} & \leq C\sigma\dashint_{\ball_{R_{0}}}|\E \widetilde{u}|\stackrel{\text{Def}}{=}C\sigma\dashint_{\ball_{R_{0}}}|\E\,(u-a)|&\;\text{(by Lemma~\ref{lem:SQCcontinuityimplc} and \eqref{eq:TabTHeartbound})}\\
& = C\sigma \dashint_{\ball_{R_{0}}}|\E u-(\E u)_{\ball_{R_{0}}}|&\;(\text{since}\,\E a=(\E u)_{\ball_{R_{0}}})\\
& = C\sigma \Big(\dashint_{\ball_{R_{0}}}|\E u-(\E u)_{\ball_{R_{0}}}|\Big)^{2\cdot\frac{1}{2}}& \\
& \leq C\sigma \Big(V\Big(\dashint_{\ball_{R_{0}}}|\E u-(\E u)_{\ball_{R_{0}}}|\Big)\Big)^{\frac{1}{2}}&\;\text{(by \eqref{eq:smallness2} and $\eqref{eq:Vest1}_{1}$)}\\
& \leq C\sigma \Big(\dashint_{\ball_{R_{0}}}V(|\E u-(\E u)_{\ball_{R_{0}}}|)\Big)^{\frac{1}{2}}&\;\text{(by Jensen and $\tfrac{9}{10}R_{0}<R< R_{0}$)}.
\end{align*}
Collecting estimates, we obtain with a constant $C=C(m,n,L,\ell)>0$ and for all $x\in\ball_{2\sigma R_{0}}$
\begin{align}\label{eq:almostfinalTscherp}
V\Big(\frac{\widetilde{h}(x)-A(x)}{\sigma R_{0}}\Big) \leq CV\Big(\sigma \Big(\dashint_{\ball_{R_{0}}}V(|\E u-(\E u)_{\ball_{R_{0}}}|)\Big)^{\frac{1}{2}}\Big)=:CV(\Upsilon),  
\end{align}
where $\Upsilon$ is defined in the obvious manner. Now, since $V(\cdot)\leq |\cdot|$, $0<\sigma<\frac{1}{5}$ and by \eqref{eq:smallness2}, $\Upsilon\leq 1$.  Consequently, integrating \eqref{eq:almostfinalTscherp} over $\ball_{2\sigma R_{0}}$, we obtain with $C=C(m,n,L,\ell)>0$
\begin{align}\label{eq:IIestimateexcessdecay}
\begin{split}
\mathbf{II} & = C\int_{\ball_{2\sigma R_{0}}}V\left(\frac{\widetilde{h}(x)-A(x)}{\sigma R_{0}} \right)\dif x \leq C(\sigma R_{0})^{n} V(\Upsilon) \\ & \stackrel{\eqref{eq:Vest1}_{1}}{\leq} C(\sigma R_{0})^{n}\min\{\Upsilon,\Upsilon^{2}\}\leq C\sigma^{n+2} R_{0}^{n}\dashint_{\ball_{R_{0}}}V(|\E u-(\E u)_{\ball_{R_{0}}}|).
\end{split}
\end{align}
Combining estimates \eqref{eq:Iestimateexcessdecay} and \eqref{eq:IIestimateexcessdecay}, we then find with a constant $C=C(n,m,L,\ell,q)>0$ that
\begin{align}\label{eq:TabbyCenterEstimateMain}
\begin{split}
\mathbf{E}(x_{0},\sigma R_{0}) & \leq C\frac{R_{0}^{n}}{\sigma^{2}} \left(\dashint_{\ball_{R}}V(\E \widetilde{u}) \right)^{q} + C\sigma^{n+2} R_{0}^{n}\dashint_{\ball_{R}}V(|\E\widetilde{u}|) \\
& \leq \frac{C}{\sigma^{2}} \left(\frac{\mathbf{E}(x_{0},R_{0})}{R_{0}^{n}} \right)^{q-1}\mathbf{E}(x_{0},R_{0})+ C\sigma^{n+2} \mathbf{E}(x_{0},R_{0})\\
& = \left( \frac{C}{\sigma^{2}} (\widetilde{\mathbf{E}}(x_{0},R_{0}))^{q-1}+ C\sigma^{n+2} \right)\mathbf{E}(x_{0},R_{0}).
\end{split}
\end{align}
We will now use the previous inequality to deduce a preliminary excess decay. 
\begin{proposition}\label{prop:epsreg}
Let $f\colon\rsym\to\R$ satisfy \ref{item:reg1}--\ref{item:reg3} from Theorem~\ref{thm:main1}. Given $0<\alpha<1$, $M_{0}>0$ and $1<q<\frac{n+1}{n}$, there exist two parameters $\sigma=\sigma(n,L,\ell,\alpha,M_{0},q)\in (0,\tfrac{1}{5})$ as well as $\widetilde{\varepsilon}=\widetilde{\varepsilon}(n,L,\ell,\alpha,M_{0},q)\in (0,1)$ such that every local $\bd$-minimiser $u\in\bd_{\locc}(\Omega)$ of the functional $F$ satisfies the following: If $\ball(x_{0},R_{0})\Subset\Omega$ is an open ball with 
$0<R_{0}\leq 1$ together with 
\begin{align}\label{eq:TabbyCenterEstimateCond00}
\widetilde{\mathbf{E}}(u;x_{0},R_{0})\leq \widetilde{\varepsilon}^{2}\;\;\;\text{and}\;\;\;|(\E u)_{\ball(x_{0},R_{0})}|\leq M_{0},  
\end{align}
then there holds 
\begin{align}\label{eq:TabbyCenterEstimateMain1}
\widetilde{\mathbf{E}}(u;x_{0},\sigma R_{0})\leq \sigma^{1+\alpha}\widetilde{\mathbf{E}}(u;x_{0},R_{0}).
\end{align}
\end{proposition}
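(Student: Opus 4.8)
The plan is to read off the excess decay \eqref{eq:TabbyCenterEstimateMain1} directly from the estimate \eqref{eq:TabbyCenterEstimateMain} (whose proof is contained in Steps~1--4 above) by a purely quantitative choice of the two parameters $\sigma$ and $\widetilde{\varepsilon}$. The one non-cosmetic point is to confirm that the smallness conditions \eqref{eq:smallness1}--\eqref{eq:smallness2}, under which \eqref{eq:TabbyCenterEstimateMain} was derived, follow from \eqref{eq:TabbyCenterEstimateCond00} once $\widetilde{\varepsilon}$ lies below a suitable threshold. Now \eqref{eq:smallness1} is exactly the second requirement in \eqref{eq:TabbyCenterEstimateCond00}. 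For \eqref{eq:smallness2}, I would split $\E u-(\E u)_{\ball(x_0,R_0)}\mathscr{L}^{n}$ into its Lebesgue--Radon--Nikod\'{y}m parts; since $V^{\infty}(\cdot)=|\cdot|$ this gives
\[ \widetilde{\mathbf{E}}(u;x_0,R_0)=\dashint_{\ball(x_0,R_0)}V\big(\mathscr{E}u-(\E u)_{\ball(x_0,R_0)}\big)\dif x+\frac{|\E^{s}u|(\ball(x_0,R_0))}{\mathscr{L}^{n}(\ball(x_0,R_0))}, \]
so both summands are $\leq\widetilde{\varepsilon}^{2}$. Decomposing the first integral according to whether the modulus $|\mathscr{E}u-(\E u)_{\ball(x_0,R_0)}|$ is $\geq 1$ or $<1$ and invoking the lower bound $V(z)\geq(\sqrt{2}-1)\min\{|z|,|z|^{2}\}$ from \eqref{eq:Vest1} --- which controls $|z|$ by $(\sqrt{2}-1)^{-1}V(z)$ on the first set and $|z|^{2}$ by $(\sqrt{2}-1)^{-1}V(z)$ on the second --- one obtains after Jensen's inequality $\dashint_{\ball(x_0,R_0)}|\E u-(\E u)_{\ball(x_0,R_0)}|\leq C_{0}\widetilde{\varepsilon}$ with an absolute constant $C_{0}=C_{0}(n)$. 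Hence, provided $\widetilde{\varepsilon}\leq 1/C_{0}$, \eqref{eq:smallness2} holds and \eqref{eq:TabbyCenterEstimateMain} becomes available, with its constant $C=C(n,M_{0},L,\ell,q)$ (recall $m=M_{0}+c(n,L,\ell)$, and $C$ is independent of $\sigma$).

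Next I would normalise. Dividing both sides of \eqref{eq:TabbyCenterEstimateMain} by $\mathscr{L}^{n}(\ball(x_0,\sigma R_0))=\sigma^{n}\mathscr{L}^{n}(\ball(x_0,R_0))$ turns it into
\[ \widetilde{\mathbf{E}}(u;x_0,\sigma R_0)\leq\Big(\tfrac{C}{\sigma^{n+2}}\,\widetilde{\mathbf{E}}(u;x_0,R_0)^{q-1}+C\sigma^{2}\Big)\widetilde{\mathbf{E}}(u;x_0,R_0), \]
and it suffices to make the bracket $\leq\sigma^{1+\alpha}$. Since $\alpha<1<2$, I would \emph{first} fix $\sigma\in(0,\tfrac15)$ so small that $2C\sigma^{1-\alpha}\leq 1$, i.e.\ $C\sigma^{2}\leq\tfrac12\sigma^{1+\alpha}$; this is legitimate as $C$ does not depend on $\sigma$. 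With $\sigma$ thus fixed, and using $q-1>0$ together with $\widetilde{\mathbf{E}}(u;x_0,R_0)\leq\widetilde{\varepsilon}^{2}$, I would \emph{then} choose $\widetilde{\varepsilon}\in(0,1)$ so small that simultaneously $\widetilde{\varepsilon}\leq 1/C_{0}$ (preserving \eqref{eq:smallness2}) and $\tfrac{C}{\sigma^{n+2}}\,\widetilde{\varepsilon}^{2(q-1)}\leq\tfrac12\sigma^{1+\alpha}$. Adding the two halves makes the bracket $\leq\sigma^{1+\alpha}$, which is precisely \eqref{eq:TabbyCenterEstimateMain1}; by construction $\sigma$ and $\widetilde{\varepsilon}$ depend only on $n,L,\ell,\alpha,M_{0},q$. (The normalisation $R_0\leq 1$ plays no role here and is retained only for the subsequent iteration.)

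I expect no genuine obstacle, since the analytic substance sits entirely in \eqref{eq:TabbyCenterEstimateMain}; what remains is bookkeeping, the one point to be disciplined about being the order of the quantifiers: $\sigma$ is forced by the lower-order term $C\sigma^{2}$ alone and must be fixed before the admissible range of $\widetilde{\varepsilon}$, which carries the factor $\sigma^{-(n+2)}$, can be pinned down. When quoting Step~4 one should also verify that the constant $C$ in \eqref{eq:TabbyCenterEstimateMain} is genuinely $\sigma$-independent, which it is because the powers of $\sigma$ were displayed explicitly there; the two elementary $V$-function facts used in the first paragraph follow at once from \eqref{eq:Vest1}.
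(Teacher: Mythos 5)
Your proposal is correct and follows essentially the same route as the paper's proof: you verify \eqref{eq:smallness1}--\eqref{eq:smallness2} from \eqref{eq:TabbyCenterEstimateCond00} via the lower bound in \eqref{eq:Vest1} together with Jensen's inequality, then normalise \eqref{eq:TabbyCenterEstimateMain} to the form \eqref{eq:TabbyCenterEstimateMain2} and fix $\sigma$ first (from the $C\sigma^{2}$ term) and $\widetilde{\varepsilon}$ afterwards (absorbing the $\sigma^{-(n+2)}$ factor through $\widetilde{\varepsilon}^{2(q-1)}$), exactly as in the paper, which chooses $2C\sigma^{2}\leq\sigma^{1+\alpha}$ and $\widetilde{\varepsilon}=\min\{\sigma^{\frac{n+4}{2(q-1)}},\widetilde{\varepsilon}_{0}\}$. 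The only differences are cosmetic (splitting the bracket into two halves of $\tfrac12\sigma^{1+\alpha}$ rather than bounding the first term by the second), and your observations on the $\sigma$-independence of $C$ and the order of quantifiers match the paper's.
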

\begin{proof}
Let $\alpha\in (0,1)$ and $M_{0}>0$ be given. We estimate with $H:=\E u - (\E u)_{\ball(x_{0},R_{0})}$, Lemma~\ref{lem:auxVSQC} and the shorthands $A_{R_{0}}^{\lessgtr}:=\ball(x_{0},R_{0})\cap \{|H|\lesseqgtr 1\}$
\begin{align*}
\dashint_{\ball(x_{0},R_{0})}|H| & \leq \frac{1}{\omega_{n}R_{0}^{n}}\int_{A_{R_{0}^{\leq}}}|H| + \frac{1}{\omega_{n}R_{0}^{n}}\int_{A_{R_{0}}^{>}}|H| = \frac{\mathscr{L}^{n}(A_{R_{0}}^{\leq})}{\omega_{n}R_{0}^{n}}
\dashint_{A_{R_{0}}^{\leq }}|H|  + \frac{C}{R_{0}^{n}}\int_{A_{R_{0}}^{>}}V(|H|) \\
& \leq C\frac{\mathscr{L}^{n}(A_{R_{0}}^{\leq})^{\frac{1}{2}}}{R_{0}^{n}}\Big(\int_{\ball(x_{0},R_{0})}V(|H|)\Big)^{\frac{1}{2}}  + \frac{C}{R_{0}^{n}}\int_{\ball(x_{0},R_{0})}V(|H|) \\
& \leq C\left(\dashint_{\ball(x_{0},R_{0})}V(|H|)\right)^{\frac{1}{2}} + C\dashint_{\ball(x_{0},R_{0})}V(|H|) \leq C(\sqrt{\widetilde{\mathbf{E}}(u;x_{0},R_{0})}+\widetilde{\mathbf{E}}(u;x_{0},R_{0})), 
\end{align*}
where $C=C(n)>0$. We may thus choose a preliminary $\widetilde{\varepsilon}_{0}\in (0,1)$ such that  $\widetilde{\mathbf{E}}(u;x_{0},R_{0})\leq \widetilde{\varepsilon}_{0}^{2}$ and $|(\E u)_{\ball(x_{0},R_{0})}|\leq M_{0}$ imply \eqref{eq:smallness1} and \eqref{eq:smallness2}.    At this stage, for $1<q<\frac{n+1}{n}$, \eqref{eq:TabbyCenterEstimateMain} is available and therefore yields for $0<\sigma<\frac{1}{5}$
\begin{align}\label{eq:TabbyCenterEstimateMain2}
\begin{split}
\widetilde{\mathbf{E}}(u;x_{0},\sigma R_{0}) &  \leq \left( \frac{C}{\sigma^{n+2}} \left(\widetilde{\mathbf{E}}(u;x_{0},R_{0})\right)^{q-1}+ C\sigma^{2} \right)\widetilde{\mathbf{E}}(u;x_{0},R_{0}),
\end{split}
\end{align}
where now\footnote{Note that the constant $C>0$ in \eqref{eq:TabbyCenterEstimateMain} depends on $n,m,L,\ell$ and $q$, but by \eqref{eq:M0est}, $m$ depends on $n$ and $M_{0}$ only.} $C=C(n,M_{0},L,\ell,q)>0$. We subsequently choose $\sigma=\sigma(n,M_{0},L,\ell,q,\alpha)>0$ so small such that with the constant $C>0$ from \eqref{eq:TabbyCenterEstimateMain2} there holds
$2C\sigma^{2}\leq \sigma^{1+\alpha}$. We then put $\widetilde{\varepsilon}:=\min\{\sigma^{\frac{n+4}{2(q-1)}},\widetilde{\varepsilon}_{0}\}$.
In turn, if $\widetilde{\mathbf{E}}(u;x_{0},R_{0})\leq \widetilde{\varepsilon}^{2}$ and $|(\E u)_{\ball(x_{0},R_{0})}|\leq M_{0}$, then  \eqref{eq:TabbyCenterEstimateMain2} gives 
\begin{align*}
\widetilde{\mathbf{E}}(u;x_{0},\sigma R_{0}) & \leq ( 2C\sigma^{2})\widetilde{\mathbf{E}}(u;x_{0},R_{0}) \leq \sigma^{1+\alpha}\widetilde{\mathbf{E}}(u;x_{0},R_{0}), 
\end{align*}
and this is precisely \eqref{eq:TabbyCenterEstimateMain1}. The proof is complete. 
\end{proof}
\subsection{Iteration and Proof of Theorem~\ref{thm:main1}}
To conclude the proof of Theorem~\ref{thm:main1}, we need to iterate Proposition~\ref{prop:epsreg}. 
\begin{corollary}[Iteration]\label{thm:iterationSQC}
Let $f\colon\rsym\to\R$ satisfy \ref{item:reg1}--\ref{item:reg3} from Theorem~\ref{thm:main1}. Given $0<\alpha<1$ and $M_{0}>0$, there exist $\varepsilon=\varepsilon(n,L,\ell,\alpha,M_{0})\in (0,1)$ and $R_{0}=R_{0}(n,L,\ell,M_{0},\alpha)\in (0,1)$ such that every generalised local minimiser $u\in\bd_{\locc}(\Omega)$ of the functional $F$ satisfies the following: If $x_{0}\in\Omega$ and $0<R<R_{0}$ are such that $\ball(x_{0},R_{0})\Subset\Omega$ and 
\begin{align}\label{eq:TabbyCenterEstimateCond}
\widetilde{\mathbf{E}}(u;x_{0},R)\leq \varepsilon^{2}\;\;\;\text{and}\;\;\;|(\E u)_{\ball(x_{0},R)}|\leq \frac{M_{0}}{2}, 
\end{align}
then there holds 
\begin{align}\label{eq:TabbyCenterEstimateMain1}
\widetilde{\mathbf{E}}(u;x_{0},r)\leq C\Big(\frac{r}{R}\Big)^{2\alpha}\widetilde{\mathbf{E}}(u;x_{0},R)\qquad\text{for all}\;0<r\leq R.
\end{align}
Here, $C=C(n,L,\ell,\alpha,M_{0})>0$ is a constant. 
\end{corollary}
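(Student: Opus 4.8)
The plan is to iterate the one-step excess decay of Proposition~\ref{prop:epsreg}; this is the standard Campanato-type iteration, so the only genuine work is the bookkeeping needed to keep the averages $(\E u)_{\ball(x_0,\sigma^k R)}$ uniformly below the admissible level $M_0$.

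\emph{Setup.} Given $0<\alpha<1$ and $M_0>0$, I would first set $\alpha':=\max\{\tfrac12,2\alpha-1\}\in[\tfrac12,1)$, so that $1+\alpha'\ge 2\alpha$, fix any $1<q<\tfrac{n+1}{n}$, and feed $(\alpha',M_0,q)$ into Proposition~\ref{prop:epsreg}. This produces $\sigma=\sigma(n,L,\ell,\alpha,M_0)\in(0,\tfrac15)$ and $\widetilde\varepsilon=\widetilde\varepsilon(n,L,\ell,\alpha,M_0)\in(0,1)$ such that every ball $\ball(z,\rho)\Subset\Omega$ with $\rho\le1$, $\widetilde{\mathbf{E}}(u;z,\rho)\le\widetilde\varepsilon^2$ and $|(\E u)_{\ball(z,\rho)}|\le M_0$ satisfies $\widetilde{\mathbf{E}}(u;z,\sigma\rho)\le\sigma^{1+\alpha'}\widetilde{\mathbf{E}}(u;z,\rho)$. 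I would also record, exactly as in the opening lines of the proof of Proposition~\ref{prop:epsreg}, the bound $\dashint_{\ball(z,\rho)}|\E u-(\E u)_{\ball(z,\rho)}|\le C_*\bigl(\widetilde{\mathbf{E}}(u;z,\rho)^{1/2}+\widetilde{\mathbf{E}}(u;z,\rho)\bigr)$ with $C_*=C_*(n)>0$; since $\ball(z,\sigma\rho)\subset\ball(z,\rho)$, this yields the average-drift estimate $|(\E u)_{\ball(z,\sigma\rho)}-(\E u)_{\ball(z,\rho)}|\le C_*\sigma^{-n}\bigl(\widetilde{\mathbf{E}}(u;z,\rho)^{1/2}+\widetilde{\mathbf{E}}(u;z,\rho)\bigr)$.

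\emph{Induction on dyadic scales.} Setting $R_0:=1$ (any fixed small absolute constant works), and assuming $\widetilde{\mathbf{E}}(u;x_0,R)\le\varepsilon^2$ and $|(\E u)_{\ball(x_0,R)}|\le M_0/2$ with $\varepsilon\le\widetilde\varepsilon$ still to be chosen, I would prove by induction on $k\ge 0$ the two statements $\widetilde{\mathbf{E}}(u;x_0,\sigma^kR)\le\sigma^{k(1+\alpha')}\widetilde{\mathbf{E}}(u;x_0,R)$ and $|(\E u)_{\ball(x_0,\sigma^kR)}|\le M_0$. In the inductive step one telescopes the average-drift estimate over $j=0,\dots,k$, inserts the already-established geometric decay $\widetilde{\mathbf{E}}(u;x_0,\sigma^jR)\le\sigma^{j(1+\alpha')}\varepsilon^2$, and sums the convergent series $\sum_{j\ge0}\sigma^{j(1+\alpha')/2}$, getting $|(\E u)_{\ball(x_0,\sigma^{k+1}R)}-(\E u)_{\ball(x_0,R)}|\le C_{**}\varepsilon$ with $C_{**}=C_{**}(n,\sigma,\alpha')$. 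Choosing $\varepsilon:=\min\{\widetilde\varepsilon,M_0/(2C_{**})\}$ — legitimate precisely because $\sigma$, hence $C_{**}$, has already been fixed — keeps $|(\E u)_{\ball(x_0,\sigma^jR)}|\le M_0$ for all $j\le k+1$, so the hypotheses of Proposition~\ref{prop:epsreg} hold at scale $\sigma^kR$ and deliver the decay at scale $\sigma^{k+1}R$, closing the induction.

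\emph{Passage to arbitrary radii and conclusion.} For $0<r\le R$ pick $k\ge0$ with $\sigma^{k+1}R<r\le\sigma^kR$. Using the elementary inequality $V(\E u-(\E u)_{\ball(x_0,r)})\le C\,V(\E u-(\E u)_{\ball(x_0,\sigma^kR)})+C\,V\bigl((\E u-(\E u)_{\ball(x_0,\sigma^kR)})_{\ball(x_0,r)}\bigr)$ from Lemma~\ref{lem:auxVSQC}, Jensen's inequality, and $\ball(x_0,r)\subset\ball(x_0,\sigma^kR)$, one obtains $\mathbf{E}(u;x_0,r)\le C\,\mathbf{E}(u;x_0,\sigma^kR)$; dividing by $\omega_n r^n$, using $\sigma^kR<r/\sigma$ and the dyadic decay gives $\widetilde{\mathbf{E}}(u;x_0,r)\le C\sigma^{-n}\sigma^{k(1+\alpha')}\widetilde{\mathbf{E}}(u;x_0,R)\le C\sigma^{-n-1-\alpha'}(r/R)^{1+\alpha'}\widetilde{\mathbf{E}}(u;x_0,R)$. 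As $1+\alpha'\ge2\alpha$ and $r/R\le1$, this is $\le C(r/R)^{2\alpha}\widetilde{\mathbf{E}}(u;x_0,R)$ after absorbing the $\sigma$-powers into $C=C(n,L,\ell,\alpha,M_0)$, which is the claimed estimate. There is no conceptual obstacle here; the single point requiring care is the order of the quantifiers — $\sigma$ must be produced by Proposition~\ref{prop:epsreg} \emph{before} $\varepsilon$ is selected small enough that the telescoped drift of the averages never leaves the interval of admissible means — together with the minor nuisance that $\widetilde{\mathbf{E}}(u;x_0,\cdot)$ is not monotone, which is what forces the $V$-function comparison between scales in the last step.
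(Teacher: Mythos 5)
Your proposal is correct and is exactly the standard Campanato-type iteration that the paper itself delegates to \cite[Prop.~4.8]{GK2} and \cite[Lem.~5.8]{Beck}: one-step decay from Proposition~\ref{prop:epsreg} with a fixed $q\in(1,\tfrac{n+1}{n})$, a telescoped control of the drift of the means $(\E u)_{\ball(x_{0},\sigma^{k}R)}$ so that the hypothesis $|(\E u)|\leq M_{0}$ survives the iteration (with $\sigma$ fixed before $\varepsilon$), and a $V$-function comparison to pass from dyadic to arbitrary radii. The only cosmetic remark is that the auxiliary exponent $\alpha'$ is superfluous, since $1+\alpha\geq 2\alpha$ already holds for every $\alpha\in(0,1)$, so Proposition~\ref{prop:epsreg} may be applied with $\alpha$ itself.
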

The corollary is proved in a standard manner, the proof following, e.g., \cite[Prop.~4.8]{GK2} or \cite[Lem.~5.8]{Beck}; note that the dependence on $q$ in Proposition~\ref{prop:epsreg} is removed by specialising, e.g., to $q=\frac{2n+1}{2n}\in (1,\frac{n+1}{n})$. Working from here, we can proceed to the 
\begin{proof}[Proof of Theorem~\ref{thm:main1}]
Let $0<\alpha<1$ and $M>0$ be given. We put $M_{0}:=8\max\{M,1\}$. Then, by the previous corollary, there exist $\varepsilon=\varepsilon(n,L,\ell,\alpha,M_{0})\in (0,1)$ and $R_{0}=R_{0}(n,L,\ell,M_{0},\alpha)\in (0,1)$ such that \eqref{eq:TabbyCenterEstimateCond} implies \eqref{eq:TabbyCenterEstimateMain1}. Within the framework of Theorem~\ref{thm:main1}, we put $\varepsilon_{M}:=\varepsilon^{2}/2^{2n+4}$ and let $0<R<R_{0}$ be such that 
\begin{align}\label{eq:chooseepsfinal}
\widetilde{\mathbf{E}}(u;x_{0},R)\leq\varepsilon_{M}= \frac{\varepsilon^{2}}{2^{2n+4}}\;\;\;\text{and}\;\;\;|(\E u)_{\ball(x_{0},R)}|\leq M (\leq \tfrac{1}{2}M_{0}). 
\end{align}
 Our aim is to show that with $R':=\frac{1}{2}R$ there holds $
\widetilde{\mathbf{E}}(u;x,R')\leq \varepsilon^{2}$ and $|(\E u)_{\ball(x,R')}|\leq \tfrac{1}{2}M_{0}$ 
for all $x\in\ball(x_{0},R')$. We have 
\begin{align*}
\dashint_{\ball(x,R')} & V(|\mathscr{E}u-(\E u)_{\ball(x,R')}|)\dif\mathscr{L}^{n}  + \frac{|\E^{s}u|(\ball(x,R'))}{\omega_{n}(R')^{n}} \\
& \stackrel{\eqref{eq:Vest1}_{3},\,V(\cdot)\leq|\cdot|}{\leq} 2\dashint_{\ball(x,R')} V(|\mathscr{E}u-(\mathscr{E}u)_{\ball(x,R')}|)\dif\mathscr{L}^{n}  + 3\frac{|\E^{s}u|(\ball(x,R'))}{\omega_{n}(R')^{n}}\\
& \;\;\;\;\;\;\;\;\;\,\leq \frac{2\cdot 2^{2n}}{\omega_{n}^{2}R^{2n}}\int_{\ball(x_{0},R)}\int_{\ball(x_{0},R)}V(|\mathscr{E}u(y)-\mathscr{E}u(z)|)\dif y\dif z + 2^{n+2}\frac{|\E^{s}u|(\ball(x_{0},R))}{\omega_{n}R^{n}}\\
& \;\;\;\;\;\;\stackrel{\eqref{eq:Vest1}_{3}}{\leq} \frac{8\cdot 2^{2n}}{\omega_{n}^{2}R^{2n}}\int_{\ball(x_{0},R)}\int_{\ball(x_{0},R)}V(|\mathscr{E}u(y)-(\E u)_{\ball(x_{0},R)}|)\dif y\dif z  + 2^{n+2}\frac{|\E^{s}u|(\ball(x_{0},R))}{\omega_{n}R^{n}}\\
& \;\;\;\;\;\;\;\stackrel{\eqref{eq:chooseepsfinal}}{\leq} 2^{2n+3}\frac{\varepsilon^{2}}{2^{2n+4}} < \varepsilon^{2}. 
\end{align*}
On the other hand, we have by \eqref{eq:chooseepsfinal} and $\eqref{eq:Vest1}_{3}$ in the third step
\begin{align*}
|(\E u)_{\ball(x,R')}| & \leq \left\vert\dashint_{\ball(x,R')}\mathscr{E}u \dif\mathscr{L}^{n}\right\vert + \frac{|\E^{s}u|(\ball(x,R'))}{\omega_{n}(R')^{n}} \\
& \leq \left\vert\dashint_{\ball(x,R')}\mathscr{E}u -(\E u)_{\ball(x_{0},R)}\dif\mathscr{L}^{n}\right\vert + \frac{|\E^{s}u|(\ball(x,R'))}{\omega_{n}(R')^{n}} + |(\E u)_{\ball(x_{0},R)}|\\
& \leq 2^{n}\Big(\frac{1}{\sqrt{2}-1}\dashint_{\ball(x_{0},R)}V(|\mathscr{E}u -(\E u)_{\ball(x_{0},R)}|)\dif\mathscr{L}^{n}\Big)^{\frac{1}{2}} + 2^{n}\frac{|\E^{s}u|(\ball(x_{0},R))}{\omega_{n}R^{n}} + M\\
& \leq \frac{\varepsilon}{4\sqrt{\sqrt{2}-1}}+\frac{\varepsilon^{2}}{2^{n+4}}+M \leq M+1
\end{align*}
having used that $\varepsilon\in (0,1)$ in the ultimate step. Since $M+1\leq 2\max\{M,1\}\leq \frac{M_{0}}{2}$, we thus obtain by \eqref{eq:chooseepsfinal} and \eqref{eq:TabbyCenterEstimateMain1} that for all $x\in\ball(x_{0},R')$ and all $0<r<R'$ there holds 
\begin{align*}
\widetilde{\mathbf{E}}(u;x,r)\leq C(n,L,\ell,\alpha,M_{0})\Big(\frac{r}{R'}\Big)^{2\alpha}\widetilde{\mathbf{E}}(u;x,R').
\end{align*} 
Working from here, we first deduce by sending $r\searrow 0$ that $\E^{s}u \equiv 0$ in $\ball(x_{0},R')$. Therefore, setting $G(t):=\min\{t,t^{2}\}$ for $t\geq 0$, we find by  $\eqref{eq:Vest1}_{1}$ and Jensen's inequality 
\begin{align*}
(\sqrt{2}-1)G(\Upsilon'):=(\sqrt{2}-1)G\Big(\dashint_{\ball(x,r)}|\mathscr{E}u-(\mathscr{E}u)_{\ball(x,r)}|\dif\mathscr{L}^{n}\Big) & \leq C(n,L,\ell,\alpha,M_{0})\Big(\frac{r}{R'}\Big)^{2\alpha}\varepsilon^{2},
\end{align*}
for all $0<r<R'$, with $\Upsilon'$ defined in the obvious manner. Now, if $0\leq\Upsilon'\leq 1$, the previous estimate yields $|\Upsilon'|^{2}\leq C(\tfrac{r}{R'})^{2\alpha}$ whereas if $|\Upsilon'|>1$, we use $(\tfrac{r}{R'})^{2\alpha}\leq (\tfrac{r}{R'})^{\alpha}$ to infer that 
\begin{align*}
\frac{1}{r^{\alpha}}\dashint_{\ball(x,r)}|\mathscr{E}u-(\mathscr{E}u)_{\ball(x,r)}|\dif\mathscr{L}^{n} \leq \frac{C(n,L,\ell,\alpha,M_{0})}{(R')^{\alpha}}
\end{align*}
for all $0<r<R'$. Now, by the Campanato-Meyers characterisation of H\"{o}lder continuity, this implies that $\mathscr{E}u$ is of class $\hold^{0,\alpha}$ in $\ball(x_{0},R')$. Now we use $\hold^{0,\alpha}\simeq \mathcal{L}^{2,n+2\alpha}$ with the \textsc{Campanato} spaces $\mathscr{L}^{p,\lambda}$ (cf.~\cite[Thm.~2.9]{Giusti}) and Proposition~\ref{prop:OrliczKorn}\ref{item:OK3} with $\psi(t)=t^{2}$ to find that $\D\!u$ is of class $\hold^{0,\alpha}$ in $\ball(x_{0},R')$, too. 

Since $V(\cdot)\leq |\cdot|$ and with the above choices of $\varepsilon_{M},R_{0}$, \eqref{eq:thm:smallness} implies \eqref{eq:TabbyCenterEstimateCond}. Hence, to conclude the proof of the theorem, we note that by the Lebesgue differentiation theorem for Radon measures, for $\mathscr{L}^{n}$-a.e. $x_{0}\in\Omega$ there exists $z\in\rsym$ such that 
\begin{align}\label{eq:regsetcondSQC}
\lim_{r\searrow 0}\dashint_{\ball(x_{0},r)}|\mathscr{E}u-z|\dif\mathscr{L}^{n} + \frac{|\E^{s}u|(\ball(x_{0},r))}{\omega_{n}r^{n}}=0.
\end{align}
Thus, for such $x_{0}$, there exists $\delta>0$ with $\sup_{0<r<\delta}|(\E u)_{\ball(x_{0},r)}|=:M<\infty$. Applying the foregoing to the number $M$ and invoking \eqref{eq:regsetcondSQC}, the conclusion of the theorem follows immediately and the proof is complete.  
\end{proof}
We concude with the following 
\begin{remark}\label{rem:unifying}
In the $\bv$-case as considered by \textsc{Kristensen} and the author \cite{GK2}, different Fubini-type properties needed to be invoked to deal with $n=2$ and $n\geq 3$. Starting from the fact that for $\bv$-maps the tangential derivatives of $u$ on $\partial\!\ball(x_{0},t)$ for $\mathscr{L}^{1}$-a.e. $t>0$ are finite Radon measures themselves, the approach in \cite{GK2} is to embed $\bv(\partial\!\ball(x_{0},t);\R^{N})$ into higher fractional Sobolev spaces. If $n=2$, spheres are one-dimensional manifolds, and here Remark~\ref{rem:BVFubinibad} excludes the relevant embeddings. This forces to argue via Besov-Nikolski\u{\i} spaces in the full gradient, strongly quasiconvex case for $n=2$. However, the approach as outlined above for $\bd$ equally works in the easier $\bv$-situation, too, and thus yields a unifying method for all $n\geq 2$. 
\end{remark}
\subsection{Remarks and Extensions}\label{sec:extensions}
In this concluding section, we discuss some aspects, extensions and limitations of the results presented so far. 

We begin by noting that, under the assumptions of Theorem~\ref{thm:main1}, we can actually establish $\hold^{2,\alpha}$-partial regularity of generalised minima. Namely, letting $x_{0}\in\Omega_{u}$, we have $u\in\hold^{2,\alpha}(\ball(x_{0},r);\R^{n})$ for some $r>0$ and all $0<\alpha<1$. This is a consequence of Schauder estimates based on the $\hold^{1,\alpha}$-regularity of $u$ in a neighbourhood of $x_{0}$; choosing $|h|$ small enough, we consider the finite differences $\tau_{s,h}\sg(u)(x):=\sg(u)(x+he_{s})-\sg(u)(x)$, where $x$ belongs to a suitable neighbourhood of $x_{0}$ and $e_{s}$ is the $s$-th unit vector. We then set
\begin{align*}
\mathscr{Q}(x)[\xi,\eta] := \int_{0}^{1}\langle f''(\sg(u)(x) + t \tau_{s,h}\sg(u)(x))\xi,\eta\rangle \dif t,\qquad \xi,\eta\in\rsym. 
\end{align*}
By condition \ref{item:reg1} from Theorem~\ref{thm:main1} and the $\hold^{1,\alpha}$-partial regularity (for any $0<\alpha<1$) established above, $\mathscr{Q}\in\hold^{0,\alpha}(U;\mathbb{S}(\rsym))$ for some open neighbourhood $U$ of $x_{0}$ and any $0<\alpha<1$. Possibly diminishing $U$, we infer similarly as to \eqref{eq:lin1} that $\mathscr{Q}$ is uniformly  symmetric Legendre-Hadamard in $U$; i.e, there exists a constant $\lambda>0$ such that $\mathscr{Q}(x)[a\odot b,a\odot b]\geq \lambda |a\odot b|^{2}$ for all $a,b\in\R^{n}$ and all $x\in U$ together with $\sup_{x\in U}|\mathscr{Q}(x)|<\infty$. Working from here, it is not too difficult to establish an inequality of Garding type for some $r>0$ suitably small: There exist $\gamma_{1},\gamma_{2}>0$ such that there holds 
\begin{align*}
\int_{\ball(x_{0},r)}\mathscr{Q}(x)[\sg(\varphi),\sg(\varphi)]\dif x \geq \int_{\ball(x_{0},r)}\gamma_{1}|\sg(\varphi)|^{2}-\gamma_{2}|\varphi|^{2}\dif x\qquad\text{for all}\;\varphi\in\sobo_{0}^{1,\infty}(\ball(x_{0},r);\R^{n}). 
\end{align*}
On the other hand, since $u$ is a minimiser, we deduce the Euler-Lagrange equation 
\begin{align*}
\int_{\ball(x_{0},r)}\langle f'(\sg(u)),\sg(\varphi)\rangle\dif x = 0\qquad\text{for all}\;\varphi\in\ld_{0}(\ball(x_{0},r)). 
\end{align*}
At this stage, picking an arbitrary localisation function $\rho\in \hold_{c}^{\infty}(\ball(x_{0},r);[0,1])$, we may test the preceding equation with $\varphi=\tau_{s,-h}(\rho^{2}\tau_{s,h}u)$ for $|h|$ suitably small. Here $\tau_{s,-h}v(x):=v(x-he_{s})-v(x)$ denotes the backward finite difference. As a consequence, we obtain that 
\begin{align*}
\int_{\ball(x_{0},r)}\mathscr{Q}(x)[\tau_{s,h}\sg(u),\rho^{2}\tau_{s,h}\sg(u)+2\rho\nabla\rho\odot\tau_{s,h}u]\dif x= 0
\end{align*}
holds for any $s\in\{1,...,n\}$. Then it is routine to conclude by the above Garding-type inequality and Korn's inequality that $u$ is of class $\sobo^{2,2}$ in a suitable neighbourhood of $x_{0}$. Diminishing $r>0$ if necessary, we may then assume that 
\begin{align}
\int_{\Omega}\langle f''(\sg(u))\partial_{s}\sg(u),\sg(\varphi))\dif x = 0\qquad\text{for all}\;\varphi\in\hold_{c}^{1}(\ball(x_{0},r);\R^{n}). 
\end{align}
At this stage, we invoke a similar argument as in Proposition~\ref{thm:mazsha} to reduce to the \textsc{Schauder} estimates \cite[Ch.~III, Thm.~3.2]{Giaquinta} to conclude that $\partial_{s}\sg(u)$ is of class $\hold^{0,\alpha}$ in a fixed neighbourhood of $x_{0}$. Switching to the \textsc{Campanato}-\textsc{Meyers} characterisation of H\"{o}lder continuity and employing Proposition~\ref{prop:OrliczKorn}\ref{item:OK3}, $\partial_{s}\D\!u\in\hold^{0,\alpha}$ in an open neighbourhood of $x_{0}$ for any $s\in\{1,...,n\}$.

An analogous regularity theory can be set up when $x$-dependent integrands $f\colon\Omega\times\rsym\to\R$ are considered, and we refer the reader to the corresponding statements in \cite[Sec.~6]{GK2}; these follow in an analogous way once the regularity results from Theorem~\ref{thm:main1} are available. However, the case of \emph{fully} non-autonomous integrands $f\colon \Omega\times\R^{n}\times\rsym\ni (x,y,z)\mapsto f(x,y,z)$ comes along with two major difficulties. First, to the best of the author's knowledge, there is no integral representation of the relaxed functional available at present; the arguments of \textsc{Rindler} \cite{Ri1} do not seem to easily generalise to this situation. In contrast to \eqref{eq:minimality}, the definition of generalised minima then must be given directly by the Lebesgue-Serrin-type extension. To then access the Euler-Lagrange equations satisfied by the respective $\bd$-minimisers, it is necessary to employ a careful approximation procedure. This in principle being possible, we would still need a higher integrability result on the gradients of $\bd$-minima as it is usually required (cf.~\cite[Thm.~9.5 ff.]{Giusti}). In the quasiconvex, superlinear growth context, the latter is obtained as a consequence of the Caccioppoli inequality of the second kind in conjunction with the Sobolev inequality. In this situation, the Gehring lemma then boosts the so derived reverse H\"{o}lder inequality with increasing supports to the higher integrability of the gradients. In the linear growth situation, working from the Caccioppoli-type inequality strictly requires a \emph{sublinear Sobolev inequality}, the unconditional availability of which being ruled out by a counterexample due to \textsc{Buckley \& Koskela} \cite{BuckleyKoskela}. This is an important issue, as otherwise we would immediately obtain that $\bd$-minima belonged to $\sobo_{\locc}^{1,q}$ for some $q>1$, a fact which would simplify several stages of the above proof. A similar issue had been identified by \textsc{Anzellotti \& Giaquinta} \cite[Sec.~6]{AG} within the framework of convex full gradient functionals. However, note that if $f\colon\Omega\times\R^{n}\times\rsym\to\R$ satisfies a splitting condition $f(x,y,z)=f_{1}(x,z) + f_{2}(x,y)$ for some strongly symmetric quasiconvex integrand $f_{1}\colon\Omega\times\rsym\to\R$ of linear growth and $f_{2}\colon\Omega\times\R^{n}\to\R$ being convex and of at most $\frac{n}{n-1}$-growth in the second variable, then suitable regularity results \emph{can be formulated}. \textsc{Schmidt} \cite{SchmidtPR1} provides an interesting alternative of a partial regularity proof for convex, fully non-autonomous integrands of (super)quadratic growth that does not utilise Gehring's lemma. The drawback here is that does not seem to generalise easily to the quasiconvex situation with (super)linear growth; even if it would, it needed to be compatible with the above proof scheme. 

Lastly, let us address the possibility of giving Hausdorff dimension bounds on $\Sigma_{u}$ in the framework of Theorems~\ref{thm:ppartialregularity} or \ref{thm:main1}. Whereas in the convex context higher differentiability of minima can be invoked to establish $\dim_{\mathcal{H}}(\Sigma_{u})<n$ (also see \cite{Giusti,Mingione00,Mingione0} and \cite{CFI,G00,G1,GK1} in the symmetric gradient situation), such methods rely exclusively on the Euler-Lagrange system and thus do not apply to the (symmetric) quasiconvex situation. In this case, one option is to apply the set porosity approach employed by \textsc{Kristensen \& Mingione} \cite{KristensenMingione} for Lipschitzian minima. In the framework of Theorem~\ref{thm:ppartialregularity} for $p\geq 2$, this directly yields the dimension reduction for $\sobo^{1,\infty}$-regular minima. The method is likely to generalise to $1\leq p <\infty$; however, to the best of the author's knowledge, the method is not known to yield a dimension reduction for $\sobo^{1,s}$-regular minima even with  large $s>p\geq 1$; neither is it clear how to obtain  such a higher local gradient integrability in the (strongly) quasiconvex context beyond the usual \textsc{Gehring}-type improvement. Indeed, for $p>1$, \emph{some} higher gradient integrability follows by \textsc{Gehring}'s lemma in conjunction with the Caccioppoli inequality, but by the above discussion even this is unclear in the linear growth context. We intend to tackle questions of this sort in the future. 
\section{Appendix}\label{sec:appendix}
\subsection{Existence of $\bd$-minima}\label{sec:existence}
Implicitly used in the main part, we now briefly justify the existence of generalised minima for the Dirichlet problem \eqref{eq:varprin} in the sense of \eqref{eq:minimality}, now being subject to the strong symmetric quasiconvexity of $f\in\hold(\rsym)$, and gather some consequences. This program is somewhat analogous to \cite[Thm.~5.3]{BDG} where, however, a different coerciveness condition was employed. We hereafter let $u_{0}\in\ld(\Omega)$ be a given Dirichlet datum and $f\in\hold(\rsym)$ a strongly symmetric quasiconvex integrand satisfying both \eqref{eq:SSQC} and the linear growth assumption~\eqref{eq:lingrowth}. Our objective is to establish (with the notation of \eqref{eq:varprin} ff.)
\begin{align}\label{eq:nogap}
\inf_{\mathscr{D}_{u_{0}}}F = \min_{\bd(\Omega)}\overline{F}_{u_{0}}, 
\end{align}
particularly asserting the existence of $\bd$-minimisers. Toward the latter, we note that because $F$ is strongly symmetric quasiconvex, we have for all $\varphi\in\hold_{c}^{\infty}(\Omega;\R^{n})$
\begin{align*}
f(0)=f(0)-\ell V(0)\leq \dashint_{\Omega}f(\sg(\varphi))-\ell V(\sg(\varphi))\dif x,  
\end{align*}
as follows easily by passing from $Q=(0,1)^{n}$ to general open domains $\Omega$. Thus, by Lipschitz continuity of $f$, 
\begin{align}\label{eq:coerciveinequality1}
\begin{split}
f(0)\mathscr{L}^{n}(\Omega) +\ell \int_{\Omega}V(\sg(\varphi))\dif x & \leq \int_{\Omega}f(\sg(\varphi))-f(\sg(u_{0}+\varphi))\dif x + \int_{\Omega}f(\sg(u_{0}+\varphi))\dif x \\ 
& \leq c(L)\int_{\Omega}|\sg(u_{0})|\dif x + \int_{\Omega}f(\sg(u_{0}+\varphi))\dif x. 
\end{split}
\end{align}
At this stage, we pick an open and bounded Lipschitz set $\widetilde{\Omega}\subset\R^{n}$ such that $\Omega\Subset\widetilde{\Omega}$ and find, following the discussion in Section~\ref{sec:bd}, $\overline{u}_{0}\in\ld_{0}(\widetilde{\Omega})$ such that $\overline{u}_{0}|_{\Omega}=u_{0}$. We then put, for $v\in\bd(\Omega)$ 
\begin{align}\label{eq:defextension}
\widetilde{v}:=\begin{cases} v&\;\text{in}\;\Omega,\\ 
\overline{u}_{0} &\;\text{in}\;\widetilde{\Omega}\setminus\overline{\Omega}.
\end{cases} 
\end{align} 
Since $\partial\Omega$ is Lipschitz and $\overline{u}_{0}\in\ld(\widetilde{\Omega})$, $\widetilde{v}\in\bd(\widetilde{\Omega})$. Hence, we have for all $\varphi\in\hold_{c}^{\infty}(\Omega;\R^{n})$
\begin{align*}
\ell\int_{\widetilde{\Omega}}V(\sg(\overline{u}_{0}+\varphi))\dif x & - \ell \int_{\widetilde{\Omega}\setminus\Omega}V(\sg(\overline{u}_{0}))\dif x = \ell\int_{\Omega} V(\sg(u_{0}+\varphi))-V(\sg(\varphi))\dif x + \ell\int_{\Omega}V(\sg(\varphi))\dif x \\  
& \stackrel{\eqref{eq:coerciveinequality1}}{\leq}  C(\ell,L,V) \int_{\Omega}|\sg(u_{0})|  -f(0)\mathscr{L}^{n}(\Omega) + \int_{\Omega}f(\sg(u_{0}+\varphi))\dif x \\ 
&\stackrel{\eqref{eq:lingrowth}}{\leq} C(\ell,L,V,\mathscr{L}^{n}(\widetilde{\Omega}))\Big(\int_{\widetilde{\Omega}}|\sg(\overline{u}_{0})|\dif x +1 \Big) + \int_{\widetilde{\Omega}}f(\sg(\overline{u}_{0}+\varphi))\dif x.
\end{align*}
At this stage, let $v\in\bd(\Omega)$ be arbitrary and pick, due to Lemma~\ref{lem:smooth}, a sequence $(v_{j})\subset u_{0}+\hold_{c}^{\infty}(\Omega;\R^{n})$ such that $\widetilde{v}_{j}\to \widetilde{v}$ area-strictly on $\bd(\widetilde{\Omega})$. Since for every $j\in\mathbb{N}$, $\widetilde{v}_{j}$ is of the form $\overline{u}_{0}+\varphi_{j}$ with some $\varphi_{j}\in\hold_{c}^{\infty}(\Omega;\R^{n})$, we obtain 
\begin{align*}
\ell\int_{\widetilde{\Omega}}V(\sg(\widetilde{v}_{j}))\dif x  - \ell \int_{\widetilde{\Omega}\setminus\Omega}V(\sg(\overline{u}_{0}))\dif x  \leq  C(\ell,L,V,\mathscr{L}^{n}(\widetilde{\Omega}))\Big(\int_{\widetilde{\Omega}}|\sg(\overline{u}_{0})|\dif x +1 \Big) + \int_{\widetilde{\Omega}}f(\sg(\widetilde{v}_{j}))\dif x.
\end{align*}
Since $f$ is symmetric quasiconvex, it is symmetric rank-one convex in the sense as specified in Section~\ref{sec:functionsofmeasures}. Therefore, Lemma~\ref{lem:symareastrict} (which precisely yields continuity of the associated integral functionals for symmetric rank-one convex integrands) and the very definition of $V$ yield by passing $j\to\infty$
\begin{align*}
\ell\int_{\widetilde{\Omega}}V(\E\widetilde{v}) - \ell \int_{\widetilde{\Omega}\setminus\Omega}V(\sg(\overline{u}_{0}))\dif x  \leq  C(\ell,L,V,\mathscr{L}^{n}(\widetilde{\Omega}))\Big(\int_{\widetilde{\Omega}}|\sg(\overline{u}_{0})|\dif x +1 \Big) + \int_{\widetilde{\Omega}}f(\E\widetilde{v}).
\end{align*}
Enlarging the constant $C>0$ from the previous inequality, by definition of $\widetilde{v}$ we thereby obtain 
\begin{align}\label{eq:bddbelow}
\begin{split}
\ell\int_{\Omega}V(\E v) & + \ell\int_{\Omega}|\trace_{\partial\Omega}(u_{0}-v)\odot\nu_{\partial\Omega}|\dif\mathscr{H}^{n-1}-C(\ell,L,V,\mathscr{L}^{n}(\widetilde{\Omega}))\Big(\int_{\widetilde{\Omega}}|\sg(\overline{u}_{0})|\dif x +1 \Big) \\ & \stackrel{\eqref{eq:lingrowth}}{\leq} \int_{\Omega}f(\E v) + \int_{\partial\Omega}f^{\infty}\big(\trace_{\partial\Omega}(u_{0}-v)\odot \nu_{\partial\Omega}\big)\dif\mathscr{H}^{n-1} = \overline{F}_{u_{0}}[v]. 
\end{split}
\end{align}
Since $|\cdot|\leq V(\cdot)+1$, this proves that $\overline{F}_{u_{0}}$ is bounded below on $\bd(\Omega)$. Let $(u_{j})\subset\bd(\Omega)$ be a minimising sequence for $\overline{F}_{u_{0}}$, i.e., $\overline{F}_{u_{0}}[u_{j}]\to \inf_{\bd(\Omega)}\overline{F}_{u_{0}}$. Then $(u_{j})$ is bounded in $\bd(\Omega)$ and we may extract a non-relabeled subsequence and find some $u\in\bd(\Omega)$ such that $u_{j}\stackrel{*}{\rightharpoonup} u$ in $\bd(\Omega)$ (and hence $\widetilde{u}_{j}\stackrel{*}{\rightharpoonup}\widetilde{u}$ in $\bd(\widetilde{\Omega})$). Cancelling the integrals over $\widetilde{\Omega}\setminus\overline{\Omega}$ and as a consequence of Theorem~\ref{thm:rindler}, $\overline{F}_{u_{0}}[u] \leq \liminf_{j\to\infty}\overline{F}_{u_{0}}[u_{j}]=\inf_{\bd(\Omega)}\overline{F}_{u_{0}}$. Hence, $u$ is a $\bd$-minimiser in the sense of \eqref{eq:minimality}. 

We come to \eqref{eq:nogap}. Since there holds $\mathscr{D}_{u_{0}}\subset\bd(\Omega)$ and $\overline{F}_{u_{0}}|_{\mathscr{D}_{u_{0}}(\Omega)}=F$ on $\mathscr{D}_{u_{0}}$, we obtain '$\geq$' in \eqref{eq:nogap}. For the other direction, pick a $\bd$-minimiser $u\in\bd(\Omega)$ for $F$, its existence having been established above. Choosing an extension $\overline{u}_{0}$ of the Dirichlet datum $u_{0}$ as above and defining $\widetilde{u}$ via \eqref{eq:defextension}, we invoke Lemma~\ref{lem:smooth} to obtain a sequence $(u_{j})\subset u_{0}+\hold_{c}^{\infty}(\Omega;\R^{n})$ such that $\widetilde{u}_{j}\to \widetilde{u}$ area-strictly in $\bd(\widetilde{\Omega})$. Then, again by Lemma~\ref{lem:symareastrict}, $\overline{F}_{u_{0}}[u_{j}]\to \overline{F}_{u_{0}}[u]$ as $j\to\infty$. Thus, since $\widetilde{u}_{j}\in \mathscr{D}_{u_{0}}$ and $\overline{F}_{u_{0}}[u_{j}]=F[u_{j}]$ for all $j\in\mathbb{N}$, 
\begin{align*}
\inf_{\mathscr{D}_{u_{0}}}F \leq \lim_{j\to\infty}\overline{F}_{u_{0}}[u_{j}] = \overline{F}_{u_{0}}[u]=\min_{\bd(\Omega)}\overline{F}_{u_{0}}[u]. 
\end{align*}
Since we already established that $\min_{\bd(\Omega)}\overline{F}_{u_{0}}\leq \inf_{\mathscr{D}_{u_{0}}}F$, the proof of \eqref{eq:nogap} is complete. 
\subsection{Auxiliary Estimates on the $V_{p}$-functions}\label{sec:app1}
In this section we provide the proof of the auxiliary estimation \eqref{eq:psiaestimate} that helped to establish a particular form of a Korn-type inequality; recall that now $1<p<2$. The first uniform comparability assertion of \eqref{eq:psiaestimate} is a basic property of shifted $N$-functions, cf. \cite[Def.~2 and Sec.~2]{DLSV}. We thus begin by showing that $\psi$ given by \eqref{eq:psimaindef} satisfies the conditions of Lemma~\ref{lem:conjugate} together with the second uniform comparability of \eqref{eq:psiaestimate}. This means 
\begin{align}\label{eq:simeq1main}
\begin{split}
&c\psi'(t) \leq \psi''(t)t \leq C\psi'(t),\\
&c(1+a+t)^{p-2}t^{2} \leq \psi''(a+t)t^{2} \leq C (1+a+t)^{p-2}t^{2}
\end{split}
\end{align}
for some $0<c\leq C <\infty$ independent of $a\geq 0$ and $t>0$. We start with $\eqref{eq:simeq1main}_{2}$, and recall that $1<p<2$ throughout this section. To this end, note that for $t>0$
\begin{align}\label{eq:psidashcompute}
\begin{split}
&\frac{\dif}{\dif t} \psi(t) = (p-2)(1+t)^{p-3}t^{2} + 2(1+t)^{p-2}t,\\
&\frac{\dif^{2}}{\dif t^{2}}\psi(t) = (p-2)(p-3)(1+t)^{p-4}t^{2} + 4(p-2)(1+t)^{p-3}t + 2(1+t)^{p-2}.
\end{split}
\end{align}
Since $1<p<2$, the second term on the right-hand side of $\eqref{eq:psidashcompute}_{2}$ is negative. Therefore, 
\begin{align*}
\psi''(a+t)t^{2} & \leq (p-2)(p-3)(1+a+t)^{p-4}(a+t)^{2}t^{2} + 2(1+a+t)^{p-2}t^{2} \\
& \leq ((p-2)(p-3)+2)(1+a+t)^{p-2}t^{2}, 
\end{align*}
establishing the upper bound asserted by \eqref{eq:simeq1main}. The lower bound requires a refined argument. Since $p>1$, $c_{p}:=p^{2}-p$ is strictly positive. We write for $t>0$
\begin{align*}
\frac{\psi''(a+t)t^{2}}{(1+a+t)^{p-2}t^{2}} &  = \frac{ (p-2)(p-3)(1+a+t)^{p-4}(a+t)^{2}t^{2}}{(1+a+t)^{p-2}t^{2}} + \frac{4(p-2)(1+a+t)^{p-3}(t+a)t^{2}}{(1+a+t)^{p-2}t^{2}} \\ & + \frac{2(1+a+t)^{p-2}t^{2}}{(1+a+t)^{p-2}t^{2}} \\ 
& = 2 + (p-2)\Big[(p-3)\Big(\frac{a+t}{1+a+t}\Big)^{2} + 4\Big(\frac{a+t}{1+a+t}\Big)\Big]
\end{align*}
We claim that the ultimate term is larger or equal than $c_{p}$. Put $\vartheta\colon \R\ni z \mapsto 2 + (p-2)((p-3)z^{2}+4z)$. Since $1<p<2$, this function has a global minimum at $z_{0}=\frac{2}{3-p}$ which, by $p>1$, satisfies $z_{0}>1$. Hence, for all $z\in (0,1)$, $\vartheta(z)\geq\vartheta(1)=p^{2}-p=c_{p}>0$, and the lower bound of \eqref{eq:simeq1main} follows because of $(a+t)/(1+a+t)\in (0,1)$.

We turn to the third uniform comparability assertion of \eqref{eq:psiaestimate}, which is equivalent to the existence of constants $0<c\leq C <\infty$ such that 
\begin{align}\label{eq:simeq2main}
c(1+t^{2}+a^{2})^{\frac{p-2}{2}}t^{2} \leq (1+t+a)^{p-2}t^{2} \leq C(1+t^{2}+a^{2})^{\frac{p-2}{2}}t^{2}
\end{align}
holds for all $a,t\geq 0$. First note that 
\begin{align*}
\sqrt{1+t^{2}+a^{2}}\leq \sqrt{(1+t+a)^{2}}= 1+t+a
\end{align*}
so that, because of $1<p<2$, $(1+t+a)^{p-2}\leq (1+t^{2}+a^{2})^{\frac{p-2}{2}}$, 
and so the upper bound in \eqref{eq:simeq2main} follows. For the lower bound note that, because of Young's inequality 
\begin{align*}
1+t+a & = \sqrt{(1+t+a)^{2}} \leq \sqrt{8+8t^{2}+8a^{2}} \leq \sqrt{8}\sqrt{1+t^{2}+a^{2}}, 
\end{align*}
thereby establishing the lower bound in \eqref{eq:simeq2main}; for the latter estimate, we could have alternatively argued by virtue of Lemma~\ref{lem:auxVSQC}, cf.~$\eqref{eq:Vest1}_{1}$. We now turn to $\eqref{eq:simeq1main}_{1}$. Setting $a=0$ in $\eqref{eq:simeq1main}_{2}$, $\eqref{eq:simeq1main}_{1}$ is obviously equivalent to 
\begin{align}\label{eq:simeq3main}
\psi'(t) \simeq (1+t)^{p-2}t.
\end{align}
By $\eqref{eq:psidashcompute}_{1}$ and $1<p<2$, we  have $\psi'(t) \leq 2(1+t)^{p-2}t$ for all $t>0$. On the other hand, for $t>0$, 
\begin{align*}
\frac{\psi'(t)}{(1+t)^{p-2}t}=\frac{(p-2)(1+t)^{p-3}t^{2} + 2(1+t)^{p-2}t}{(1+t)^{p-2}t}=(p-2)\frac{t}{1+t}+2\geq p.
\end{align*}
The proof of \eqref{eq:psiaestimate} is complete. 

\begin{center}
\textbf{Declaration}
\end{center}
I hereby declare that there are no conflicts of interest and that the manuscript under consideration has not been submitted to any other journal. The work has been partially funded by the Euorpean Research Council EPSRC and the Hausdorff Center in Mathematics, Bonn. 
\end{document}